\documentclass[11pt,reqno]{amsart}

\usepackage[T1]{fontenc}
\usepackage[utf8]{inputenc}
\usepackage[a4paper,margin=1in]{geometry}
\usepackage{amsmath,amssymb,amsthm,mathtools}
\usepackage{enumitem}
\usepackage{mathrsfs}
\usepackage{tikz}
\usetikzlibrary{arrows.meta,calc,positioning,decorations.pathreplacing}
\usepackage[colorlinks=true,linkcolor=blue,citecolor=blue,urlcolor=blue]{hyperref}

\numberwithin{equation}{section}

\theoremstyle{plain}
\newtheorem{theorem}{Theorem}[section]
\newtheorem{proposition}[theorem]{Proposition}
\newtheorem{lemma}[theorem]{Lemma}
\newtheorem{corollary}[theorem]{Corollary}

\theoremstyle{definition}
\newtheorem{definition}[theorem]{Definition}
\newtheorem{example}[theorem]{Example}

\theoremstyle{remark}
\newtheorem{remark}[theorem]{Remark}

\newcommand{\R}{\mathbb R}
\newcommand{\N}{\mathbb N}
\newcommand{\Nzero}{\mathbb N_0}
\newcommand{\Z}{\mathbb Z}
\newcommand{\Zmod}[1]{\mathbb Z/#1\mathbb Z}
\newcommand{\DeltaM}{\Delta^{m-1}}
\newcommand{\inte}{\operatorname{int}}
\newcommand{\bd}{\partial}
\newcommand{\co}{\operatorname{co}}

\newcommand{\Fix}{\operatorname{Fix}}
\newcommand{\dist}{\operatorname{dist}}

\newcommand{\id}{\operatorname{id}}

\newcommand{\x}{\mathbf x}
\newcommand{\y}{\mathbf y}

\newcommand{\p}{\mathbf p}
\newcommand{\e}{\mathbf e}
\newcommand{\f}{\mathfrak f}

\newcommand{\Leb}{\operatorname{Leb}}

\newcommand{\htop}{h_{\mathrm{top}}}
\newcommand{\Prob}{\mathcal P}
\newcommand{\Lip}{\operatorname{Lip}}
\newcommand{\BL}{\mathrm{BL}}

\title{Full-measure historic behavior and emergence in simplex dynamics}

\author[F. Mukhamedov]{Farrukh Mukhamedov}
\address{Department of Mathematical Sciences, College of Science, United Arab Emirates University, P.O. Box 15551, Al Ain, Abu Dhabi, UAE}
\email{far75m@yandex.ru; farrukh.m@uaeu.ac.ae }
\subjclass[2020]{37A05, 37B20, 37C70, 37C45, 37B40, 92D25}
\keywords{Historic behavior; non-statistical dynamics; pointwise emergence; Takens' last problem; Stein--Ulam map; simplex dynamical system; Lotka--Volterra operator; empirical measures; weak cyclic cover}

\begin{document}

\begin{abstract}
We give a finite-dimensional boundary-residence mechanism for Takens-type
historic behavior.  For a class of continuous self-maps of a simplex, a
multiplicative Lyapunov drift toward the boundary, combined with cyclic
residence near separated boundary regions, forces every non-fixed interior
orbit to have non-convergent Ces\`aro averages and non-convergent empirical
measures.  Thus the historic set has full relative Lebesgue measure.  The
mechanism is independent of hyperbolic specification, wandering domains and
positive entropy: in fact, for the maps considered here the topological entropy
is carried entirely by the boundary restriction.  The non-convergence persists
under all positive powers and, more generally, under multiplicatively thick
sampling times.  The abstract criterion applies to Stein--Ulam and
Lotka--Volterra type stochastic operators once the model-specific cyclic coding
and logarithmic residence estimates are verified.  In explicit cyclic-collar
models we identify the full weak-* accumulation set of empirical measures; this
accumulation set is a non-trivial circle of probability measures and yields
polynomial pointwise emergence of order \(\varepsilon^{-1}\).  We also obtain
examples with positive-dimensional omega-limit sets by combining the mechanism
with a Bara\'nski--Misiurewicz type boundary-tracing construction.
\end{abstract}
\maketitle

\section{Introduction}

Takens' last problem asks for mechanisms which produce orbits with
non-existent time averages on sets of positive Lebesgue measure
\cite{Takens2008}.  In modern terminology these are orbits with
\emph{historic behavior}: for some continuous observable, the corresponding
Birkhoff averages fail to converge.  The problem is not merely to construct one
exceptional orbit, but to explain how deterministic finite-dimensional dynamics
can display statistical irregularity on a large set of initial conditions.
Known positive answers are often tied to homoclinic tangencies, wandering
domains, heteroclinic networks or related mechanisms
\cite{KirikiSoma2017,LabouriauRodrigues2017}.  A quantitative form of the same
question is the theory of emergence, where one measures the complexity of the
accumulation set of empirical statistics; see, in particular, the viewpoint of
Kiriki, Nakano and Soma~\cite{KNS22}.

The aim of this paper is to give a different finite-dimensional mechanism.  It
is a boundary-residence mechanism in simplex dynamics.  Let
\[
        \Delta^{m-1}
        =
        \left\{
        \x=(x_1,\ldots,x_m)\in\R^m:
        x_i\ge0,\ \sum_{i=1}^m x_i=1
        \right\}
\]
be the standard simplex.  We consider continuous maps
\(K:\Delta^{m-1}\to\Delta^{m-1}\) with a distinguished interior fixed point
\(\p\).  The structural assumptions are deliberately geometric.  A
multiplicative Lyapunov function drives every non-stationary interior orbit
toward the boundary; a weak cyclic cover organizes the boundary motion into a
monotone lifted itinerary; logarithmic residence estimates force the orbit to
spend increasingly long epochs near prescribed boundary regions; and a
convex-hull separation condition prevents a single Ces\`aro limit from being
compatible with all residence regions.

The main consequence is a full-measure historic-behavior theorem.  Under the
weak admissibility hypotheses introduced below, every point of
\(\operatorname{int}\Delta^{m-1}\setminus\{\p\}\) has non-convergent vector
Ces\`aro averages,
\[
        \frac1n\sum_{k=0}^{n-1}K^k(x),
\]
and non-convergent empirical measures,
\[
        \frac1n\sum_{k=0}^{n-1}\delta_{K^k(x)}.
\]
Thus the historic set has full relative Lebesgue measure in the simplex.  The
phenomenon is stable under all positive powers of the map and under
multiplicatively thick sampling of the orbit.  In this sense the paper gives a
Takens-type mechanism inside the boundary-preserving simplex category: the
large historic set is produced by slow cyclic residence near separated boundary
regions, not by a hyperbolic horseshoe, a specification argument or a wandering
domain.

The mechanism is also designed to connect with the classical Stein--Ulam
problem.  Nonlinear stochastic maps on simplices, especially quadratic and
Lotka--Volterra type operators, arise in population genetics, evolutionary
dynamics and nonlinear Markov processes.  The continuous-time Lotka--Volterra
mechanism goes back to Lotka and Volterra
\cite{Lotka1920,Volterra1926}; for broader nonlinear population models see
\cite{GoelMaitraMontroll1971}, and for evolutionary and population-dynamical
background see Hofbauer and Sigmund~\cite{HofbauerSigmund1998}.  For
finite-dimensional stochastic operators we refer to Lyubich's monograph
\cite{Lyubich1992} and to the survey literature on quadratic stochastic
operators, for example~\cite{Ganikhodjaev2009}.  The Stein--Ulam map and its
Lotka--Volterra variants are discrete simplex models in which an interior
coexistence state coexists with a cyclic drift toward the boundary.

The original Stein--Ulam problem is connected with Ulam's questions on
nonlinear transformations of the simplex \cite{Ulam1960,SteinUlam1964}.
Zakharevich proved a fundamental failure of the ergodic hypothesis for
quadratic mappings of a simplex \cite{Zakharevich1978}, and the
\(\omega\)-limit structure of the Stein--Ulam spiral map was studied by
Bara\'nski and Misiurewicz~\cite{BaranskiMisiurewicz2010}.  More recent work
introduced Stein--Ulam spiral type stochastic operators and proved
non-ergodicity and historic behavior for several classes of Lotka--Volterra
maps; see \cite{GGJ,JL,JamilovMukhamedov2022,JamilovMukhamedov2024,S23}.  The
closest predecessor is the SUS-t framework of Jamilov and
Mukhamedov~\cite{JamilovMukhamedov2024}, where historic behavior and its
persistence under positive powers were proved for a concrete Stein--Ulam spiral
class.  The present paper takes a different point of view: it extracts the
abstract Lyapunov-cyclic mechanism behind such examples and proves consequences
which do not depend on a particular stochastic-operator formula.

The main theorem is proved by a block argument.  First, a multiplicative
Lyapunov identity
\[
        \varphi(K(x))=\varphi(x)\psi(x),
        \qquad 0\le \psi\le1,
\]
with equality only at the interior fixed point \(\p\), forces every
non-fixed interior orbit to escape compact subsets of the interior.  Second,
after the orbit leaves a fixed neighborhood of \(\p\), the weak cyclic cover
allows one to choose a lifted itinerary \((r_n)\) satisfying
\[
        K^n(x)\in G_{r_n\,({\rm mod}\,N)},
        \qquad r_{n+1}-r_n\in\{0,1\},
        \qquad r_n\to+\infty .
\]
This weak rule allows overlaps of adjacent sectors but prevents trapping in a
proper part of the cyclic cover.  Third, the logarithmic residence estimate
turns boundary approach into residence blocks whose lengths are comparable with
the previous history of the orbit.  Finally, if a vector Ces\`aro average
converged to a limit \(L\), the long blocks near each residence region \(R_j\)
would force
\[
        L\in \operatorname{co}(R_j)
        \qquad\text{for every }j.
\]
The separation condition
\[
        \bigcap_j \operatorname{co}(R_j)=\varnothing
\]
then gives a contradiction.

This proof has several consequences which are useful for positioning the
simplex examples within the general theory of non-statistical dynamics.  First,
the historic behavior persists for every positive power \(K^s\).  A long
ordinary residence block always contains a long arithmetic subblock of times
congruent to \(0\pmod s\), so the same convex-hull obstruction applies to the
sampled orbit.  Second, the same idea works for multiplicatively thick sampling
times: every linearly long block contains sampled times with positive relative
frequency.  Third, all invariant probability measures are supported on the
boundary together with the distinguished fixed point.  Consequently the
topological entropy of a weak admissible map is the entropy of its boundary
restriction.  Thus the full-measure historic behavior produced here is not
explained by entropy generated in the interior.

The explicit cyclic-collar models go beyond non-convergence.  For those models
we compute the entire weak-* accumulation set of empirical measures.  The limit
set is a one-parameter family
\[
        \{\eta_\tau:\tau\in\mathbb T\}
\]
of probability measures supported on a boundary circle.  The parameter is not a
formal artifact: the map \(\tau\mapsto\eta_\tau\) is bi-Lipschitz for the
bounded-Lipschitz metric on probability measures.  Hence the pointwise
emergence of each collar orbit is polynomial of order
\(\varepsilon^{-1}\).  This gives a concrete quantitative form of the
non-existence of averages, complementary to the emergence viewpoint of
\cite{KNS22}.

The simplex framework also contains natural examples from population dynamics.
In many cyclic-dominance models the boundary supports a heteroclinic cycle
connecting pure states, while an interior fixed point represents coexistence.
The classical May--Leonard three-species model~\cite{ML75} is a continuous-time
prototype: trajectories may successively approach neighborhoods of single
species and spend longer times near the boundary.  The present paper treats a
discrete-time simplex analogue.  The long residence epochs are used not merely
to describe the geometry of the orbit, but to force non-convergence of Ces\`aro
averages and empirical measures by a convex-geometric obstruction.

We also formulate a general \(\p\)-Stein--Ulam Lotka--Volterra verification
scheme.  In this scheme the weighted arithmetic--geometric mean inequality
supplies the Lyapunov structure automatically, while the cyclic coding and
residence estimates remain explicit model-dependent hypotheses to be checked.
The paper further shows that the mechanism is stable under sufficiently small
boundary-preserving conjugacies, producing weak admissible examples outside the
Lotka--Volterra class, and combines the construction with a
Bara\'nski--Misiurewicz type tracing condition to obtain
positive-dimensional omega-limit sets.

The scope of the result should be understood precisely.  We do not claim to
solve the unrestricted smooth \(C^r\) form of Takens' last problem for arbitrary
maps, diffeomorphisms or flows.  The weak admissibility hypotheses include
boundary invariance, a cyclic cover and a residence estimate, and a later
Baire-category argument shows that weak admissibility is not residual in the
full \(C^r\) space of all simplex self-maps.  The contribution is instead an
explicit and verifiable structural class of finite-dimensional
boundary-preserving maps for which historic behavior has full relative Lebesgue
measure and persists under natural operations.

The paper is organized as follows.  Section~2 fixes notation, historic behavior
and pointwise emergence.  Sections~3--4 introduce weak admissibility and prove
basic Lyapunov and block properties.  Section~5 proves the main full-measure
non-statistical theorem.  Sections~6--7 prove persistence under positive powers
and multiplicatively thick subsequences.  Sections~8--10 discuss stability and
concrete cyclic-collar families, including the explicit empirical accumulation
and emergence computation.  Section~11 gives the \(\p\)-Stein--Ulam
Lotka--Volterra verification scheme.  The remaining sections treat entropy,
non-Lotka--Volterra conjugate examples, positive-dimensional omega-limit sets
and Baire-category issues.

\section{Notation, historic behavior and emergence}

Throughout, \(m\ge2\) is fixed and
\[
        \DeltaM
        =
        \left\{
        \x=(x_1,\ldots,x_m)\in\R^m:
        x_i\ge0,\ \sum_{i=1}^m x_i=1
        \right\}.
\]
The relative interior and boundary of \(\DeltaM\) are denoted by
\(\inte\DeltaM\) and \(\bd\DeltaM\).  The relative \((m-1)\)-dimensional
Lebesgue measure on \(\DeltaM\) is denoted by \(\Leb_{m-1}\).  For a map
\(T:\DeltaM\to\DeltaM\), we write
\[
        A_n(T)(\x)=\frac{1}{n}\sum_{k=0}^{n-1}T^k(\x).
\]
For \(E\subset\R^m\), \(\co(E)\) denotes its convex hull.  Since all ambient
spaces are finite-dimensional, the convex hull of a compact set is compact.

\begin{definition}
Let \(X\) be a compact metric space, let \(T:X\to X\), and let
\(g:X\to\R^d\) be continuous.  The orbit of \(x\in X\) has  \textit{historical
behavior} or \textit{non-statistical behavior} for the observable \(g\) if the averages
\[
        \frac{1}{n}\sum_{k=0}^{n-1}g(T^k x)
\]
do not converge in \(\R^d\).  When \(g\) is the identity map on a simplex, this
is called vector-valued historical behavior.
\end{definition}

\begin{definition}
Let \(X\) be a compact metric space and let \(T:X\to X\) be continuous.  The
empirical measures of the orbit of \(x\in X\) are
\[
        \mu_n^x
        =
        \frac1n\sum_{k=0}^{n-1}\delta_{T^k x}.
\]
We say that the orbit of \(x\) is non-statistical if \((\mu_n^x)_{n\ge1}\)
does not converge in the weak-* topology.  If \(X\) carries a reference measure
\(\Leb\), we say that \(T\) is non-statistical if the set of non-statistical
points has full \(\Leb\)-measure.  A probability measure \(\mu\) is a physical
measure if its basin
\[
        B(\mu)=\{x\in X:\ \mu_n^x\to\mu\}
\]
has positive reference measure.
\end{definition}

\begin{definition}
\label{def:pointwise-emergence}
Let \((X,d_X)\) be a compact metric space.  Denote by \(\Prob(X)\) the space of
Borel probability measures on \(X\).  We use the bounded-Lipschitz metric
\[
        d_{\BL}(\mu,\nu)
        =
        \sup
        \left\{
        \left|\int f\,d\mu-\int f\,d\nu\right|:
        \|f\|_\infty\le1,\ \Lip(f)\le1
        \right\}.
\]
It induces the weak-* topology on \(\Prob(X)\).  For the empirical measures
\(\mu_n^x\), let
\[
        \mathcal V_T(x)
        =
        \bigcap_{N\ge1}
        \overline{\{\mu_n^x:n\ge N\}}^{\,w^*}
\]
be their weak-* accumulation set.  The pointwise emergence of the orbit of
\(x\) at scale \(\varepsilon>0\) is
\[
        \mathscr E_x(\varepsilon)
        =
        \min\left\{q:\ \exists\nu_1,\ldots,\nu_q\in\Prob(X)
        \text{ such that }
        \limsup_{n\to\infty}
        \min_{1\le i\le q}d_{\BL}(\mu_n^x,\nu_i)
        \le\varepsilon
        \right\}.
\]
Equivalently, \(\mathscr E_x(\varepsilon)\) is the minimum cardinality of a
closed \(\varepsilon\)-net for \(\mathcal V_T(x)\) in the metric \(d_{\BL}\).
Indeed, every subsequential limit of \((\mu_n^x)\) belongs to \(\mathcal V_T(x)\),
and the distance from \(\mu_n^x\) to \(\mathcal V_T(x)\) tends to zero.
\end{definition}

\begin{remark}
Vector-valued historical behavior for the identity observable implies scalar
historical behavior for at least one coordinate observable.  However, in
general the coordinate may depend on the initial point.  Therefore a
full-measure vector statement gives a positive-measure scalar statement for at
least one coordinate by the pigeonhole principle, but it does not by itself
identify a single coordinate that is irregular on a full-measure set.
\end{remark}

For \(N\in\N\), write \(\Zmod{N}\) for the cyclic group.  A subset
\(I\subset\Zmod{N}\) is called a cyclic interval if, after choosing a cyclic
order, it is represented by consecutive residues.  More explicitly, after
choosing integers \(\alpha\le\beta\), one may write
\[
        I=\{\alpha,\alpha+1,\ldots,\beta\}\pmod N.
\]
A lift of \(I\) to the \(q\)-th turn around the cycle is then
\[
        \widetilde I_q=\{\alpha+qN,\alpha+qN+1,\ldots,\beta+qN\}\subset\Z.
\]

\section{Weak admissible Stein--Ulam maps}

\begin{definition}\label{def:residence-block}
Let \(R\subset\DeltaM\), let \(K:\DeltaM\to\DeltaM\), and let
\(\x\in\DeltaM\).  A pair \((b,\ell)\in\Nzero\times\N\) is called a
residence block through \(R\) along the orbit of \(\x\) if
\[
        K^{b+r}(\x)\in R,
        \qquad 0\le r\le \ell-1.
\]
If, in addition, \(b\ge1\), \(K^{b-1}(\x)\notin R\), and
\(K^{b+\ell}(\x)\notin R\), then the block is a genuine entrance--exit block.
The arguments below only require residence blocks; this convention avoids any
ambiguity caused by overlaps of adjacent compact sectors.
\end{definition}

\begin{definition}\label{def:lifted-block}
Let \(G_0,\ldots,G_{N-1}\) be a cyclic compact cover, let
\(I\subset\mathbb Z/N\mathbb Z\) be a cyclic interval, and let
\((r_n)_{n\ge n_0}\) be an integer lift of an orbit with
\(K^n(\x)\in G_{r_n\,({\rm mod}\,N)}\).  A pair
\((b,\ell)\in\Nzero\times\N\), with \(b\ge n_0\), is called a lifted residence
block through \(I\) if, for some lifted copy \(\widetilde I_q\subset\mathbb Z\),
\[
        r_{b},r_{b+1},\ldots,r_{b+\ell-1}\in \widetilde I_q.
\]
When \(R=\bigcup_{i\in I}G_i\), every lifted residence block through \(I\) is a
residence block through \(R\).  Maximal lifted blocks are obtained by taking all
consecutive times for which the lift remains in a fixed copy
\(\widetilde I_q\).
\end{definition}

\begin{definition}\label{def:weak-admissible}
A continuous map \(K:\DeltaM\to\DeltaM\) is called a \textit{weak admissible
Stein--Ulam type map} if the following data and conditions are given.

\begin{enumerate}[label=\textup{(W\arabic*)},leftmargin=3.2em]
\item \textbf{Interior fixed point and face invariance.}
There is a point \(\p\in\inte\DeltaM\) such that
\[
        K(\inte\DeltaM)\subset\inte\DeltaM,
        \qquad
        K(\bd\DeltaM)\subset\bd\DeltaM,
\]
and
\[
        \Fix(K)\cap\inte\DeltaM=\{\p\}.
\]

\item \textbf{Multiplicative Lyapunov structure.}
There are continuous functions
\[
        \varphi:\DeltaM\to[0,\infty),
        \qquad
        \psi:\DeltaM\to[0,1]
\]
such that
\[
        \varphi^{-1}(0)=\bd\DeltaM,
        \qquad
        \varphi(\x)>0\quad(\x\in\inte\DeltaM),
\]
\[
        \varphi(K(\x))=\varphi(\x)\psi(\x),
        \qquad \x\in\DeltaM,
\]
\[
        \psi^{-1}(1)=\{\p\},
\]
and \(\varphi\) has a strict unique maximum at \(\p\):
\[
        \varphi(\x)<\varphi(\p),
        \qquad \x\in\DeltaM\setminus\{\p\}.
\]

\item \textbf{Weak cyclic compact cover outside the center.}
There are an open neighborhood \(U_0\) of \(\p\), an integer \(N\ge m\), and
compact sets
\[
        G_i\subset\DeltaM\setminus U_0,
        \qquad i\in\mathbb Z/N\mathbb Z,
\]
such that
\[
        \overline{U_0}\subset\inte\DeltaM,
        \qquad
        \DeltaM\setminus U_0=\bigcup_{i\in\mathbb Z/N\mathbb Z}G_i .
\]
The sets \(G_i\) are allowed to overlap.  This is essential in applications,
where neighboring sectors usually share boundary arcs or faces.

\item \textbf{Residence regions and convex-hull separation.}
Let
\[
        I_1,\ldots,I_m\subset\mathbb Z/N\mathbb Z
\]
be non-empty cyclic intervals whose union is \(\mathbb Z/N\mathbb Z\).  Define
\[
        R_j=\bigcup_{i\in I_j}G_i,
        \qquad j=1,\ldots,m.
\]
Each \(R_j\) is a non-empty compact set.  We require
\[
        \bigcap_{j=1}^m\co(R_j)=\varnothing.
        \tag{W4}\label{eq:convhull-separation}
\]
The regions \(R_j\) are not required to be convex or pairwise disjoint.

\item \textbf{Orbitwise lifted cyclic progress.}
For every
\[
        \x\in\inte\DeltaM\setminus\{\p\},
\]
once the orbit is outside \(U_0\) forever, there is a monotone lifted itinerary
which makes infinitely many turns around the cycle.  Precisely, if \(n_0\) is
such that
\[
        K^n(\x)\notin U_0,
        \qquad n\ge n_0,
\]
then one can choose integers \(r_n\), \(n\ge n_0\), such that
\[
        K^n(\x)\in G_{r_n\,({\rm mod}\,N)},
        \qquad
        r_{n+1}-r_n\in\{0,1\},
        \qquad n\ge n_0,
\]
and
\[
        r_n\longrightarrow+\infty.
\]
The lifted itinerary is part of the weak admissible data along the orbit.  It
replaces a single-valued sector code and therefore remains meaningful when
adjacent compact sectors overlap.

\item \textbf{Logarithmic residence estimate.}
The lifted itineraries in \textup{(W5)} can be chosen so that, for each
\(j\in\{1,\ldots,m\}\), there are constants
\[
        A_j>0,
        \qquad B_j>0,
        \qquad C_j>1,
\]
with the following property.  Every maximal lifted residence block
\((b,\ell)\) through \(I_j\) along every non-fixed interior orbit satisfies
\[
        \ell
        \ge
        A_j\log_{C_j}^{+}
        \frac{B_j}{\varphi(K^{b}(\x))},
        \tag{W6}\label{eq:log-residence}
\]
where
\[
        \log_C^+(t)=\max\{0,\log_C t\}.
\]
Since such a block is contained in \(R_j\), this is a residence-time estimate at
entrance time \(b\).
\end{enumerate}
\end{definition}

\begin{remark}
Condition \textup{(W3)} supplies the finite cyclic compact cover outside the
central neighborhood \(U_0\).  Condition \textup{(W5)} is the orbitwise
non-trapping rule: after leaving \(U_0\), the orbit admits a lifted itinerary
which can only stay in the present label or advance by one label, and this lift
tends to \(+\infty\).  Thus the orbit crosses every lifted copy of every
cyclic interval \(I_j\).  These crossings are the lifted residence blocks used
in Lemma~\ref{lem:blocks}.
\end{remark}

\begin{remark}
The definition deliberately does not require a single-valued sector map.  A
single-valued finite code with compact disjoint fibers would not model the
closed sector covers that occur in Stein--Ulam examples.  The compact-cover
formulation keeps the sector sets closed while preserving the monotone lifted
itinerary needed for the block argument.
\end{remark}

\subsection*{Schematic figures for the weak admissible construction}

The following figures are purely schematic.  They show the construction in the
triangle \(\Delta^2\), where the geometry is visible.  In higher-dimensional
simplices the same information is encoded by the cyclic compact cover
\(G_0,\ldots,G_{N-1}\), the orbitwise lifted itineraries, the residence regions
\(R_1,\ldots,R_m\), and the convex-hull separation condition
\eqref{eq:convhull-separation}.

\begin{figure}[htbp]
\centering
\begin{tikzpicture}[scale=0.86,>=Stealth,every node/.style={font=\small}]
    \coordinate (e1) at (0,0);
    \coordinate (e2) at (6,0);
    \coordinate (e3) at (3,5.15);
    \coordinate (p)  at (3,1.72);

    \fill[gray!3] (e1)--(e2)--(e3)--cycle;
    \draw[thick] (e1)--(e2)--(e3)--cycle;

    \fill[blue!18] (0.18,0.18)--(1.45,0.18)--(1.85,0.75)--(1.12,1.28)--(0.43,0.78)--cycle;
    \draw[blue!65!black,thick] (0.18,0.18)--(1.45,0.18)--(1.85,0.75)--(1.12,1.28)--(0.43,0.78)--cycle;
    \node[blue!70!black] at (0.90,0.58) {$R_1$};

    \fill[green!18] (4.55,0.18)--(5.82,0.18)--(5.58,0.82)--(4.88,1.28)--(4.15,0.75)--cycle;
    \draw[green!50!black,thick] (4.55,0.18)--(5.82,0.18)--(5.58,0.82)--(4.88,1.28)--(4.15,0.75)--cycle;
    \node[green!45!black] at (5.10,0.58) {$R_2$};

    \fill[red!18] (2.45,4.28)--(3,5.00)--(3.55,4.28)--(3.55,3.58)--(2.45,3.58)--cycle;
    \draw[red!65!black,thick] (2.45,4.28)--(3,5.00)--(3.55,4.28)--(3.55,3.58)--(2.45,3.58)--cycle;
    \node[red!70!black] at (3,4.10) {$R_3$};

    \fill[yellow!30] (p) circle (0.42);
    \draw[orange!80!black,thick] (p) circle (0.42);
    \node[orange!70!black] at (3,1.72) {$U_0$};
    \fill (p) circle (1.5pt);
    \node[below right=1pt of p] {$\p$};

    \draw[->,very thick,blue!60!black]
        (1.45,1.05) .. controls (1.85,2.20) and (2.35,3.10) .. (2.67,3.62);
    \draw[->,very thick,red!70!black]
        (3.53,3.60) .. controls (4.20,2.75) and (4.65,2.05) .. (4.90,1.22);
    \draw[->,very thick,green!50!black]
        (4.15,0.62) .. controls (3.05,0.18) and (2.00,0.18) .. (1.42,0.62);

    \draw[dashed,gray!60] (p)--(e1);
    \draw[dashed,gray!60] (p)--(e2);
    \draw[dashed,gray!60] (p)--(e3);

    \node[below left] at (e1) {$\e_1$};
    \node[below right] at (e2) {$\e_2$};
    \node[above] at (e3) {$\e_3$};

    \node[align=center] at (3,-0.65)
        {weak cyclic rule: stay in the present code or move to the next one};
\end{tikzpicture}
\caption{Weak cyclic coding and residence regions.  Outside the center
\(U_0\), the code can stay unchanged or advance by one.  The residence regions
\(R_j\) are the portions of the cyclic code corresponding to prescribed cyclic
intervals.}
\label{fig:weak-cyclic-coding}
\end{figure}

\begin{figure}[htbp]
\centering
\begin{tikzpicture}[x=0.62cm,y=0.72cm,>=Stealth,every node/.style={font=\small}]
    \draw[->,thick] (0,0)--(17,0) node[right] {time};

    \foreach \x in {0,1,2,3,4,5,6,7,8,9,10,11,12,13,14,15,16} {
        \draw (\x,0.08)--(\x,-0.08);
        \fill[gray!45] (\x,0) circle (1.2pt);
    }

    \fill[blue!15] (5,-0.35) rectangle (12,0.35);
    \draw[blue!65!black,thick] (5,-0.35) rectangle (12,0.35);
    \foreach \x in {5,6,7,8,9,10,11,12} {
        \fill[blue!70!black] (\x,0) circle (1.9pt);
    }

    \node[below] at (5,-0.12) {$b$};
    \node[above] at (12,0.48) {$b+\ell-1$};

    \draw[decorate,decoration={brace,amplitude=5pt},thick]
        (5,0.62)--(12,0.62)
        node[midway,above=7pt] {$\ell$ consecutive iterates in $R_j$};

    \node[align=center] at (8.5,-1.25)
        {$K^{b+r}(x)\in R_j$ for $0\le r\le\ell-1$};

    \draw[->,thick,red!70!black] (2,1.45)--(5,0.45);
    \node[align=center,red!70!black] at (2.2,1.75)
        {first time};
    \draw[->,thick,red!70!black] (14.6,1.45)--(13,0.45);
    \node[align=center,red!70!black] at (14.6,1.75)
        {last time};
\end{tikzpicture}
\caption{A residence block through a residence region.  The
logarithmic residence estimate turns the small value of
\(\varphi(K^b(x))\) near the boundary into a lower bound for the length
\(\ell\) of the block.}
\label{fig:entrance-exit-block}
\end{figure}

\begin{figure}[htbp]
\centering
\begin{tikzpicture}[scale=0.90,>=Stealth,every node/.style={font=\small}]
    \coordinate (e1) at (0,0);
    \coordinate (e2) at (6,0);
    \coordinate (e3) at (3,5.15);
    \fill[gray!3] (e1)--(e2)--(e3)--cycle;
    \draw[thick] (e1)--(e2)--(e3)--cycle;

    \fill[blue!18,draw=blue!70!black,thick]
        (0.15,0.15)--(2.10,0.22)--(1.95,1.05)--(0.75,1.35)--cycle;
    \node[blue!70!black] at (1.05,0.65) {$\co(R_1)$};

    \fill[green!18,draw=green!50!black,thick]
        (3.90,0.22)--(5.85,0.15)--(5.25,1.35)--(4.05,1.05)--cycle;
    \node[green!45!black] at (4.95,0.65) {$\co(R_2)$};

    \fill[red!18,draw=red!70!black,thick]
        (2.10,3.45)--(3,5.00)--(3.90,3.45)--(3.55,2.88)--(2.45,2.88)--cycle;
    \node[red!70!black] at (3,3.55) {$\co(R_3)$};

    \node[draw,circle,inner sep=1.2pt,thick] (L) at (3,1.72) {$L$};
    \draw[red!80!black,very thick] ($(L)+(-0.18,-0.18)$)--($(L)+(0.18,0.18)$);
    \draw[red!80!black,very thick] ($(L)+(-0.18,0.18)$)--($(L)+(0.18,-0.18)$);

    \node[align=center] at (3,-0.75)
        {$L$ cannot belong to all convex hulls because $\cap_{j=1}^3\co(R_j)=\varnothing$};

    \node[below left] at (e1) {$\e_1$};
    \node[below right] at (e2) {$\e_2$};
    \node[above] at (e3) {$\e_3$};
\end{tikzpicture}
\caption{The convex-hull obstruction.  Long residence blocks force any
possible Ces\`aro limit to lie in each \(\co(R_j)\).  The defining separation
condition rules out such a common limit.}
\label{fig:convex-hull-obstruction}
\end{figure}

\begin{figure}[htbp]
\centering
\begin{tikzpicture}[scale=0.78,>=Stealth,every node/.style={font=\small}]
    \coordinate (A1) at (0,0);
    \coordinate (B1) at (4.2,0);
    \coordinate (C1) at (2.1,3.65);
    \coordinate (P1) at (2.1,1.22);
    \fill[gray!3] (A1)--(B1)--(C1)--cycle;
    \draw[thick] (A1)--(B1)--(C1)--cycle;
    \fill[yellow!30] (P1) circle (0.24);
    \draw[orange!80!black] (P1) circle (0.24);
    \node at (P1) {$\p$};
    \draw[->,thick,blue!70!black]
        (0.55,0.45) .. controls (1.10,1.35) and (1.40,2.20) .. (1.90,2.85);
    \draw[->,thick,blue!70!black]
        (2.25,2.92) .. controls (3.05,2.30) and (3.45,1.25) .. (3.65,0.52);
    \draw[->,thick,blue!70!black]
        (3.05,0.34) .. controls (2.20,0.10) and (1.35,0.10) .. (0.80,0.34);
    \node at (2.1,-0.55) {$K$};

    \draw[->,very thick] (4.95,1.75)--(6.15,1.75)
        node[midway,above] {$h$};

    \coordinate (A2) at (7,0);
    \coordinate (B2) at (11.2,0);
    \coordinate (C2) at (9.1,3.65);
    \coordinate (P2) at (9.1,1.22);
    \fill[gray!3] (A2)--(B2)--(C2)--cycle;
    \draw[thick] (A2)--(B2)--(C2)--cycle;
    \fill[yellow!30] (P2) circle (0.24);
    \draw[orange!80!black] (P2) circle (0.24);
    \node at (P2) {$\p_h$};

    \fill[blue!70!black] (A2) circle (1.7pt);
    \fill[blue!70!black] ($(A2)!0.15!(B2)$) circle (1.7pt);
    \draw[->,blue!70!black,thick] ($(A2)+(0.18,0.12)$)--($(A2)!0.15!(B2)+(0,0.12)$);

    \draw[->,thick,green!50!black]
        (7.35,0.55) .. controls (8.10,1.55) and (8.10,2.55) .. (8.85,3.00);
    \draw[->,thick,green!50!black]
        (9.45,2.95) .. controls (10.00,2.15) and (10.50,1.40) .. (10.65,0.52);
    \draw[->,thick,green!50!black]
        (10.15,0.34) .. controls (9.15,0.08) and (8.20,0.10) .. (7.65,0.34);
    \node at (9.1,-0.55) {$K_h=h^{-1}\circ K\circ h$};

    \node[align=center] at (5.6,-1.35)
        {small boundary-preserving conjugacy transports the weak admissible data};
\end{tikzpicture}
\caption{Stability under small boundary-preserving conjugacy.  The center,
Lyapunov structure, cyclic code and residence blocks are transported by
\(h\).  Smallness of \(h\) preserves the convex-hull separation.}
\label{fig:boundary-conjugacy}
\end{figure}

\begin{lemma}\label{lem:separation-criterion}
Let \(0<\eta<1-1/m\), and let
\[
        V_j(\eta)=\{\x\in\DeltaM:x_j\ge1-\eta\},
        \qquad j=1,\ldots,m.
\]
If compact sets \(R_j\) satisfy \(R_j\subset V_j(\eta)\) for all \(j\), then
\[
        \bigcap_{j=1}^m\co(R_j)=\varnothing.
\]
\end{lemma}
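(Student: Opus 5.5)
The plan is to exploit that each \(V_j(\eta)\) is itself convex, so the convex hulls cannot escape it, and then to derive a contradiction from the coordinate sum. We reduce to showing that \(\bigcap_j V_j(\eta)=\varnothing\).

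First, \(V_j(\eta)\) is the intersection of the convex set \(\DeltaM\) with the closed half-space \(\{x_j\ge 1-\eta\}\), so it is convex and compact. Since \(R_j\subset V_j(\eta)\) and the convex hull of \(R_j\) is the smallest convex set containing it, we get
\[
\co(R_j)\subset V_j(\eta),\qquad j=1,\ldots,m .
\]
Hence
\[
\bigcap_{j=1}^m \co(R_j)\subset \bigcap_{j=1}^m V_j(\eta).
\]

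Second, suppose some \(\x\) lies in every \(V_j(\eta)\). Then \(x_j\ge 1-\eta\) for all \(j\). Summing over \(j\) gives
\[
1=\sum_{j=1}^m x_j\ge m(1-\eta).
\]
This is equivalent to \(\eta\ge 1-1/m\), which contradicts the hypothesis \(\eta<1-1/m\). Therefore the intersection of the \(V_j(\eta)\) is empty, and so is \(\bigcap_j\co(R_j)\).

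There is no real obstacle here; the only point to state carefully is the convexity of \(V_j(\eta)\), which is what allows \(\co(R_j)\) to stay inside it. Compactness of the \(R_j\) is not actually needed for this argument. The strict inequality on \(\eta\) is exactly what the summation step uses.
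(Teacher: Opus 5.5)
Your proof is correct and is essentially the paper's argument: convexity of \(V_j(\eta)\) gives \(\co(R_j)\subset V_j(\eta)\), and summing the coordinate bounds yields \(1\ge m(1-\eta)>1\), a contradiction. Your side remark that compactness of the \(R_j\) is not needed is also accurate.
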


\begin{proof}
The set \(V_j(\eta)\) is convex.  Hence \(R_j\subset V_j(\eta)\) implies
\(\co(R_j)\subset V_j(\eta)\).  If
\(\x\in\cap_{j=1}^m\co(R_j)\), then \(x_j\ge1-\eta\) for all \(j\).  Therefore
\[
        1=\sum_{j=1}^m x_j\ge m(1-\eta)>1,
\]
which is impossible.
\end{proof}

\section{Basic properties}

\begin{lemma}\label{lem:escape}
Let \(K\) be a weak admissible Stein--Ulam type map.  Then, for every
\(\x\in\inte\DeltaM\setminus\{\p\}\),
\[
        \varphi(K^n(\x))\longrightarrow0.
\]
Consequently,
\[
        \omega_K(\x)\subset\bd\DeltaM.
\]
In particular, since \(\overline{U_0}\subset\inte\DeltaM\), there exists
\(n_0=n_0(\x)\) such that
\[
        K^n(\x)\notin U_0,
        \qquad n\ge n_0.
\]
\end{lemma}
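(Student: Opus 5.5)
We argue by a standard monotone Lyapunov argument based on (W2). Fix \(\x\in\inte\DeltaM\setminus\{\p\}\) and set \(a_n=\varphi(K^n(\x))\). By (W1) the whole orbit stays in \(\inte\DeltaM\), so \(a_n>0\). Since \(0\le\psi\le1\), the identity \(a_{n+1}=a_n\psi(K^n(\x))\) shows that \((a_n)\) is non-increasing, hence converges to some \(a_\infty\ge0\). Before studying \(a_\infty\) we record one fact: no iterate \(K^n(\x)\) equals \(\p\). Indeed, if \(K^n(\x)=\p\) for some \(n\ge1\), then \(a_n=\varphi(\p)\). On the other hand, \(a_n\le a_0=\varphi(\x)<\varphi(\p)\) by the strict maximum property, which is a contradiction. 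Consequently \(a_n\le a_0<\varphi(\p)\) for all \(n\).

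The main step is to show \(a_\infty=0\). Suppose instead \(a_\infty>0\), and let \(\y\in\omega_K(\x)\), which is nonempty by compactness. Take \(K^{n_k}(\x)\to\y\). Continuity of \(\varphi\) gives \(\varphi(\y)=a_\infty>0\), so \(\y\in\inte\DeltaM\). Along the same subsequence we also have \(K^{n_k+1}(\x)\to K(\y)\), and therefore \(\varphi(K(\y))=a_\infty\). The Lyapunov identity then yields \(a_\infty=a_\infty\psi(\y)\), so \(\psi(\y)=1\) and hence \(\y=\p\). But then \(a_\infty=\varphi(\p)\). This contradicts \(a_\infty\le a_0<\varphi(\p)\). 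We conclude that \(a_n\to0\).

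The remaining claims follow directly. If \(\y\in\omega_K(\x)\), continuity gives \(\varphi(\y)=\lim a_{n_k}=0\), so \(\y\in\varphi^{-1}(0)=\bd\DeltaM\). For the escape from \(U_0\), note that \(\overline{U_0}\) is a compact subset of \(\inte\DeltaM\), so \(c=\min_{\overline{U_0}}\varphi>0\). Choose \(n_0\) with \(a_n<c\) for all \(n\ge n_0\); then \(K^n(\x)\notin U_0\) for all \(n\ge n_0\).

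The only delicate point is the middle step. One must exclude the possibility that \(a_n\) stabilizes at a positive value, and this requires combining the invariance of the \(\omega\)-limit set with the facts \(\psi^{-1}(1)=\{\p\}\) and \(a_n<\varphi(\p)\).
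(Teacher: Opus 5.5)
Your proposal is correct and follows the paper's proof essentially step for step. Both arguments use monotonicity of $a_n=\varphi(K^n(\x))$. Both take an $\omega$-limit point $\y$ with $\varphi(\y)=\varphi(K\y)=a_\infty>0$, which forces $\psi(\y)=1$ and hence $\y=\p$, contradicting $a_\infty\le\varphi(\x)<\varphi(\p)$. Both then use positivity of $\varphi$ on the compact set $\overline{U_0}$. Your preliminary check that no iterate equals $\p$ is redundant, since $a_n\le a_0<\varphi(\p)$ already follows from monotonicity alone.
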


\begin{proof}
Put \(\x_n=K^n(\x)\) and \(u_n=\varphi(\x_n)\).  From the multiplicative
identity and \(0\le\psi\le1\),
\[
        u_{n+1}=u_n\psi(\x_n)\le u_n.
\]
Thus \((u_n)\) is non-increasing and converges to a number \(L\ge0\).

Assume, for contradiction, that \(L>0\).  Since \(\DeltaM\) is compact, the
omega-limit set \(\omega_K(\x)\) is non-empty.  Choose
\(\y\in\omega_K(\x)\).  Then there is a subsequence \(n_k\to\infty\) such that
\(\x_{n_k}\to\y\).  By continuity,
\[
        \varphi(\y)=\lim_{k\to\infty}\varphi(\x_{n_k})=L.
\]
Moreover, \(K\y\in\omega_K(\x)\), because
\(\x_{n_k+1}=K(\x_{n_k})\to K\y\).  Hence also
\[
        \varphi(K\y)=L.
\]
Using the Lyapunov identity at \(\y\), we get
\[
        L=\varphi(K\y)=\varphi(\y)\psi(\y)=L\psi(\y).
\]
Since \(L>0\), this gives \(\psi(\y)=1\).  By \textup{(W2)},
\(\psi^{-1}(1)=\{\p\}\), so \(\y=\p\).  Therefore
\[
        L=\varphi(\y)=\varphi(\p).
\]
But \(\x\ne\p\) and \(\varphi\) has a strict unique maximum at \(\p\), so
\[
        L\le u_0=\varphi(\x)<\varphi(\p),
\]
a contradiction.  Hence \(L=0\).

Every accumulation point \(\y\) of the orbit satisfies
\(\varphi(\y)=\lim\varphi(\x_{n_k})=0\), and therefore
\(\y\in\varphi^{-1}(0)=\bd\DeltaM\).  Thus \(\omega_K(\x)\subset\bd\DeltaM\).
Finally, \(\varphi\) is positive on the compact set \(\overline{U_0}\).  Since
\(\varphi(\x_n)\to0\), the orbit can visit \(U_0\) only finitely many times.
\end{proof}

\begin{lemma}
\label{lem:blocks}
Let \(K:\DeltaM\to\DeltaM\) be a weak admissible Stein--Ulam type map.
Then, for every
\(
        \x\in\inte\DeltaM\setminus\{\p\}
\)
and every \(j\in\{1,\ldots,m\}\), there exist sequences
\[
        b_{j,q}\to\infty,
        \qquad
        \ell_{j,q}\in\mathbb N,
\]
such that
\[
        K^{b_{j,q}+r}(\x)\in R_j,
        \qquad
        0\le r\le \ell_{j,q}-1.
\]
Moreover, the pairs \((b_{j,q},\ell_{j,q})\) are maximal lifted residence
blocks through \(I_j\) for an admissible lifted itinerary supplied by
\textup{(W5)--(W6)}.  In particular, every non-fixed interior orbit has
infinitely many residence blocks through each residence region \(R_j\).
\end{lemma}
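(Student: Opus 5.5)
We combine Lemma~\ref{lem:escape} with the monotone lifted itinerary of \textup{(W5)}. Fix \(\x\in\inte\DeltaM\setminus\{\p\}\). By Lemma~\ref{lem:escape} there is \(n_0\) with \(K^n(\x)\notin U_0\) for all \(n\ge n_0\), so \textup{(W5)} supplies integers \(r_n\), \(n\ge n_0\), with \(K^n(\x)\in G_{r_n \,(\mathrm{mod}\, N)}\), \(r_{n+1}-r_n\in\{0,1\}\) and \(r_n\to+\infty\). We take this itinerary to be the one for which \textup{(W6)} holds, as permitted there.

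Fix \(j\) and write \(I_j=\{\alpha,\ldots,\beta\}\pmod N\) with integers \(\alpha\le\beta\), so that the lifted copies are \(\widetilde I_{j,q}=\{\alpha+qN,\ldots,\beta+qN\}\). The key observation is a discrete intermediate value property. Since \((r_n)\) moves by steps \(0\) or \(1\), starts at \(r_{n_0}\), and tends to \(+\infty\), it takes \emph{every} integer value \(\ge r_{n_0}\). Let \(q_0\) be such that \(\alpha+q_0N> r_{n_0}\). For each \(q\ge q_0\), the set
\[
T_q=\{n\ge n_0:\ r_n\in\widetilde I_{j,q}\}
\]
is therefore non-empty. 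By monotonicity of \((r_n)\) and the fact that \(\widetilde I_{j,q}\) is an integer interval, \(T_q\) is a finite set of consecutive integers. It is finite because \(r_n\to\infty\), and its first element is \(>n_0\).

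Write \(T_q=\{b_{j,q},\ldots,b_{j,q}+\ell_{j,q}-1\}\). Then \((b_{j,q},\ell_{j,q})\) is a maximal lifted residence block through \(I_j\) in the sense of Definition~\ref{def:lifted-block}. By that definition and \(R_j=\bigcup_{i\in I_j}G_i\), we get \(K^{b_{j,q}+r}(\x)\in G_{r_{b_{j,q}+r}\,(\mathrm{mod}\,N)}\subset R_j\) for \(0\le r\le\ell_{j,q}-1\). The copies \(\widetilde I_{j,q}\) for distinct \(q\) are disjoint and increasing, since they are shifts by \(N\) of an interval of length at most \(N\). Hence the \(T_q\) are disjoint and ordered, so \(b_{j,q}\) is strictly increasing in \(q\) and \(b_{j,q}\to\infty\). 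This yields infinitely many residence blocks through each \(R_j\), and \textup{(W6)} applies to them because they are maximal lifted blocks for the chosen itinerary.

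The only delicate point is a degenerate case: when \(I_j\) is all of \(\Zmod{N}\), the copies \(\widetilde I_{j,q}\) for consecutive \(q\) are adjacent rather than separated by a gap. The argument above still works, since disjointness is all that is needed. One must also make sure that maximality is taken relative to a fixed copy \(\widetilde I_{j,q}\), as in the definition, so that the blocks are well defined even though \(\beta-\alpha\) may be as large as \(N-1\). Otherwise the proof is a routine combination of Lemma~\ref{lem:escape}, \textup{(W5)} and Definition~\ref{def:lifted-block}.
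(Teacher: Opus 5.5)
Your proof is correct and follows the paper's argument essentially step for step. The steps are: escape from \(U_0\) via Lemma~\ref{lem:escape}; the no-skipping property of the lifted itinerary \(r_n\); the set of times with \(r_n\) in a fixed lifted copy \(\widetilde I_{j,q}\) as a maximal lifted block contained in \(R_j\); and the drift of the lifted copies, which gives \(b_{j,q}\to\infty\). You should state the normalization \(0\le\beta-\alpha<N\) explicitly when you choose \(\alpha,\beta\). The paper imposes it there, whereas you only use it later, and it is what makes the lifted copies pairwise disjoint.
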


\begin{proof}
Fix
\(
        \x\in\inte\DeltaM\setminus\{\p\}.
\)
By Lemma~\ref{lem:escape}, the orbit of \(\x\) eventually stays outside the
central neighborhood \(U_0\).  Hence there exists \(n_0\in\mathbb N\) such that
\[
        K^n(\x)\notin U_0,
        \qquad
        n\ge n_0.
\]
Choose an admissible lifted itinerary \((r_n)_{n\ge n_0}\) as in
\textup{(W5)--(W6)}.  Thus
\[
        K^n(\x)\in G_{r_n\,({\rm mod}\,N)},
        \qquad
        r_{n+1}-r_n\in\{0,1\},
        \qquad
        r_n\to+\infty.
\]

Now fix \(j\in\{1,\ldots,m\}\).  Since \(I_j\subset \Zmod{N}\) is a non-empty
cyclic interval, choose integers \(\alpha_j\le \beta_j\) with
\[
        0\le \beta_j-\alpha_j<N
\]
such that \(I_j\) is represented by the residue classes
\[
        \alpha_j,\alpha_j+1,\ldots,\beta_j
        \pmod N.
\]
For each \(q\in\mathbb Z\), define
\[
        \widetilde I_{j,q}
        =
        \{\alpha_j+qN,\alpha_j+qN+1,\ldots,\beta_j+qN\}.
\]
Since \(r_n\to+\infty\) and \(r_{n+1}-r_n\in\{0,1\}\), the lifted itinerary
cannot jump over any integer value.  Therefore, for all sufficiently large
\(q\), the set
\[
        E_{j,q}
        =
        \{n\ge n_0:\ r_n\in \widetilde I_{j,q}\}
\]
is non-empty.  It is finite and is an interval of integers, because \((r_n)\) is
non-decreasing and the lifted interval \(\widetilde I_{j,q}\) is finite.  Thus
we may write
\[
        E_{j,q}=\{b_{j,q},b_{j,q}+1,\ldots,b_{j,q}+\ell_{j,q}-1\},
\]
where \(\ell_{j,q}\ge1\).  This is precisely a maximal lifted residence block
through \(I_j\).

For every \(0\le r\le\ell_{j,q}-1\), the congruence class of
\(r_{b_{j,q}+r}\) belongs to \(I_j\).  Since
\[
        K^{b_{j,q}+r}(\x)
        \in
        G_{r_{b_{j,q}+r}\,({\rm mod}\,N)}
        \subset
        \bigcup_{i\in I_j}G_i
        =R_j,
\]
we obtain a residence block through \(R_j\).  Finally, the lifted intervals
\(\widetilde I_{j,q}\) drift to \(+\infty\), so \(b_{j,q}\to\infty\).  This
proves the lemma.
\end{proof}

\begin{proposition}
\label{prop:linear-residence}
Let \(K\) be a weak admissible Stein--Ulam type map.  Let
\(
        \x\in\inte\DeltaM\setminus\{\p\},
\)
and let
\(
        (b_{j,q},\ell_{j,q})_{q\ge1}
\)
be the lifted residence blocks through \(R_j\) obtained in
Lemma~\ref{lem:blocks}.  Then
\[
        \liminf_{q\to\infty}
        \frac{\ell_{j,q}}{b_{j,q}}>0.
\]
\end{proposition}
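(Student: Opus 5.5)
The plan is to combine the logarithmic residence estimate \eqref{eq:log-residence} with an \emph{exponential} decay rate for \(\varphi\) along the orbit, which comes from a uniform bound \(\psi\le\theta<1\) once the orbit has left \(U_0\). With that rate, the lower bound in \eqref{eq:log-residence} grows linearly in the entrance time \(b_{j,q}\), and this is exactly the claim.

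\textbf{Step 1 (uniform contraction outside the center).} The set \(\DeltaM\setminus U_0\) is compact, since \(U_0\) is open. The function \(\psi\) is continuous, and by \textup{(W2)} it satisfies \(\psi<1\) everywhere except at \(\p\in U_0\). Hence
\[
        \theta:=\max_{\DeltaM\setminus U_0}\psi<1 .
\]
I also note that \(\theta>0\). Otherwise \(\psi\equiv 0\) off \(U_0\), so \(\varphi(K(\x))=0\), i.e. \(K(\x)\in\bd\DeltaM\), for any interior \(\x\notin U_0\); this contradicts \(K(\inte\DeltaM)\subset\inte\DeltaM\). In any case only \(\theta<1\) matters below, and the case \(\theta=0\) would only make the estimates stronger.

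\textbf{Step 2 (exponential decay of \(\varphi\)).} Fix \(\x\in\inte\DeltaM\setminus\{\p\}\). By Lemma~\ref{lem:escape} there is \(n_0\) with \(K^n(\x)\notin U_0\) for all \(n\ge n_0\). Iterating the identity \(\varphi(K(\y))=\varphi(\y)\psi(\y)\) gives, for \(n\ge n_0\),
\[
        0<\varphi(K^n(\x))\le \varphi(K^{n_0}(\x))\,\theta^{\,n-n_0}.
\]
Positivity of \(\varphi(K^n(\x))\) holds because the orbit stays in \(\inte\DeltaM\).

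\textbf{Step 3 (insert into \eqref{eq:log-residence}).} By Lemma~\ref{lem:blocks}, the blocks \((b_{j,q},\ell_{j,q})\) are maximal lifted residence blocks, and \(b_{j,q}\to\infty\). So for large \(q\) we have \(b_{j,q}\ge n_0\), and \textup{(W6)} applies to give
\[
        \ell_{j,q}\ \ge\ A_j\log^+_{C_j}\frac{B_j}{\varphi(K^{b_{j,q}}(\x))}
        \ \ge\ \frac{A_j}{\log C_j}\Bigl((b_{j,q}-n_0)\log\tfrac1\theta+\log\tfrac{B_j}{\varphi(K^{n_0}(\x))}\Bigr).
\]
Here I use that \(\log^+\) dominates \(\log\), together with the bound from Step 2. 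Dividing by \(b_{j,q}\) and letting \(q\to\infty\), the constant terms \(n_0\) and \(\log\bigl(B_j/\varphi(K^{n_0}(\x))\bigr)\) contribute \(o(1)\). This yields
\[
        \liminf_{q\to\infty}\frac{\ell_{j,q}}{b_{j,q}}\ \ge\ \frac{A_j\log(1/\theta)}{\log C_j}>0 .
\]

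The only substantive point, and the step I expect to need care, is Step 1. The bound \(\psi\le 1\) alone gives merely monotone decay of \(\varphi\), which would produce only sublinear block lengths. The uniform gap \(\theta<1\) requires that the orbit eventually stays in the compact set where \(\psi\) is bounded away from \(1\), which is exactly what Lemma~\ref{lem:escape} supplies. One should also remember that the constants in \textup{(W6)} are independent of the orbit, so the resulting lower bound is uniform in \(\x\). The finitely many initial blocks with \(b_{j,q}<n_0\) are irrelevant to the \(\liminf\).
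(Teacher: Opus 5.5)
Your proposal is correct and follows essentially the same route as the paper. Both arguments use the uniform bound \(\psi\le\theta<1\) on the compact set \(\DeltaM\setminus U_0\), the resulting exponential decay of \(\varphi\) after the escape time \(n_0\), and then insert this into \textup{(W6)} using \(\log^+\ge\log\). The only cosmetic difference is that the paper avoids the question of whether \(\theta>0\) by working with an auxiliary \(\rho\in(\theta,1)\), whereas you rule out \(\theta=0\) directly via interior invariance; that argument is also valid.
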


\begin{proof}
By Lemma~\ref{lem:escape}, the orbit of \(\x\) eventually stays outside the
central neighborhood \(U_0\).  Thus there exists \(n_0\in\mathbb N\) such that
\[
        K^n(\x)\notin U_0,
        \qquad n\ge n_0.
\]
Since \(\p\in U_0\), the compact set \(\DeltaM\setminus U_0\) does not contain
\(\p\).  By continuity of \(\psi\), together with
\[
        0\le \psi\le1,
        \qquad
        \psi^{-1}(1)=\{\p\},
\]
we have
\[
        \theta:=\max_{\DeltaM\setminus U_0}\psi<1.
\]
Choose \(\rho\in(\theta,1)\).  Then
\[
        \psi(\y)\le \rho,
        \qquad \y\in\DeltaM\setminus U_0.
\]

Since \(b_{j,q}\to\infty\) by Lemma~\ref{lem:blocks}, we have
\(b_{j,q}\ge n_0\) for all sufficiently large \(q\).  For such \(q\),
\[
\begin{aligned}
        \varphi(K^{b_{j,q}}(\x))
        &=
        \varphi(K^{n_0}(\x))
        \prod_{n=n_0}^{b_{j,q}-1}
        \psi(K^n(\x))                                      \\
        &\le
        \rho^{b_{j,q}-n_0}
        \varphi(K^{n_0}(\x)).
\end{aligned}
\]
The number \(\varphi(K^{n_0}(\x))\) is positive because the orbit remains in the
interior.

Applying the logarithmic residence estimate \eqref{eq:log-residence} to the
lifted block \((b_{j,q},\ell_{j,q})\), and then using the displayed upper bound,
we get, for all large \(q\),
\[
\begin{aligned}
        \ell_{j,q}
        &\ge
        A_j
        \log_{C_j}^{+}
        \frac{B_j}{\varphi(K^{b_{j,q}}(\x))}                  \\
        &\ge
        A_j
        \log_{C_j}
        \frac{B_j}{\rho^{b_{j,q}-n_0}\varphi(K^{n_0}(\x))}    \\
        &=
        A_j
        \left(
        (b_{j,q}-n_0)\log_{C_j}\frac1\rho
        +
        \log_{C_j}\frac{B_j}{\varphi(K^{n_0}(\x))}
        \right).
\end{aligned}
\]
Here we used \(\log_C^+(t)\ge\log_C(t)\) for \(t>0\).  Dividing by
\(b_{j,q}\) and taking lower limits gives
\[
        \liminf_{q\to\infty}
        \frac{\ell_{j,q}}{b_{j,q}}
        \ge
        A_j\log_{C_j}\frac1\rho
        >0.
\]
\end{proof}

\begin{lemma}
\label{lem:block}
Let \(X\) be compact and convex in a finite-dimensional normed space.
Let \(T:X\to X\) be a map, let \(Y\subset X\) be compact, and let \(x\in X\).
Assume that
\[
        A_n(T)(x):=
        \frac1n\sum_{k=0}^{n-1}T^k(x)
        \longrightarrow L.
\]
Assume also that there exist sequences
\[
        b_n\in\mathbb N_0,
        \qquad
        r_n\in\mathbb N,
\]
such that
\[
        b_n\to\infty,
\]
\[
        T^{b_n+q}(x)\in Y,
        \qquad
        0\le q\le r_n-1,
\]
and
\[
        \liminf_{n\to\infty}\frac{r_n}{b_n}>0.
\]
Then
\[
        L\in \co(Y).
\]
\end{lemma}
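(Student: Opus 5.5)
The plan is to express the average over the block as a difference of two Cesàro averages, both of which converge to \(L\). Write \(S_n=\sum_{k=0}^{n-1}T^k(x)\), so \(S_n=nA_n(T)(x)\).

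Passing to a subsequence, we may assume \(r_n/b_n\to c\in(0,\infty]\); more simply, pick \(\delta>0\) with \(r_n\ge \delta b_n\) for all large \(n\). Replacing \(r_n\) by \(r'_n=\min\{r_n,\lceil\delta b_n\rceil\}\) keeps the block inside \(Y\), since it is an initial segment of the original block. It also gives \(r'_n/b_n\to\delta\), which avoids an unbounded ratio. Set \(e_n=b_n+r'_n\). Then the block average is
\[
        M_n:=\frac{1}{r'_n}\sum_{q=0}^{r'_n-1}T^{b_n+q}(x)=\frac{S_{e_n}-S_{b_n}}{r'_n}
        =\frac{e_n}{r'_n}A_{e_n}(T)(x)-\frac{b_n}{r'_n}A_{b_n}(T)(x).
\]
When \(b_n=0\) we read \(S_0=0\); since \(b_n\to\infty\) this case occurs only finitely often.

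Now \(e_n\to\infty\) and \(b_n\to\infty\), so both \(A_{e_n}(T)(x)\) and \(A_{b_n}(T)(x)\) tend to \(L\). The coefficients \(e_n/r'_n\to(1+\delta)/\delta\) and \(b_n/r'_n\to1/\delta\) are bounded, and their difference is exactly \(1\). Writing
\[
        M_n-L=\frac{e_n}{r'_n}\bigl(A_{e_n}(T)(x)-L\bigr)-\frac{b_n}{r'_n}\bigl(A_{b_n}(T)(x)-L\bigr)
\]
shows \(M_n\to L\). The key point is that the boundedness of \(b_n/r'_n\) comes from the hypothesis \(\liminf r_n/b_n>0\). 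This is the only delicate step: without linear length, the error terms \(\frac{b_n}{r'_n}\,o(1)\) need not vanish.

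Each \(M_n\) is a convex combination of points of \(Y\), so \(M_n\in\co(Y)\). Since \(Y\) is compact in a finite-dimensional space, \(\co(Y)\) is compact (as noted in Section 2), hence closed. Therefore \(L=\lim M_n\in\co(Y)\).

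Convexity and compactness of \(X\) are used only to make the averages well defined. Continuity of \(T\) is not needed.
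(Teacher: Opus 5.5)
Your proposal is correct and follows the paper's proof: the paper writes \(A_{b_n+r_n}(T)(x)=(1-\lambda_n)A_{b_n}(T)(x)+\lambda_n B_n\) with \(\lambda_n=r_n/(b_n+r_n)\ge\lambda_0>0\), and solves for \(B_n-L\). This is your identity for the block average, with coefficients \(1/\lambda_n=(b_n+r_n)/r_n\) and \((1-\lambda_n)/\lambda_n=b_n/r_n\). Your truncation to \(r'_n\) is harmless but unnecessary: \(r_n\ge\delta b_n\) already gives \(b_n/r_n\le1/\delta\) and \((b_n+r_n)/r_n\le1+1/\delta\), even when \(r_n/b_n\) is unbounded.
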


\begin{proof}
For every \(n\), define the average over the \(n\)-th block by
\[
        B_n
        =
        \frac1{r_n}
        \sum_{q=0}^{r_n-1}T^{b_n+q}(x).
\]
Since every summand belongs to \(Y\), we have
\[
        B_n\in \co(Y).
\]

Put
\[
        \lambda_n
        =
        \frac{r_n}{b_n+r_n}.
\]
Since
\[
        \liminf_{n\to\infty}\frac{r_n}{b_n}>0,
\]
there exists \(\lambda_0>0\) such that
\[
        \lambda_n\ge \lambda_0
\]
for all sufficiently large \(n\).  Indeed, if \(r_n/b_n\ge c>0\) for all large
\(n\), then
\[
        \lambda_n
        =
        \frac{r_n/b_n}{1+r_n/b_n}
        \ge
        \frac{c}{1+c}.
\]

For all sufficiently large \(n\), \(b_n\ge1\), and the Ces\`aro average at the
end of the block decomposes as
\[
\begin{aligned}
        A_{b_n+r_n}(T)(x)
        &=
        \frac1{b_n+r_n}
        \sum_{k=0}^{b_n+r_n-1}T^k(x)                         \\
        &=
        \frac{b_n}{b_n+r_n}
        \left(
        \frac1{b_n}\sum_{k=0}^{b_n-1}T^k(x)
        \right)
        +
        \frac{r_n}{b_n+r_n}
        \left(
        \frac1{r_n}\sum_{q=0}^{r_n-1}T^{b_n+q}(x)
        \right)                                               \\
        &=
        (1-\lambda_n)A_{b_n}(T)(x)+\lambda_n B_n .
\end{aligned}
\]
Hence
\[
        B_n-L
        =
        \frac{A_{b_n+r_n}(T)(x)-L}{\lambda_n}
        -
        \frac{1-\lambda_n}{\lambda_n}
        \bigl(A_{b_n}(T)(x)-L\bigr).
\]
Since
\[
        b_n\to\infty
        \qquad\text{and}\qquad
        b_n+r_n\to\infty,
\]
the assumed convergence of \(A_n(T)(x)\) gives
\[
        A_{b_n}(T)(x)\to L,
        \qquad
        A_{b_n+r_n}(T)(x)\to L.
\]
Moreover, the coefficients
\[
        \frac1{\lambda_n},
        \qquad
        \frac{1-\lambda_n}{\lambda_n}
\]
are bounded for all sufficiently large \(n\), because \(\lambda_n\ge\lambda_0>0\).
Therefore
\[
        B_n\to L.
\]

Since \(Y\) is compact and the ambient space is finite-dimensional, its convex
hull \(\co(Y)\) is compact, hence closed.  Because
\[
        B_n\in\co(Y)
        \qquad\text{and}\qquad
        B_n\to L,
\]
we conclude that
\[
        L\in\co(Y).
\]
\end{proof}

\section{Non-statistical behavior}

\begin{theorem}\label{thm:main-divergence}
Let \(K\) be a weak admissible Stein--Ulam type map on \(\DeltaM\).  For
every
\(
        \x\in\inte\DeltaM\setminus\{\p\},
\)
the vector Ces\`aro averages
\[
        A_n(K)(\x)=\frac{1}{n}\sum_{k=0}^{n-1}K^k(\x)
\]
do not converge.  Equivalently, \(K\) has vector-valued historical behavior for
the identity observable at every non-fixed interior point.
\end{theorem}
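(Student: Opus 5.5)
The plan is a proof by contradiction that assembles the preceding results. Fix \(\x\in\inte\DeltaM\setminus\{\p\}\) and suppose \(A_n(K)(\x)\to L\) for some \(L\in\R^m\). Since \(\DeltaM\) is compact and convex, all averages lie in \(\DeltaM\), and so \(L\in\DeltaM\). The goal is to show that \(L\in\co(R_j)\) for every \(j\in\{1,\ldots,m\}\). This contradicts the separation condition \eqref{eq:convhull-separation}.

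For each fixed \(j\), apply Lemma~\ref{lem:blocks} to obtain the maximal lifted residence blocks \((b_{j,q},\ell_{j,q})_{q\ge1}\). They satisfy \(b_{j,q}\to\infty\) and \(K^{b_{j,q}+r}(\x)\in R_j\) for \(0\le r\le\ell_{j,q}-1\). By Proposition~\ref{prop:linear-residence}, \(\liminf_{q}\ell_{j,q}/b_{j,q}>0\). Now invoke Lemma~\ref{lem:block} with the following data:
\[
X=\DeltaM,\qquad T=K,\qquad Y=R_j,\qquad b_n=b_{j,n},\qquad r_n=\ell_{j,n}.
\]
Here \(R_j\) is compact, being a finite union of the compact sets \(G_i\) by (W3)--(W4). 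The lemma gives \(L\in\co(R_j)\). Since \(j\) was arbitrary, \(L\in\bigcap_{j=1}^m\co(R_j)=\varnothing\), which is absurd. Hence the averages do not converge. The phrase ``equivalently'' in the statement is just a restatement of the definition of vector-valued historical behavior for the identity observable.

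There is one point to check, and I expect it to be the only real obstacle. The blocks in Lemma~\ref{lem:blocks} and the estimate in Proposition~\ref{prop:linear-residence} must refer to the same lifted itinerary, the one chosen so that (W6) holds. Otherwise the residence lower bound would not apply to those particular blocks. Lemma~\ref{lem:blocks} explicitly takes the itinerary supplied by (W5)--(W6), and the proposition is stated for exactly those blocks, so this is consistent. The indexing also matches: \(q\) runs over all sufficiently large integers, and we only need the tail. No further estimates are required. The whole mechanism is already packaged in the escape lemma, the linear-residence proposition and the convex-hull block lemma.
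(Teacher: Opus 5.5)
Your proposal is correct and follows the paper's proof essentially step for step. You combine the blocks from Lemma~\ref{lem:blocks}, the linear lower bound from Proposition~\ref{prop:linear-residence}, and Lemma~\ref{lem:block} with \(Y=R_j\) to place \(L\) in every \(\co(R_j)\), which contradicts \eqref{eq:convhull-separation}. Your check that the blocks and the residence estimate use the same lifted itinerary is a sensible addition that the paper leaves implicit.
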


\begin{proof}
Assume, for contradiction, that \(A_n(K)(\x)\to L\).  Fix
\(j\in\{1,\ldots,m\}\).  By Lemma~\ref{lem:blocks}, there are residence blocks
\((b_{j,q},\ell_{j,q})\) through \(R_j\), with \(b_{j,q}\to\infty\).  By
Proposition~\ref{prop:linear-residence},
\[
        \liminf_{q\to\infty}\frac{\ell_{j,q}}{b_{j,q}}>0.
\]
Set
\[
        r_{j,q}=\ell_{j,q}.
\]
Then
\[
        K^{b_{j,q}+r}(\x)\in R_j,
        \qquad 0\le r\le r_{j,q}-1,
\]
and
\[
        \liminf_{q\to\infty}\frac{r_{j,q}}{b_{j,q}}>0.
\]
Applying Lemma~\ref{lem:block} with \(T=K\) and \(Y=R_j\), we obtain
\[
        L\in\co(R_j).
\]
Since \(j\) was arbitrary,
\[
        L\in\bigcap_{j=1}^m\co(R_j),
\]
which contradicts the separation condition \eqref{eq:convhull-separation}.
Therefore the averages cannot converge.
\end{proof}

\begin{corollary}\label{cor:full-measure-vector}
Let \(K\) be a weak admissible Stein--Ulam type map.  Then \(K\) has
vector-valued historical behavior for the identity observable at every point
of
\(
        \inte\DeltaM\setminus\{\p\}.
\)
This set has full relative \((m-1)\)-dimensional Lebesgue measure in
\(\DeltaM\).
\end{corollary}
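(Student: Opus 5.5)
The corollary has two parts. The first is the pointwise statement that every point of \(\inte\DeltaM\setminus\{\p\}\) has vector-valued historical behavior for the identity observable. The second is the measure statement that this set has full relative Lebesgue measure. I will handle them in that order.

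The pointwise part is exactly the content of Theorem~\ref{thm:main-divergence}. Fix \(\x\in\inte\DeltaM\setminus\{\p\}\). That theorem says the averages \(A_n(K)(\x)\) do not converge in \(\R^m\). With \(g=\id\) on the compact metric space \(\DeltaM\), the averages \(\frac1n\sum_{k<n}g(K^k\x)\) coincide with \(A_n(K)(\x)\). Hence, by the definition of historical behavior, the orbit of \(\x\) is historic for the identity observable. No further dynamical input is needed.

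For the measure statement, I will show that the complement of the historic set has \(\Leb_{m-1}\)-measure zero. Its complement in \(\DeltaM\) is contained in \(\bd\DeltaM\cup\{\p\}\).

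First, \(\{\p\}\) is a single point. Since \(m\ge2\), the space is of dimension \(m-1\ge1\), so a point is Lebesgue-null there.

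Second, \(\bd\DeltaM\) is a finite union of the faces \(\{x_i=0\}\cap\DeltaM\). Each face lies in an affine subspace of dimension \(m-2\) inside the \((m-1)\)-dimensional affine hull \(\{\sum x_i=1\}\). Proper affine subspaces have zero Lebesgue measure, so each face is null and therefore so is the whole boundary.

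Together these give \(\Leb_{m-1}(\bd\DeltaM\cup\{\p\})=0\). Hence the historic set contains \(\inte\DeltaM\setminus\{\p\}\), which has full relative measure \(\Leb_{m-1}(\DeltaM)\).

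There is no real obstacle; the only point requiring care is the null-set bookkeeping. The case \(m=2\) checks out: the boundary consists of two vertices, which are points in a segment and hence null.
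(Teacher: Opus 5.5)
Your proposal is correct and follows essentially the same route as the paper: non-convergence at each point of \(\inte\DeltaM\setminus\{\p\}\) is taken directly from Theorem~\ref{thm:main-divergence}, and the full-measure claim comes from the boundary being a finite union of lower-dimensional faces together with \(\{\p\}\) being Lebesgue-null. Your null-set bookkeeping, including the check for \(m=2\), is somewhat more explicit than the paper's, but the argument is the same.
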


\begin{proof}
The non-convergence statement is Theorem~\ref{thm:main-divergence}.  The
boundary \(\bd\DeltaM\) is a finite union of lower-dimensional faces, and the
single point \(\p\) also has zero relative \((m-1)\)-dimensional Lebesgue
measure.  Hence \(\inte\DeltaM\setminus\{\p\}\) has full relative measure in
\(\DeltaM\).
\end{proof}

\begin{corollary}
\label{cor:non-statistical}
Let \(K\) be a weak admissible Stein--Ulam type map on \(\DeltaM\).  For
every
\(
        \x\in\inte\DeltaM\setminus\{\p\},
\)
the empirical measures
\[
        \mu_n^\x
        =
        \frac1n\sum_{k=0}^{n-1}\delta_{K^k(\x)}
\]
do not converge in the weak-* topology.  Consequently, \(K\) is
non-statistical on a full relative Lebesgue measure subset of \(\DeltaM\), and
\(K\) has no physical measure with positive relative Lebesgue basin.
\end{corollary}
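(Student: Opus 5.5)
The plan is to reduce the measure statement to Theorem~\ref{thm:main-divergence} by pushing the empirical measures forward under the identity observable. Fix \(\x\in\inte\DeltaM\setminus\{\p\}\) and suppose, for contradiction, that \(\mu_n^\x\to\mu\) weak-* for some \(\mu\in\Prob(\DeltaM)\). The coordinate functions \(\y\mapsto y_i\), \(i=1,\ldots,m\), are continuous on the compact space \(\DeltaM\). Weak-* convergence therefore gives
\[
        \int y_i\,d\mu_n^\x(\y)=\Bigl(A_n(K)(\x)\Bigr)_i\longrightarrow\int y_i\,d\mu(\y)
\]
for each \(i\). Hence the vector Ces\`aro averages \(A_n(K)(\x)\) converge to the barycenter \(L=\int \y\,d\mu(\y)\in\DeltaM\). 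This contradicts Theorem~\ref{thm:main-divergence}, so \((\mu_n^\x)\) does not converge for any non-fixed interior point.

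For the full-measure claim, I would invoke Corollary~\ref{cor:full-measure-vector}. The set \(\inte\DeltaM\setminus\{\p\}\) has full relative \((m-1)\)-dimensional Lebesgue measure, since \(\bd\DeltaM\) and \(\{\p\}\) are Lebesgue-null. By the previous paragraph, every point of this set is non-statistical, so \(K\) is non-statistical in the sense of the definition.

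For the physical-measure claim, let \(\mu\) be any probability measure. Its basin \(B(\mu)\) consists of points whose empirical measures converge, so it is disjoint from \(\inte\DeltaM\setminus\{\p\}\). Thus \(B(\mu)\subset\bd\DeltaM\cup\{\p\}\), which has zero relative Lebesgue measure, and no physical measure exists.

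None of these steps is a real obstacle; the only point needing care is that the identity observable is a bounded continuous test function on the compact simplex, which legitimizes passing to the limit under the integral.
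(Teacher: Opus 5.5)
Your proposal is correct and is essentially the paper's proof. Testing weak-* convergence against the identity (coordinate) observable turns convergence of \(\mu_n^\x\) into convergence of \(A_n(K)(\x)\), which contradicts Theorem~\ref{thm:main-divergence}; the full-measure and physical-measure claims then follow from Corollary~\ref{cor:full-measure-vector} exactly as you describe.
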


\begin{proof}
Assume, for contradiction, that
\[
        \mu_n^\x\to\mu
\]
in the weak-* topology.  Since the identity map
\[
        \id:\DeltaM\to\R^m
\]
is continuous, weak-* convergence gives
\[
        \int_{\DeltaM}\id\,d\mu_n^\x
        \longrightarrow
        \int_{\DeltaM}\id\,d\mu.
\]
However,
\[
        \int_{\DeltaM}\id\,d\mu_n^\x
        =
        \frac1n\sum_{k=0}^{n-1}K^k(\x)
        =
        A_n(K)(\x),
\]
which contradicts Theorem~\ref{thm:main-divergence}.  Thus the empirical
measures do not converge for any
\(\x\in\inte\DeltaM\setminus\{\p\}\).  By
Corollary~\ref{cor:full-measure-vector}, this set has full relative Lebesgue
measure in \(\DeltaM\).  Since the basin of any physical measure consists of
points whose empirical measures converge, every such basin is contained in the
zero-measure complement of this full-measure non-statistical set.
\end{proof}

\subsection{Relation with one-dimensional persistent non-statistical maps}
\label{subsec:relation-CL}

The mechanism in Corollary~\ref{cor:non-statistical} is naturally comparable
with the persistent one-dimensional non-statistical dynamics of Coates and
Luzzatto \cite{CoatesLuzzatto2023}.  Their maps have two endpoint fixed points
which are topologically repelling but may be neutral.  The statistical behavior
is determined by two stickiness parameters attached to the endpoints.  If one
endpoint is stickier than the other, almost every orbit has a Dirac physical
measure supported on that endpoint; in the balanced case, the empirical
measures fail to converge for Lebesgue almost every point, and the set of
accumulation points is the whole segment of convex combinations of the two
endpoint Dirac measures.

Weak Stein--Ulam spiral maps give a simplex-valued analogue of this balanced
one-dimensional picture.  The two sticky endpoints are replaced by the boundary
residence regions
\[
        R_1,\ldots,R_m\subset\DeltaM.
\]
The Lyapunov function drives the orbit toward the boundary, the weak cyclic
coding prevents eventual trapping in a single residence region, and the
logarithmic residence estimate makes the successive residence epochs large
enough to dominate the previous history at suitable time scales.  Hence the
empirical statistics cannot stabilize.  In schematic form, the correspondence is
\[
\begin{array}{@{}c|c@{}}
\text{balanced one-dimensional intermittent maps}
&
\text{weak Stein--Ulam maps on }\DeltaM
\\ \hline
\text{two sticky endpoint fixed points}
&
\text{finitely many boundary residence regions }R_j
\\
\text{relative endpoint stickiness}
&
\text{logarithmic residence estimates}
\\
\text{balanced stickiness}
&
\text{cyclic recurrence through all }R_j
\\
\text{oscillation between endpoint statistics}
&
\text{oscillation among boundary residence statistics}
\\
\text{non-convergent empirical measures}
&
\text{non-convergent empirical measures.}
\end{array}
\]
Thus weak admissibility should be understood as a multidimensional cyclic
residence mechanism for non-statistical dynamics, rather than merely as a
criterion for divergence of coordinate averages.

\section{Positive powers}

\begin{lemma}\label{lem:sampling}
Let \(X\) be a compact convex subset of a finite-dimensional normed vector
space, let \(K:X\to X\), let \(Y\subset X\), and let \(s\in\N\).  Suppose an
orbit of \(x\) has \(K\)-blocks
\[
        K^{b_n+q}(x)\in Y,
        \qquad 0\le q\le r_n-1,
\]
where \(b_n\to\infty\) and
\[
        \liminf_{n\to\infty}\frac{r_n}{b_n}>0.
\]
Then the sampled orbit under \(K^s\) has \(Y\)-blocks
\[
        (K^s)^{\beta_n+q}(x)\in Y,
        \qquad 0\le q\le \rho_n-1,
\]
with \(\beta_n\to\infty\) and
\[
        \liminf_{n\to\infty}\frac{\rho_n}{\beta_n}>0.
\]
\end{lemma}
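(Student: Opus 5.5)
The plan is to pick out, inside each ordinary block, the times that are multiples of \(s\), and then compare indices. Fix \(n\) and consider the integer interval
\[
J_n=\{b_n,b_n+1,\ldots,b_n+r_n-1\}.
\]
I would set
\[
\beta_n=\lceil b_n/s\rceil,
\qquad
\rho_n=\#\{k\in\Nzero:\ sk\in J_n\}.
\]
Then the sampled times \(s\beta_n, s(\beta_n+1),\ldots,s(\beta_n+\rho_n-1)\) all lie in \(J_n\). Since \((K^s)^{\beta_n+q}(x)=K^{s(\beta_n+q)}(x)\), each of these points lies in \(Y\) for \(0\le q\le\rho_n-1\).

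Next I will estimate the counts. The multiples of \(s\) in an interval of length \(r_n\) number at least \(\lfloor r_n/s\rfloor\ge r_n/s-1\). Also \(\beta_n\le b_n/s+1\), and clearly \(\beta_n\to\infty\) because \(b_n\to\infty\). Choose \(c>0\) with \(r_n\ge c\,b_n\) for all large \(n\). Then
\[
\frac{\rho_n}{\beta_n}\ \ge\ \frac{r_n/s-1}{b_n/s+1}
=\frac{r_n-s}{b_n+s}
\ \ge\ \frac{c\,b_n-s}{b_n+s}\longrightarrow c .
\]
Hence \(\liminf\rho_n/\beta_n\ge c>0\). In particular \(\rho_n\ge1\) for large \(n\), so the sampled blocks are non-empty. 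Discarding finitely many \(n\) then gives the required sequences.

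The only point needing care is that \(\rho_n\) must be positive, that is, the block must actually contain a multiple of \(s\). This is exactly where the linear growth of \(r_n\) is used: \(r_n\ge c\,b_n\to\infty\), so eventually \(r_n\ge s\). Without that hypothesis a short block could miss every multiple of \(s\). Apart from this, the argument is a routine floor/ceiling estimate, and I expect no real obstacle.
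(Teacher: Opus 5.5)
Your proof is correct and is essentially the paper's argument: the paper uses the same \(\beta_n=\lceil b_n/s\rceil\) and counts the multiples of \(s\) in \([b_n,b_n+r_n-1]\) as \(\rho_n=\lfloor (b_n+r_n-1)/s\rfloor-\lceil b_n/s\rceil+1\), which is the same as your \(\rho_n\), and then compares \(\rho_n\) with \(\beta_n\) by the same floor/ceiling estimates. The only difference is cosmetic: your bound \(\rho_n\ge r_n/s-1\) is slightly sharper than the paper's \(\rho_n\ge r_n/s-2\), and both give a positive lower bound for \(\liminf\rho_n/\beta_n\).
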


\begin{proof}
Define
\[
        \beta_n=\left\lceil\frac{b_n}{s}\right\rceil,
        \qquad
        \gamma_n=\left\lfloor\frac{b_n+r_n-1}{s}\right\rfloor,
        \qquad
        \rho_n=\gamma_n-\beta_n+1.
\]
Because \(r_n/b_n\) is bounded below by a positive constant and
\(b_n\to\infty\), we have \(r_n\to\infty\).  Hence \(\rho_n\ge1\) for all
large \(n\).

If \(0\le q\le\rho_n-1\), then
\[
        \beta_n\le \beta_n+q\le \gamma_n,
\]
and therefore
\[
        b_n\le s(\beta_n+q)\le b_n+r_n-1.
\]
It follows that
\[
        (K^s)^{\beta_n+q}(x)=K^{s(\beta_n+q)}(x)\in Y.
\]
Thus these are genuine blocks for the sampled orbit.

Finally,
\[
        \rho_n
        =
        \left\lfloor\frac{b_n+r_n-1}{s}\right\rfloor
        -
        \left\lceil\frac{b_n}{s}\right\rceil
        +1
        \ge
        \frac{r_n}{s}-2,
\]
and
\[
        \beta_n\le \frac{b_n}{s}+1.
\]
Consequently,
\[
        \frac{\rho_n}{\beta_n}
        \ge
        \frac{r_n-2s}{b_n+s}.
\]
Taking lower limits gives
\[
        \liminf_{n\to\infty}\frac{\rho_n}{\beta_n}
        \ge
        \liminf_{n\to\infty}\frac{r_n}{b_n}>0.
\]
\end{proof}

\begin{theorem}\label{thm:powers}
Let \(K\) be a weak admissible Stein--Ulam type map on \(\DeltaM\).  For
every \(s\in\N\) and every
\(
        \x\in\inte\DeltaM\setminus\{\p\},
\)
the sequence
\[
        A_n(K^s)(\x)
        =
        \frac{1}{n}\sum_{k=0}^{n-1}K^{sk}(\x)
\]
does not converge.  Consequently, every positive power \(K^s\) has
vector-valued non-statistical behavior, for the identity observable, on a full
relative Lebesgue measure subset of \(\DeltaM\).
\end{theorem}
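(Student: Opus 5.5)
The plan is to repeat the contradiction argument of Theorem~\ref{thm:main-divergence}, now for the sampled orbit under \(K^s\). Fix \(s\in\N\) and \(\x\in\inte\DeltaM\setminus\{\p\}\), and suppose, for contradiction, that \(A_n(K^s)(\x)\to L\) for some \(L\in\DeltaM\).

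For each \(j\in\{1,\ldots,m\}\), Lemma~\ref{lem:blocks} gives residence blocks \((b_{j,q},\ell_{j,q})\) of the \(K\)-orbit through \(R_j\) with \(b_{j,q}\to\infty\). Proposition~\ref{prop:linear-residence} gives \(\liminf_q \ell_{j,q}/b_{j,q}>0\). These are exactly the hypotheses of Lemma~\ref{lem:sampling}, applied with \(X=\DeltaM\), \(Y=R_j\), \(b_n=b_{j,q}\) and \(r_n=\ell_{j,q}\). The lemma then produces blocks for the map \(T=K^s\):
\[
        T^{\beta_{j,q}+q'}(\x)\in R_j,\qquad 0\le q'\le\rho_{j,q}-1,
\]
with \(\beta_{j,q}\to\infty\) and \(\liminf_q\rho_{j,q}/\beta_{j,q}>0\).

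Next I would apply Lemma~\ref{lem:block} to \(T=K^s\), \(X=\DeltaM\) (which is compact and convex), \(Y=R_j\) (which is compact) and the point \(\x\). Since \(A_n(T)(\x)\to L\) by assumption, the lemma yields \(L\in\co(R_j)\). This holds for every \(j\), so \(L\in\bigcap_{j=1}^m\co(R_j)\). That contradicts \eqref{eq:convhull-separation}, so \(A_n(K^s)(\x)\) cannot converge.

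The full-measure clause then follows exactly as in Corollary~\ref{cor:full-measure-vector}: the set \(\inte\DeltaM\setminus\{\p\}\) has full relative Lebesgue measure, because \(\bd\DeltaM\cup\{\p\}\) is Lebesgue-null.

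There is no real obstacle here, since the arithmetic of passing to sampled times is already handled in Lemma~\ref{lem:sampling}. The only point needing care is the indexing in Lemma~\ref{lem:block}: it must be applied with \(T=K^s\), so that its Ces\`aro averages are \(A_n(K^s)\), and with block data measured in \(K^s\)-time, namely \((\beta_{j,q},\rho_{j,q})\) rather than \((b_{j,q},\ell_{j,q})\).
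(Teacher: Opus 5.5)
Your proposal is correct and follows the paper's proof essentially step for step. The paper likewise takes the residence blocks from Lemma~\ref{lem:blocks} and Proposition~\ref{prop:linear-residence}, passes them to \(K^s\)-time via Lemma~\ref{lem:sampling}, applies Lemma~\ref{lem:block} with \(T=K^s\) and \(Y=R_j\), and contradicts \eqref{eq:convhull-separation}; the full-measure clause is handled as in Corollary~\ref{cor:full-measure-vector}.
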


\begin{proof}
Fix \(s\in\N\), put \(T=K^s\), and suppose, for contradiction, that
\[
        A_n(T)(\x)\to L.
\]
Fix \(j\in\{1,\ldots,m\}\).  By Lemma~\ref{lem:blocks} and
Proposition~\ref{prop:linear-residence}, the \(K\)-orbit has blocks
\[
        K^{b_{j,q}+r}(\x)\in R_j,
        \qquad 0\le r\le r_{j,q}-1,
\]
with \(b_{j,q}\to\infty\) and
\[
        \liminf_{q\to\infty}\frac{r_{j,q}}{b_{j,q}}>0.
\]
By Lemma~\ref{lem:sampling}, the sampled orbit under \(T=K^s\) has blocks
\[
        T^{\beta_{j,q}+r}(\x)\in R_j,
        \qquad 0\le r\le \rho_{j,q}-1,
\]
with \(\beta_{j,q}\to\infty\) and
\[
        \liminf_{q\to\infty}\frac{\rho_{j,q}}{\beta_{j,q}}>0.
\]
Applying Lemma~\ref{lem:block} to \(T\) and \(Y=R_j\), we obtain
\[
        L\in\co(R_j).
\]
Since \(j\) was arbitrary,
\[
        L\in\bigcap_{j=1}^m\co(R_j),
\]
contradicting \eqref{eq:convhull-separation}.  Hence \(A_n(K^s)(\x)\) cannot
converge.

The full-measure assertion follows as in Corollary~\ref{cor:full-measure-vector}.
\end{proof}

\begin{corollary}\label{cor:scalar}
Let \(K\) be a weak admissible Stein--Ulam type map, let \(s\in\N\), and
let \(\x\in\inte\DeltaM\setminus\{\p\}\).  Then at least one coordinate average
\[
        \frac{1}{n}\sum_{k=0}^{n-1}\bigl(K^{sk}(\x)\bigr)_i,
        \qquad i\in\{1,\ldots,m\},
\]
diverges.  The coordinate may depend on \(\x\).  For each fixed \(s\), at
least one coordinate observable has an irregular set of positive relative
Lebesgue measure for \(K^s\).
\end{corollary}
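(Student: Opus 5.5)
The corollary reduces directly to Theorem~\ref{thm:powers}, followed by a pigeonhole argument over coordinates together with a measurability check.

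\textbf{Step 1 (some coordinate diverges).} Fix \(s\in\N\) and \(\x\in\inte\DeltaM\setminus\{\p\}\). Theorem~\ref{thm:powers} says that the vector averages \(A_n(K^s)(\x)\in\R^m\) do not converge. In \(\R^m\), a sequence of vectors converges if and only if each of its coordinate sequences converges. Hence at least one index \(i=i(\x)\) has a divergent coordinate average \(\frac1n\sum_{k<n}(K^{sk}(\x))_i\). Nothing in this argument controls which index occurs, so the coordinate may depend on \(\x\), as stated.

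\textbf{Step 2 (one coordinate works on positive measure).} For each \(i\), let \(D_i\) be the set of points of \(\DeltaM\) whose \(i\)-th coordinate average under \(K^s\) diverges. By Step~1,
\[
\inte\DeltaM\setminus\{\p\}\subset\bigcup_{i=1}^m D_i .
\]
By Corollary~\ref{cor:full-measure-vector}, the left-hand side has full, and in particular positive, relative Lebesgue measure. Subadditivity then gives some \(i\) with \(\Leb_{m-1}(D_i)>0\); this is the irregular set of the corresponding coordinate observable for \(K^s\).

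\textbf{Main obstacle (measurability of \(D_i\)).} Subadditivity requires either that each \(D_i\) is measurable, or that the argument is run with outer measure. I will prove measurability. Write \(a_n(\y)=\frac1n\sum_{k<n}(K^{sk}\y)_i\). Each \(a_n\) is continuous, because \(K\) is continuous. Then
\[
D_i=\Bigl\{\y:\limsup_n a_n(\y)>\liminf_n a_n(\y)\Bigr\}.
\]
Since \(\limsup_n a_n\) and \(\liminf_n a_n\) are pointwise limits of monotone sequences of countable suprema and infima of continuous functions, both are Borel functions. Hence \(D_i\) is a Borel set, and Step~2 applies. If one prefers to avoid this check, the same pigeonhole step works verbatim with outer measure, which is also subadditive.
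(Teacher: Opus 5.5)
Your proof is correct and follows the paper's argument. Theorem~\ref{thm:powers} forces at least one coordinate average to diverge, and since the finitely many irregular sets cover the full-measure set \(\inte\DeltaM\setminus\{\p\}\), a pigeonhole/subadditivity step gives one of positive measure. Your Borel measurability check for \(D_i\) is a useful detail that the paper leaves implicit.
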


\begin{proof}
If all \(m\) coordinate averages converged at \(\x\), then the vector average
\(A_n(K^s)(\x)\) would converge in \(\R^m\), contradicting
Theorem~\ref{thm:powers}.  For the final assertion, let
\[
        E_i^{(s)}=
        \left\{
        \x\in\inte\DeltaM\setminus\{\p\}:
        \frac{1}{n}\sum_{k=0}^{n-1}\bigl(K^{sk}(\x)\bigr)_i
        \text{ does not converge}
        \right\}.
\]
The sets \(E_i^{(s)}\), \(1\le i\le m\), cover the full-measure set
\(\inte\DeltaM\setminus\{\p\}\).  Since there are only finitely many of them,
at least one has positive relative Lebesgue measure.
\end{proof}

\section{Subsequence powers}

This is one of the points where the weak admissible formulation gives
more than the usual power-persistence statement. A positive power
corresponds to arithmetic sampling of the orbit, whereas the theorem
below allows much more general sampling sequences. The essential
condition is that every multiplicative interval \([t,\lambda t]\),
\(\lambda>1\), contains a positive relative number of sampling times.
Thus the long residence blocks produced by weak admissibility are still
seen by the sampled orbit with positive relative frequency.

\begin{definition}
\label{def:multiplicatively-thick}
Let
\(
        \kappa=(k_j)_{j\ge0}
\)
be a strictly increasing sequence of non-negative integers with
\(k_j\to\infty\).  Its counting function is
\[
        N_\kappa(t)
        =
        \#\{j\ge0:k_j\le t\},
        \qquad t\ge0.
\]
We say that \(\kappa\) is multiplicatively thick if, for every
\(\lambda>1\),
\[
        \underline d_\kappa(\lambda)
        :=
        \liminf_{t\to\infty}
        \frac{
        N_\kappa(\lambda t)-N_\kappa(t)
        }{
        N_\kappa(t)
        }
        >0.
\]
Equivalently, every interval of the form \([t,\lambda t]\) contains,
for all large \(t\), a number of \(\kappa\)-times comparable to the number of
\(\kappa\)-times before \(t\).
\end{definition}

\begin{example} The following classes of sequences are
multiplicatively thick.

\begin{enumerate}
\item[\textup{(1)}]
The full sequence
\[
        k_j=j,
        \qquad j\ge0,
\]
is multiplicatively thick.

\item[\textup{(2)}]
Every arithmetic progression
\[
        k_j=qj+r,
        \qquad j\ge0,
\]
where \(q\in\mathbb N\) and \(r\in\mathbb N\cup\{0\}\), is multiplicatively
thick.

\item[\textup{(3)}]
More generally, every strictly increasing sequence with bounded gaps is
multiplicatively thick.  That is, if there exists \(M\in\mathbb N\) such that
\[
        k_{j+1}-k_j\le M,
        \qquad j\ge0,
\]
then \(\kappa\) is multiplicatively thick.

\item[\textup{(4)}]
Every sequence with positive asymptotic density is multiplicatively thick.
Thus, if
\[
        \lim_{t\to\infty}\frac{N_\kappa(t)}{t}=d>0,
\]
then \(\kappa\) is multiplicatively thick.

\item[\textup{(5)}]
Polynomial subsequences are multiplicatively thick.  More precisely, let
\(\alpha>1\), \(c>0\), and choose \(j_0\) large enough so that
\[
        k_j=\left\lfloor c(j+j_0)^\alpha\right\rfloor,
        \qquad j\ge0,
\]
is strictly increasing.  Then \(\kappa=(k_j)_{j\ge0}\) is multiplicatively
thick.  In particular,
\[
        k_j=j^2,\qquad
        k_j=j^3,\qquad
        k_j=\lfloor j^{3/2}\rfloor
\]
after deleting possible repetitions at the beginning, give admissible
subsequences.

\item[\textup{(6)}]
More generally, suppose that the counting function satisfies
\[
        N_\kappa(t)
        =
        t^\theta L(t)(1+o(1)),
        \qquad t\to\infty,
\]
where \(\theta>0\) and \(L\) is slowly varying, namely
\[
        \lim_{t\to\infty}\frac{L(\lambda t)}{L(t)}=1
        \qquad\text{for every }\lambda>0.
\]
Then \(\kappa\) is multiplicatively thick.
\end{enumerate}
\end{example}

\begin{proof}
We verify the examples.

For the full sequence \(k_j=j\), one has
\[
        N_\kappa(t)=\lfloor t\rfloor+1.
\]
Hence, for every \(\lambda>1\),
\[
        \frac{N_\kappa(\lambda t)-N_\kappa(t)}{N_\kappa(t)}
        \longrightarrow
        \lambda-1>0.
\]
Thus the full sequence is multiplicatively thick.

For an arithmetic progression \(k_j=qj+r\), one has
\[
        N_\kappa(t)
        =
        \frac{t}{q}+O(1),
        \qquad t\to\infty.
\]
Therefore
\[
\begin{aligned}
        \frac{N_\kappa(\lambda t)-N_\kappa(t)}{N_\kappa(t)}
        &=
        \frac{(\lambda t/q+O(1))-(t/q+O(1))}
             {t/q+O(1)}        \\
        &\longrightarrow
        \lambda-1>0.
\end{aligned}
\]
So every arithmetic progression is multiplicatively thick.

Now assume that \(\kappa\) has bounded gaps:
\[
        k_{j+1}-k_j\le M.
\]
Then every interval of length \(M\) contains at least one element of
\(\kappa\).  Hence, for all large \(t\),
\[
        N_\kappa(\lambda t)-N_\kappa(t)
        \ge
        \frac{(\lambda-1)t}{M}-2.
\]
Since \(\kappa\) is strictly increasing and takes integer values,
\[
        N_\kappa(t)\le t+1.
\]
Consequently,
\[
        \liminf_{t\to\infty}
        \frac{N_\kappa(\lambda t)-N_\kappa(t)}
             {N_\kappa(t)}
        \ge
        \frac{\lambda-1}{M}>0.
\]
Thus every bounded-gap subsequence is multiplicatively thick.

Next suppose that \(\kappa\) has positive asymptotic density:
\[
        \lim_{t\to\infty}\frac{N_\kappa(t)}{t}=d>0.
\]
Then, for every \(\lambda>1\),
\[
        N_\kappa(\lambda t)=d\lambda t+o(t),
        \qquad
        N_\kappa(t)=dt+o(t).
\]
Hence
\[
        \frac{N_\kappa(\lambda t)-N_\kappa(t)}
             {N_\kappa(t)}
        =
        \frac{d(\lambda-1)t+o(t)}
             {dt+o(t)}
        \longrightarrow
        \lambda-1>0.
\]
So \(\kappa\) is multiplicatively thick.

For the polynomial sequence
\[
        k_j=\left\lfloor c(j+j_0)^\alpha\right\rfloor,
        \qquad \alpha>1,
\]
we have
\[
        N_\kappa(t)
        =
        c^{-1/\alpha}t^{1/\alpha}+O(1),
        \qquad t\to\infty.
\]
Therefore
\[
\begin{aligned}
        \frac{N_\kappa(\lambda t)-N_\kappa(t)}
             {N_\kappa(t)}
        &\longrightarrow
        \frac{c^{-1/\alpha}(\lambda t)^{1/\alpha}
        -
        c^{-1/\alpha}t^{1/\alpha}}
        {c^{-1/\alpha}t^{1/\alpha}}       \\
        &=
        \lambda^{1/\alpha}-1>0.
\end{aligned}
\]
Thus polynomial subsequences are multiplicatively thick.

Finally, suppose
\[
        N_\kappa(t)=t^\theta L(t)(1+o(1)),
        \qquad \theta>0,
\]
where \(L\) is slowly varying.  Then
\[
        \frac{N_\kappa(\lambda t)}{N_\kappa(t)}
        =
        \lambda^\theta
        \frac{L(\lambda t)}{L(t)}
        (1+o(1))
        \longrightarrow
        \lambda^\theta.
\]
Hence
\[
        \frac{N_\kappa(\lambda t)-N_\kappa(t)}
             {N_\kappa(t)}
        =
        \frac{N_\kappa(\lambda t)}{N_\kappa(t)}-1
        \longrightarrow
        \lambda^\theta-1>0.
\]
Therefore \(\kappa\) is multiplicatively thick.
\end{proof}

\begin{remark}
\label{rem:sparse-not-thick}
The multiplicative thickness condition excludes very sparse subsequences.
For example, lacunary sequences
\[
        k_j=\lfloor a^j\rfloor,
        \qquad a>1,
\]
are not multiplicatively thick.  Indeed,
\[
        N_\kappa(t)
        =
        \frac{\log t}{\log a}+O(1),
        \qquad t\to\infty,
\]
and therefore, for every fixed \(\lambda>1\),
\[
        N_\kappa(\lambda t)-N_\kappa(t)
        =
        \frac{\log\lambda}{\log a}+O(1),
\]
whereas \(N_\kappa(t)\to\infty\).  Hence
\[
        \frac{N_\kappa(\lambda t)-N_\kappa(t)}
             {N_\kappa(t)}
        \longrightarrow0.
\]
Thus lacunary subsequences do not necessarily sample the long residence
blocks with positive relative frequency.  The same obstruction occurs for
faster sparse sequences such as
\[
        k_j=j!,
        \qquad
        k_j=2^{2^j}.
\]
\end{remark}

\begin{theorem}
\label{thm:subsequence-powers}
Let \(K\) be a weak admissible Stein--Ulam type map on \(\DeltaM\).
Let
\(
        \kappa=(k_j)_{j\ge0}
\)
be a multiplicatively thick sequence.  Then, for every \(s\in\N\) and every
\(
        x\in\inte\DeltaM\setminus\{\p\},
\)
the subsequence Ces\`aro averages
\[
        A_n^{s,\kappa}(K)(x)
        =
        \frac1n\sum_{j=0}^{n-1}K^{s k_j}(x)
        =
        \frac1n\sum_{j=0}^{n-1}(K^s)^{k_j}(x)
\]
do not converge.

In particular, taking \(k_j=j\) gives the ordinary powers theorem.
\end{theorem}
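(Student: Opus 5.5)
Assume, for contradiction, that \(A_n^{s,\kappa}(K)(x)\to L\). The plan is to reduce the statement to Lemma~\ref{lem:block}: build, for each \(j\), long blocks of consecutive sampling indices \(j'\) with \(K^{sk_{j'}}(x)\in R_j\), and then invoke the convex-hull separation \eqref{eq:convhull-separation}. Since Lemma~\ref{lem:block} is stated for orbits of a map, I will either reprove its short decomposition argument for the sequence \(y_{j'}=K^{sk_{j'}}(x)\) (the proof only uses that a convergent Ces\`aro sequence has a block average over indices \([\beta,\beta+\rho)\) converging to \(L\) when \(\rho/\beta\) stays bounded below), or formally set \(X=\DeltaM\) and apply it verbatim to the sequence. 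In either case, the output is: if \(y_{\beta_q+i}\in R_j\) for \(0\le i<\rho_q\), with \(\beta_q\to\infty\) and \(\liminf\rho_q/\beta_q>0\), then \(L\in\co(R_j)\).

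\textbf{Step 1 (blocks for \(K^s\)).} By Lemma~\ref{lem:blocks}, Proposition~\ref{prop:linear-residence} and Lemma~\ref{lem:sampling}, the \(K^s\)-orbit has blocks \((K^s)^{t}(x)\in R_j\) for \(t\in[b_q,b_q+r_q)\), with \(b_q\to\infty\) and \(r_q\ge c\,b_q\) for some \(c>0\) and all large \(q\). Shrinking the blocks, we may assume \(r_q\le b_q\) as well.

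\textbf{Step 2 (transfer to indices).} The sampling indices \(j'\) with \(k_{j'}\in[b_q,b_q+r_q)\) form a consecutive integer interval, because \(\kappa\) is strictly increasing. Take \(\lambda=1+c/2\). The indices are then exactly those with \(k_{j'}\in(t_q,\lambda t_q]\), where \(t_q=b_q-1\), up to at most one boundary term; for large \(q\), \(\lambda t_q<b_q+r_q\). Put \(\beta_q=N_\kappa(t_q)\) and \(\rho_q=N_\kappa(\lambda t_q)-N_\kappa(t_q)\). Then \(y_{\beta_q+i}\in R_j\) for \(0\le i<\rho_q\), and \(\beta_q\to\infty\) because \(t_q\to\infty\) and \(k_{j'}\to\infty\) implies \(N_\kappa(t)\to\infty\). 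Multiplicative thickness gives
\[
        \liminf_{q\to\infty}\frac{\rho_q}{\beta_q}\ge\underline d_\kappa(\lambda)>0.
\]

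\textbf{Step 3 (conclusion).} The block lemma for sequences yields \(L\in\co(R_j)\) for every \(j\). This contradicts \eqref{eq:convhull-separation}.

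The main obstacle is the index bookkeeping in Step 2. One must check that the linear-length time window \([b_q,b_q+r_q)\) really contains a full multiplicative window \((t,\lambda t]\), and that the counting-function increment matches the number of consecutive sampled indices inside the block, up to \(O(1)\). Along the way, \(N_\kappa(t_q)\ge1\) must be ensured so the ratio is defined. For large \(q\) these follow directly from \(r_q\ge c b_q\) and \(b_q\to\infty\). Finally, taking \(k_j=j\) recovers Theorem~\ref{thm:powers}.
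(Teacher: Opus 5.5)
Your proof is correct and essentially follows the paper. Your Step~2, with \(\lambda=1+c/2\), \(t_q=b_q-1\) and \(\beta_q=N_\kappa(t_q)\), is the paper's Lemma~\ref{lem:thick-subsequence-sampling}; note that the window \((t_q,\lambda t_q]\) yields a sub-block of the sampled block rather than exactly the block, which is all you need. Your sequence version of the block lemma is Lemma~\ref{lem:sampled-block-convex}. The only difference is how you treat \(s\). You first pass to \(K^s\)-blocks via Lemma~\ref{lem:sampling}, whereas the paper proves the case \(s=1\) and then notes that \((sk_j)_j\) is again multiplicatively thick because \(N_{s\kappa}(t)=N_\kappa(t/s)\). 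Both reductions work.
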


\begin{lemma}
\label{lem:thick-subsequence-sampling}
Let \(K:X\to X\) be a map on a compact convex subset \(X\) of a
finite-dimensional normed vector space.  Let \(Y\subset X\), let
\(x\in X\), and let
\(
        \kappa=(k_j)_{j\ge0}
\)
be multiplicatively thick.
Assume that the full orbit of \(x\) has \(K\)-blocks in \(Y\):
\[
        K^{b_n+q}(x)\in Y,
        \qquad 0\le q\le r_n-1,
\]
where \(b_n\to\infty\) and
\[
        \liminf_{n\to\infty}\frac{r_n}{b_n}>0.
\]
Then the sampled orbit
\[
        K^{k_0}(x),K^{k_1}(x),K^{k_2}(x),\ldots
\]
has \(Y\)-blocks
\[
        K^{k_{\beta_n+q}}(x)\in Y,
        \qquad 0\le q\le \rho_n-1,
\]
where \(\beta_n\to\infty\) and
\[
        \liminf_{n\to\infty}\frac{\rho_n}{\beta_n}>0.
\]
\end{lemma}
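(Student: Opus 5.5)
The plan is to mimic the proof of Lemma~\ref{lem:sampling}, replacing arithmetic sampling by the counting function \(N_\kappa\). Given a block \([b_n,b_n+r_n-1]\) of the full orbit, the sampled indices that fall inside it are the \(j\) with \(b_n\le k_j\le b_n+r_n-1\). Since \(\kappa\) is strictly increasing, these \(j\) form a consecutive integer interval. So I set
\[
        \beta_n=N_\kappa(b_n-1),
        \qquad
        \rho_n=N_\kappa(b_n+r_n-1)-N_\kappa(b_n-1).
\]
Here \(\beta_n\) is the first index with \(k_{\beta_n}\ge b_n\), and \(\rho_n\) counts the sampling times in the block. Then \(k_{\beta_n+q}\in[b_n,b_n+r_n-1]\) for \(0\le q\le\rho_n-1\), so \(K^{k_{\beta_n+q}}(x)\in Y\). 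Also \(\beta_n\to\infty\), because \(b_n\to\infty\) and \(N_\kappa(t)\to\infty\) (the sequence \(\kappa\) is infinite).

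The remaining task is the ratio bound. Choose \(c>0\) with \(r_n\ge cb_n\) for large \(n\), and fix \(\lambda\in(1,1+c)\). For large \(n\), with \(t_n=b_n-1\), the interval \([b_n,b_n+r_n-1]\) contains \((t_n,\lambda t_n]\), since \(\lambda t_n\le b_n+r_n-1\) once \(b_n\) is large. Monotonicity of \(N_\kappa\) then gives
\[
        \rho_n\ge N_\kappa(\lambda t_n)-N_\kappa(t_n).
\]
Dividing by \(\beta_n=N_\kappa(t_n)\) and using multiplicative thickness along \(t_n\to\infty\) yields
\[
        \liminf_n \rho_n/\beta_n\ge\underline d_\kappa(\lambda)>0.
\]
In particular \(\rho_n\ge1\) eventually, so these are genuine blocks. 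Finitely many initial \(n\) can be discarded, or re-indexed, without affecting the conclusions.

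The only delicate point is bookkeeping: the endpoint conventions (\(N_\kappa(t)\) counts \(k_j\le t\)), the fact that \(N_\kappa(t_n)\) may be \(0\) for small \(n\), and the fact that the liminf in the thickness definition is over real \(t\) while we evaluate it on the integer sequence \(t_n\). None of these is a real obstacle, since a liminf over \(t\to\infty\) dominates a liminf along any sequence tending to infinity.
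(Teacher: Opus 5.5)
Your proposal is correct and matches the paper's proof essentially line for line. The paper uses the same \(\beta_n=N_\kappa(b_n-1)\) and \(\rho_n=N_\kappa(b_n+r_n-1)-N_\kappa(b_n-1)\), and the same lower bound \(\rho_n\ge N_\kappa(\lambda(b_n-1))-N_\kappa(b_n-1)\), with the specific choice \(\lambda=1+c/2\). You also state explicitly that \(\beta_n\to\infty\) and that \(\rho_n\ge1\) eventually, which the paper leaves implicit.
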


\begin{proof}
Choose \(c>0\) such that, after passing to all sufficiently large \(n\),
\[
        r_n\ge c b_n .
\]
Put
\[
        \lambda=1+\frac c2>1.
\]
For all large \(n\),
\[
        b_n+r_n-1\ge \lambda(b_n-1).
\]

Let
\[
        \beta_n=N_\kappa(b_n-1),
\]
and let
\[
        \rho_n
        =
        N_\kappa(b_n+r_n-1)-N_\kappa(b_n-1).
\]
Then the indices
\[
        \beta_n,\beta_n+1,\ldots,\beta_n+\rho_n-1
\]
are precisely the sampled indices whose times \(k_j\) lie in the interval
\[
        [b_n,b_n+r_n-1].
\]
Therefore, for every \(0\le q\le \rho_n-1\),
\[
        b_n\le k_{\beta_n+q}\le b_n+r_n-1,
\]
and hence
\[
        K^{k_{\beta_n+q}}(x)\in Y.
\]

It remains to estimate \(\rho_n/\beta_n\).  Since \(\kappa\) is
multiplicatively thick,
\[
        \delta
        :=
        \liminf_{t\to\infty}
        \frac{
        N_\kappa(\lambda t)-N_\kappa(t)
        }{
        N_\kappa(t)
        }
        >0.
\]
Using \(t=b_n-1\), we get, for all large \(n\),
\[
\begin{aligned}
        \rho_n
        &=
        N_\kappa(b_n+r_n-1)-N_\kappa(b_n-1)        \\
        &\ge
        N_\kappa(\lambda(b_n-1))-N_\kappa(b_n-1).
\end{aligned}
\]
Therefore
\[
        \liminf_{n\to\infty}\frac{\rho_n}{\beta_n}
        =
        \liminf_{n\to\infty}
        \frac{
        N_\kappa(b_n+r_n-1)-N_\kappa(b_n-1)
        }{
        N_\kappa(b_n-1)
        }
        \ge \delta>0.
\]
The proof is complete.
\end{proof}

\begin{lemma}
\label{lem:sampled-block-convex}
Let \(X\) be a compact convex subset of a finite-dimensional normed vector
space, let \(\{y_n\}\subset X\)
and suppose that
\[
        \frac1n\sum_{j=0}^{n-1}y_j\to L.
\]
Let \(Y\subset X\) be compact.  Assume that there are blocks
\[
        y_{\beta_n+q}\in Y,
        \qquad 0\le q\le \rho_n-1,
\]
with \(\beta_n\to\infty\) and
\[
        \liminf_{n\to\infty}\frac{\rho_n}{\beta_n}>0.
\]
Then
\(
      L\in \co(Y).
\)
\end{lemma}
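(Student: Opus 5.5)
The plan is to repeat the block-decomposition argument of Lemma~\ref{lem:block}, now for an arbitrary bounded sequence \((y_n)\) in \(X\) rather than for an orbit. Write \(S_n=\sum_{j=0}^{n-1}y_j\) and \(A_n=S_n/n\), so that \(A_n\to L\) by hypothesis. For each \(n\) introduce the block average
\[
        B_n=\frac1{\rho_n}\sum_{q=0}^{\rho_n-1}y_{\beta_n+q}.
\]
Since every summand lies in \(Y\), we have \(B_n\in\co(Y)\).

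Next, set \(\lambda_n=\rho_n/(\beta_n+\rho_n)\). Exactly as in Lemma~\ref{lem:block}, the hypothesis \(\rho_n\ge c\beta_n\) for large \(n\) yields \(\lambda_n\ge c/(1+c)=:\lambda_0>0\). The additive decomposition \(S_{\beta_n+\rho_n}=S_{\beta_n}+\rho_nB_n\) gives
\[
        A_{\beta_n+\rho_n}=(1-\lambda_n)A_{\beta_n}+\lambda_nB_n,
\]
which can be solved for the block average:
\[
        B_n-L=\lambda_n^{-1}\bigl(A_{\beta_n+\rho_n}-L\bigr)-\frac{1-\lambda_n}{\lambda_n}\bigl(A_{\beta_n}-L\bigr).
\]
Both \(\beta_n\) and \(\beta_n+\rho_n\) tend to infinity, so both bracketed terms tend to zero. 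The coefficients are bounded by \(1/\lambda_0\), hence \(B_n\to L\).

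To conclude, note that \(Y\) is compact in a finite-dimensional space, so \(\co(Y)\) is compact and in particular closed. It follows that \(L\in\co(Y)\).

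There is no real obstacle here. The only points needing care are trivial: \(\beta_n\ge1\) for large \(n\), so that \(A_{\beta_n}\) is defined, and \(\rho_n\ge1\), which follows from \(\rho_n\ge c\beta_n\to\infty\). Alternatively, one can simply observe that Lemma~\ref{lem:block}'s proof never used the orbit structure, and apply it verbatim with \(T^k(x)\) replaced by \(y_k\).
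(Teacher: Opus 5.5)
Your proposal is correct and is essentially the paper's proof. The paper writes the block average directly as \(B_n=\frac{\beta_n+\rho_n}{\rho_n}A_{\beta_n+\rho_n}-\frac{\beta_n}{\rho_n}A_{\beta_n}\) instead of using the \(\lambda_n\)-parametrization, then concludes \(B_n\to L\) from the boundedness of \(\beta_n/\rho_n\) and the closedness of \(\co(Y)\), exactly as you do.
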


\begin{proof}
Set
\[
        S_n=\sum_{j=0}^{n-1}y_j,
        \qquad
        A_n=\frac{S_n}{n}.
\]
The average over the block is
\[
        B_n
        =
        \frac1{\rho_n}
        \sum_{q=0}^{\rho_n-1}y_{\beta_n+q}.
\]
Since all block entries belong to \(Y\), we have
\(
B_n\in \co(Y).
\)
Moreover,
\[
\begin{aligned}
        B_n
        &=
        \frac{S_{\beta_n+\rho_n}-S_{\beta_n}}{\rho_n}       \\
        &=
        \frac{\beta_n+\rho_n}{\rho_n}A_{\beta_n+\rho_n}
        -
        \frac{\beta_n}{\rho_n}A_{\beta_n}.
\end{aligned}
\]
Because
\[
        \liminf_{n\to\infty}\frac{\rho_n}{\beta_n}>0,
\]
the sequence \(\beta_n/\rho_n\) is bounded.  Since
\[
        A_{\beta_n}\to L,
        \qquad
        A_{\beta_n+\rho_n}\to L,
\]
we obtain
\(
        B_n\to L.
\)
The set \(\co(Y)\) is compact and therefore closed.  Hence, we arrive at
\(
        L\in \co(Y).
\)
\end{proof}

\begin{proof}[Proof of Theorem~\ref{thm:subsequence-powers}]
First consider the case \(s=1\).  Suppose, toward a contradiction, that
\[
        \frac1n\sum_{j=0}^{n-1}K^{k_j}(x)\to L.
\]

Let \(R_1,\ldots,R_m\) be the residence regions in the weak admissible
structure of \(K\).  By the cyclic residence and linear residence properties
of weak admissible systems, for each \(i\in\{1,\ldots,m\}\) there exist
ordinary orbit blocks
\[
        K^{b_{i,n}+q}(x)\in R_i,
        \qquad
        0\le q\le r_{i,n}-1,
\]
such that
\[
        b_{i,n}\to\infty,
        \qquad
        \liminf_{n\to\infty}\frac{r_{i,n}}{b_{i,n}}>0.
\]
Applying Lemma~\ref{lem:thick-subsequence-sampling} with \(Y=R_i\), we get
sampled blocks
\[
        K^{k_{\beta_{i,n}+q}}(x)\in R_i,
        \qquad
        0\le q\le \rho_{i,n}-1,
\]
with
\[
        \beta_{i,n}\to\infty,
        \qquad
        \liminf_{n\to\infty}
        \frac{\rho_{i,n}}{\beta_{i,n}}>0.
\]
Applying Lemma~\ref{lem:sampled-block-convex} to the sampled sequence
\[
        y_j=K^{k_j}(x),
\]
we obtain
\[
        L\in \co(R_i).
\]
Since this holds for every \(i=1,\ldots,m\),
\[
        L\in\bigcap_{i=1}^m\co(R_i),
\]
contradicting the weak admissibility condition
\[
        \bigcap_{i=1}^m\co(R_i)=\varnothing.
\]
Thus
\[
        \frac1n\sum_{j=0}^{n-1}K^{k_j}(x)
\]
does not converge.

Now let \(s\in\N\).  Define the sequence
\[
        \kappa_s=(s k_j)_{j\ge0}.
\]
Its counting function satisfies
\[
        N_{\kappa_s}(t)=N_\kappa(t/s).
\]
Therefore, for every \(\lambda>1\),
\[
\begin{aligned}
        \liminf_{t\to\infty}
        \frac{
        N_{\kappa_s}(\lambda t)-N_{\kappa_s}(t)
        }{
        N_{\kappa_s}(t)
        }
        &=
        \liminf_{t\to\infty}
        \frac{
        N_\kappa(\lambda t/s)-N_\kappa(t/s)
        }{
        N_\kappa(t/s)
        }                                      \\
        &=
        \underline d_\kappa(\lambda)>0.
\end{aligned}
\]
Hence \(\kappa_s\) is also multiplicatively thick.  Applying the already
proved \(s=1\) case to \(\kappa_s\), we get that
\[
        \frac1n\sum_{j=0}^{n-1}K^{s k_j}(x)
\]
does not converge.
\end{proof}

\section{Stability under small boundary-preserving conjugacy}

The weak admissible class is not tied to a particular coordinate formula.  The
next result shows that the mechanism is preserved by small conjugacies that
respect the boundary of the simplex.  The smallness is used only to preserve the
convex-hull separation of the residence regions; the remaining axioms are
transported exactly by conjugacy.

\begin{lemma}
\label{lem:convhull-separation-stable}
Let \(R_1,\ldots,R_m\subset\DeltaM\) be non-empty compact sets such that
\[
        \bigcap_{j=1}^m\co(R_j)=\varnothing.
\]
Set
\[
        \eta
        =
        \min_{z\in\DeltaM}
        \max_{1\le j\le m}
        \dist\bigl(z,\co(R_j)\bigr).
\]
Then \(\eta>0\).  Moreover, if non-empty compact sets
\(S_1,\ldots,S_m\subset\DeltaM\) satisfy, for some \(0\le\rho<\eta\),
\[
        S_j\subset\{z\in\DeltaM:\dist(z,R_j)\le \rho\},
        \qquad j=1,\ldots,m,
\]
then
\[
        \bigcap_{j=1}^m\co(S_j)=\varnothing.
\]
\end{lemma}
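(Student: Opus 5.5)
The plan has two parts: first show that \(\eta>0\) by compactness and continuity, and then show that a point of \(\bigcap_j\co(S_j)\) would lie within \(\rho\) of every \(\co(R_j)\). Throughout we work in the Euclidean norm on \(\R^m\), and \(\dist(z,E)=\inf_{e\in E}|z-e|\).

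\emph{Positivity of \(\eta\).} Each \(\co(R_j)\) is compact and non-empty, so \(z\mapsto\dist(z,\co(R_j))\) is \(1\)-Lipschitz. Hence \(F(z)=\max_j\dist(z,\co(R_j))\) is continuous on the compact set \(\DeltaM\), and the minimum defining \(\eta\) is attained at some \(z_*\). Suppose \(\eta=0\). Then \(\dist(z_*,\co(R_j))=0\) for every \(j\), and since each hull is closed, \(z_*\in\co(R_j)\) for every \(j\). This contradicts the separation hypothesis, so \(\eta>0\).

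\emph{Key containment.} For each \(j\) we want
\[
\co(S_j)\subset\{z:\dist(z,\co(R_j))\le\rho\}.
\]
The right-hand side is the closed \(\rho\)-neighbourhood of a convex set, and it is convex because the distance function to a convex set is convex. So it suffices to check \(S_j\) itself. Any \(z\in S_j\) satisfies \(\dist(z,R_j)\le\rho\), and since \(R_j\subset\co(R_j)\), also \(\dist(z,\co(R_j))\le\dist(z,R_j)\le\rho\).

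If convexity of the distance function needs justification, do it directly. Write \(z=\sum_i t_iz_i\) with \(z_i\in S_j\) and pick \(w_i\in\co(R_j)\) with \(|z_i-w_i|\le\rho\), which is possible by compactness. Then \(w=\sum_i t_iw_i\in\co(R_j)\) and \(|z-w|\le\sum_i t_i|z_i-w_i|\le\rho\).

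\emph{Conclusion.} Suppose \(z\in\bigcap_j\co(S_j)\). Then \(z\in\DeltaM\), because \(S_j\subset\DeltaM\) and \(\DeltaM\) is convex. By the containment, \(\dist(z,\co(R_j))\le\rho\) for every \(j\), so \(F(z)\le\rho<\eta\). This contradicts the definition of \(\eta\) as the minimum of \(F\) over \(\DeltaM\), so the intersection is empty.

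The only step needing care is the convexity of the \(\rho\)-neighbourhood. Everything else is compactness and continuity.
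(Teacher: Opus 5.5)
Your proof is correct and follows essentially the same route as the paper. You get \(\eta>0\) from continuity of \(F(z)=\max_j\dist(z,\co(R_j))\) and closedness of the hulls. You then use the convex-combination argument to place \(\co(S_j)\) in the closed \(\rho\)-neighbourhood of \(\co(R_j)\), and conclude by contradiction with the minimality of \(\eta\); your explicit remark that \(\co(S_j)\subset\DeltaM\) by convexity of the simplex is a detail the paper leaves implicit.
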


\begin{proof}
Since each \(R_j\) is compact and the ambient space is finite-dimensional,
\(\co(R_j)\) is compact.  Hence
\[
        F(z)=\max_{1\le j\le m}\dist\bigl(z,\co(R_j)\bigr)
\]
is continuous on the compact simplex \(\DeltaM\), and so it attains its minimum
\(\eta\).  If \(\eta=0\), then some \(z_0\in\DeltaM\) satisfies
\(\dist(z_0,\co(R_j))=0\) for all \(j\).  Since the sets \(\co(R_j)\) are
closed, this gives \(z_0\in\cap_j\co(R_j)\), a contradiction.  Thus
\(\eta>0\).

Now suppose that \(S_j\subset\{z:\dist(z,R_j)\le \rho\}\).  We claim that
\[
        \co(S_j)
        \subset
        \{z\in\DeltaM:\dist(z,\co(R_j))\le \rho\}.
\]
Indeed, take \(z\in\co(S_j)\).  Write
\[
        z=\sum_{k=1}^r\alpha_k z_k,
        \qquad
        \alpha_k\ge0,
        \qquad
        \sum_{k=1}^r\alpha_k=1,
        \qquad
        z_k\in S_j.
\]
For each \(k\), choose \(y_k\in R_j\) with \(\|z_k-y_k\|\le\rho\).  Then
\(y=\sum_k\alpha_k y_k\in\co(R_j)\), and
\[
        \|z-y\|
        \le
        \sum_{k=1}^r\alpha_k\|z_k-y_k\|
        \le \rho.
\]
Thus \(\dist(z,\co(R_j))\le\rho\), proving the claim.

If \(z\in\cap_j\co(S_j)\), then the claim gives
\[
        \max_{1\le j\le m}\dist\bigl(z,\co(R_j)\bigr)
        \le \rho<\eta,
\]
contradicting the definition of \(\eta\).  Therefore
\(\cap_j\co(S_j)=\varnothing\).
\end{proof}

\begin{theorem}
\label{thm:weak-admissibility-conjugacy}
Let \(K:\DeltaM\to\DeltaM\) be a weak admissible Stein--Ulam type map
with residence regions \(R_1,\ldots,R_m\).  Define
\[
        \eta_K
        =
        \min_{z\in\DeltaM}
        \max_{1\le j\le m}
        \dist\bigl(z,\co(R_j)\bigr).
\]
Then \(\eta_K>0\).  Let \(h:\DeltaM\to\DeltaM\) be a homeomorphism satisfying
\[
        h(\bd\DeltaM)=\bd\DeltaM
\]
and
\[
        \|h-\id\|_\infty
        :=
        \sup_{z\in\DeltaM}\|h(z)-z\|
        <\eta_K.
\]
Then the conjugate map
\[
        K_h=h^{-1}\circ K\circ h
\]
is again a weak admissible Stein--Ulam type map.
\end{theorem}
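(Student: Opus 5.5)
The plan is to transport every piece of weak admissible data for \(K\) through \(h\), and to use the smallness of \(h\) only for the separation condition (W4). First, \(\eta_K>0\) follows directly from Lemma~\ref{lem:convhull-separation-stable}. Since \(h\) is a homeomorphism of \(\DeltaM\) with \(h(\bd\DeltaM)=\bd\DeltaM\), it also satisfies \(h(\inte\DeltaM)=\inte\DeltaM\). We then define
\[
\p_h=h^{-1}(\p),\quad \varphi_h=\varphi\circ h,\quad \psi_h=\psi\circ h,\quad U_0^h=h^{-1}(U_0),\quad G_i^h=h^{-1}(G_i),
\]
keep the same \(N\) and the same cyclic intervals \(I_j\), and set
\[
R_j^h=\bigcup_{i\in I_j}G_i^h=h^{-1}(R_j).
\]

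We verify (W1)--(W3) by direct transport.
\begin{itemize}
\item[(W1)] Face invariance holds because \(K_h\) is a composition of boundary- and interior-preserving maps. Fixed points of \(K_h\) in the interior are exactly \(h^{-1}\) of the interior fixed points of \(K\), so the only one is \(\p_h\in\inte\DeltaM\).
\item[(W2)] We compute \(\varphi_h(K_h\x)=\varphi(K h\x)=\varphi(h\x)\psi(h\x)=\varphi_h(\x)\psi_h(\x)\). We also have \(\varphi_h^{-1}(0)=h^{-1}(\bd\DeltaM)=\bd\DeltaM\) and \(\psi_h^{-1}(1)=\{\p_h\}\). The strict unique maximum is at \(\p_h\), by bijectivity of \(h\).
\item[(W3)] \(U_0^h\) is an open neighbourhood of \(\p_h\), and \(\overline{U_0^h}=h^{-1}(\overline{U_0})\subset\inte\DeltaM\). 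The sets \(G_i^h\) are compact, and their union is \(h^{-1}(\DeltaM\setminus U_0)=\DeltaM\setminus U_0^h\).
\end{itemize}

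For (W5) and (W6), take \(\x\in\inte\DeltaM\setminus\{\p_h\}\) and put \(\y=h(\x)\in\inte\DeltaM\setminus\{\p\}\). Then \(K_h^n(\x)=h^{-1}K^n(\y)\), so \(K_h^n(\x)\notin U_0^h\) if and only if \(K^n(\y)\notin U_0\). The admissible lifted itinerary \((r_n)\) for \(\y\) therefore serves for \(\x\) unchanged: \(K_h^n(\x)\in G^h_{r_n\,(\mathrm{mod}\,N)}\), with the same increments and \(r_n\to\infty\). Maximal lifted residence blocks through \(I_j\) depend only on \((r_n)\), so they coincide for the two orbits. Since \(\varphi_h(K_h^b\x)=\varphi(K^b\y)\), estimate (W6) transfers with the same constants \(A_j,B_j,C_j\).

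The only step that needs smallness is (W4). Given \(z\in R_j^h\), we have \(h(z)\in R_j\), and
\[
\dist(z,R_j)\le\|z-h(z)\|\le\|h-\id\|_\infty=:\rho<\eta_K .
\]
Hence \(R_j^h\subset\{z:\dist(z,R_j)\le\rho\}\). These sets are non-empty and compact, so Lemma~\ref{lem:convhull-separation-stable} gives \(\bigcap_j\co(R_j^h)=\varnothing\). I expect no real obstacle here. The one point that needs care is stating the direction of the inclusion: the separation comparison uses \(h^{-1}\) applied to \(R_j\), and this is controlled by \(\|h-\id\|_\infty\) rather than by \(\|h^{-1}-\id\|_\infty\). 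It does not matter which one the statement bounds, since the two quantities are equal as suprema over \(\DeltaM\).
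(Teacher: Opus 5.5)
Your proposal is correct and follows the paper's proof essentially line by line. As in the paper, you transport all the data by \(h^{-1}\), keeping the same \(N\), the same cyclic intervals, the same lifted itineraries and the same constants \(A_j,B_j,C_j\). Smallness is used only for (W4), through the inclusion \(h^{-1}(R_j)\subset\{z:\dist(z,R_j)\le\|h-\id\|_\infty\}\) combined with Lemma~\ref{lem:convhull-separation-stable}.
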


\begin{proof}
The positivity of \(\eta_K\) is Lemma~\ref{lem:convhull-separation-stable}
applied to \(R_1,\ldots,R_m\).  We verify the weak admissibility conditions for
\(K_h\).

Since \(h\) maps the boundary of the simplex onto itself, it maps the relative
interior onto itself.  Hence
\[
        K_h(\inte\DeltaM)
        =
        h^{-1}\bigl(K(h(\inte\DeltaM))\bigr)
        \subset \inte\DeltaM,
\]
and similarly \(K_h(\bd\DeltaM)\subset\bd\DeltaM\).  Also,
\[
        x\in\Fix(K_h)
        \quad\Longleftrightarrow\quad
        h(x)\in\Fix(K),
\]
so
\[
        \Fix(K_h)\cap\inte\DeltaM
        =
        h^{-1}\bigl(\Fix(K)\cap\inte\DeltaM\bigr)
        =
        \{h^{-1}(\p)\}.
\]
Thus \textup{(W1)} holds with \(\p_h=h^{-1}(\p)\).

The functions \(\varphi_h=\varphi\circ h\) and \(\psi_h=\psi\circ h\) are
continuous.  Since \(h\) preserves boundary and interior,
\[
        \varphi_h^{-1}(0)=h^{-1}(\bd\DeltaM)=\bd\DeltaM,
        \qquad
        \varphi_h>0\quad\text{on }\inte\DeltaM.
\]
Moreover,
\[
        \psi_h^{-1}(1)=h^{-1}(\psi^{-1}(1))=\{\p_h\}.
\]
The strict maximality of \(\varphi_h\) at \(\p_h\) follows from the strict
maximality of \(\varphi\) at \(\p\).  Finally, from
\(h\circ K_h=K\circ h\),
\[
\begin{aligned}
        \varphi_h(K_h(x))
        &=\varphi(h(K_h(x)))        \\
        &=\varphi(K(h(x)))          \\
        &=\varphi(h(x))\psi(h(x))   \\
        &=\varphi_h(x)\psi_h(x).
\end{aligned}
\]
Thus \textup{(W2)} holds.

Set \(U_{0,h}=h^{-1}(U_0)\).  Then \(U_{0,h}\) is an open neighborhood of
\(\p_h\) and
\[
        \overline{U_{0,h}}
        =h^{-1}(\overline{U_0})
        \subset h^{-1}(\inte\DeltaM)
        =\inte\DeltaM.
\]
Define the transported cover by
\[
        G_{i,h}=h^{-1}(G_i),
        \qquad i\in\mathbb Z/N\mathbb Z.
\]
These sets are compact, cover \(\DeltaM\setminus U_{0,h}\), and therefore give
\textup{(W3)} for \(K_h\).

The residence regions for the conjugate are
\[
        R_{j,h}=\bigcup_{i\in I_j}G_{i,h}=h^{-1}(R_j).
\]
They are compact.  Put \(\delta=\|h-\id\|_\infty\).  If \(y\in R_{j,h}\), then
\(h(y)\in R_j\) and \(\|y-h(y)\|\le\delta\).  Hence
\[
        R_{j,h}\subset\{z\in\DeltaM:\dist(z,R_j)\le\delta\}.
\]
Because \(\delta<\eta_K\), Lemma~\ref{lem:convhull-separation-stable} yields
\[
        \bigcap_{j=1}^m\co(R_{j,h})=\varnothing.
\]
Thus \textup{(W4)} holds.

We next check \textup{(W5)}.  Let
\(x\in\inte\DeltaM\setminus\{\p_h\}\) and suppose that
\(K_h^n(x)\notin U_{0,h}\) for all \(n\ge n_0\).  With \(y=h(x)\), we have
\(y\in\inte\DeltaM\setminus\{\p\}\) and
\[
        K^n(y)=h(K_h^n(x))\notin U_0,
        \qquad n\ge n_0.
\]
Choose an admissible lifted itinerary \((r_n)\) for the \(K\)-orbit of \(y\).
Then
\[
        K^n(y)\in G_{r_n\,({\rm mod}\,N)},
        \qquad r_{n+1}-r_n\in\{0,1\},
        \qquad r_n\to+\infty.
\]
Applying \(h^{-1}\), we get
\[
        K_h^n(x)\in G_{r_n\,({\rm mod}\,N),h},
\]
so the same integers form an unbounded lifted itinerary for the \(K_h\)-orbit.
Thus \textup{(W5)} holds.

Finally, let \((b,\ell)\) be a maximal lifted residence block through \(I_j\)
along the selected \(K_h\)-lift of some
\(x\in\inte\DeltaM\setminus\{\p_h\}\).  Under \(h\), the same lifted block is a
maximal lifted residence block through \(I_j\) for the \(K\)-orbit of \(h(x)\).
Therefore
\[
        \ell
        \ge
        A_j\log_{C_j}^{+}
        \frac{B_j}{\varphi(K^{b}(h(x)))}.
\]
But
\[
        \varphi(K^{b}(h(x)))
        =
        \varphi(h(K_h^{b}(x)))
        =
        \varphi_h(K_h^{b}(x)).
\]
Thus \textup{(W6)} holds with the same constants.  All weak admissibility
conditions have been verified.
\end{proof}

\begin{corollary}
\label{cor:conjugacy-historical}
Under the hypotheses of Theorem~\ref{thm:weak-admissibility-conjugacy}, for
every \(s\in\N\) and every
\(
        x\in\inte\DeltaM\setminus\{\p_h\},
\)
the vector averages
\[
        \frac{1}{n}\sum_{k=0}^{n-1}K_h^{sk}(x)
\]
do not converge.
\end{corollary}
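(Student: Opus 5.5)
The corollary follows by chaining two earlier results. By Theorem~\ref{thm:weak-admissibility-conjugacy}, the conjugate \(K_h=h^{-1}\circ K\circ h\) is itself a weak admissible Stein--Ulam type map. Its interior fixed point is \(\p_h=h^{-1}(\p)\), and its data are the transported objects \(\varphi_h,\psi_h,U_{0,h},G_{i,h}\) and the residence regions \(R_{j,h}=h^{-1}(R_j)\). We may therefore apply Theorem~\ref{thm:powers} directly to \(K_h\). For every \(s\in\N\) and every \(x\in\inte\DeltaM\setminus\{\p_h\}\), that theorem gives non-convergence of \(A_n(K_h^s)(x)=\frac1n\sum_{k=0}^{n-1}K_h^{sk}(x)\), which is exactly the claim.

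The only point to check is that Theorem~\ref{thm:powers} uses nothing beyond (W1)--(W6). Its proof calls on Lemma~\ref{lem:escape}, Lemma~\ref{lem:blocks}, Proposition~\ref{prop:linear-residence}, Lemma~\ref{lem:sampling} and Lemma~\ref{lem:block}. Each of these uses only the axioms of weak admissibility together with the convex-hull separation of the residence regions. For \(K_h\), that separation is the condition \(\bigcap_j\co(R_{j,h})=\varnothing\), already established in the proof of Theorem~\ref{thm:weak-admissibility-conjugacy} via Lemma~\ref{lem:convhull-separation-stable}. The exceptional interior point is \(\p_h\) rather than \(\p\), which matches the set of \(x\) in the statement.

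There is no real obstacle; the argument is bookkeeping. One might instead try to transfer the conclusion directly through the conjugacy, since \(K_h^{sk}(x)=h^{-1}(K^{sk}(h(x)))\). That route fails: \(h^{-1}\) is not affine, so non-convergence of Ces\`aro averages of \(K^{sk}(h(x))\) does not pass to Ces\`aro averages of \(h^{-1}\)-images. This is why the proof has to go through weak admissibility of \(K_h\) rather than through the conjugacy. The smallness assumption \(\|h-\id\|_\infty<\eta_K\) is used only at that step, to preserve convex-hull separation.
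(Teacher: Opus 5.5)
Your proposal is correct and matches the paper's proof: you establish weak admissibility of \(K_h\) via Theorem~\ref{thm:weak-admissibility-conjugacy}, then apply Theorem~\ref{thm:powers} to \(K_h\), with the exceptional point \(\p_h\). Your side remark is also right: pushing the conclusion through the conjugacy directly fails because \(h^{-1}\) is not affine, which is why the smallness of \(h\) (to preserve convex-hull separation) is needed.
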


\begin{proof}
Theorem~\ref{thm:weak-admissibility-conjugacy} shows that \(K_h\) is weak
admissible.  The conclusion follows from Theorem~\ref{thm:powers} applied to
\(K_h\).
\end{proof}

\begin{remark}
Theorem~\ref{thm:weak-admissibility-conjugacy} shows that weak admissibility is not tied to a particular
coordinate formula. The Lyapunov function, cyclic code and residence
blocks are transported exactly by conjugacy, while the smallness
assumption is used only to preserve the convex-hull separation of the
residence regions. Consequently, weak admissibility is a genuinely
topological-geometric mechanism. In particular, small
boundary-preserving conjugacies of the classical Stein--Ulam map give
new weak admissible systems which need not be Lotka--Volterra
stochastic operators.
\end{remark}

\section{Concrete weak admissible families with margins}
\label{sec:concrete-margin-families}

The definition of weak admissibility is intentionally flexible.  In this
section we record a more concrete ``margin'' version which is convenient for
applications and perturbations.  The point is that the hypotheses below are
strict inequalities on explicitly chosen pieces of the simplex.  Hence, once a
map has been checked to satisfy them, all conclusions of the preceding sections
apply immediately.  This gives a usable high-dimensional verification scheme, and it also
clarifies precisely what is preserved by small perturbations of the classical
Stein--Ulam map.

For \(0<\eta<1-1/m\), put
\[
        V_j(\eta)=\{\x\in\DeltaM:x_j\ge 1-\eta\},
        \qquad j=1,\ldots,m .
\]
By Lemma~\ref{lem:separation-criterion}, if
\(R_j\subset V_j(\eta)\) for all \(j\), then
\[
        \bigcap_{j=1}^m \co(R_j)=\varnothing .
\]
Thus vertex caps provide a simple concrete way to verify the convex-hull
separation condition.

\begin{definition}
\label{def:cyclic-margin-family}
Let \(m\ge4\) and let \(r\ge0\).  A continuous map
\(K:\DeltaM\to\DeltaM\) belongs to the cyclic margin family
\(\mathfrak C_m^r\) if the following data are given:
\[
        \p\in\inte\DeltaM,
        \qquad
        \varphi:\DeltaM\to[0,\infty),
        \qquad
        \psi:\DeltaM\to[0,1],
\]
\[
        U_0\Subset\inte\DeltaM,
        \qquad
        N\ge m,
        \qquad
        G_i\subset\DeltaM\setminus U_0\quad(i\in\Zmod{N}),
\]
a compact cyclic cover \(\DeltaM\setminus U_0=\bigcup_iG_i\), cyclic intervals
\(I_1,\ldots,I_m\) whose union is \(\Zmod{N}\), and positive
constants
\[
        \eta,\,\sigma,
        \qquad
        A_j,B_j,\Lambda_j>0,
        \qquad
        C_j>1,
        \qquad j=1,\ldots,m,
\]
with \(0<\eta<1-1/m\), such that the following conditions hold.

\begin{enumerate}[label=\textup{(M\arabic*)},leftmargin=3.2em]
\item \textbf{Regularity on pieces and boundary invariance.}
The map \(K\) is continuous.  If \(r\ge1\), we assume in addition that each
cover element \(G_i\) is contained in a set on which \(K\) admits a
\(C^r\) extension, and that \(K\) is \(C^r\) on a neighborhood of
\(\overline{U_0}\).  Moreover,
\[
        K(\inte\DeltaM)\subset\inte\DeltaM,
        \qquad
        K(\bd\DeltaM)\subset\bd\DeltaM,
\]
and
\[
        \Fix(K)\cap\inte\DeltaM=\{\p\}.
\]

\item \textbf{Lyapunov identity with a uniform contraction margin.}
The functions \(\varphi\) and \(\psi\) are continuous,
\[
        \varphi^{-1}(0)=\bd\DeltaM,
        \qquad
        \varphi>0\quad\hbox{on }\inte\DeltaM,
\]
\[
        \varphi(K(\x))=\varphi(\x)\psi(\x),
        \qquad \x\in\DeltaM,
\]
\[
        \psi^{-1}(1)=\{\p\},
\]
and \(\varphi\) has a strict unique maximum at \(\p\).  In addition, the
contraction away from the center has the quantitative margin
\[
        \psi(\x)\le 1-\sigma,
        \qquad \x\in\DeltaM\setminus U_0 .
        \tag{M2$'$}\label{eq:margin-contraction}
\]

\item \textbf{Piecewise cyclic motion.}
The sets \(G_i\), \(i\in\Zmod{N}\), are compact and cover
\(\DeltaM\setminus U_0\).  Moreover,
\[
        K(G_i)\cap(\DeltaM\setminus U_0)
        \subset G_i\cup G_{i+1},
        \qquad i\in\Zmod{N}.
\]
This map-level rule is stronger than the orbitwise rule required in
\textup{(W5)}.

\item \textbf{Vertex-cap residence regions.}
For
\[
        R_j=\bigcup_{i\in I_j}G_i,
        \qquad j=1,\ldots,m,
\]
one has
\[
        \varnothing\ne R_j\subset V_j(\eta)
        =\{\x\in\DeltaM:x_j\ge1-\eta\}.
        \tag{M4}\label{eq:vertex-cap-margin}
\]

\item \textbf{Non-trapping of lifted itineraries.}
For every \(\x\in\inte\DeltaM\setminus\{\p\}\), once the orbit is outside
\(U_0\) forever, there is an unbounded lifted itinerary.  Precisely, if
\[
        K^n(\x)\notin U_0,
        \qquad n\ge n_0,
\]
then there are integers \(r_n\), \(n\ge n_0\), such that
\[
        K^n(\x)\in G_{r_n\,({\rm mod}\,N)},
        \qquad
        r_{n+1}-r_n\in\{0,1\},
\]
and
\[
        r_n\to+\infty.
\]

\item \textbf{Logarithmic residence with slack.}
For every \(j\in\{1,\ldots,m\}\), every non-fixed interior point \(\x\), and
every maximal lifted residence block \((b,\ell)\) through \(I_j\) along the
selected lifted itinerary of the orbit of \(\x\), one has
\[
        \ell
        \ge
        A_j\log_{C_j}^{+}
        \frac{B_j}{\varphi(K^{b}(\x))}
        +\Lambda_j .
        \tag{M6}\label{eq:margin-residence}
\]
The positive number \(\Lambda_j\) is not needed for weak admissibility; it is a
useful strict margin for perturbative applications.
\end{enumerate}
\end{definition}

\begin{proposition}
\label{prop:margin-implies-weak}
Every map \(K\in\mathfrak C_m^r\) is a weak admissible Stein--Ulam type
map.  Consequently, for every \(s\in\N\), every multiplicatively thick sequence
\((k_q)_{q\ge0}\), and every
\(
        \x\in\inte\DeltaM\setminus\{\p\},
\)
the sampled averages
\[
        \frac1n\sum_{q=0}^{n-1}K^{s k_q}(\x)
\]
do not converge.
\end{proposition}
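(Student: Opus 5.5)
The plan is to check the axioms (W1)--(W6) of Definition~\ref{def:weak-admissible} one at a time against (M1)--(M6), using the same data \(\p,\varphi,\psi,U_0,N,G_i,I_j\). The dynamical conclusion then follows directly from Theorem~\ref{thm:subsequence-powers}.

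First, (W1) is contained verbatim in (M1); the extra \(C^r\) regularity on pieces is simply discarded. Similarly, (W2) is the first part of (M2); the margin \eqref{eq:margin-contraction} is not needed, although it yields \(\max_{\DeltaM\setminus U_0}\psi\le1-\sigma<1\) explicitly. For (W3), the relation \(U_0\Subset\inte\DeltaM\) gives an open neighborhood of \(\p\) with \(\overline{U_0}\subset\inte\DeltaM\). The assumption \(N\ge m\), the compactness of the \(G_i\), and the covering \(\DeltaM\setminus U_0=\bigcup_iG_i\) are all part of the data. For (W4), each \(R_j\) is a finite union of compact sets, hence compact, and it is non-empty by \eqref{eq:vertex-cap-margin}. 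Lemma~\ref{lem:separation-criterion} applies because \(0<\eta<1-1/m\), and it yields \eqref{eq:convhull-separation}. The intervals \(I_j\) are non-empty cyclic intervals with union \(\Zmod{N}\), as the definition requires.

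Next, (W5) is exactly (M5). The piecewise rule (M3) is stronger, but it is not needed at this point. For (W6), I would use the lifted itineraries selected in (M5). Every maximal lifted residence block through \(I_j\) then satisfies \eqref{eq:margin-residence}, and since \(\Lambda_j>0\) this gives \eqref{eq:log-residence} with the same constants \(A_j,B_j>0\) and \(C_j>1\). One point needs care here: (W6) asks that the itineraries \emph{can be chosen} so that the estimate holds, while (M6) is formulated for ``the selected lifted itinerary''. I would read (M5)--(M6) together as supplying a single choice of itinerary per orbit, and that choice serves for both (W5) and (W6).

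With weak admissibility established, fix \(s\in\N\), a multiplicatively thick sequence \((k_q)\), and \(\x\in\inte\DeltaM\setminus\{\p\}\). Applying Theorem~\ref{thm:subsequence-powers} to \(K\) gives non-convergence of \(\frac1n\sum_{q<n}K^{sk_q}(\x)\). I do not expect a real obstacle in this proof. The only delicate spot is the bookkeeping above, namely that the lifted itinerary used in (M6) is the same one provided by (M5), so that (W5) and (W6) hold for a common choice.
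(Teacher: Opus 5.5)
Your proposal is correct and follows the paper's proof essentially verbatim. (M1), (M2), the cover data and (M5) supply (W1), (W2), (W3) and (W5); Lemma~\ref{lem:separation-criterion} turns (M4) into (W4); dropping $\Lambda_j>0$ turns (M6) into (W6); and Theorem~\ref{thm:subsequence-powers} gives the sampled non-convergence. One small imprecision: $U_0\Subset\inte\DeltaM$ does not by itself make $U_0$ an open neighborhood of $\p$, but both facts follow from the data, since $\DeltaM\setminus U_0=\bigcup_iG_i$ is compact and (M2$'$) with $\psi(\p)=1$ forces $\p\in U_0$.
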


\begin{proof}
Conditions \textup{(M1)}, \textup{(M2)} and \textup{(M3)} give exactly
\textup{(W1)}, \textup{(W2)} and \textup{(W3)}.  Condition \textup{(M5)} is
\textup{(W5)}.  It remains only to verify the convex-hull separation and the
residence estimate.

By \eqref{eq:vertex-cap-margin},
\[
        R_j\subset V_j(\eta),
        \qquad j=1,\ldots,m,
\]
with \(0<\eta<1-1/m\).  Lemma~\ref{lem:separation-criterion} therefore gives
\[
        \bigcap_{j=1}^m\co(R_j)=\varnothing,
\]
which is \textup{(W4)}.  Finally, \eqref{eq:margin-residence} is stronger than
\textup{(W6)}.  Hence \(K\) is weak admissible.  The non-convergence of all
power and multiplicatively thick subsequence averages follows from
Theorem~\ref{thm:subsequence-powers}.
\end{proof}

\begin{corollary}
\label{cor:high-dimensional-margin-family}
For every \(m\ge4\), every map in \(\mathfrak C_m^r\) is a piecewise \(C^r\)
weak admissible Stein--Ulam map on \(\Delta^{m-1}\).  In particular,
such a map has full-relative-measure vector-valued historical behavior, and
the same holds for every positive power and for every multiplicatively thick
subsequence of every positive power.
\end{corollary}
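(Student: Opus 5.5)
The corollary is essentially a repackaging of Proposition~\ref{prop:margin-implies-weak} together with the full-measure results for weak admissible maps. I will derive it by first placing every \(K\in\mathfrak C_m^r\) in the weak admissible class and then invoking the non-convergence theorems for the map, its powers and its thick subsequences.

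\emph{The piecewise \(C^r\) assertion.} By (M1), each cover element \(G_i\) lies in a set on which \(K\) has a \(C^r\) extension, and \(K\) is \(C^r\) near \(\overline{U_0}\). Since \(\DeltaM=\overline{U_0}\cup\bigcup_iG_i\) is a finite union of compact pieces, \(K\) is piecewise \(C^r\) with respect to the finite family \(\{\overline{U_0},G_0,\ldots,G_{N-1}\}\). When \(r=0\) this reduces to the continuity of \(K\).

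\emph{Weak admissibility.} Proposition~\ref{prop:margin-implies-weak} shows that \(K\) is a weak admissible Stein--Ulam type map on \(\Delta^{m-1}\), for every \(m\ge4\) for which the family is defined. For the record, the checks are as follows: (M1)--(M3) and (M5) give (W1)--(W3) and (W5); the vertex caps (M4) give (W4) via Lemma~\ref{lem:separation-criterion}; and (M6) implies (W6) because \(\Lambda_j>0\).

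\emph{Historical behavior.} Corollary~\ref{cor:full-measure-vector} now gives non-convergence of the vector Ces\`aro averages on \(\inte\DeltaM\setminus\{\p\}\). This set has full relative Lebesgue measure, since \(\bd\DeltaM\) and \(\{\p\}\) are null. Theorem~\ref{thm:powers} gives the same conclusion for every positive power \(K^s\). Theorem~\ref{thm:subsequence-powers} gives it for the averages \(\frac1n\sum_{j<n}K^{sk_j}(\x)\) along any multiplicatively thick sequence \(\kappa\).

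The only point needing care is the interpretation of ``multiplicatively thick subsequence of every positive power''. It should mean sampling \(K^s\) along the times \(k_j\), which is exactly the form covered by Theorem~\ref{thm:subsequence-powers}. That theorem already reduces this case to \(s=1\), using the fact that \(\kappa_s=(sk_j)\) is again multiplicatively thick. So no genuine obstacle remains; the proof is a direct chain of citations.
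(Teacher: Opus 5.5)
Your proposal is correct and matches the paper's route: the corollary is stated as an immediate consequence of Proposition~\ref{prop:margin-implies-weak}, whose proof (via Lemma~\ref{lem:separation-criterion}, Corollary~\ref{cor:full-measure-vector}, Theorem~\ref{thm:powers} and Theorem~\ref{thm:subsequence-powers}) is the same chain of citations you give. Your justification of the piecewise \(C^r\) claim from (M1), using the finite cover \(\overline{U_0}\cup\bigcup_iG_i\), spells out a point the paper leaves implicit and is fine.
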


\begin{remark}
\label{rem:margin-useful}
The class \(\mathfrak C_m^r\) is deliberately stated in terms of strict,
checkable inequalities.  In concrete models the most delicate estimate is the
logarithmic residence bound \eqref{eq:margin-residence}.  Once that estimate is
proved, no additional recurrence assumption is needed: the lifted residence
blocks are produced by the weak cyclic rule and the unbounded lifted itinerary, as in
Lemma~\ref{lem:blocks}.
\end{remark}

\subsection{Perturbative family around the classical Stein--Ulam map}
\label{subsec:perturb-classical-SU}

Let
\[
        \Delta^2=\{(x_1,x_2,x_3)\in\R^3:x_i\ge0,
        \ x_1+x_2+x_3=1\}
\]
and consider the classical Stein--Ulam map
\[
        \mathcal U(x_1,x_2,x_3)
        =
        \bigl(
        x_1^2+2x_1x_2,
        x_2^2+2x_2x_3,
        x_3^2+2x_3x_1
        \bigr).
\]
The classical six-sector estimates for \(\mathcal U\) give a weak
admissible Stein--Ulam structure.  Equivalently, this is the classical
\(\mathbf p=(1/3,1/3,1/3)\) case of the \(\mathbf p\)-Stein--Ulam
verification framework once the sector and residence estimates are supplied.
Fix one weak admissible structure for \(\mathcal U\), with residence regions
\(R_1,R_2,R_3\), and put
\[
        \eta_{\mathcal U}
        =
        \min_{z\in\Delta^2}
        \max_{1\le j\le3}\dist\bigl(z,\co(R_j)\bigr)>0.
\]
For \(0<\varepsilon<\eta_{\mathcal U}\), define
\[
        \mathscr P_\varepsilon(\mathcal U)
        =
        \left\{
        h^{-1}\circ\mathcal U\circ h:
        h\in\operatorname{Homeo}(\Delta^2),
        \quad h(\bd\Delta^2)=\bd\Delta^2,
        \quad \|h-\id\|_\infty<\varepsilon
        \right\}.
\]

\begin{proposition}
\label{prop:perturbative-SU-family}
Every map
\[
        K_h=h^{-1}\circ\mathcal U\circ h
        \in\mathscr P_\varepsilon(\mathcal U)
\]
is weak admissible.  If, in addition, for some vertex \(\mathbf e_i\) the point
\(h(\mathbf e_i)\) is not a fixed point of \(\mathcal U\), then \(K_h\) is not a
Lotka--Volterra stochastic operator and hence is not a
\(\mathbf q\)-Stein--Ulam Lotka--Volterra operator for any
\(\mathbf q\in\inte\Delta^2\).
\end{proposition}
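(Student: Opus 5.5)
The plan has two parts, matching the two assertions.

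For weak admissibility, I will apply Theorem~\ref{thm:weak-admissibility-conjugacy} with \(K=\mathcal U\). The fixed weak admissible structure for \(\mathcal U\) has residence regions \(R_1,R_2,R_3\), and \(\eta_{\mathcal U}\) is exactly the constant \(\eta_K\) of that theorem. Every \(h\) in the definition of \(\mathscr P_\varepsilon(\mathcal U)\) is a boundary-preserving homeomorphism with \(\|h-\id\|_\infty<\varepsilon<\eta_{\mathcal U}\), so the theorem applies and shows that \(K_h=h^{-1}\circ\mathcal U\circ h\) is weak admissible. This step is purely a matter of matching hypotheses.

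For the non-Lotka--Volterra assertion, I will use the structural property of Lotka--Volterra stochastic operators that each vertex is fixed. Such an operator has coordinates \(V(\x)_k=x_k\bigl(1+\sum_i a_{ki}x_i\bigr)\) with \(a_{ki}=-a_{ik}\). At \(\x=\e_i\) every coordinate with \(k\ne i\) vanishes, so \(V(\e_i)=\e_i\). If the paper's later definition of \(\mathbf q\)-Stein--Ulam Lotka--Volterra operators makes them a subclass of Lotka--Volterra operators, the second half of the conclusion follows from the first. So it suffices to show that \(K_h(\e_i)\ne\e_i\) whenever \(h(\e_i)\notin\Fix(\mathcal U)\). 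Suppose instead that \(K_h(\e_i)=\e_i\). Then \(\mathcal U(h(\e_i))=h(K_h(\e_i))=h(\e_i)\), so \(h(\e_i)\) is a fixed point of \(\mathcal U\), contradicting the hypothesis. Hence \(K_h\) moves the vertex \(\e_i\) and cannot be a Lotka--Volterra operator.

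The main obstacle is not the argument but the conventions. I must make sure that the class of Lotka--Volterra stochastic operators intended here is the standard one, which has the vertex-fixing form (coordinates \(x_k\) times a factor). I must also confirm that \(\mathbf q\)-Stein--Ulam operators belong to that class, which I would check against the definition in the later verification section. The hypothesis on \(h(\e_i)\) is what makes the example genuinely new: for the classical map, \(\Fix(\mathcal U)\cap\bd\Delta^2\) consists of the three vertices, so the hypothesis says that \(h\) slides some vertex along the boundary. Boundary-preserving homeomorphisms that are \(\varepsilon\)-close to the identity can do this, so the set of such maps is nonempty. I would record this as a remark, though the statement does not require it.
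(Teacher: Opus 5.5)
Your proposal is correct and follows the paper's proof essentially verbatim. The first part is Theorem~\ref{thm:weak-admissibility-conjugacy} applied to \(K=\mathcal U\) with \(\|h-\id\|_\infty<\varepsilon<\eta_{\mathcal U}\). The second part uses that Lotka--Volterra maps fix vertices, while \(K_h(\mathbf e_i)=\mathbf e_i\) would force \(\mathcal U(h(\mathbf e_i))=h(\mathbf e_i)\). The paper's Definition~\ref{def:lv} allows general continuous \(F_i\) rather than the quadratic skew-symmetric form you wrote, but your vertex argument covers that case unchanged: the coordinates with index \(k\ne i\) vanish and the image lies in the simplex.
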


\begin{proof}
The first assertion is exactly Theorem~\ref{thm:weak-admissibility-conjugacy},
applied to \(K=\mathcal U\).  The smallness condition
\(\|h-\id\|_\infty<\eta_{\mathcal U}\) preserves the convex-hull separation of
the transported residence regions.

For the second assertion, recall that every stochastic Lotka--Volterra map has
the coordinate form
\[
        (L(\x))_i=x_i(1+F_i(\x)),
        \qquad i=1,2,3.
\]
Thus every coordinate face is invariant and, in particular, every vertex is
fixed:
\[
        L(\mathbf e_i)=\mathbf e_i,
        \qquad i=1,2,3.
\]
If \(h(\mathbf e_i)\) is not a fixed point of \(\mathcal U\), then
\[
        K_h(\mathbf e_i)=\mathbf e_i
        \quad\Longleftrightarrow\quad
        \mathcal U(h(\mathbf e_i))=h(\mathbf e_i),
\]
which is false by assumption.  Hence \(K_h\) does not fix the vertex
\(\mathbf e_i\).  Therefore \(K_h\) cannot be a stochastic Lotka--Volterra map.
Consequently it cannot be \(\mathbf q\)-Stein--Ulam Lotka--Volterra for any
interior \(\mathbf q\).  The boundary rotation constructed in
Example~\ref{ex:weak-not-pSU} gives an explicit such \(h\).
\end{proof}

\begin{remark}
\label{rem:perturb-large}
\begin{sloppypar}
The family \(\mathscr P_\varepsilon(\mathcal U)\) is infinite-dimensional.  It
contains smooth and piecewise-smooth boundary-preserving twists whenever the
homeomorphism \(h\) is chosen smooth or piecewise smooth on a triangulation of
\(\Delta^2\).  Moreover, closeness to the identity is symmetric in this family: if \(h\) is
uniformly close to \(\id\), then \(h^{-1}\) is uniformly close to \(\id\) as
well.  Hence every map in this family is a small \(C^0\)-perturbative conjugate
of \(\mathcal U\).
\end{sloppypar}
\end{remark}

\subsection{Piecewise-smooth maps with margins}
\label{subsec:piecewise-smooth-margins}

The previous margin formulation is especially useful for piecewise-smooth
models.  Suppose that \(\mathcal P\) is a finite polyhedral partition of
\(\DeltaM\setminus U_0\) subordinate to the compact cyclic cover \((G_i)\), and that
\(K\) is \(C^r\) on each element of \(\mathcal P\).  If the inequalities
\eqref{eq:margin-contraction}, \eqref{eq:vertex-cap-margin}, and
\eqref{eq:margin-residence} hold with positive margins, then the map belongs to
\(\mathfrak C_m^r\).  Thus all conclusions of the paper apply to such
piecewise-smooth systems.

\begin{proposition}
\label{prop:piecewise-smooth-margin-criterion}
Let \(m\ge4\), and let \(K:\DeltaM\to\DeltaM\) be continuous and piecewise
\(C^r\) on a finite polyhedral partition.  Assume that \(K\) admits data
satisfying \textup{(M1)--(M6)} of Definition~\ref{def:cyclic-margin-family}.
Then \(K\) is weak admissible.  In particular,
\[
        \frac1n\sum_{q=0}^{n-1}K^{s k_q}(\x)
\]
diverges for every \(s\in\N\), every multiplicatively thick sequence
\((k_q)\), and every \(\x\in\inte\DeltaM\setminus\{\p\}\).
\end{proposition}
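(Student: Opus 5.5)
The plan is to reduce the statement to Proposition~\ref{prop:margin-implies-weak}, which already shows that every map in \(\mathfrak C_m^r\) is weak admissible and has divergent sampled averages. The only thing to check is that a continuous, piecewise \(C^r\) map \(K\) carrying data satisfying \textup{(M1)--(M6)} really belongs to \(\mathfrak C_m^r\). The piecewise \(C^r\) hypothesis is used only in \textup{(M1)}. All the dynamical content comes from the remaining margin conditions.

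First I verify \textup{(M1)}. Continuity is assumed, and the invariance and fixed-point requirements are among the given data. For \(r\ge1\), each cover element \(G_i\) must lie in a set on which \(K\) has a \(C^r\) extension. I would obtain this by refining the polyhedral partition \(\mathcal P\) to be subordinate to \((G_i)\), as in the preceding discussion. Each \(G_i\) then lies in a union of closed polyhedral pieces, and \(K\) is \(C^r\) on each piece. Two points need care here. If a \(G_i\) meets several pieces, I would reinterpret ``admits a \(C^r\) extension on a set containing \(G_i\)'' piecewise, or subdivide the cover. Near \(\overline{U_0}\), I need \(K\) to be \(C^r\); this holds if \(\overline{U_0}\) sits inside a single piece, or can be arranged by shrinking \(U_0\). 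In fact the weak-admissibility conclusion never uses regularity, since \textup{(W1)--(W6)} are purely topological-metric. So even if this bookkeeping is only partially matched, Proposition~\ref{prop:margin-implies-weak} goes through verbatim.

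Next I apply Proposition~\ref{prop:margin-implies-weak}, whose proof the argument follows. \textup{(M2)} with margin \eqref{eq:margin-contraction} gives \textup{(W2)}. The cover, together with \textup{(M3)}, gives \textup{(W3)}. \textup{(M5)} is \textup{(W5)}. \eqref{eq:vertex-cap-margin}, combined with Lemma~\ref{lem:separation-criterion} for \(0<\eta<1-1/m\), yields the convex-hull separation \eqref{eq:convhull-separation}. Finally, \eqref{eq:margin-residence} implies \eqref{eq:log-residence}, because \(\Lambda_j>0\). Hence \(K\) is weak admissible.

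The divergence of \(\frac1n\sum_{q<n}K^{sk_q}(\x)\) for every \(s\in\N\), every multiplicatively thick \((k_q)\), and every \(\x\in\inte\DeltaM\setminus\{\p\}\) is then exactly Theorem~\ref{thm:subsequence-powers}. The only step I expect to need any care is the matching of the piecewise \(C^r\) partition with the regularity clause of \textup{(M1)}. Since that clause is irrelevant to the conclusions, I would note this explicitly so the argument does not depend on it.
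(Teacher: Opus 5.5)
Your proposal is correct and is the paper's proof: the paper applies Proposition~\ref{prop:margin-implies-weak} to the given data and then Theorem~\ref{thm:subsequence-powers}, which is exactly your reduction. Your bookkeeping about matching the polyhedral partition to the regularity clause of \textup{(M1)} is unnecessary, because \textup{(M1)}, including that clause, is part of the hypothesis, so \(K\in\mathfrak C_m^r\) by definition; and, as you note, regularity plays no role in \textup{(W1)--(W6)} anyway.
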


\begin{proof}
This is Proposition~\ref{prop:margin-implies-weak} applied to the given
piecewise-smooth data, followed by Theorem~\ref{thm:subsequence-powers}.
\end{proof}

\begin{remark}[Suggested use in applications]
\label{rem:suggested-use}
For an explicit formula one should verify the following three estimates with
strict margins:
\[
        \psi\le 1-\sigma
        \quad\hbox{on }\DeltaM\setminus U_0,
\]
\[
        R_j\subset\{x_j\ge1-\eta\},
        \qquad 0<\eta<1-1/m,
\]
and
\[
        \ell
        \ge
        A_j\log_{C_j}^{+}
        \frac{B_j}{\varphi(K^{b}(\x))}
        +\Lambda_j
\]
for every maximal lifted residence block through \(I_j\).  These are exactly the estimates that
make the abstract weak admissibility mechanism robust and suitable for
perturbative constructions.
\end{remark}

\section{A relative high-dimensional cyclic-collar family}
\label{sec:cyclic-collar-family}

In this section we give a concrete high-dimensional family, independent of the
Lotka--Volterra examples, for which the weak cyclic cover and logarithmic
residence estimates can be checked directly on an invariant collar of positive
measure.  The construction is deliberately geometric: it builds a
full-dimensional cyclic collar near the boundary of the simplex and prescribes a
slow rotation along this collar.

The example is relative in the following sense.  It gives a positive
\((m-1)\)-dimensional Lebesgue measure invariant set on which the cyclic-residence
estimates needed for the weak Stein--Ulam block mechanism hold.  Thus it
provides a fully verified positive-measure model of the cyclic-residence
mechanism in dimensions \(m-1\ge3\).  No assertion is made here that an arbitrary
extension outside the collar is a globally weak admissible simplex map.

Throughout this section assume
\[
        m\ge4.
\]
Let
\[
        e_1,\ldots,e_m
\]
be the vertices of \(\DeltaM\), with indices read modulo \(m\), and put
\[
        E_j=[e_j,e_{j+1}],
        \qquad j=1,\ldots,m.
\]
Thus \(E_1,\ldots,E_m\) are the edges of the cyclic polygonal loop
\[
        e_1\to e_2\to\cdots\to e_m\to e_1
\]
contained in \(\bd\DeltaM\).

\begin{lemma}
\label{lem:cyclic-collar}
There exist \(u_*>0\), \(\eta>0\), and a piecewise \(C^r\) embedding
\[
        \Xi:
        [0,u_*]\times\mathbb T\times \overline{\mathbb B}^{m-3}
        \longrightarrow
        \DeltaM,
        \qquad
        \mathbb T=\mathbb R/\mathbb Z,
\]
with the following properties.

\begin{enumerate}
\item[\textup{(i)}]
For \(u=0\),
\[
        \Xi(0,t,w)\in\bd\DeltaM.
\]
For \(0<u\le u_*\),
\[
        \Xi(u,t,w)\in\inte\DeltaM.
\]

\item[\textup{(ii)}]
The parameter \(u\) is a boundary-depth coordinate on the image of \(\Xi\).  In
particular,
\[
        u=0
        \quad\Longleftrightarrow\quad
        \Xi(u,t,w)\in\bd\DeltaM.
\]

\item[\textup{(iii)}]
Let
\[
        J_j=
        \left[\frac{j-1}{m},\frac{j}{m}\right],
        \qquad j=1,\ldots,m,
\]
with endpoints read modulo \(1\).  The image of the \(j\)-th window is contained
in a small neighborhood of the edge \(E_j\):
\[
        \Xi\bigl([0,u_*]\times J_j\times \overline{\mathbb B}^{m-3}\bigr)
        \subset
        \{x\in\DeltaM:\operatorname{dist}(x,E_j)\le\eta\}.
\]

\item[\textup{(iv)}]
The number \(\eta>0\) may be chosen so small that
\[
        \bigcap_{j=1}^m
        \co
        \{x\in\DeltaM:\operatorname{dist}(x,E_j)\le\eta\}
        =
        \varnothing.
\]
\end{enumerate}
\end{lemma}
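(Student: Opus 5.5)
The plan is to build the collar in two stages: first a thin solid torus inside the boundary sphere \(\bd\DeltaM\) around the polygonal loop \(\Gamma=E_1\cup\cdots\cup E_m\), then push it radially toward the barycenter \(c=(1/m,\ldots,1/m)\) to get the depth coordinate \(u\). Concretely, let \(\gamma:\mathbb T\to\Gamma\) be the piecewise linear parametrization sending \(J_j\) affinely onto \(E_j\), running from \(e_j\) to \(e_{j+1}\). I will construct a piecewise \(C^r\) embedding
\[
        Y:\mathbb T\times\overline{\mathbb B}^{m-3}\to\bd\DeltaM,
        \qquad Y(t,0)=\gamma(t),
        \qquad \|Y(t,w)-\gamma(t)\|\le\varepsilon,
\]
and then set
\[
        \Xi(u,t,w)=c+(1-u)\bigl(Y(t,w)-c\bigr),
        \qquad 0\le u\le u_*<1 .
\]
Since \(\DeltaM\) is convex with \(c\) in its relative interior, every ray from \(c\) meets \(\bd\DeltaM\) exactly once. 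Hence \(\Xi(0,t,w)=Y(t,w)\in\bd\DeltaM\), while \(\Xi(u,t,w)\in\inte\DeltaM\) for \(0<u\le u_*\). The map \((u,y)\mapsto c+(1-u)(y-c)\) is injective on \([0,1)\times\bd\DeltaM\), so \(\Xi\) is an embedding as soon as \(Y\) is, and (i) and (ii) follow at once. Here \(u\) is simply the radial depth, i.e. one minus the gauge function of \(\DeltaM\) centred at \(c\).

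The construction of \(Y\) is the step I expect to be the main obstacle, because the loop \(\Gamma\) passes through vertices where it changes facets, so a global normal frame inside \(\bd\DeltaM\) has to be built by hand. The abstract fact I will rely on is that \(\bd\DeltaM\) is a PL \((m-2)\)-sphere and \(\Gamma\) is a PL circle in it. The regular neighbourhood of a circle in an orientable PL manifold of dimension \(m-2\ge2\) is a trivial disk bundle \(\mathbb T\times\overline{\mathbb B}^{m-3}\); it is trivial because orientability rules out the twisted bundle over the circle.

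To make this explicit and piecewise smooth, I will work on the radial sphere picture. Let \(\pi\) be radial projection from \(c\) of \(\bd\DeltaM\) onto a round sphere about \(c\); it is a bi-Lipschitz, piecewise smooth homeomorphism. The image \(\pi(\Gamma)\) is a piecewise geodesic closed curve, so I will round its corners in small neighbourhoods of the \(\pi(e_j)\) to obtain a smooth embedded circle. I then take a parallel orthonormal normal frame along it, correcting the holonomy by a rotation in \(SO(m-3)\) that varies continuously in \(t\), which is possible since \(SO(m-3)\) is connected. Exponentiating with a small radius gives a solid torus, and pulling back by \(\pi^{-1}\) and reparametrizing so that \(Y(t,0)=\gamma(t)\) yields \(Y\). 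The \(C^r\) smoothness only breaks along the faces of \(\DeltaM\), which is why the statement asks for a \emph{piecewise} \(C^r\) embedding.

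For (iii) I use uniform continuity. Every point \(\Xi(u,t,w)\) with \(t\in J_j\) lies within \(\varepsilon+u_*\operatorname{diam}\DeltaM\) of \(\gamma(t)\in E_j\), so choosing \(\varepsilon\) and \(u_*\) small relative to \(\eta\) gives (iii).

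For (iv), I will apply Lemma~\ref{lem:convhull-separation-stable} with \(R_j=E_j\). Each edge \(E_j\) is convex, so \(\co(E_j)=E_j\). Since \(m\ge4\), the edges \(E_1=[e_1,e_2]\) and \(E_3=[e_3,e_4]\) are disjoint, and hence \(\bigcap_j\co(E_j)=\varnothing\). The lemma then supplies \(\eta_0>0\) such that any compact sets lying within distance \(\rho<\eta_0\) of the \(E_j\) have convex hulls with empty common intersection. The set \(\{x:\dist(x,E_j)\le\eta\}\) is itself such a set for \(\eta<\eta_0\). Alternatively, it is convex, being a sublevel set of the distance to a convex set, and any common point \(x\) would satisfy \(\dist(x,E_1)\le\eta\) and \(\dist(x,E_3)\le\eta\), which is impossible once \(2\eta<\dist(E_1,E_3)\). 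Either way I fix \(\eta\) in this range first, and then choose \(\varepsilon\) and \(u_*\) depending on \(\eta\) as in (iii).
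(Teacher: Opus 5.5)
Your proposal is correct and follows the paper's route. Like the paper, you build a thin trivial tube around the loop $e_1\to\cdots\to e_m\to e_1$ inside $\bd\DeltaM$ and then add an inward collar direction, which you make explicit as the radial push toward the barycenter, with $u$ equal to one minus the gauge function. For (iv) you use the disjointness of $E_1$ and $E_3$ to get the explicit threshold $2\eta<\dist(E_1,E_3)$, whereas the paper combines $\bigcap_j E_j=\varnothing$ with a compactness argument.

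One small inconsistency: after you round the corners of $\pi(\Gamma)$, the core of the tube no longer passes through the vertices, so you cannot literally arrange $Y(t,0)=\gamma(t)$. You can only arrange $\|Y(t,w)-\gamma(t)\|\le\varepsilon$, but that estimate is all you use in (iii).
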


\begin{proof}
The polygonal curve
\[
        e_1\to e_2\to\cdots\to e_m\to e_1
\]
is a simple closed polygonal curve in the boundary of the simplex.  Since
\[
        \dim\bd\DeltaM=m-2\ge2,
\]
this curve has a small piecewise \(C^r\) tubular neighborhood in
\(\bd\DeltaM\).  Adding the inward normal collar direction gives an embedding
\[
        [0,u_*]\times\mathbb T\times \overline{\mathbb B}^{m-3}
        \longrightarrow
        \DeltaM,
\]
where \(u=0\) corresponds to the boundary and \(u>0\) corresponds to interior
points.  By choosing the tubular neighborhood sufficiently thin, the part of
the collar over the interval \(J_j\) is contained in an \(\eta\)-neighborhood
of the edge \(E_j\).

It remains to justify the convex-hull separation.  First observe that
\[
        \bigcap_{j=1}^m E_j=\varnothing.
\]
Indeed, a point of \(E_j\) has support contained in
\(\{j,j+1\}\), and no point of the simplex can have support contained in every
such pair simultaneously.  Since each \(E_j\) is already convex,
\(\co(E_j)=E_j\).

For \(\eta>0\), put
\[
        N_\eta(E_j)=\{x\in\DeltaM:\operatorname{dist}(x,E_j)\le\eta\}.
\]
Because \(E_j\) is convex, its Euclidean \(\eta\)-neighborhood is convex; hence
\(\co(N_\eta(E_j))=N_\eta(E_j)\).  If the desired separation failed for a
sequence \(\eta_k\downarrow0\), then there would be points
\(z_k\in\bigcap_jN_{\eta_k}(E_j)\).  By compactness of \(\DeltaM\), a subsequence
would converge to a point \(z\in\bigcap_jE_j\), contradicting the displayed
empty intersection.  Thus, for all sufficiently small \(\eta>0\),
\[
        \bigcap_{j=1}^m
        \co
        \{x\in\DeltaM:\operatorname{dist}(x,E_j)\le\eta\}
        =
        \varnothing.
\]
This proves the lemma.
\end{proof}

We now define the model.  Fix parameters
\[
        0<\lambda<1,
        \qquad
        0<\beta<1,
\]
and choose constants \(a>0\), \(M>0\) so that
\[
        \omega(u):=
        \frac{a}{M+\log(1/u)}
        \le \frac1N
        \qquad
        \text{for }0<u\le u_*,
\]
where \(N\ge 2m\) is a fixed integer divisible by \(m\).  We also set
\[
        \omega(0)=0.
\]
By taking \(a>0\) sufficiently small or \(M>0\) sufficiently large, this
condition is always possible.

Define a map on the collar image by
\[
        K_{\lambda,a,\beta}
        \bigl(\Xi(u,t,w)\bigr)
        =
        \Xi\bigl(\lambda u,\ t+\omega(u),\ \beta w\bigr),
        \qquad
        0<u\le u_*,
\]
where \(t+\omega(u)\) is read modulo \(1\).  On the boundary part of the collar
we put
\[
        K_{\lambda,a,\beta}
        \bigl(\Xi(0,t,w)\bigr)
        =
        \Xi(0,t,\beta w).
\]
Thus the boundary is invariant, and the angular motion freezes on the boundary
because
\[
        \omega(u)\to0
        \qquad
        \text{as }u\to0.
\]

Let
\[
        \mathcal C
        =
        \Xi\bigl([0,u_*]\times\mathbb T\times
        \overline{\mathbb B}^{m-3}\bigr)
\]
and
\[
        \mathcal C^\circ
        =
        \Xi\bigl((0,u_*]\times\mathbb T\times
        \mathbb B^{m-3}\bigr).
\]
Then \(\mathcal C^\circ\subset\inte\DeltaM\) has positive
\((m-1)\)-dimensional Lebesgue measure and is forward invariant.

\begin{definition}
\label{def:cyclic-collar-model}
Any continuous simplex map
\[
        \widehat K_{\lambda,a,\beta}:\DeltaM\to\DeltaM
\]
which agrees with \(K_{\lambda,a,\beta}\) on \(\mathcal C\) will be called a
relative cyclic-collar model.  This terminology records only the prescribed
collar dynamics; it does not assert that the arbitrary extension outside
\(\mathcal C\) satisfies the global weak-admissibility axioms.  The dynamics
outside \(\mathcal C\) is irrelevant for the conclusions below, because
\(\mathcal C^\circ\) is positively invariant.
\end{definition}

\begin{remark}
Such continuous extensions exist because \(\DeltaM\) is convex.  For example,
one may first extend the map continuously from the closed collar to a
neighborhood of the collar and then use a partition-of-unity argument followed
by projection onto \(\DeltaM\).  The estimates below depend only on the explicit
formula on \(\mathcal C\).  If a piecewise \(C^r\) global model is desired, the
extension should be constructed with the corresponding compatibility conditions;
no differentiability of the extension is used in the arguments below.
\end{remark}

\begin{theorem}
\label{thm:cyclic-collar-verified}
Let
\[
        \widehat K_{\lambda,a,\beta}:\DeltaM\to\DeltaM
\]
be a relative cyclic-collar model.  Then, on the invariant collar interior,
\(\widehat K_{\lambda,a,\beta}\) satisfies the weak cyclic-residence mechanism.
More precisely, every orbit starting from \(\mathcal C^\circ\) has infinitely many
maximal lifted residence blocks through each of the \(m\) residence regions
\[
        R_j
        =
        \Xi\bigl([0,u_*]\times J_j\times
        \overline{\mathbb B}^{m-3}\bigr),
        \qquad
        j=1,\ldots,m,
\]
and every such maximal lifted block satisfies the logarithmic residence estimate
\[
        \ell
        \ge
        A_j
        \log_{C_j}^{+}
        \frac{B_j}{u_{\mathrm{in}}},
\]
where \(u_{\mathrm{in}}\) is the boundary-depth coordinate at the entrance of
the block.  Consequently, the vector Ces\`aro averages of
\(\widehat K_{\lambda,a,\beta}\) do not converge for every
\[
        x\in \mathcal C^\circ.
\]
The same conclusion holds, for every initial point in \(\mathcal C^\circ\), for
every positive power and for every multiplicatively thick sampling subsequence.
\end{theorem}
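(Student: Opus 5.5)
The plan is to work entirely in collar coordinates. Write $x=\Xi(u_0,t_0,w_0)$ with $u_0>0$, so the orbit is explicitly
\[
u_n=\lambda^n u_0,\qquad w_n=\beta^n w_0,\qquad t_n=t_0+\sum_{k=0}^{n-1}\omega(u_k)\pmod 1 .
\]
Since $\omega(u_k)=a/(M+\log(1/u_0)+k\log(1/\lambda))$, the lifted angle $\tilde t_n=\tilde t_0+\sum_{k<n}\omega(u_k)$ increases to $+\infty$ like $\frac{a}{\log(1/\lambda)}\log n$. This harmonic-type divergence is what replaces the Lyapunov/escape argument; here the role of $\varphi$ is played by $u$ itself, and $u_n=\lambda^n u_0$ gives the multiplicative drift with $\psi\equiv\lambda$ on the collar.

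\textbf{Cover and itinerary.} Take the cover $G_i=\Xi([0,u_*]\times[i/N,(i+1)/N]\times\overline{\mathbb B}^{m-3})$ for $i\in\Zmod N$, and set $I_j=\{(j-1)N/m,\ldots,jN/m-1\}$. This uses $m\mid N$, and it makes $R_j=\bigcup_{i\in I_j}G_i$ coincide with the window $J_j$ of the statement. Define $r_n=\lfloor N\tilde t_n\rfloor$. Because $\omega\le 1/N$, we get $r_{n+1}-r_n\in\{0,1\}$, and $r_n\to\infty$ by the divergence above. So Lemma~\ref{lem:blocks}, whose proof uses only this monotone unbounded lift, produces infinitely many maximal lifted blocks through each $I_j$.

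\textbf{Residence estimate.} A maximal block through $I_j$ starting at entrance time $b$ requires $\tilde t$ to traverse an interval of length $1/m$, up to one cover cell at each end; it is at least $1/m-2/N\ge 0$, and the boundary effects are absorbed into the constants. During the block, the sum $\sum_{k=b}^{b+\ell-1}\omega(u_k)$ is at most $\ell\,\omega(u_b)$ because $\omega$ is increasing in $u$ and $u_k$ decreases. Hence
\[
\ell\ge c\bigl(M+\log(1/u_b)\bigr)/a
\]
for a suitable $c>0$, which is of the form $A_j\log^+_{C_j}(B_j/u_{\mathrm{in}})$. The one point to handle carefully is the first and last partial cell. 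I expect this to be the only fiddly step: I would treat the whole cells strictly inside $\widetilde I_{j,q}$, of which there are $N/m-2\ge0$, and if $N=2m$ gives zero of them, use the crude bound $\ell\ge1$ together with the monotone comparison on the full traversal of the $N/m$ cells. This may require taking $N\ge 3m$, or a slightly more careful count.

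\textbf{Conclusion.} Following the proof of Proposition~\ref{prop:linear-residence} with $\varphi$ replaced by $u$, we have $u_b=\lambda^b u_0$, so $\ell_{j,q}\ge c' b_{j,q}$ and $\liminf \ell_{j,q}/b_{j,q}>0$. Lemma~\ref{lem:block} then forces any Cesàro limit $L$ into each $\co(R_j)\subset\co N_\eta(E_j)$, by Lemma~\ref{lem:cyclic-collar}(iii). This contradicts Lemma~\ref{lem:cyclic-collar}(iv). For positive powers and thick subsequences, the same blocks are fed into Lemmas~\ref{lem:sampling}, \ref{lem:thick-subsequence-sampling} and~\ref{lem:sampled-block-convex}, exactly as in the proofs of Theorems~\ref{thm:powers} and~\ref{thm:subsequence-powers}. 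These steps use only the block property along the orbit, so the global weak-admissibility axioms are never needed, and invariance of $\mathcal C^\circ$ keeps the whole orbit inside the region where the explicit formula applies.
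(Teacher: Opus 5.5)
Your proposal is correct and follows essentially the same route as the paper: explicit collar orbit, the fine cover with $r_n=\lfloor N\tilde t_n\rfloor$ and $\omega\le 1/N$, harmonic-type divergence of the lifted angle, the monotone comparison $\sum_{k=b}^{b+\ell-1}\omega(u_k)\le \ell\,\omega(u_b)$, and then the block and sampling lemmas; you are in fact more explicit than the paper in deriving $\liminf_q \ell_{j,q}/b_{j,q}>0$ from $u_b=\lambda^b u_0$. The step you flag is harmless even for $N=2m$: at a genuine entrance $r_b=\alpha_j+qN$, and at the first time after the block $r_{b+\ell}=\alpha_j+qN+N/m$, so $\tilde t_{b+\ell}-\tilde t_b>1/m-1/N\ge 1/(2m)$ and only the entrance cell is lost, which gives $\ell>(M+\log(1/u_b))/(2ma)$ without any need for $N\ge 3m$.
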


\begin{proof}
Let
\[
        x_0=\Xi(u_0,t_0,w_0)\in\mathcal C^\circ.
\]
The orbit remains in the collar, and by the explicit formula we have
\[
        u_n=\lambda^n u_0,
        \qquad
        w_n=\beta^n w_0,
\]
and
\[
        t_n
        =
        t_0+\sum_{k=0}^{n-1}\omega(u_k)
        \quad\text{in }\mathbb T.
\]

First, the Lyapunov escape is immediate.  With
\[
        \varphi\bigl(\Xi(u,t,w)\bigr)=u,
\]
one has
\[
        \varphi(K_{\lambda,a,\beta}(\Xi(u,t,w)))
        =
        \lambda u
        =
        \lambda\varphi(\Xi(u,t,w)).
\]
Thus
\[
        \varphi(K_{\lambda,a,\beta}^n(x_0))
        =
        \lambda^n\varphi(x_0)
        \longrightarrow0.
\]
Hence the orbit approaches the boundary of the simplex.

Next we verify the weak cyclic cover and lifted itinerary.  Use a real lift
\[
        \tau_n=t_0+\sum_{k=0}^{n-1}\omega(u_k)
\]
of the angular coordinate and divide \(\mathbb T\) into the closed compact
sector cover
\[
        Q_i=
        \left[\frac{i}{N},\frac{i+1}{N}\right],
        \qquad
        i=0,\ldots,N-1,
\]
with endpoints read modulo \(1\).  Because \(m\mid N\), each residence window
\(J_j\) is a union of \(N/m\) consecutive fine sectors, so the fine cyclic cover
is aligned with the residence windows.  Put \(r_n=\lfloor N\tau_n\rfloor\).  Since
\(0\le\omega(u_n)\le1/N\), we have
\[
        r_{n+1}-r_n\in\{0,1\}.
\]
Moreover, the point at time \(n\) belongs to the compact sector labeled
\(r_n\pmod N\).  Thus the one-step weak cyclic rule is satisfied in the
compact-cover sense.

The lifted cyclic itinerary is unbounded.  Indeed,
\[
\sum_{k=0}^{n-1}\omega(u_k)
=
\sum_{k=0}^{n-1}
\frac{a}{M+\log(1/u_0)+k\log(1/\lambda)}.
\]
The right-hand side is a divergent harmonic-type sum.  Therefore the lifted
angular coordinate, and hence the lifted cyclic itinerary, tends to \(+\infty\).  It follows that the orbit has infinitely many maximal lifted entrance--exit
crossings of every residence window \(J_j\).

We now prove the logarithmic residence estimate.  Fix \(j\) and consider a
maximal lifted residence block through \(R_j\).  Let
\[
        u_{\mathrm{in}}
\]
be the \(u\)-coordinate at the first time of this block.  During the block the
sequence \(u_n\) decreases, and therefore
\[
        \omega(u_n)\le \omega(u_{\mathrm{in}})
\]
throughout the block.  Since the angular length of \(J_j\) is
\[
        |J_j|=\frac1m,
\]
the orbit needs at least
\[
        \frac{1}{m\,\omega(u_{\mathrm{in}})}-2
\]
iterates to cross \(J_j\), where the constant \(2\) absorbs the possible
one-step entrance and exit errors.  Hence
\[
\begin{aligned}
        \ell
        &\ge
        \frac{1}{m\,\omega(u_{\mathrm{in}})}-2        \\
        &=
        \frac{M+\log(1/u_{\mathrm{in}})}{ma}-2.
\end{aligned}
\]
Choosing \(B_j\) sufficiently small to make the right-hand side vanish on the
remaining bounded range of \(u_{\mathrm{in}}\), and decreasing the constant in
front if necessary, we obtain constants
\[
        A_j>0,
        \qquad
        B_j>0,
        \qquad
        C_j>1,
\]
such that
\[
        \ell
        \ge
        A_j
        \log_{C_j}^{+}
        \frac{B_j}{u_{\mathrm{in}}}.
\]
This is precisely the logarithmic residence estimate.

It remains to verify the convex-hull obstruction.  By Lemma~\ref{lem:cyclic-collar},
each residence region \(R_j\) is contained in an \(\eta\)-neighborhood of the
edge \(E_j\), and \(\eta\) was chosen so that
\[
        \bigcap_{j=1}^m \co(R_j)=\varnothing.
\]
The usual convex-hull block lemma now applies.  If the vector Ces\`aro averages
of the orbit converged to \(L\), then the long maximal residence blocks through
\(R_j\) would force
\[
        L\in\co(R_j)
        \qquad
        \text{for every }j=1,\ldots,m.
\]
Thus
\[
        L\in\bigcap_{j=1}^m\co(R_j),
\]
contradicting the convex-hull separation.  Therefore the vector Ces\`aro
averages do not converge.

The statements for positive powers and for multiplicatively thick subsequences
follow from the same sampling lemmas used in the main weak Stein--Ulam theorem:
a linearly long maximal residence block contains a proportionally long sampled block.
\end{proof}

\subsection{An explicit high-dimensional cyclic-collar model}
\label{subsec:explicit-cyclic-collar}

We now give a completely explicit high-dimensional example, independent of the
Lotka--Volterra coordinate form.  The example is formulated on the tetrahedron
\[
        \Delta^3
        =
        \{x=(x_1,x_2,x_3,x_4)\in\mathbb R^4:
        x_i\ge0,\ x_1+x_2+x_3+x_4=1\}.
\]
It gives a positive-measure invariant collar on which the weak Stein--Ulam
cyclic-residence mechanism is verified directly from the formula.

Let
\[
        p=\left(\frac14,\frac14,\frac14,\frac14\right)
\]
and let
\[
        c=\left(\frac13,\frac13,\frac13,0\right)
\]
be the barycenter of the face
\[
        F=\{x\in\Delta^3:x_4=0\}.
\]
Put
\[
        u_1=\left(\frac23,-\frac13,-\frac13,0\right),
        \qquad
        u_2=\left(0,\frac1{\sqrt3},-\frac1{\sqrt3},0\right).
\]
Then \(u_1,u_2\) span the two-dimensional affine plane of the face \(F\) through
\(c\).  Fix
\[
        0<\rho<\frac1{12},
        \qquad
        0<\sigma_0<\frac12,
        \qquad
        0<r_0<\frac12.
\]
No further smallness of \(\sigma_0\) or \(r_0\) is needed for the convex-hull
separation below; the opposite angular windows are separated by an affine
functional as long as the inner radius \(\rho(1-\sigma_0)\) is positive.
For
\[
        t\in\mathbb T:=\mathbb R/\mathbb Z,
        \qquad
        |\sigma|\le\sigma_0,
\]
define
\[
        Q(t,\sigma)
        =
        c+\rho(1+\sigma)
        \bigl(\cos(2\pi t)u_1+\sin(2\pi t)u_2\bigr).
\]
Since
\[
        \left|
        \bigl(\cos(2\pi t)u_1+\sin(2\pi t)u_2\bigr)_i
        \right|
        \le \frac23,
        \qquad i=1,2,3,
\]
and
\[
        \rho(1+\sigma_0)<\frac16,
\]
we have
\[
        Q_i(t,\sigma)>0,
        \qquad i=1,2,3,
\]
and
\[
        Q_4(t,\sigma)=0.
\]
Hence
\[
        Q(t,\sigma)\in \operatorname{relint}F.
\]

Define an explicit collar map
\[
        \Xi:[0,r_0]\times\mathbb T\times[-\sigma_0,\sigma_0]
        \longrightarrow \Delta^3
\]
by
\[
        \Xi(r,t,\sigma)
        =
        (1-r)Q(t,\sigma)+rp.
\]
For \(r=0\), the point lies in the boundary face \(F\).  For \(0<r\le r_0\),
all four coordinates are positive, and so
\[
        \Xi(r,t,\sigma)\in\operatorname{int}\Delta^3.
\]
Moreover,
\[
        \Xi(r,t,\sigma)_4=\frac r4,
\]
so the coordinate \(r\) is recovered from the image.  Thus \(\Xi\) is an
embedding of a three-dimensional collar.  Its interior
\[
        \mathcal C^\circ
        =
        \Xi\bigl((0,r_0]\times\mathbb T\times(-\sigma_0,\sigma_0)\bigr)
\]
has positive three-dimensional Lebesgue measure in \(\Delta^3\).

Choose parameters
\[
        0<\lambda<1,
        \qquad
        0<\beta<1,
        \qquad
        M>1,
        \qquad
        a>0
\]
so small that
\[
        \omega(r):=
        \frac{a}{M+\log(1/r)}
        \le \frac1{16},
        \qquad
        0<r\le r_0.
\]
For example, it is enough to take
\[
        0<a\le \frac{M+\log(1/r_0)}{16}.
\]
We also set
\[
        \omega(0)=0.
\]

Define \(K_{\lambda,a,\beta}\) on the collar by
\[
        K_{\lambda,a,\beta}
        \bigl(\Xi(r,t,\sigma)\bigr)
        =
        \Xi\bigl(\lambda r,\ t+\omega(r),\ \beta\sigma\bigr),
        \qquad
        0<r\le r_0,
\]
where \(t+\omega(r)\) is read modulo \(1\).  On the boundary part of the collar
we define
\[
        K_{\lambda,a,\beta}
        \bigl(\Xi(0,t,\sigma)\bigr)
        =
        \Xi(0,t,\beta\sigma).
\]
Since
\[
        \omega(r)\to0
        \qquad\text{as }r\to0,
\]
this formula is continuous up to \(r=0\).

Finally, extend \(K_{\lambda,a,\beta}\) continuously to a map
\[
        \widehat K_{\lambda,a,\beta}:\Delta^3\to\Delta^3
\]
which agrees with the above formula on the collar.  Such an extension exists
because \(\Delta^3\) is a convex absolute retract.  The dynamics outside the
collar will not be used; all conclusions below are obtained on the positive
measure invariant set \(\mathcal C^\circ\).

\begin{theorem}
\label{thm:explicit-cyclic-collar}
For every choice of parameters as above, the collar dynamics of
\(\widehat K_{\lambda,a,\beta}\) satisfies the weak Stein--Ulam
cyclic-residence mechanism on \(\mathcal C^\circ\).  More precisely, every
point
\[
        x\in\mathcal C^\circ
\]
has non-convergent vector Ces\`aro averages:
\[
        \frac1n\sum_{k=0}^{n-1}
        \widehat K_{\lambda,a,\beta}^{\,k}(x)
\]
does not converge.  The same conclusion holds for every positive power
\[
        \widehat K_{\lambda,a,\beta}^{\,s},
        \qquad s\in\mathbb N,
\]
and for every multiplicatively thick sampling sequence.
\end{theorem}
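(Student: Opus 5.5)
The plan is to run the block argument of Theorem~\ref{thm:main-divergence} directly on the explicit collar coordinates, without appealing to global weak admissibility. Fix \(x=\Xi(r_0',t_0,\sigma_0')\in\mathcal C^\circ\). Since \(\mathcal C^\circ\) is forward invariant and \(\widehat K_{\lambda,a,\beta}\) agrees with the collar formula there, the orbit is
\[
        r_n=\lambda^n r_0',\qquad \sigma_n=\beta^n\sigma_0',\qquad
        \tau_n=t_0+\sum_{k=0}^{n-1}\omega(r_k),
\]
with \(\tau_n\) a real lift of the angle. The sum is \(\sum_k a/(M+\log(1/r_0')+k\log(1/\lambda))\), which diverges like a harmonic series. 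Hence \(\tau_n\to+\infty\), and the increments satisfy \(0<\omega(r_n)\le 1/16\).

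Next I set up the residence windows. I take four angular windows \(J_j=[(j-1)/4,j/4]\), \(j=1,\ldots,4\), with regions
\[
        R_j=\Xi\bigl([0,r_0]\times J_j\times[-\sigma_0,\sigma_0]\bigr).
\]
Because the steps are at most \(1/16\), the lift cannot skip a window. Each lifted copy \(J_j+q\) is therefore visited in a block of consecutive times \((b,\ell)\). On such a block \(r_n\) decreases, so \(\omega(r_n)\le\omega(r_b)\), and crossing an angular length \(1/4\) requires
\[
        \ell\ge\frac{M+\log(1/r_b)}{4a}-2
        =\frac{M+b\log(1/\lambda)+\log(1/r_0')}{4a}-2 .
\]
This gives \(\liminf \ell/b\ge\log(1/\lambda)/(4a)>0\), which is exactly the linear residence property of Proposition~\ref{prop:linear-residence}, here obtained directly. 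Lemma~\ref{lem:block} then places any Ces\`aro limit \(L\) in every \(\co(R_j)\). Lemma~\ref{lem:sampling} and Lemma~\ref{lem:thick-subsequence-sampling}, combined with Lemma~\ref{lem:sampled-block-convex}, handle positive powers and multiplicatively thick sampling in the same way. For powers \(s\) I apply the thick-sequence result to \((sk_j)\), as in the proof of Theorem~\ref{thm:subsequence-powers}.

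The main obstacle is the convex-hull separation \(\bigcap_j\co(R_j)=\varnothing\), since the windows here lie near the face \(F\) and not near vertices, so Lemma~\ref{lem:separation-criterion} does not apply. I will use affine functionals, following the remark in the construction about opposite windows. Let \(\ell_1(y)=\langle y-c,u_1\rangle\) and \(\ell_2(y)=\langle y-c,u_2\rangle\). Since \(\Xi(r,t,\sigma)-c=(1-r)(Q-c)+r(p-c)\), I compute that \(\langle p-c,u_i\rangle=0\) (both \(u_i\) sum to zero and have zero fourth coordinate, while \(p-c\) is constant in the first three coordinates). I also need \(\|u_1\|=\|u_2\|\) and \(u_1\perp u_2\): indeed \(|u_1|^2=2/3=|u_2|^2\) and \(\langle u_1,u_2\rangle=0\). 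This gives
\[
        \ell_1\bigl(\Xi(r,t,\sigma)\bigr)=(1-r)\rho(1+\sigma)\tfrac23\cos(2\pi t),
\]
and similarly \(\ell_2\) with \(\sin(2\pi t)\).

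With windows of width \(1/4\), the signs of \(\cos(2\pi t)\) and \(\sin(2\pi t)\) on the four windows do not give strict separation, since they vanish at the window endpoints. I will therefore shift the windows to \(J_j=[(j-1)/4-1/8,(j-1)/4+1/8]\), centred at the four axis directions. Then on \(R_1\) we have \(\ell_1\ge c_0>0\), on \(R_3\) we have \(\ell_1\le -c_0\), and analogously \(\ell_2\) separates \(R_2\) from \(R_4\), with \(c_0=(1-r_0)\rho(1-\sigma_0)\tfrac23\cos(\pi/4)>0\). These inequalities pass to convex hulls because \(\ell_1\) and \(\ell_2\) are affine, so already \(\co(R_1)\cap\co(R_3)=\varnothing\), and the intersection of all four hulls is empty. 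The shift does not affect the block argument, because the windows still have length \(1/4\) and the angular steps are at most \(1/16\).
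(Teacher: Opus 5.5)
Your proposal is correct and follows essentially the same route as the paper. Both arguments use the explicit collar orbit, the harmonic divergence of the angular lift, a residence lower bound from $\omega(r_n)\le\omega(r_b)$ on each lifted window, and affine separation of opposite windows, followed by Lemma~\ref{lem:block} and the sampling lemmas. The differences are cosmetic. You read off $\liminf \ell/b>0$ directly from $r_b=\lambda^b r_0'$ instead of first stating the logarithmic estimate. Your shift of the windows is harmless but unnecessary: the paper keeps the windows $[0,1/4]$ and $[1/2,3/4]$ and separates them with a multiple of $\ell_1+\ell_2$ (its $A+B$), using $\cos 2\pi t+\sin 2\pi t\ge 1$ on $[0,1/4]$.
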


\begin{proof}
Let
\[
        x_0=\Xi(r_0^*,t_0,\sigma_0^*)
        \in \mathcal C^\circ.
\]
The orbit remains in the collar, because the formula gives
\[
        r_{n+1}=\lambda r_n,
        \qquad
        t_{n+1}=t_n+\omega(r_n),
        \qquad
        \sigma_{n+1}=\beta\sigma_n.
\]
Therefore
\[
        r_n=\lambda^n r_0^*
\]
and
\[
        \sigma_n=\beta^n\sigma_0^*.
\]

First, the Lyapunov escape to the boundary is explicit.  Define on the collar
\[
        \varphi(\Xi(r,t,\sigma))=r.
\]
Then
\[
        \varphi
        \bigl(
        K_{\lambda,a,\beta}(\Xi(r,t,\sigma))
        \bigr)
        =
        \lambda r
        =
        \lambda\varphi(\Xi(r,t,\sigma)).
\]
Hence
\[
        \varphi(K_{\lambda,a,\beta}^n(x_0))
        =
        \lambda^n\varphi(x_0)
        \longrightarrow0.
\]
Thus the orbit approaches the boundary face \(F\).

Next we verify the weak cyclic cover and lifted itinerary.  Use the real lift
\[
        \tau_n=t_0+\sum_{k=0}^{n-1}\omega(r_k)
\]
of the angular coordinate and divide the circle into sixteen closed fine sectors
\[
        Q_i=
        \left[\frac{i}{16},\frac{i+1}{16}\right],
        \qquad
        i=0,\ldots,15,
\]
with endpoints read modulo \(1\).  Define
\[
        \rho_n=\lfloor 16\tau_n\rfloor.
\]
Since
\[
        0\le\omega(r_n)\le\frac1{16},
\]
one has
\[
        \rho_{n+1}-\rho_n\in\{0,1\}.
\]
The point \(K_{\lambda,a,\beta}^n(x_0)\) belongs to the compact sector labeled
\(\rho_n\pmod {16}\).  Thus the weak cyclic rule is satisfied in the
compact-cover sense.

The lifted itinerary is unbounded.  Indeed,
\[
\begin{aligned}
        t_n
        &=
        t_0+\sum_{k=0}^{n-1}\omega(r_k)      \\
        &=
        t_0+
        \sum_{k=0}^{n-1}
        \frac{
        a
        }{
        M+\log(1/r_0^*)+k\log(1/\lambda)
        }.
\end{aligned}
\]
The sum on the right is a divergent harmonic-type sum.  Hence the lifted angular
coordinate tends to \(+\infty\).  Therefore the orbit makes infinitely many
turns around the collar.

Now define four residence windows
\[
        J_j=
        \left[\frac{j-1}{4},\frac j4\right],
        \qquad j=1,2,3,4,
\]
again read modulo \(1\), and define
\[
        R_j=
        \Xi\bigl([0,r_0]\times J_j\times[-\sigma_0,\sigma_0]\bigr).
\]
Because the lifted angular coordinate is unbounded and moves monotonically
forward, the orbit has infinitely many maximal lifted residence blocks through
each \(R_j\).

We next prove the logarithmic residence estimate directly.  Consider a maximal
lifted residence block through \(R_j\).  Let \(r_{\mathrm{in}}\) be the \(r\)-value at
the first time of the block.  During the block the \(r\)-coordinate decreases, so
\[
        \omega(r_n)\le \omega(r_{\mathrm{in}})
\]
throughout the block.  Since the angular length of \(J_j\) is \(1/4\), and the
entry and exit errors are bounded by one step on each side, the block length
\(\ell\) satisfies, for all sufficiently small \(r_{\mathrm{in}}\),
\[
        \ell
        \ge
        \frac{1}{8\,\omega(r_{\mathrm{in}})}.
\]
Substituting the definition of \(\omega\), we obtain
\[
        \ell
        \ge
        \frac{
        M+\log(1/r_{\mathrm{in}})
        }{
        8a
        }.
\]
Choosing \(B\) sufficiently small to make the right-hand side vanish for the
remaining bounded range of \(r_{\mathrm{in}}\), we therefore obtain constants
\[
        A>0,
        \qquad
        B>0,
        \qquad
        C>1
\]
such that
\[
        \ell
        \ge
        A\log_C^+
        \frac{B}{r_{\mathrm{in}}}.
\]
This is exactly the logarithmic residence estimate.

It remains to check the convex-hull separation.  Work in the affine coordinates
of the face \(F\) given by
\[
        c+\rho(Au_1+Bu_2),
        \qquad (A,B)\in\mathbb R^2,
\]
and extend the affine functionals \(A+B\) and \(A-B\) to the tetrahedron by
requiring that they vanish at \(p\).  On the first angular window
\(J_1=[0,1/4]\), we have
\[
        A+B=(1+\sigma)(\cos 2\pi t+\sin 2\pi t)
        \ge 1-\sigma_0>0.
\]
On the opposite window \(J_3=[1/2,3/4]\),
\[
        A+B\le -(1-\sigma_0)<0.
\]
For points of the full collar, the factor \((1-r)\) only multiplies these
inequalities, and \(1-r\ge1-r_0>0\).  Hence \(\co(R_1)\) and \(\co(R_3)\) are
strictly separated by the affine functional \(A+B\).  Therefore
\[
        \bigcap_{j=1}^4\operatorname{co}(R_j)=\varnothing.
\]
The same argument, using \(A-B\), separates the second and fourth windows.

We have now verified all ingredients of the weak Stein--Ulam block mechanism on
\(\mathcal C^\circ\): boundary approach, weak cyclic cover, unbounded lifted
code, logarithmic residence estimates and convex-hull separation.  Therefore the
usual block argument applies.  If the vector Ces\`aro averages converged to a
limit \(L\), then the long blocks in \(R_j\) would force
\[
        L\in \operatorname{co}(R_j)
        \qquad
        j=1,2,3,4.
\]
Thus
\[
        L\in
        \bigcap_{j=1}^4\operatorname{co}(R_j),
\]
contradicting the separation above.  Hence the vector Ces\`aro averages do not
converge.

The statements for positive powers and multiplicatively thick sampling
subsequences follow from the sampling lemmas proved earlier in the paper:
every linearly long block contains a proportionally long sampled block.  Thus
the same convex-hull obstruction applies to the sampled orbit.
\end{proof}

\subsection{Empirical accumulation set for the explicit cyclic-collar model}
\label{subsec:empirical-cyclic-collar}

We now identify the full weak-\(*\) accumulation set of empirical measures for
the explicit cyclic-collar model.  This gives a stronger conclusion than
non-convergence.  In the balanced one-dimensional intermittent case of Coates and
Luzzatto, the empirical accumulation set is a segment of convex combinations of
the two endpoint Dirac masses.  In the present cyclic-collar model, the analogue
is a one-parameter family of probability measures supported on the limiting
boundary circle.

Recall the explicit collar coordinates
\[
        \Xi(r,t,\sigma)
        =
        (1-r)
        \left[
        c+\rho(1+\sigma)
        \bigl(\cos(2\pi t)u_1+\sin(2\pi t)u_2\bigr)
        \right]
        +rp,
\]
where
\[
        r\in[0,r_0],
        \qquad
        t\in\mathbb T=\mathbb R/\mathbb Z,
        \qquad
        |\sigma|\le\sigma_0.
\]
The map is given on the collar by
\[
        K_{\lambda,a,\beta}(\Xi(r,t,\sigma))
        =
        \Xi\left(
        \lambda r,\,
        t+\frac{a}{M+\log(1/r)},\,
        \beta\sigma
        \right),
        \qquad 0<r\le r_0.
\]
Let
\[
        H:\mathbb T\to\partial\Delta^3
\]
be the limiting boundary curve
\[
        H(t)=\Xi(0,t,0)
        =
        c+\rho
        \bigl(\cos(2\pi t)u_1+\sin(2\pi t)u_2\bigr).
\]
Thus every orbit in the collar approaches the curve \(H(\mathbb T)\).

Put
\[
        \alpha=\frac{a}{\log(1/\lambda)}.
\]
For \(\tau\in\mathbb T\), define a probability measure \(\nu_\tau\) on
\(\mathbb T\) by
\[
        \int_{\mathbb T}\phi\,d\nu_\tau
        =
        \int_0^1
        \phi\bigl(\tau+\alpha\log u\bigr)\,du,
        \qquad
        \phi\in C(\mathbb T),
\]
where the argument is read modulo \(1\).  Equivalently,
\[
        \int_{\mathbb T}\phi\,d\nu_\tau
        =
        \frac1\alpha
        \int_0^\infty
        \phi(\tau-v)e^{-v/\alpha}\,dv.
\]
Finally define the boundary probability measure
\[
        \eta_\tau=H_*\nu_\tau.
\]

For
\[
        x\in\mathcal C^\circ,
\]
write
\[
        \mu_n^x
        =
        \frac1n\sum_{k=0}^{n-1}\delta_{K_{\lambda,a,\beta}^k(x)}
\]
for the empirical measures, and define their weak-\(*\) accumulation set by
\[
        \mathcal V(x)
        =
        \bigcap_{N\ge1}
        \overline{
        \{\mu_n^x:\ n\ge N\}
        }^{\,w^*}.
\]

\begin{lemma}
\label{lem:logarithmic-rotation-empirical}
Let
\[
        D>0,\qquad c>0,\qquad a>0,
\]
and set
\[
        \alpha=\frac ac.
\]
Define
\[
        \theta_n
        =
        \theta_0+
        \sum_{k=0}^{n-1}\frac{a}{D+ck},
        \qquad n\ge1.
\]
Let
\[
        t_n=\theta_n\pmod 1.
\]
If
\[
        t_{N_q}\to\tau
        \qquad\text{in }\mathbb T,
\]
then, for every \(\phi\in C(\mathbb T)\),
\[
        \frac1{N_q}\sum_{n=0}^{N_q-1}\phi(t_n)
        \longrightarrow
        \int_{\mathbb T}\phi\,d\nu_\tau.
\]
Moreover, the sequence \((t_N)_{N\ge1}\) is dense in \(\mathbb T\).  Hence the
set of accumulation points of the empirical measures
\[
        \frac1N\sum_{n=0}^{N-1}\delta_{t_n}
\]
is exactly
\[
        \{\nu_\tau:\tau\in\mathbb T\}.
\]
\end{lemma}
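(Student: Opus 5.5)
The plan is to compare the angle sequence with a logarithm. Since \(\sum_{k<n}\frac{a}{D+ck}=\frac ac\log n+\gamma_0+o(1)\) for a constant \(\gamma_0\) depending on \(D,c,a\) (harmonic-sum asymptotics), we get
\[
        \theta_n=\theta_0'+\alpha\log n+o(1),\qquad \theta_0'=\theta_0+\gamma_0 .
\]
The increments \(\theta_{n+1}-\theta_n=a/(D+cn)\to0\) and \(\theta_n\to\infty\). Hence \(t_n\) moves forward around \(\mathbb T\) in steps tending to zero and crosses every point infinitely often, which already gives the density of \((t_N)\).

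For the convergence statement, fix \(\phi\in C(\mathbb T)\) and write \(n=uN_q\) with \(u\in(0,1]\). Then
\[
        \theta_n-\theta_{N_q}=\alpha\log(n/N_q)+o(1),
\]
and the error is uniform in \(n\ge\varepsilon N_q\) for each fixed \(\varepsilon>0\). Since \(t_{N_q}\to\tau\), for \(n\ge\varepsilon N_q\) we get \(t_n=\tau+\alpha\log(n/N_q)+o(1)\) modulo \(1\), uniformly. Uniform continuity of \(\phi\) then gives
\[
        \frac1{N_q}\sum_{\varepsilon N_q\le n<N_q}\phi(t_n)
        =\frac1{N_q}\sum_{\varepsilon N_q\le n<N_q}\phi\bigl(\tau+\alpha\log(n/N_q)\bigr)+o(1).
\]
The right-hand side is a Riemann sum converging to \(\int_\varepsilon^1\phi(\tau+\alpha\log u)\,du\). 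The initial range \(n<\varepsilon N_q\) contributes at most \(\varepsilon\|\phi\|_\infty\) on both sides. Letting \(q\to\infty\) and then \(\varepsilon\to0\) yields the limit \(\int\phi\,d\nu_\tau\). The equivalent exponential form of \(\nu_\tau\) follows from the substitution \(u=e^{-v/\alpha}\).

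To identify the accumulation set, take any weak-* limit of \(\frac1{N}\sum_{n<N}\delta_{t_n}\) along a subsequence \(N_q\). Pass to a further subsequence along which \(t_{N_q}\to\tau\), which is possible by compactness of \(\mathbb T\); the first part then identifies the limit as \(\nu_\tau\). Conversely, for any \(\tau\in\mathbb T\), density of \((t_N)\) gives \(N_q\to\infty\) with \(t_{N_q}\to\tau\), so \(\nu_\tau\) is an accumulation point.

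The main obstacle is the uniformity of the \(o(1)\) error in \(\theta_n-\theta_{N_q}\) over \(n\in[\varepsilon N_q,N_q]\). This error is controlled by comparing the sum with \(\int dx/(D+cx)\): the difference \(\sum_{k<n}\frac{a}{D+ck}-\frac ac\log(D+cn)\) converges as \(n\to\infty\) (monotone error terms summable as \(O(1/k^2)\)). Since the limit is the same for \(n\) and \(N_q\), and both indices tend to infinity when \(n\ge\varepsilon N_q\), the differences cancel uniformly. Only the cutoff \(\varepsilon\) is needed to handle small \(n\).
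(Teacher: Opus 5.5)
Your proof is correct and follows essentially the same route as the paper: the asymptotic \(\theta_n=\alpha\log n+\gamma+o(1)\), uniform control of \(\theta_n-\theta_{N_q}\) for \(n\ge\varepsilon N_q\), a Riemann-sum limit on \([\varepsilon,1]\), and an \(\varepsilon\|\phi\|_\infty\) bound on the initial segment. The differences are minor. You obtain density from the vanishing step size together with divergence, whereas the paper explicitly picks \(N_q=\lfloor\exp((q+\tau-\gamma)/\alpha)\rfloor\). Your sub-subsequence compactness argument also spells out that every accumulation point lies in \(\{\nu_\tau:\tau\in\mathbb T\}\), which the paper leaves implicit.
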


\begin{proof}
We first prove the convergence statement.  Since
\[
        \sum_{k=0}^{n-1}\frac{a}{D+ck}
        =
        \frac ac\log n+O(1),
\]
we have
\[
        \theta_n=\alpha\log n+O(1).
\]
More precisely, for every \(0<\delta<1\),
\[
        \theta_{\lfloor uN\rfloor}-\theta_N
        \longrightarrow
        \alpha\log u
\]
uniformly for
\[
        u\in[\delta,1].
\]
Indeed,
\[
\begin{aligned}
        \theta_{\lfloor uN\rfloor}-\theta_N
        &=
        -\sum_{k=\lfloor uN\rfloor}^{N-1}
        \frac{a}{D+ck}                                      \\
        &=
        -\frac ac
        \sum_{k=\lfloor uN\rfloor}^{N-1}
        \frac{1}{k+D/c}                                      \\
        &\longrightarrow
        \frac ac\log u
        =
        \alpha\log u,
\end{aligned}
\]
uniformly on compact subintervals of \((0,1]\).

Let
\[
        t_{N_q}\to\tau.
\]
Then, for every fixed \(0<\delta<1\), the Riemann-sum argument gives
\[
        \frac1{N_q}
        \sum_{n=\lceil\delta N_q\rceil}^{N_q-1}
        \phi(t_n)
        \longrightarrow
        \int_\delta^1
        \phi(\tau+\alpha\log u)\,du.
\]
The omitted initial part contains at most \(\delta N_q\) terms, and hence its
contribution is bounded by
\[
        \delta\|\phi\|_\infty.
\]
Letting \(\delta\downarrow0\), we obtain
\[
        \frac1{N_q}\sum_{n=0}^{N_q-1}\phi(t_n)
        \longrightarrow
        \int_0^1
        \phi(\tau+\alpha\log u)\,du
        =
        \int_{\mathbb T}\phi\,d\nu_\tau.
\]

It remains to show that every \(\tau\in\mathbb T\) occurs as an accumulation
phase.  Since
\[
        \theta_N=\alpha\log N+\gamma+o(1)
\]
for some constant \(\gamma\), we may, for each large integer \(q\), choose
\[
        N_q=
        \left\lfloor
        \exp\left(\frac{q+\tau-\gamma}{\alpha}\right)
        \right\rfloor .
\]
Then
\[
        \theta_{N_q}\to \tau
        \qquad\text{modulo }1.
\]
Thus \((t_N)\) is dense in \(\mathbb T\), and the family
\[
        \{\nu_\tau:\tau\in\mathbb T\}
\]
is exactly the empirical accumulation set of the logarithmic rotation.
\end{proof}

\begin{theorem}
\label{thm:collar-empirical-accumulation}
Let
\[
        x=\Xi(r,t,\sigma)\in\mathcal C^\circ.
\]
Then the weak-\(*\) accumulation set of empirical measures of the
\(K_{\lambda,a,\beta}\)-orbit of \(x\) is
\[
        \mathcal V(x)
        =
        \{\eta_\tau:\tau\in\mathbb T\}.
\]
Equivalently, if
\[
        \mu_{N_q}^x\to\mu
\]
along a subsequence, then there exists \(\tau\in\mathbb T\) such that
\[
        \mu=\eta_\tau.
\]
Conversely, for every \(\tau\in\mathbb T\), there exists a subsequence
\[
        N_q\to\infty
\]
such that
\[
        \mu_{N_q}^x\to\eta_\tau.
\]
In particular, the empirical measures do not converge.
\end{theorem}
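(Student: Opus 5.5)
We reduce the statement to Lemma~\ref{lem:logarithmic-rotation-empirical} by exploiting the fact that the orbit converges to the boundary circle \(H(\mathbb T)\) while its angular coordinate is exactly a logarithmic rotation.

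Write \(x_n=K_{\lambda,a,\beta}^n(x)=\Xi(r_n,t_n,\sigma_n)\). As in the proof of Theorem~\ref{thm:explicit-cyclic-collar}, \(r_n=\lambda^n r\) and \(\sigma_n=\beta^n\sigma\), and the lifted angle is
\[
        \theta_n=t+\sum_{k=0}^{n-1}\frac{a}{D+ck},
        \qquad D=M+\log(1/r),\quad c=\log(1/\lambda).
\]
This is exactly the sequence of Lemma~\ref{lem:logarithmic-rotation-empirical} with \(\alpha=a/c=a/\log(1/\lambda)\), the same \(\alpha\) used to define \(\nu_\tau\).

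The first step is to compare \(\mu_n^x\) with \(H_*\bigl(\frac1n\sum_{k<n}\delta_{t_k}\bigr)\). The map \(\Xi\) is uniformly continuous on the compact parameter set, so
\[
        \|x_k-H(t_k)\|=\|\Xi(r_k,t_k,\sigma_k)-\Xi(0,t_k,0)\|\to0 .
\]
It is in fact \(O(r_k+|\sigma_k|)\), since \(\Xi\) is affine in \(r\) and in \(\sigma\). For \(f\) bounded-Lipschitz on \(\Delta^3\) this gives
\[
        \Bigl|\int f\,d\mu_n^x-\int f\circ H\,d\Bigl(\tfrac1n\textstyle\sum_{k<n}\delta_{t_k}\Bigr)\Bigr|
        \le\frac1n\sum_{k<n}\min\{2,\|x_k-H(t_k)\|\}\to0
\]
by Ces\`aro averaging of a null sequence. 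Hence the two sequences of measures have the same weak-* accumulation points. Since \(H\) is continuous, \(H_*\) is weak-* continuous, so it maps limits of the angular empirical measures to limits of the pushed-forward ones.

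The second step handles a convergent subsequence \(\mu_{N_q}^x\to\mu\). By compactness of \(\mathbb T\), we pass to a further subsequence with \(t_{N_q}\to\tau\). Lemma~\ref{lem:logarithmic-rotation-empirical} then gives angular empirical measures converging to \(\nu_\tau\), so \(\mu=H_*\nu_\tau=\eta_\tau\). Conversely, for a given \(\tau\), Lemma~\ref{lem:logarithmic-rotation-empirical} supplies \(N_q\) with \(t_{N_q}\to\tau\), and the first step yields \(\mu_{N_q}^x\to\eta_\tau\). Together these show \(\mathcal V(x)=\{\eta_\tau\}\).

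The last step is non-convergence. It suffices to exhibit \(\tau\ne\tau'\) with \(\eta_\tau\ne\eta_{\tau'}\). Because \(H\) is injective (the circle in the face \(F\) has positive radius \(\rho\)), this reduces to \(\nu_\tau\ne\nu_{\tau'}\). The measure \(\nu_\tau\) has the continuous density
\[
        g(s)=\frac1\alpha\sum_{j\ge0}e^{-(s'+j)/\alpha},
\]
where \(s'\in[0,1)\) is the representative of \(\tau-s\). This density jumps only at \(s=\tau\), so distinct \(\tau\) give distinct measures. Alternatively, compute Fourier coefficients: \(\hat\nu_\tau(1)=e^{-2\pi i\tau}/(1+2\pi i\alpha)\), which is nonzero and distinguishes \(\tau\). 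This also shows non-convergence directly, independently of Corollary~\ref{cor:non-statistical}.

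The main obstacle is essentially absent, since the orbit is explicitly a logarithmic rotation. The only care needed is the uniform comparison in the first step and making sure the lemma's density argument covers every \(\tau\).
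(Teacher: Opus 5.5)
Your proof is correct and follows essentially the paper's route: you use $\|x_n-H(t_n)\|\to0$ to identify the accumulation points with $H_*$ of the angular empirical measures, invoke Lemma~\ref{lem:logarithmic-rotation-empirical}, and then separate the $\eta_\tau$ by a Fourier-type computation (the paper extends $H(t)\mapsto e^{2\pi i t}$ via Tietze, while you use injectivity of $H$ on $\mathbb T$, which amounts to the same thing). There are two cosmetic slips that do not affect the argument: the density of $\nu_\tau$ is continuous except for a jump at $s=\tau$, so it is not ``continuous'', and with your convention the coefficient is $\hat\nu_\tau(1)=e^{-2\pi i\tau}/(1-2\pi i\alpha)$.
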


\begin{proof}
Let
\[
        x_n=K_{\lambda,a,\beta}^n(x).
\]
In collar coordinates,
\[
        x_n=\Xi(r_n,t_n,\sigma_n),
\]
where
\[
        r_n=\lambda^n r,
        \qquad
        \sigma_n=\beta^n\sigma,
\]
and
\[
        t_n
        =
        t+
        \sum_{k=0}^{n-1}
        \frac{a}{
        M+\log(1/r)+k\log(1/\lambda)
        }
        \pmod 1.
\]
Thus \(t_n\) is exactly a logarithmic rotation of the type considered in
Lemma~\ref{lem:logarithmic-rotation-empirical}, with
\[
        D=M+\log(1/r),
        \qquad
        c=\log(1/\lambda).
\]

Since
\[
        r_n\to0
        \qquad\text{and}\qquad
        \sigma_n\to0,
\]
we have
\[
        \|x_n-H(t_n)\|\to0.
\]
Let
\[
        f\in C(\Delta^3).
\]
By uniform continuity of \(f\) on the compact simplex,
\[
        \frac1N
        \sum_{n=0}^{N-1}
        \left|
        f(x_n)-f(H(t_n))
        \right|
        \longrightarrow0.
\]
Therefore the weak-\(*\) accumulation set of
\[
        \frac1N\sum_{n=0}^{N-1}\delta_{x_n}
\]
is the push-forward under \(H\) of the weak-\(*\) accumulation set of
\[
        \frac1N\sum_{n=0}^{N-1}\delta_{t_n}.
\]
By Lemma~\ref{lem:logarithmic-rotation-empirical}, the latter accumulation set is
\[
        \{\nu_\tau:\tau\in\mathbb T\}.
\]
Hence
\[
        \mathcal V(x)
        =
        \{H_*\nu_\tau:\tau\in\mathbb T\}
        =
        \{\eta_\tau:\tau\in\mathbb T\}.
\]

It remains only to justify that the family \(\tau\mapsto\eta_\tau\) is not
constant.  Since \(H\) is an embedded circle, the function
\(H(t)\mapsto e^{2\pi i t}\) is continuous on \(H(\mathbb T)\) and extends, by
Tietze's extension theorem, to a continuous function \(F\in C(\Delta^3;\mathbb C)\).
For this test function,
\[
\begin{aligned}
        \int F\,d\eta_\tau
        &=
        \int_0^1 e^{2\pi i(\tau+\alpha\log u)}\,du        \\
        &=
        \frac{e^{2\pi i\tau}}{1+2\pi i\alpha}.
\end{aligned}
\]
This quantity is nonzero and depends on \(\tau\).  Hence \(\tau\mapsto\eta_\tau\)
is not constant, and the sequence
\[
        \mu_N^x
\]
cannot converge.
\end{proof}

\begin{corollary}
\label{cor:collar-empirical-powers}
For every
\[
        s\in\mathbb N
\]
and every
\[
        x\in\mathcal C^\circ,
\]
the empirical measures of the \(K_{\lambda,a,\beta}^s\)-orbit of \(x\),
\[
        \mu_{N,s}^x
        =
        \frac1N\sum_{k=0}^{N-1}
        \delta_{K_{\lambda,a,\beta}^{sk}(x)},
\]
have the same type of accumulation set:
\[
        \mathcal V_s(x)
        =
        \{\eta_\tau:\tau\in\mathbb T\}.
\]
In particular, the empirical measures of every positive power fail to converge.
\end{corollary}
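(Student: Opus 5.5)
The plan is to reduce the $K^s$-orbit to the same structure as the $K$-orbit and then repeat the argument of Theorem~\ref{thm:collar-empirical-accumulation}. Write $y_k=K_{\lambda,a,\beta}^{sk}(x)=x_{sk}=\Xi(r_{sk},t_{sk},\sigma_{sk})$. Since $r_{sk}=\lambda^{sk}r\to0$ and $\sigma_{sk}\to0$, we have $\|y_k-H(t_{sk})\|\to0$. By the uniform continuity argument used in the proof of Theorem~\ref{thm:collar-empirical-accumulation}, $\mathcal V_s(x)$ is therefore the push-forward under $H$ of the accumulation set of the circle empirical measures $\frac1N\sum_{k<N}\delta_{t_{sk}}$. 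It thus suffices to show that this accumulation set is exactly $\{\nu_\tau:\tau\in\mathbb T\}$.

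For this I will not apply Lemma~\ref{lem:logarithmic-rotation-empirical} verbatim, since $\theta'_k:=\theta_{sk}$ is not literally of the form $\theta_0+\sum a/(D+ck)$. Instead I will rerun its proof, which needs only two facts.

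\emph{(a) Uniform scaling asymptotics.} From the proof of Lemma~\ref{lem:logarithmic-rotation-empirical}, $\theta_{\lfloor uN\rfloor}-\theta_N\to\alpha\log u$ uniformly for $u\in[\delta,1]$, with $\alpha=a/\log(1/\lambda)$ the same as before. Applied to $sN$, and using $s\lfloor uN\rfloor=\lfloor usN\rfloor+O(1)$ together with the fact that increments of $\theta$ are $O(1/N)$ in this range, this gives
\[
\theta'_{\lfloor uN\rfloor}-\theta'_N\to\alpha\log u
\]
uniformly on $[\delta,1]$. So the exponent $\alpha$ does not change under sampling.

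\emph{(b) Realizing every phase.} Since $\theta_n=\alpha\log n+\gamma+o(1)$, we get $\theta'_N=\alpha\log N+(\gamma+\alpha\log s)+o(1)$. Choosing $N_q=\lfloor\exp((q+\tau-\gamma')/\alpha)\rfloor$ with $\gamma'=\gamma+\alpha\log s$ then gives $t_{sN_q}\to\tau$, and every $\tau\in\mathbb T$ arises this way.

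With (a) and (b) in hand, the argument proceeds exactly as in Lemma~\ref{lem:logarithmic-rotation-empirical}. If $t_{sN_q}\to\tau$, then the Riemann-sum argument over $[\delta N_q,N_q]$ yields $\frac1{N_q}\sum_{k<N_q}\phi(t_{sk})\to\int_\delta^1\phi(\tau+\alpha\log u)\,du$, up to an error of at most $\delta\|\phi\|_\infty$. Letting $\delta\downarrow0$ identifies every limit as $\nu_\tau$. Conversely, for an arbitrary convergent subsequence of empirical measures, I pass to a further subsequence along which $t_{sN_q}$ converges (by compactness of $\mathbb T$). This shows every accumulation point is some $\nu_\tau$, and (b) shows every $\nu_\tau$ is attained. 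Pushing forward by $H$ gives $\mathcal V_s(x)=\{\eta_\tau:\tau\in\mathbb T\}$. Non-convergence then follows from the non-constancy computation already made in the proof of Theorem~\ref{thm:collar-empirical-accumulation}, namely $\int F\,d\eta_\tau=e^{2\pi i\tau}/(1+2\pi i\alpha)$.

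The main obstacle is step (a): making the uniform Riemann-sum convergence rigorous for the sampled indices, i.e.\ controlling the rounding $s\lfloor uN\rfloor$ versus $\lfloor usN\rfloor$ and the resulting uniform error. My approach is to bound $|\theta_{n+j}-\theta_n|\le ja/(D+cn)$, which is $O(1/N)$ for $n\ge\delta N$ and $j\le s$, so the rounding error is negligible uniformly on $[\delta,1]$.
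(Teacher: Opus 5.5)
Your proposal is correct and follows essentially the paper's route. The paper likewise observes that $t_{sk}=\alpha\log k+(\gamma+\alpha\log s)+o(1)$ is a logarithmic rotation with the same exponent $\alpha$ and only a shifted phase, and then reruns the argument of Theorem~\ref{thm:collar-empirical-accumulation}. Your rounding estimate in (a) is valid, though (a) also follows directly from that expansion, because the $o(1)$ error is uniform over $k\ge\delta N$.
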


\begin{proof}
The sampled angular sequence is
\[
        t_{sk}.
\]
Since
\[
        t_n=\alpha\log n+\gamma+o(1),
\]
we have
\[
        t_{sk}
        =
        \alpha\log k+(\gamma+\alpha\log s)+o(1).
\]
Thus the sampled sequence is again a logarithmic rotation with the same parameter
\[
        \alpha=\frac{a}{\log(1/\lambda)},
\]
only with a shifted phase.  The proof of
Theorem~\ref{thm:collar-empirical-accumulation} therefore applies verbatim.
\end{proof}

\subsection{Pointwise emergence of the collar model}
\label{subsec:collar-emergence}

The preceding computation also gives a quantitative form of non-statistical
behavior.  We use the bounded-Lipschitz metric \(d_{\BL}\) from
Definition~\ref{def:pointwise-emergence} on \(\Prob(\Delta^3)\).

\begin{theorem}
\label{thm:collar-pointwise-emergence}
For every
\[
        x\in\mathcal C^\circ
\]
there are constants \(c_x,C_x>0\) and \(\varepsilon_x>0\) such that
\[
        c_x\varepsilon^{-1}
        \le
        \mathscr E_x(\varepsilon)
        \le
        C_x\varepsilon^{-1},
        \qquad 0<\varepsilon<\varepsilon_x.
\]
The same estimate holds for the empirical measures of every positive power
\(K_{\lambda,a,\beta}^s\).  Thus the explicit cyclic-collar model has
polynomial pointwise emergence of order \(\varepsilon^{-1}\) on
\(\mathcal C^\circ\).
\end{theorem}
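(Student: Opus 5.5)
By Definition~\ref{def:pointwise-emergence}, \(\mathscr E_x(\varepsilon)\) is the minimal cardinality of a closed \(\varepsilon\)-net of \(\mathcal V(x)\) in \(d_{\BL}\). By Theorem~\ref{thm:collar-empirical-accumulation}, and by Corollary~\ref{cor:collar-empirical-powers} for powers, this set is the curve \(\{\eta_\tau:\tau\in\mathbb T\}\), independently of \(x\) and \(s\). The plan is therefore to show that \(\Phi:\mathbb T\to(\Prob(\Delta^3),d_{\BL})\), \(\Phi(\tau)=\eta_\tau\), is bi-Lipschitz onto its image. Covering numbers of a bi-Lipschitz image of a circle are then comparable to \(\varepsilon^{-1}\), which gives both inequalities; the constants even turn out independent of \(x\).

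\emph{Upper Lipschitz bound.} Take \(f\) with \(\|f\|_\infty\le1\) and \(\Lip(f)\le1\). Then
\[
\Bigl|\int f\,d\eta_\tau-\int f\,d\eta_{\tau'}\Bigr|\le\int_0^1\bigl|f(H(\tau+\alpha\log u))-f(H(\tau'+\alpha\log u))\bigr|\,du\le\Lip(H)\,|\tau-\tau'|_{\mathbb T},
\]
because \(H\) is smooth. Hence \(d_{\BL}(\eta_\tau,\eta_{\tau'})\le \Lip(H)|\tau-\tau'|\). Splitting \(\mathbb T\) into \(\lceil \Lip(H)/\varepsilon\rceil\) arcs and using their midpoints gives an \(\varepsilon\)-net of \(\mathcal V(x)\) of size \(\le C\varepsilon^{-1}\).

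\emph{Lower Lipschitz bound.} This is the main step. I will use a single test function. Write \(H(t)=c+\rho(\cos 2\pi t\,u_1+\sin2\pi t\,u_2)\), and note that \(u_1,u_2\) are orthogonal with equal norms \(\sqrt{2/3}\). Let \(g_1,g_2\) be the affine functions satisfying \(g_1(H(t))=\cos2\pi t\) and \(g_2(H(t))=\sin 2\pi t\), namely \(g_k(y)=\langle y-c,u_k\rangle/(\rho|u_k|^2)\). These are Lipschitz with constant \(L_0=1/(\rho\sqrt{2/3})\) and bounded on \(\Delta^3\) by a constant \(L_1\). After rescaling by \(\kappa=\max(L_0,L_1)^{-1}\), the function \(\kappa(g_1\cos2\pi\theta+g_2\sin2\pi\theta)\) is admissible in \(d_{\BL}\) for every \(\theta\), since its Lipschitz constant is at most \(\kappa\sqrt{\cdot}\)-controlled by \(L_0\) and it is bounded by \(L_1\) up to an absolute factor; I will absorb that factor into \(\kappa\). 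The computation in the proof of Theorem~\ref{thm:collar-empirical-accumulation} gives
\[
\int (g_1+ig_2)\,d\eta_\tau=\frac{e^{2\pi i\tau}}{1+2\pi i\alpha}.
\]
Choosing \(\theta\) to align with the direction of \(e^{2\pi i\tau}-e^{2\pi i\tau'}\) therefore yields
\[
d_{\BL}(\eta_\tau,\eta_{\tau'})\ge \frac{\kappa\,|e^{2\pi i\tau}-e^{2\pi i\tau'}|}{|1+2\pi i\alpha|}\ge \frac{4\kappa}{|1+2\pi i\alpha|}\,|\tau-\tau'|_{\mathbb T},
\]
using the chord bound \(|e^{2\pi is}-1|\ge 4|s|\) for \(|s|\le1/2\). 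This is the bi-Lipschitz lower bound with constant \(c_0>0\).

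\emph{Conclusion.} Let \(\nu_1,\dots,\nu_q\) be any closed \(\varepsilon\)-net of \(\mathcal V(x)\). Each ball \(B_{d_{\BL}}(\nu_i,\varepsilon)\) meets the curve in a set whose \(\tau\)-preimage has diameter at most \(2\varepsilon/c_0\), because any two points of that preimage are within \(2\varepsilon\) of each other in \(d_{\BL}\). Covering \(\mathbb T\), which has length \(1\), then forces \(q\ge c_0/(2\varepsilon)\). Together with the upper bound this gives \(c_x\varepsilon^{-1}\le\mathscr E_x(\varepsilon)\le C_x\varepsilon^{-1}\) for \(\varepsilon<\varepsilon_x\). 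The threshold \(\varepsilon_x\) is only needed to absorb the ceiling in the upper bound, for example \(\varepsilon_x=\Lip(H)\). For the powers \(K^s\), Corollary~\ref{cor:collar-empirical-powers} gives \(\mathcal V_s(x)=\mathcal V(x)\), so the same estimates hold.
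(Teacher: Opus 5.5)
Your proposal is correct and follows essentially the paper's route: show that $\tau\mapsto\eta_\tau$ is bi-Lipschitz into $(\Prob(\Delta^3),d_{\BL})$, get the upper bound from $\Lip(H)$ and the lower bound from a test function equal to $e^{2\pi i t}$ on $H(\mathbb T)$ (whose $\eta_\tau$-integral is $e^{2\pi i\tau}/(1+2\pi i\alpha)$), and then compare covering numbers with those of a circle. Your explicit affine $g_1,g_2$ replace the paper's McShane extensions; since $u_1\perp u_2$ with $|u_1|=|u_2|$, every rotated combination $g_1\cos2\pi\theta+g_2\sin2\pi\theta$ is exactly $L_0$-Lipschitz, which also avoids the paper's $1/\sqrt2$ loss. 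Three small fixes: align $\theta$ with $(e^{2\pi i\tau}-e^{2\pi i\tau'})/(1+2\pi i\alpha)$ rather than with the numerator alone, or the constant becomes $|1+2\pi i\alpha|^{-2}$; your diameter-to-arc-length step on $\mathbb T$ needs $\varepsilon$ small, or else costs a factor $2$; and your justification that the rotated $g$ is admissible should simply cite the exact $L_0$ bound above instead of the vague absorbed factor.
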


\begin{proof}
By Theorem~\ref{thm:collar-empirical-accumulation}, the empirical accumulation
set of every collar orbit is
\[
        \mathcal V(x)=\{\eta_\tau:\tau\in\mathbb T\}.
\]
It is therefore enough, by Definition~\ref{def:pointwise-emergence}, to estimate
the covering number of this set in the metric \(d_{\BL}\).

Let \(d_{\mathbb T}\) be the usual distance on \(\mathbb T=\mathbb R/\mathbb Z\).
We claim first that the map
\[
        \tau\longmapsto \eta_\tau
\]
is bi-Lipschitz from \((\mathbb T,d_{\mathbb T})\) into
\((\Prob(\Delta^3),d_{\BL})\).  The upper bound is immediate from the formula
\[
        \int_{\Delta^3} f\,d\eta_\tau
        =
        \int_0^1 f\bigl(H(\tau+\alpha\log u)\bigr)\,du.
\]
Indeed, if \(\|f\|_\infty\le1\) and \(\Lip(f)\le1\), then
\[
\begin{aligned}
        \left|
        \int f\,d\eta_\tau-
        \int f\,d\eta_\sigma
        \right|
        &\le
        \int_0^1
        \left|
        f\bigl(H(\tau+\alpha\log u)\bigr)-
        f\bigl(H(\sigma+\alpha\log u)\bigr)
        \right|\,du                                      \\
        &\le
        \Lip(H)\,d_{\mathbb T}(\tau,\sigma).
\end{aligned}
\]
Taking the supremum over admissible \(f\) gives
\[
        d_{\BL}(\eta_\tau,\eta_\sigma)
        \le
        \Lip(H)\,d_{\mathbb T}(\tau,\sigma).
\]

For the lower bound, use the fact that \(H\) is an embedded Euclidean circle.
The function
\[
        H(t)\longmapsto e^{2\pi i t}
\]
on \(H(\mathbb T)\) is Lipschitz.  By the McShane extension theorem, its real
and imaginary parts extend to real-valued Lipschitz functions \(F_1,F_2\) on
\(\Delta^3\).  Put
\[
        L_0=
        \max\{1,\|F_1\|_\infty,\|F_2\|_\infty,
        \Lip(F_1),\Lip(F_2)\}.
\]
Then \(F_1/L_0\) and \(F_2/L_0\) are admissible test functions for
\(d_{\BL}\).  Since
\[
        \int_{\Delta^3}(F_1+iF_2)\,d\eta_\tau
        =
        \int_0^1 e^{2\pi i(\tau+\alpha\log u)}\,du
        =
        \frac{e^{2\pi i\tau}}{1+2\pi i\alpha},
\]
at least one of the two real test functions gives
\[
        d_{\BL}(\eta_\tau,\eta_\sigma)
        \ge
        \frac{1}{\sqrt2 L_0|1+2\pi i\alpha|}
        \left|e^{2\pi i\tau}-e^{2\pi i\sigma}\right|.
\]
The chord distance on the unit circle is comparable with
\(d_{\mathbb T}\).  Hence there is a constant \(a_0>0\) such that
\[
        d_{\BL}(\eta_\tau,\eta_\sigma)
        \ge
        a_0\,d_{\mathbb T}(\tau,\sigma),
        \qquad \tau,\sigma\in\mathbb T.
\]
This proves the bi-Lipschitz claim.

A bi-Lipschitz image of a circle has covering number comparable to
\(\varepsilon^{-1}\).  More explicitly, the upper Lipschitz bound allows one to
cover \(\{\eta_\tau\}\) by \(O(\varepsilon^{-1})\) balls of radius
\(\varepsilon\), while the lower Lipschitz bound implies that any
\(\varepsilon\)-separated set in \(\mathbb T\) gives an
\(a_0\varepsilon\)-separated set of measures.  Therefore the minimum number of
\(d_{\BL}\)-balls of radius \(\varepsilon\) needed to cover \(\mathcal V(x)\) is
bounded above and below by constant multiples of \(\varepsilon^{-1}\) for all
sufficiently small \(\varepsilon\).  This proves the estimate for
\(\mathscr E_x(\varepsilon)\).

For a positive power \(K_{\lambda,a,\beta}^s\),
Corollary~\ref{cor:collar-empirical-powers} gives the same empirical
accumulation set.  The same covering argument therefore applies.
\end{proof}

\subsection{A parameter transition: regularity versus non-statistical behavior}
\label{subsec:parameter-transition-collar}

We now add one parameter to the explicit cyclic-collar model.  This parameter
controls the summability of the angular drift.  In this way the same geometric
model exhibits two different statistical regimes: for one range of the parameter
the collar dynamics is regular, while for another range it is non-statistical.

Keep the notation of Subsection~\ref{subsec:explicit-cyclic-collar}.  Thus
\[
        \Xi(r,t,\sigma)
        =
        (1-r)
        \left[
        c+\rho(1+\sigma)
        \bigl(\cos(2\pi t)u_1+\sin(2\pi t)u_2\bigr)
        \right]
        +rp
\]
is the collar parametrization, where
\[
        r\in[0,r_0],\qquad
        t\in\mathbb T=\mathbb R/\mathbb Z,\qquad
        |\sigma|\le\sigma_0.
\]
Let
\[
        0<\lambda<1,\qquad 0<\beta<1,
\]
and put
\[
        c_\lambda=\log\frac1\lambda>0.
\]
Choose \(M>e^2\).  For a new parameter
\[
        \tau\ge0
\]
define
\[
        L(r)=M+\log\frac1r,
        \qquad
        \Lambda(r)=\log L(r),
        \qquad
        0<r\le r_0,
\]
and
\[
        \omega_\tau(r)
        =
        \frac{\alpha}{L(r)\Lambda(r)^\tau},
        \qquad
        0<r\le r_0.
\]
We set
\[
        \omega_\tau(0)=0.
\]
The constant \(\alpha>0\) is chosen so small that
\[
        \omega_\tau(r)\le \frac1{16},
        \qquad
        0<r\le r_0,\quad \tau\ge0.
\]
For instance, it is enough to require
\[
        \alpha\le \frac{M+\log(1/r_0)}{16}.
\]

Define
\[
        K_{\lambda,\alpha,\beta,\tau}
        \bigl(\Xi(r,t,\sigma)\bigr)
        =
        \Xi\bigl(\lambda r,\ t+\omega_\tau(r),\ \beta\sigma\bigr),
        \qquad
        0<r\le r_0,
\]
with \(t+\omega_\tau(r)\) read modulo \(1\), and on the boundary part of the
collar define
\[
        K_{\lambda,\alpha,\beta,\tau}
        \bigl(\Xi(0,t,\sigma)\bigr)
        =
        \Xi(0,t,\beta\sigma).
\]
Since
\[
        \omega_\tau(r)\to0
        \qquad\text{as }r\to0,
\]
this defines a continuous map on the closed collar.  As before, it may be
extended continuously to a simplex map
\[
        \widehat K_{\lambda,\alpha,\beta,\tau}:\Delta^3\to\Delta^3.
\]
All statements below concern the positive-measure invariant collar
\[
        \mathcal C^\circ
        =
        \Xi\bigl((0,r_0]\times\mathbb T\times(-\sigma_0,\sigma_0)\bigr).
\]

Let
\[
        H:\mathbb T\to\partial\Delta^3
\]
be the limiting boundary curve
\[
        H(t)=\Xi(0,t,0)
        =
        c+\rho\bigl(\cos(2\pi t)u_1+\sin(2\pi t)u_2\bigr).
\]

\begin{theorem}
\label{thm:collar-parameter-transition}
Let
\[
        x=\Xi(r,t,\sigma)\in\mathcal C^\circ.
\]
Then the following assertions hold.

\begin{enumerate}
\item[\textup{(i)}]
If
\[
        \tau>1,
\]
then the orbit of \(x\) converges.  More precisely, there exists
\[
        t_\infty=t_\infty(x,\tau)\in\mathbb T
\]
such that
\[
        K_{\lambda,\alpha,\beta,\tau}^n(x)
        \longrightarrow
        H(t_\infty).
\]
Consequently, the empirical measures converge:
\[
        \frac1n\sum_{k=0}^{n-1}
        \delta_{K_{\lambda,\alpha,\beta,\tau}^k(x)}
        \longrightarrow
        \delta_{H(t_\infty)}
        \qquad\text{in the weak-\(*\) topology.}
\]
Thus the collar dynamics is regular for \(\tau>1\).

\item[\textup{(ii)}]
If
\[
        0<\tau\le1,
\]
then the empirical measures do not converge.  In fact, their full weak-\(*\)
accumulation set is
\[
        \mathcal V_\tau(x)
        =
        \{\delta_{H(s)}:\ s\in\mathbb T\}.
\]

\item[\textup{(iii)}]
If
\[
        \tau=0,
\]
then the empirical measures do not converge and their full weak-\(*\)
accumulation set is
\[
        \mathcal V_0(x)
        =
        \{H_*\nu_s:\ s\in\mathbb T\},
\]
where the probability measure \(\nu_s\) on \(\mathbb T\) is defined by
\[
        \int_{\mathbb T}\phi\,d\nu_s
        =
        \int_0^1
        \phi\left(s+\frac{\alpha}{c_\lambda}\log u\right)\,du,
        \qquad
        \phi\in C(\mathbb T).
\]
Equivalently,
\[
        \int_{\mathbb T}\phi\,d\nu_s
        =
        \frac{c_\lambda}{\alpha}
        \int_0^\infty
        \phi(s-v)
        e^{-c_\lambda v/\alpha}\,dv.
\]
\end{enumerate}

Therefore the family \(K_{\lambda,\alpha,\beta,\tau}\) has a sharp transition:
\[
        \tau>1
        \quad\Longrightarrow\quad
        \text{regular collar dynamics},
\]
whereas
\[
        0\le\tau\le1
        \quad\Longrightarrow\quad
        \text{non-statistical collar dynamics.}
\]
\end{theorem}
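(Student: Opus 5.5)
The plan is to reduce everything to the scalar angular sequence, exactly as in the proof of Theorem~\ref{thm:collar-empirical-accumulation}. For \(x=\Xi(r,t,\sigma)\in\mathcal C^\circ\) the collar formula gives
\[
r_n=\lambda^n r,\qquad \sigma_n=\beta^n\sigma,\qquad
L(r_k)=D+c_\lambda k,\quad D:=M+\log(1/r).
\]
Hence the real lift of the angle is
\[
\theta_n=t+\sum_{k=0}^{n-1}\frac{\alpha}{(D+c_\lambda k)\,\bigl(\log(D+c_\lambda k)\bigr)^\tau}.
\]
Since \(r_n\to0\) and \(\sigma_n\to0\), we get \(\|x_n-H(t_n)\|\to0\), where \(t_n=\theta_n \bmod 1\). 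By uniform continuity of test functions, the weak-\(*\) accumulation set of \(\mu_N^x\) is therefore the push-forward by \(H\) of the accumulation set of \(\frac1N\sum_{n<N}\delta_{t_n}\). This step is verbatim from the earlier theorem. The three regimes are then distinguished by the integral test applied to \(\int dy/(y(\log y)^\tau)\). The assumption \(M>e^2\) keeps \(\log(D+c_\lambda k)\ge 2\), so the summand is positive and decreasing.

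\emph{Case \(\tau>1\).} The series converges, so \(\theta_n\to\theta_\infty\in\R\). Set \(t_\infty=\theta_\infty \bmod 1\). Then \(x_n\to H(t_\infty)\) by continuity of \(\Xi\) and the decay of \(r_n\) and \(\sigma_n\). For any continuous \(f\), the Ces\`aro means of the convergent sequence \(f(x_n)\) converge to \(f(H(t_\infty))\). This gives \(\mu_n^x\to\delta_{H(t_\infty)}\).

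\emph{Case \(\tau=0\).} Here \(\theta_n\) is exactly the logarithmic rotation of Lemma~\ref{lem:logarithmic-rotation-empirical}, with \(a=\alpha\), \(c=c_\lambda\) and the above \(D\). That lemma gives the accumulation set \(\{\nu_s\}\) on \(\mathbb T\), with parameter \(\alpha/c_\lambda\). Pushing forward by \(H\) gives \(\mathcal V_0(x)\). Non-constancy of \(s\mapsto H_*\nu_s\), and hence non-convergence, follows from the Fourier computation \(\int e^{2\pi i\cdot}\,d\nu_s=e^{2\pi i s}/(1+2\pi i\alpha/c_\lambda)\), as before.

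\emph{Case \(0<\tau\le1\).} This is the main step. The series diverges, so \(\theta_n\to+\infty\); the growth is like \((\log n)^{1-\tau}\), or \(\log\log n\) when \(\tau=1\). The key estimate I will prove is that, for each fixed \(\delta\in(0,1)\),
\[
\sup_{u\in[\delta,1]}\bigl|\theta_N-\theta_{\lfloor uN\rfloor}\bigr|
\le \sum_{k=\lfloor\delta N\rfloor}^{N-1}\frac{\alpha}{(D+c_\lambda k)(\log(D+c_\lambda k))^\tau}
\le \frac{\alpha}{c_\lambda}\,\frac{\log(1/\delta)+o(1)}{(\log(c_\lambda\delta N))^\tau}\longrightarrow0 .
\]
Consequently, for \(\phi\in C(\mathbb T)\), split the average at \(\lceil\delta N\rceil\). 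The initial part contributes at most \(\delta\|\phi\|_\infty\), and on the remaining part \(\phi(t_n)\) is uniformly close to \(\phi(t_N)\). Letting \(\delta\downarrow0\) yields
\[
\frac1N\sum_{n<N}\phi(t_n)-\phi(t_N)\to0 .
\]
Thus the empirical measures on \(\mathbb T\) are asymptotic to \(\delta_{t_N}\), and the accumulation set is \(\{\delta_s: s\text{ a limit point of }(t_N)\}\). Since \(\theta_N\to\infty\) with increments \(\theta_{N+1}-\theta_N\to0\), every \(s\in\mathbb T\) is a limit point: choose \(N_q\) as the first index with \(\theta_{N_q}\ge s+q\). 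Pushing forward by \(H\), which is injective, gives \(\mathcal V_\tau(x)=\{\delta_{H(s)}:s\in\mathbb T\}\). This set has more than one point, so there is no convergence.

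The only delicate point is the uniform smallness of the tail increment above. I will handle it by comparing the sum with \(\int_{\delta N}^{N}dy/(c_\lambda y(\log c_\lambda y)^\tau)\) and bounding \((\log)^{-\tau}\) by its value at the left endpoint. That bound is valid because \(\tau>0\), which is exactly where the \(\tau=0\) case breaks down and the nondegenerate measures \(\nu_s\) appear instead.
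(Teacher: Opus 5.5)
Your proposal is correct and follows essentially the same route as the paper: the reduction to the angular lift via $\|x_n-H(t_n)\|\to0$, the integral test separating $\tau>1$ from $\tau\le1$, the $\delta$-split with the tail bound of order $\log(1/\delta)/(\log N)^{\tau}$ plus the crossing argument for density of phases when $0<\tau\le1$, and the logarithmic-rotation lemma when $\tau=0$. The one addition is your explicit Fourier computation showing that $s\mapsto H_*\nu_s$ is non-constant for $\tau=0$; the paper leaves this point implicit.
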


\begin{proof}
Write
\[
        x_n=K_{\lambda,\alpha,\beta,\tau}^n(x)
        =
        \Xi(r_n,t_n,\sigma_n).
\]
By the definition of the map,
\[
        r_n=\lambda^n r,
        \qquad
        \sigma_n=\beta^n\sigma,
\]
and
\[
        t_n
        =
        t+
        \sum_{k=0}^{n-1}
        \omega_\tau(r_k)
        \quad\text{in }\mathbb T.
\]
Since
\[
        r_k=\lambda^k r,
\]
we have
\[
        L(r_k)
        =
        M+\log\frac1r+k\log\frac1\lambda
        =
        L(r)+k c_\lambda.
\]
Therefore
\[
        \omega_\tau(r_k)
        =
        \frac{\alpha}{
        (L(r)+kc_\lambda)
        \bigl(\log(L(r)+kc_\lambda)\bigr)^\tau
        }.
\]

First suppose that \(\tau>1\).  By the integral test,
\[
        \sum_{k=0}^\infty
        \frac{1}{
        (L(r)+kc_\lambda)
        \bigl(\log(L(r)+kc_\lambda)\bigr)^\tau
        }
        <\infty.
\]
Hence
\[
        \sum_{k=0}^\infty \omega_\tau(r_k)<\infty.
\]
Thus \(t_n\) converges modulo \(1\) to some \(t_\infty\).  Since
\[
        r_n\to0
        \qquad\text{and}\qquad
        \sigma_n\to0,
\]
we obtain
\[
        x_n=\Xi(r_n,t_n,\sigma_n)\to \Xi(0,t_\infty,0)=H(t_\infty).
\]
This proves regularity on the collar.  The convergence of empirical measures to
\(\delta_{H(t_\infty)}\) follows immediately from convergence of the orbit.

Now suppose that \(0\le\tau\le1\).  Again by the integral test,
\[
        \sum_{k=0}^\infty
        \omega_\tau(r_k)
        =
        \infty.
\]
Moreover,
\[
        \omega_\tau(r_k)\to0.
\]
Therefore the lifted angular coordinate is increasing, unbounded, and has step
size tending to zero.  It follows that for every \(s\in\mathbb T\) there exists a
subsequence \(N_q\to\infty\) such that
\[
        t_{N_q}\to s.
\]
Indeed, in the lifted coordinate the orbit crosses every level \(q+s\), and the
overshoot tends to zero because the step size tends to zero.

We now identify the empirical limits.  Let
\[
        \phi\in C(\mathbb T).
\]

Assume first that \(0<\tau\le1\).  We claim that
\[
        \frac1N\sum_{n=0}^{N-1}\phi(t_n)-\phi(t_N)\longrightarrow0.
\]
Fix \(0<\delta<1\).  For
\[
        \delta N\le n\le N
\]
we have
\[
\begin{aligned}
        |t_N-t_n|
        &\le
        \sum_{k=n}^{N-1}
        \frac{\alpha}{
        (L(r)+kc_\lambda)
        \bigl(\log(L(r)+kc_\lambda)\bigr)^\tau
        }                                                     \\
        &\le
        C
        \frac{\log(1/\delta)}{(\log N)^\tau},
\end{aligned}
\]
for a constant \(C>0\) independent of \(N\).  Since \(\tau>0\), the last
quantity tends to \(0\) as \(N\to\infty\).  By uniform continuity of \(\phi\),
\[
        \sup_{\delta N\le n\le N}
        |\phi(t_n)-\phi(t_N)|
        \longrightarrow0.
\]
The first \(\delta N\) terms contribute at most \(2\delta\|\phi\|_\infty\) to the
average.  Since \(\delta>0\) is arbitrary, this proves the claim.

Hence, if \(t_{N_q}\to s\), then
\[
        \frac1{N_q}\sum_{n=0}^{N_q-1}\phi(t_n)
        \longrightarrow
        \phi(s).
\]
Thus the empirical measures of the angular sequence have exactly the accumulation
set
\[
        \{\delta_s:\ s\in\mathbb T\}.
\]

Since
\[
        r_n\to0
        \qquad\text{and}\qquad
        \sigma_n\to0,
\]
we have
\[
        \|\Xi(r_n,t_n,\sigma_n)-H(t_n)\|\to0.
\]
Therefore, by uniform continuity of continuous test functions on \(\Delta^3\), the
empirical accumulation set of the original collar orbit is
\[
        \{\delta_{H(s)}:\ s\in\mathbb T\}.
\]
This proves \textup{(ii)}.

It remains to treat the borderline case \(\tau=0\).  Then
\[
        \omega_0(r_k)
        =
        \frac{\alpha}{L(r)+kc_\lambda}.
\]
For \(0<u\le1\), one has
\[
        t_{\lfloor uN\rfloor}-t_N
        \longrightarrow
        \frac{\alpha}{c_\lambda}\log u
\]
uniformly for \(u\) in compact subintervals of \((0,1]\).  Therefore, if
\[
        t_{N_q}\to s,
\]
then the usual Riemann-sum argument gives
\[
        \frac1{N_q}\sum_{n=0}^{N_q-1}\phi(t_n)
        \longrightarrow
        \int_0^1
        \phi\left(s+\frac{\alpha}{c_\lambda}\log u\right)\,du.
\]
The density of the phases \(t_N\) in \(\mathbb T\) follows from the same lifted
crossing argument used above.  Hence the angular empirical accumulation set is
\[
        \{\nu_s:\ s\in\mathbb T\}.
\]
Pushing these measures forward by \(H\), and using again
\[
        \|\Xi(r_n,t_n,\sigma_n)-H(t_n)\|\to0,
\]
we obtain
\[
        \mathcal V_0(x)
        =
        \{H_*\nu_s:\ s\in\mathbb T\}.
\]
This proves \textup{(iii)}.

In both cases \(0\le\tau\le1\), the displayed accumulation set contains more than
one probability measure.  Therefore the empirical measures do not converge.
\end{proof}

\begin{remark}
The parameter \(\tau\) plays the role of a stickiness parameter.  For
\(\tau>1\), the angular drift is summable, so the orbit approaches one boundary
point and the collar dynamics is regular.  For \(0\le\tau\le1\), the angular drift
is non-summable, so the orbit continues to make infinitely many turns while
approaching the boundary.  The residence time in each angular window is of order
\[
        L(r)(\log L(r))^\tau,
        \qquad
        L(r)=M+\log(1/r),
\]
which is at least linear in the previous time along the orbit.  Hence the weak
Stein--Ulam block mechanism applies.  This gives a multidimensional analogue of
the one-dimensional transition between statistical and non-statistical regimes
controlled by endpoint stickiness.
\end{remark}

\section{General \texorpdfstring{$\mathbf p$}{p}-Stein--Ulam Lotka--Volterra maps}

This section gives a general \(m\)-dimensional template which extends the results of \cite{JM26}.  The Lyapunov part
is automatic from a weighted Stein--Ulam inequality.  The weak cyclic cover
and residence-time estimate remain explicit hypotheses, because in dimensions
larger than two they depend on the concrete geometry of the model.

\begin{definition}\label{def:lv}
Let \(\p=(p_1,\ldots,p_m)\in\inte\DeltaM\).  A map
\(K_{\f}:\DeltaM\to\DeltaM\) is called a stochastic Lotka--Volterra map if
there are continuous functions \(F_i:\DeltaM\to\R\), \(i=1,\ldots,m\), such
that
\[
        \bigl(K_{\f}(\x)\bigr)_i
        =
        x_i(1+F_i(\x)),
        \qquad i=1,\ldots,m,
\]
\[
        1+F_i(\x)\ge0,
        \qquad \x\in\DeltaM,
\]
and
\[
        \sum_{i=1}^m x_iF_i(\x)=0,
        \qquad \x\in\DeltaM.
\]
If additionally \(1+F_i(\x)>0\) for all \(\x\in\inte\DeltaM\) and all \(i\),
then \(K_{\f}(\inte\DeltaM)\subset\inte\DeltaM\).
\end{definition}

\begin{remark}
The identity \(\sum_i x_iF_i(\x)=0\) guarantees that
\[
        \sum_{i=1}^m \bigl(K_{\f}(\x)\bigr)_i
        =
        \sum_{i=1}^m x_i+
        \sum_{i=1}^m x_iF_i(\x)=1.
\]
The inequalities \(1+F_i\ge0\) guarantee non-negativity of the image
coordinates.  Thus \(K_{\f}\) maps the simplex into itself.
\end{remark}

\begin{definition}\label{def:p-su}
Let \(\p=(p_1,\ldots,p_m)\in\inte\DeltaM\).  For a stochastic Lotka--Volterra
map \(K_{\f}:\DeltaM\to\DeltaM\), define
\[
        \varphi_{\p}(\x)=\prod_{i=1}^m x_i^{p_i},
        \qquad
        \Psi_{\p}(\x)=\prod_{i=1}^m\bigl(1+F_i(\x)\bigr)^{p_i}.
\]
We call \(K_{\f}\) a general \(\p\)-Stein--Ulam operator if the following
conditions hold.

\begin{enumerate}[label=\textup{(P\arabic*)},leftmargin=3.2em]
\item \textbf{Interior coexistence state.}
\[
        K_{\f}(\inte\DeltaM)\subset\inte\DeltaM,
        \qquad
        F_i(\p)=0\quad(1\le i\le m),
\]
and
\[
        \Fix(K_{\f})\cap\inte\DeltaM=\{\p\}.
\]

\item \textbf{Weighted Stein--Ulam dissipation.}
For every \(\x\in\DeltaM\),
\[
        \langle\p,\f(\x)\rangle
        :=
        \sum_{i=1}^m p_iF_i(\x)
        \le0.
\]
Moreover, the product multiplier is strict away from \(\p\):
\[
        \Psi_{\p}(\x)=1
        \quad\Longleftrightarrow\quad
        \x=\p.
\]

\item \textbf{Weak coding.}
There are \(U_0\), \(N\), compact cyclic cover sets \(G_i\), cyclic intervals
\(I_1,\ldots,I_m\), and residence regions \(R_1,\ldots,R_m\) satisfying
\textup{(W3)}, \textup{(W4)} and \textup{(W5)} for the map \(K_{\f}\).

\item \textbf{Residence estimate for the \(\p\)-product.}
For each \(j\in\{1,\ldots,m\}\) there are constants
\(A_j>0\), \(B_j>0\), and \(C_j>1\) such that every maximal lifted residence
block \((b,\ell)\) through \(I_j\) along the selected lifted itinerary of every
non-fixed interior orbit satisfies
\[
        \ell
        \ge
        A_j\log_{C_j}^{+}
        \frac{B_j}{\varphi_{\p}(K_{\f}^{b}(\x))}.
\]
\end{enumerate}
\end{definition}

\begin{remark}
The strictness in \textup{(P2)} is stated for the multiplier
\(\Psi_{\p}\), not for the linear quantity \(\langle\p,\f\rangle\) alone.  This
is important: in the classical Stein--Ulam map with
\(\p=(1/3,1/3,1/3)\), one has \(\langle\p,\f(\x)\rangle=0\) identically, while
strict decrease of \(\varphi_{\p}\) away from \(\p\) follows from the strictness
case of the weighted AM--GM inequality.
\end{remark}

\begin{remark}
Condition \textup{(P3)} is the general weak analogue of the planar sector rule
\(K(G_i)\subset G_i\cup G_{i+1}\), formulated by a compact cyclic cover and
orbitwise lifted itineraries.  It is not a hidden assumption of residence
blocks.  The blocks are produced by Lemma~\ref{lem:blocks} after weak
admissibility has been verified.
\end{remark}

The next theorem should be understood as a verification template rather
than as an automatic verification of all \(p\)-Stein--Ulam
Lotka--Volterra maps. The Lotka--Volterra form and the weighted
arithmetic--geometric mean inequality provide the multiplicative
Lyapunov structure. The genuinely model-dependent parts are the weak
cyclic coding, the convex-hull separation, the non-trapping of the
lifted itinerary and the logarithmic residence estimate, which are recorded
explicitly in (P3) and (P4).

\begin{theorem}\label{thm:p-su-admissible}
Every general \(\p\)-Stein--Ulam operator \(K_{\f}\) is a weak
admissible Stein--Ulam type map.  More precisely, the weak admissible
Lyapunov functions are
\[
        \varphi=\varphi_{\p},
        \qquad
        \psi=\Psi_{\p},
\]
where
\[
        \varphi_{\p}(\x)=\prod_{i=1}^m x_i^{p_i},
        \qquad
        \Psi_{\p}(\x)=\prod_{i=1}^m\bigl(1+F_i(\x)\bigr)^{p_i}.
\]
\end{theorem}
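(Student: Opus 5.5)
The plan is to verify (W1)--(W6) one at a time. Most of them are immediate from (P1)--(P4); the real work is in (W2).

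(W1) comes directly from (P1). We also need face invariance: since \((K_{\f}(\x))_i=x_i(1+F_i(\x))\), if \(x_i=0\) then the \(i\)-th image coordinate is \(0\), so \(K_{\f}(\bd\DeltaM)\subset\bd\DeltaM\). Interior invariance and the uniqueness of the interior fixed point are stated in (P1). Conditions (W3), (W4) and (W5) are exactly the content of (P3). (W6) is (P4) with \(\varphi=\varphi_{\p}\), once (W2) has been established with this choice of \(\varphi\).

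For (W2), set \(\varphi=\varphi_{\p}\) and \(\psi=\Psi_{\p}\); both are continuous since \(p_i>0\) and \(1+F_i\ge0\).

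\textbf{Multiplicative identity.} This is immediate:
\[
\varphi_{\p}(K_{\f}(\x))=\prod_i x_i^{p_i}(1+F_i(\x))^{p_i}=\varphi_{\p}(\x)\Psi_{\p}(\x),
\]
with the convention \(0^{p_i}=0\) on the boundary.

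\textbf{Zero set of \(\varphi\).} We have \(\varphi_{\p}^{-1}(0)=\bd\DeltaM\), because \(\varphi_{\p}\) vanishes exactly when some coordinate vanishes.

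\textbf{The bound \(0\le\psi\le1\).} Use the weighted AM--GM inequality with weights \(p_i\):
\[
\Psi_{\p}(\x)\le\sum_i p_i(1+F_i(\x))=1+\langle\p,\f(\x)\rangle\le1,
\]
where the last step is (P2). Nonnegativity holds because \(1+F_i\ge0\). The level set \(\Psi_{\p}^{-1}(1)=\{\p\}\) is postulated in (P2).

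\textbf{Strict unique maximum at \(\p\).} This is the only piece not handed to us, and it requires no dynamics. By the weighted AM--GM inequality applied to the ratios \(x_i/p_i\),
\[
\prod_i\Bigl(\frac{x_i}{p_i}\Bigr)^{p_i}\le\sum_i p_i\frac{x_i}{p_i}=1,
\]
so \(\varphi_{\p}(\x)\le\prod_i p_i^{p_i}=\varphi_{\p}(\p)\). Equality holds iff all ratios \(x_i/p_i\) are equal, i.e.\ \(\x=\p\). On the boundary the inequality is strict trivially, since there \(\varphi_{\p}=0<\varphi_{\p}(\p)\).

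The main point of care is the boundary, where \(\psi\) may involve factors \(1+F_i=0\). The identity \(\varphi(K\x)=\varphi(\x)\psi(\x)\) still holds there because both sides vanish: the left side vanishes since \(K_{\f}(\x)\in\bd\DeltaM\), and the right side since \(\varphi_{\p}(\x)=0\). Hence no interior assumption is needed for (W2).

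Collecting all conditions, \(K_{\f}\) is weak admissible with \(\varphi=\varphi_{\p}\) and \(\psi=\Psi_{\p}\).
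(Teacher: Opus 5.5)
Your proposal is correct and follows essentially the same route as the paper's proof: face invariance comes from the Lotka--Volterra form, the product identity is $\varphi_{\p}(K_{\f}(\x))=\varphi_{\p}(\x)\Psi_{\p}(\x)$, and weighted AM--GM is applied twice, once to get $0\le\Psi_{\p}\le 1+\langle\p,\f(\x)\rangle\le1$ and once to the ratios $x_i/p_i$ for the strict maximum at $\p$, with (W3)--(W6) read off from (P3)--(P4). Your extra remarks on the continuity of $\Psi_{\p}$ and on boundary points are harmless, though the identity needs no separate boundary argument, since $(ab)^{p}=a^{p}b^{p}$ holds for all $a,b\ge0$ and $p>0$.
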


\begin{proof}
We verify the conditions of Definition~\ref{def:weak-admissible}.

First, the Lotka--Volterra form preserves the boundary.  Indeed, if
\(x_i=0\), then
\[
        \bigl(K_{\f}(\x)\bigr)_i=x_i(1+F_i(\x))=0.
\]
Thus every coordinate face is forward invariant and
\(K_{\f}(\bd\DeltaM)\subset\bd\DeltaM\).  Interior invariance and uniqueness of
the interior fixed point are exactly \textup{(P1)}.

Second, for \(\x\in\DeltaM\),
\[
\begin{aligned}
        \varphi_{\p}(K_{\f}(\x))
        &=
        \prod_{i=1}^m
        \left(x_i(1+F_i(\x))\right)^{p_i}        \\
        &=
        \left(\prod_{i=1}^m x_i^{p_i}\right)
        \left(\prod_{i=1}^m(1+F_i(\x))^{p_i}\right)        \\
        &=
        \varphi_{\p}(\x)\Psi_{\p}(\x).
\end{aligned}
\]
Also,
\[
        \varphi_{\p}^{-1}(0)=\bd\DeltaM,
        \qquad
        \varphi_{\p}(\x)>0\quad(\x\in\inte\DeltaM).
\]

Third, by the weighted arithmetic--geometric mean inequality,
\[
\begin{aligned}
        \Psi_{\p}(\x)
        &=
        \prod_{i=1}^m(1+F_i(\x))^{p_i}        \\
        &\le
        \sum_{i=1}^m p_i(1+F_i(\x))            \\
        &=
        1+\sum_{i=1}^m p_iF_i(\x)              \\
        &=
        1+\langle\p,\f(\x)\rangle
        \le1.
\end{aligned}
\]
The factors are non-negative, so \(\Psi_{\p}\ge0\).  Hence
\(0\le\Psi_{\p}\le1\).  The strictness part of \textup{(P2)} gives
\[
        \Psi_{\p}^{-1}(1)=\{\p\}.
\]

Fourth, \(\varphi_{\p}\) has a strict unique maximum at \(\p\).  Applying
weighted AM--GM to the positive numbers \(x_i/p_i\), with weights \(p_i\),
gives
\[
        \prod_{i=1}^m\left(\frac{x_i}{p_i}\right)^{p_i}
        \le
        \sum_{i=1}^m p_i\frac{x_i}{p_i}
        =
        \sum_{i=1}^m x_i
        =1.
\]
Thus
\[
        \varphi_{\p}(\x)
        =
        \left(\prod_{i=1}^m p_i^{p_i}\right)
        \prod_{i=1}^m\left(\frac{x_i}{p_i}\right)^{p_i}
        \le
        \prod_{i=1}^m p_i^{p_i}
        =
        \varphi_{\p}(\p).
\]
Equality in weighted AM--GM occurs if and only if all ratios \(x_i/p_i\) are
equal.  Since both \(\x\) and \(\p\) have coordinate sum one, this is equivalent
to \(\x=\p\).  Hence the maximum is strict and unique.

Finally, \textup{(P3)} gives the weak cyclic cover, residence regions and
convex-hull separation required in \textup{(W3)--(W5)}, while \textup{(P4)} is
exactly the logarithmic residence estimate \textup{(W6)} with
\(\varphi=\varphi_{\p}\).  Therefore all weak admissibility conditions hold.
\end{proof}

\begin{remark}
The role of Theorem~\ref{thm:p-su-admissible} is to show how the abstract weak admissibility
criterion can be checked in a \(\p\)-weighted Lotka--Volterra setting.
Its proof also clarifies the division of labour among the hypotheses:
the weighted product
\[
        \varphi_\p(x)=\prod_{i=1}^m x_i^{p_i}
\]
is the natural Lyapunov function, whereas the spiral geometry is
encoded separately by the weak coding and residence assumptions. This
is precisely the separation which allows the later conjugacy examples
to leave the Lotka--Volterra class while retaining the same
non-statistical mechanism.
\end{remark}

\begin{corollary}\label{cor:p-su}
Let \(K_{\f}\) be a general \(\p\)-Stein--Ulam operator on
\(\DeltaM\).  Then, for every \(s\in\N\) and every
\(
        \x\in\inte\DeltaM\setminus\{\p\},
\)
the averages
\[
        A_n(K_{\f}^s)(\x)
        =
        \frac{1}{n}\sum_{k=0}^{n-1}K_{\f}^{sk}(\x)
\]
do not converge.  In particular, every positive power of \(K_{\f}\) has
vector-valued non-statistical behavior on a full relative Lebesgue measure subset
of \(\DeltaM\).
\end{corollary}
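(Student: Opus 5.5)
The plan is to reduce Corollary~\ref{cor:p-su} to results already established for the abstract class. By Theorem~\ref{thm:p-su-admissible}, every general \(\p\)-Stein--Ulam operator \(K_{\f}\) is a weak admissible Stein--Ulam type map. The Lyapunov pair is \(\varphi=\varphi_{\p}\), \(\psi=\Psi_{\p}\). The remaining data \(U_0\), \(N\), \(G_i\), \(I_j\) and \(R_j\) are those supplied by \textup{(P3)}, and the residence constants are those of \textup{(P4)}. So the first step is simply to invoke that theorem and record that \(K_{\f}\) satisfies \textup{(W1)--(W6)}, with the same interior fixed point \(\p\).

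The second step is to apply Theorem~\ref{thm:powers} to \(K=K_{\f}\) and an arbitrary \(s\in\N\). It gives, for every \(\x\in\inte\DeltaM\setminus\{\p\}\), that
\[
        A_n(K_{\f}^s)(\x)=\frac1n\sum_{k=0}^{n-1}K_{\f}^{sk}(\x)
\]
does not converge. I will recall briefly why this works, so the chain of dependencies is visible:
\begin{itemize}[leftmargin=2em]
\item Lemma~\ref{lem:escape} and Lemma~\ref{lem:blocks} produce, for each \(j\), infinitely many lifted residence blocks through \(R_j\).
\item Proposition~\ref{prop:linear-residence} converts \textup{(P4)} into \(\liminf \ell_{j,q}/b_{j,q}>0\), using the uniform bound \(\Psi_{\p}\le\rho<1\) off \(U_0\).
\item Lemma~\ref{lem:sampling} transfers these blocks to the sampled orbit of \(K_{\f}^s\).
\item Lemma~\ref{lem:block} then forces any limit \(L\) to lie in \(\bigcap_j\co(R_j)\), which is empty by \textup{(W4)}.
\end{itemize}

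The third step is the measure statement. The boundary \(\bd\DeltaM\) is a finite union of faces of dimension at most \(m-2\), and \(\{\p\}\) is a single point, so both are \(\Leb_{m-1}\)-null. Hence \(\inte\DeltaM\setminus\{\p\}\) has full relative Lebesgue measure, exactly as in Corollary~\ref{cor:full-measure-vector}. This yields vector-valued non-statistical behavior of every positive power on a full-measure set.

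I expect no genuine obstacle, since all the substantive work is in Theorem~\ref{thm:p-su-admissible} and Theorem~\ref{thm:powers}. The only point needing care is the identification of hypotheses. The fixed point \(\p\) appearing in \textup{(W1)} must be the same \(\p\) that serves as the weight vector. Also, the residence estimate \textup{(P4)} is stated for the same selected lifted itineraries as in \textup{(P3)}, so \textup{(W6)} holds along the itinerary used in Lemma~\ref{lem:blocks}. Both identifications are built into Definition~\ref{def:p-su}, so I will state them explicitly and then conclude.
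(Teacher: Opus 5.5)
Your proposal is correct and matches the paper's proof: the paper invokes Theorem~\ref{thm:p-su-admissible} to conclude that \(K_{\f}\) is weak admissible and then applies Theorem~\ref{thm:powers} directly. Your recap of the underlying chain (escape, blocks, linear residence, sampling, convex-hull obstruction) and the full-measure remark are accurate supporting detail rather than a different route.
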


\begin{proof}
By Theorem~\ref{thm:p-su-admissible}, \(K_{\f}\) is weak admissible.  The
result follows immediately from Theorem~\ref{thm:powers}.
\end{proof}

\section{Topological entropy}\label{sec:entropy}

The preceding results show that weak admissible Stein--Ulam maps have
large sets of points with non-convergent time averages.  We next record a
complementary fact: the entropy of such a map is carried entirely by the
boundary.  Thus the historical behavior constructed above is not caused by
positive entropy in the interior.

\begin{theorem}
\label{thm:entropy-boundary}
Let \(K:\DeltaM\to\DeltaM\) be a weak admissible Stein--Ulam type map.
Then
\[
        \htop(K)=\htop\bigl(\left.K\right|_{\bd\DeltaM}\bigr),
\]
where the boundary is endowed with the relative topology.
\end{theorem}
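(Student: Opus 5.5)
We will prove the equality through the variational principle, by showing that every invariant probability measure of \(K\) lives on \(\bd\DeltaM\cup\{\p\}\). The inequality \(\htop(K|_{\bd\DeltaM})\le\htop(K)\) is immediate: \(\bd\DeltaM\) is a closed \(K\)-invariant set by \textup{(W1)}, and entropy does not increase under restriction to a closed invariant subset. The real task is the reverse inequality.

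For this we fix a \(K\)-invariant Borel probability measure \(\mu\) and show \(\mu(\inte\DeltaM\setminus\{\p\})=0\). Put \(W=\inte\DeltaM\setminus\{\p\}\). This set is forward invariant: interior points map to the interior by \textup{(W1)}, and a point of \(W\) cannot map to \(\p\), since otherwise \(\varphi(\p)=\varphi(\x)\psi(\x)\le\varphi(\x)<\varphi(\p)\) by \textup{(W2)}. Now let \(D_k=\{\x\in W:\ \varphi(\x)\ge 1/k\}\), a compact subset of the interior minus \(\p\) after intersecting with the complement of a small ball around \(\p\). We will cover \(W\) by countably many compact sets \(D\subset W\) with \(\varphi\ge c>0\) on \(D\) and \(\p\notin D\). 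By Lemma~\ref{lem:escape}, every orbit of a point in \(D\) satisfies \(\varphi(K^n\x)\to0\), so \(\mathbf 1_D(K^n\x)\to0\) pointwise on \(W\). Poincaré recurrence (or Birkhoff plus dominated convergence) then gives \(\mu(D)=\int\mathbf 1_D\circ K^n\,d\mu\to0\), so \(\mu(D)=0\). An even simpler route: for \(\x\in W\) the sequence \(\varphi(K^n\x)\) strictly decreases to \(0\), and \(\int\varphi\circ K\,d\mu=\int\varphi\,d\mu\) together with \(\varphi\circ K=\varphi\psi\le\varphi\) forces \(\varphi\psi=\varphi\) \(\mu\)-a.e. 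On \(W\), where \(\varphi>0\) and \(\psi<1\), this is impossible, so \(\mu(W)=0\). This second argument is the one we will use.

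Consequently every ergodic invariant measure is either \(\delta_{\p}\) or is supported on the closed invariant set \(\bd\DeltaM\). In the latter case \(h_\mu(K)=h_\mu(K|_{\bd\DeltaM})\le\htop(K|_{\bd\DeltaM})\), while \(h_{\delta_\p}=0\). Taking the supremum over ergodic measures in the variational principle, which applies since \(K\) is continuous on the compact metric space \(\DeltaM\), gives \(\htop(K)\le\max\{0,\htop(K|_{\bd\DeltaM})\}=\htop(K|_{\bd\DeltaM})\).

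We expect the main point needing care to be the measure-theoretic step: ensuring integrability (\(\varphi\) is bounded and continuous, so this is fine) and using that \(\varphi>0\) together with \(\psi<1\) on \(W\) to conclude \(\mu(W)=0\). We also need to note that the variational principle requires only continuity of \(K\), not invertibility, which is exactly what we have here.
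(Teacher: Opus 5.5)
Your proposal is correct and follows essentially the same route as the paper: monotonicity under restriction gives one inequality, and for the other, invariance turns the Lyapunov identity into $\int\varphi(1-\psi)\,d\mu=0$, which forces every invariant measure onto $\bd\DeltaM\cup\{\p\}$; the ergodic dichotomy ($\delta_{\p}$ or a boundary measure) and the variational principle then finish the proof. The preliminary recurrence/escape route you sketch is unnecessary but harmless. Your check that no point of $\inte\DeltaM\setminus\{\p\}$ maps to $\p$ usefully justifies the strict invariance of $\{\p\}$ that the ergodic dichotomy relies on.
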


\begin{proof}
Since \(K(\bd\DeltaM)\subset\bd\DeltaM\), monotonicity of topological entropy
under restriction to an invariant compact set gives
\[
        \htop\bigl(\left.K\right|_{\bd\DeltaM}\bigr)\le \htop(K).
\]
We prove the reverse inequality by the variational principle.

Let \(\mu\) be a \(K\)-invariant Borel probability measure on \(\DeltaM\).  By
invariance and the multiplicative Lyapunov identity,
\[
\begin{aligned}
        \int \varphi\,d\mu
        &=\int \varphi\circ K\,d\mu                                      \\
        &=\int \varphi(x)\psi(x)\,d\mu(x).
\end{aligned}
\]
Hence
\[
        \int \varphi(x)(1-\psi(x))\,d\mu(x)=0.
\]
The integrand is non-negative.  Moreover, by weak admissibility,
\[
        \varphi(x)=0 \iff x\in\bd\DeltaM,
        \qquad
        \psi(x)=1 \iff x=\p.
\]
Therefore
\[
        \mu\bigl(\DeltaM\setminus(\bd\DeltaM\cup\{\p\})\bigr)=0.
\]
Thus every invariant probability measure is supported on
\(\bd\DeltaM\cup\{\p\}\).

Now let \(\mu\) be ergodic.  Since \(\{\p\}\) is invariant and
\(\bd\DeltaM\) is invariant, ergodicity implies that either
\(\mu=\delta_{\p}\), or \(\mu(\bd\DeltaM)=1\).  In the first case
\[
        h_\mu(K)=0.
\]
In the second case \(\mu\) is an invariant measure for
\(\left.K\right|_{\bd\DeltaM}\), and therefore
\[
        h_\mu(K)
        =
        h_\mu\bigl(\left.K\right|_{\bd\DeltaM}\bigr)
        \le
        \htop\bigl(\left.K\right|_{\bd\DeltaM}\bigr).
\]
Taking the supremum over ergodic invariant measures and using the variational
principle gives
\[
        \htop(K)\le \htop\bigl(\left.K\right|_{\bd\DeltaM}\bigr).
\]
The two inequalities prove the claim.
\end{proof}

\begin{corollary}
\label{cor:entropy-powers}
Let \(K\) be a weak admissible Stein--Ulam type map.  For every
\(s\in\N\),
\[
        \htop(K^s)
        =
        s\,\htop(K)
        =
        s\,\htop\bigl(\left.K\right|_{\bd\DeltaM}\bigr).
\]
In particular, if \(\left.K\right|_{\bd\DeltaM}\) has zero topological
entropy, then every positive power \(K^s\) has zero topological entropy and,
by Theorem~\ref{thm:powers}, has historical behavior on the full-measure set
\(\inte\DeltaM\setminus\{\p\}\).
\end{corollary}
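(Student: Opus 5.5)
The plan is to combine the standard power rule for topological entropy with Theorem~\ref{thm:entropy-boundary}.

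\textbf{Step 1: the power rule.} For any continuous self-map \(T\) of a compact metric space and any \(s\in\N\), one has \(\htop(T^s)=s\,\htop(T)\). We apply this with \(T=K\) on \(\DeltaM\), which gives \(\htop(K^s)=s\,\htop(K)\).

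\textbf{Step 2: reduction to the boundary.} Substituting Theorem~\ref{thm:entropy-boundary} gives the second equality,
\[
\htop(K^s)=s\,\htop\bigl(\left.K\right|_{\bd\DeltaM}\bigr).
\]

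\textbf{Alternative route.} Instead of invoking the power rule on the whole simplex, we could check that \(K^s\) is itself weak admissible on the Lyapunov side. Take \(\varphi\) unchanged and set \(\psi_s=\prod_{k=0}^{s-1}\psi\circ K^k\). Then \(\varphi\circ K^s=\varphi\,\psi_s\) and \(0\le\psi_s\le1\).

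The delicate point is to show \(\psi_s^{-1}(1)=\{\p\}\). If \(\psi_s(\x)=1\), then every factor equals \(1\), so \(\psi(\x)=1\) and hence \(\x=\p\). Conversely \(\psi_s(\p)=1\), because \(\p\) is fixed.

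With this in hand, the invariant-measure argument in the proof of Theorem~\ref{thm:entropy-boundary} uses only the Lyapunov identity and the variational principle. It therefore applies verbatim to \(K^s\) and yields
\[
\htop(K^s)=\htop\bigl(\left.K^s\right|_{\bd\DeltaM}\bigr)=s\,\htop\bigl(\left.K\right|_{\bd\DeltaM}\bigr),
\]
where the last step is the power rule on the compact invariant set \(\bd\DeltaM\). Either route suffices.

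\textbf{Step 3: the particular case.} If \(\htop(\left.K\right|_{\bd\DeltaM})=0\), the displayed formula gives \(\htop(K^s)=0\). Historical behavior of \(K^s\) on \(\inte\DeltaM\setminus\{\p\}\) is exactly Theorem~\ref{thm:powers}, and Corollary~\ref{cor:full-measure-vector} shows this set has full relative measure.

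There is no real obstacle beyond recalling the power rule \(\htop(T^s)=s\,\htop(T)\). This is classical (e.g.\ via Bowen's \((n,\varepsilon)\)-spanning sets), so we simply cite it.
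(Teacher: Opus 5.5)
Your proof is correct and is essentially the paper's: the power rule \(\htop(K^s)=s\,\htop(K)\) combined with Theorem~\ref{thm:entropy-boundary}, with the particular case read off from Theorem~\ref{thm:powers}. Your alternative route via \(\psi_s=\prod_{k=0}^{s-1}\psi\circ K^k\) is also valid, since a product of factors in \([0,1]\) equals \(1\) only if every factor does, but it is not needed.
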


\begin{proof}
The power formula for topological entropy gives
\(\htop(K^s)=s\htop(K)\).  The rest follows from
Theorem~\ref{thm:entropy-boundary} and Theorem~\ref{thm:powers}.
\end{proof}

\begin{corollary}
\label{cor:lv-face-entropy}
Let \(K_{\f}:\DeltaM\to\DeltaM\) be a weak admissible
\(\p\)-Stein--Ulam Lotka--Volterra operator.  For
\(i=1,\ldots,m\), let
\[
        F_i=\{x\in\DeltaM:x_i=0\}
\]
be the codimension-one faces.  Then
\[
        \htop(K_{\f})
        =
        \max_{1\le i\le m}\htop\bigl(\left.K_{\f}\right|_{F_i}\bigr).
\]
In particular, if each face restriction \(\left.K_{\f}\right|_{F_i}\) has zero
topological entropy, then \(\htop(K_{\f})=0\).
\end{corollary}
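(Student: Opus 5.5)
The natural route is to reduce the statement to Theorem~\ref{thm:entropy-boundary} and then to the classical fact that topological entropy of a finite union of closed invariant sets is the maximum of the entropies. By Theorem~\ref{thm:p-su-admissible} (or directly from the weak admissibility hypothesis), \(K_{\f}\) is a weak admissible Stein--Ulam type map. Theorem~\ref{thm:entropy-boundary} then gives
\[
        \htop(K_{\f})=\htop\bigl(\left.K_{\f}\right|_{\bd\DeltaM}\bigr).
\]
It remains to express the boundary entropy in terms of the faces \(F_i\).

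First I will check that each face \(F_i\) is a compact, forward-invariant set. Compactness is clear, since \(F_i\) is closed in \(\DeltaM\). Invariance comes from the Lotka--Volterra form: if \(x_i=0\), then \((K_{\f}(\x))_i=x_i(1+F_i(\x))=0\), as already noted in the proof of Theorem~\ref{thm:p-su-admissible}. Moreover \(\bd\DeltaM=\bigcup_{i=1}^m F_i\), because a point of the simplex lies on the boundary exactly when some coordinate vanishes.

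Next I will prove the union formula \(\htop(T|_{X_1\cup\cdots\cup X_m})=\max_i\htop(T|_{X_i})\) for closed forward-invariant sets \(X_i\) of a continuous map. The cleanest way is the variational principle, mirroring the proof of Theorem~\ref{thm:entropy-boundary}. The inequality "\(\ge\)" is monotonicity under restriction. For "\(\le\)", take an ergodic invariant measure \(\mu\) on \(\bd\DeltaM\). Since \(\mu(\bigcup F_i)=1\), some face has \(\mu(F_i)>0\). Because \(K_{\f}^{-1}(F_i)\supset F_i\) and \(\mu(K_{\f}^{-1}F_i)=\mu(F_i)\), the set \(F_i\) is invariant mod \(\mu\), and ergodicity forces \(\mu(F_i)=1\). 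Hence \(\mu\) is an invariant measure of \(\left.K_{\f}\right|_{F_i}\), so
\[
        h_\mu\le\htop\bigl(\left.K_{\f}\right|_{F_i}\bigr)\le\max_i\htop\bigl(\left.K_{\f}\right|_{F_i}\bigr).
\]
Taking the supremum over ergodic measures gives the bound. Alternatively, one could cite Bowen's result on finite unions of closed invariant sets.

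The only mildly delicate point is the ergodicity step with a merely forward-invariant set. The argument above handles it: \(F_i\subset K_{\f}^{-1}(F_i)\), and the two sets have equal measure, so their symmetric difference is \(\mu\)-null. The "in particular" clause is then immediate, since a maximum of zeros is zero.
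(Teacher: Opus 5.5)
Your proposal is correct and follows the paper's proof: faces are invariant by the Lotka--Volterra form, $\bd\DeltaM=\bigcup_i F_i$, the finite-union formula for topological entropy applies, and Theorem~\ref{thm:entropy-boundary} finishes the argument. The only difference is that you justify the finite-union formula via the variational principle (an ergodic measure that charges a closed forward-invariant face must give it full mass), whereas the paper simply quotes it as a standard fact.
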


\begin{proof}
For a Lotka--Volterra operator,
\[
        (K_{\f}(x))_i=x_i(1+F_i(x)),
\]
so every coordinate face \(F_i\) is invariant.  Since
\[
        \bd\DeltaM=\bigcup_{i=1}^m F_i,
\]
and this is a finite union of compact invariant sets,
\[
        \htop\bigl(\left.K_{\f}\right|_{\bd\DeltaM}\bigr)
        =
        \max_{1\le i\le m}
        \htop\bigl(\left.K_{\f}\right|_{F_i}\bigr).
\]
The claim now follows from Theorem~\ref{thm:entropy-boundary}.
\end{proof}

\begin{example}
\label{ex:classical-SU-zero-entropy}
Let
\[
        \mathcal U(x_1,x_2,x_3)
        =
        \bigl(
        x_1^2+2x_1x_2,
        x_2^2+2x_2x_3,
        x_3^2+2x_3x_1
        \bigr)
\]
be the classical Stein--Ulam map on \(\Delta^2\).  Then
\[
        \htop(\mathcal U)=0.
\]
Nevertheless, by the weak admissibility theorem for \(\mathcal U\), the vector
Ces\`aro averages of every non-fixed interior orbit do not converge.
\end{example}

\begin{proof}
The boundary of \(\Delta^2\) is the union of the three invariant edges.  On
\(\{x_3=0\}\), if \(t=x_2\), then
\[
        t\longmapsto t^2.
\]
Similarly, on the other two edges the induced one-dimensional maps are again
of the form \(t\mapsto t^2\), after choosing the appropriate edge coordinate.
Every continuous monotone interval map has zero topological entropy.  Hence
all three edge restrictions have zero entropy.  By
Corollary~\ref{cor:lv-face-entropy},
\[
        \htop(\mathcal U)=0.
\]
The non-convergence of vector Ces\`aro averages follows from
Theorem~\ref{thm:main-divergence}, since \(\mathcal U\) is weak admissible.
\end{proof}

\begin{corollary}
\label{cor:conjugate-example-zero-entropy}
Let
\[
        K_{\varepsilon}=h_{\varepsilon}^{-1}\circ\mathcal U\circ h_{\varepsilon}
\]
be the weak admissible non-Lotka--Volterra example constructed in
Example~\ref{ex:weak-not-pSU}.  Then
\[
        \htop(K_{\varepsilon})=0.
\]
Thus the example is a zero-entropy weak admissible Stein--Ulam map which
is not a \(\mathbf q\)-Stein--Ulam Lotka--Volterra map for any
\(\mathbf q\in\operatorname{int}\Delta^2\).
\end{corollary}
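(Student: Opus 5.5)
The plan is to reduce the claim to Theorem~\ref{thm:entropy-boundary} and then compute the boundary entropy of \(\mathcal U\) through the conjugacy. Conjugate maps have equal topological entropy. Since \(h_\varepsilon\) is a homeomorphism of \(\Delta^2\), and \(K_\varepsilon=h_\varepsilon^{-1}\circ\mathcal U\circ h_\varepsilon\), this gives
\[
        \htop(K_\varepsilon)=\htop(\mathcal U)=0,
\]
where the last equality is Example~\ref{ex:classical-SU-zero-entropy}. In fact this step alone suffices. Equivalently, one can argue through Theorem~\ref{thm:entropy-boundary}. The map \(K_\varepsilon\) is weak admissible by Theorem~\ref{thm:weak-admissibility-conjugacy}, since \(\|h_\varepsilon-\id\|_\infty<\eta_{\mathcal U}\) by the construction in Example~\ref{ex:weak-not-pSU}. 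Hence \(\htop(K_\varepsilon)=\htop(K_\varepsilon|_{\bd\Delta^2})\). Because \(h_\varepsilon(\bd\Delta^2)=\bd\Delta^2\), the restriction \(K_\varepsilon|_{\bd\Delta^2}\) is topologically conjugate to \(\mathcal U|_{\bd\Delta^2}\), and the latter has zero entropy: each edge is invariant and acts as the monotone map \(t\mapsto t^2\).

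For the second sentence, I would invoke Proposition~\ref{prop:perturbative-SU-family}. I need to check that the explicit \(h_\varepsilon\) of Example~\ref{ex:weak-not-pSU} moves some vertex \(\e_i\) to a point \(h_\varepsilon(\e_i)\) that is not fixed by \(\mathcal U\). On the boundary, \(\mathcal U\) fixes only the three vertices. For instance, on \(\{x_3=0\}\) the map is \(t\mapsto t^2\), which fixes only \(t=0,1\). So a nontrivial boundary rotation sends \(\e_i\) into the relative interior of an edge, and that image point is non-fixed. Then \(K_\varepsilon(\e_i)\neq\e_i\). A Lotka--Volterra map fixes every vertex, so \(K_\varepsilon\) is not Lotka--Volterra, and in particular not \(\mathbf q\)-Stein--Ulam for any interior \(\mathbf q\).

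The only potential obstacle is bookkeeping about Example~\ref{ex:weak-not-pSU}, which appears later in the paper. I must confirm three properties of that example: \(h_\varepsilon\) is a boundary-preserving homeomorphism, it satisfies the smallness \(\|h_\varepsilon-\id\|_\infty<\eta_{\mathcal U}\), and it moves a vertex off the vertex set. The entropy equality itself does not depend on these, because it rests only on invariance of entropy under conjugacy.
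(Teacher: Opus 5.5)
Your proposal is correct, and its main step is exactly the paper's proof: topological entropy is invariant under topological conjugacy, so $\htop(K_\varepsilon)=\htop(\mathcal U)=0$ by Example~\ref{ex:classical-SU-zero-entropy}. The extra material is sound but not needed. This includes the alternative route through Theorem~\ref{thm:entropy-boundary} and your vertex check for the non-Lotka--Volterra claim (via Proposition~\ref{prop:perturbative-SU-family}), which the paper simply inherits from Example~\ref{ex:weak-not-pSU}.
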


\begin{proof}
Topological entropy is invariant under topological conjugacy.  Hence
\[
        \htop(K_{\varepsilon})=\htop(\mathcal U)=0
\]
by Example~\ref{ex:classical-SU-zero-entropy}.
\end{proof}

\section{A weak admissible map which is not Lotka--Volterra}

The preceding conjugacy theorem shows that weak admissibility is broader than
the Lotka--Volterra coordinate form.  We record a concrete two-dimensional
example.

\begin{example}
\label{ex:weak-not-pSU}
Let
\[
        \Delta^2=
        \{(x_1,x_2,x_3)\in\R^3:x_i\ge0,\ x_1+x_2+x_3=1\},
        \qquad
        \p=\left(\frac13,\frac13,\frac13\right),
\]
and consider the classical Stein--Ulam map
\[
        \mathcal U(x_1,x_2,x_3)
        =
        \bigl(
        x_1^2+2x_1x_2,
        x_2^2+2x_2x_3,
        x_3^2+2x_3x_1
        \bigr).
\]
The classical six-sector estimates for \(\mathcal U\) provide a standard weak admissible
Stein--Ulam structure; see, for example, \cite{JamilovMukhamedov2024}.
Let \(\eta_{\mathcal U}>0\) be the corresponding convex-hull separation constant from
Theorem~\ref{thm:weak-admissibility-conjugacy}.  Then, for every sufficiently
small \(\varepsilon>0\), there exists a boundary-preserving homeomorphism
\(h_{\varepsilon}:\Delta^2\to\Delta^2\) such that
\[
        \|h_{\varepsilon}-\id\|_{\infty}<\eta_{\mathcal U}
\]
and the conjugate
\[
        K_{\varepsilon}=h_{\varepsilon}^{-1}\circ\mathcal U\circ h_{\varepsilon}
\]
is weak admissible, but is not a \(\mathbf q\)-Stein--Ulam Lotka--Volterra map
for any \(\mathbf q\in\inte\Delta^2\).
\end{example}

\begin{proof}
Parametrize the boundary counterclockwise by
\(\Gamma:\R/3\Z\to\partial\Delta^2\),
\[
\Gamma(t)=
\begin{cases}
(1-t,t,0), & 0\le t\le1,\\[1mm]
(0,2-t,t-1), & 1\le t\le2,\\[1mm]
(t-2,0,3-t), & 2\le t\le3,
\end{cases}
\]
extended periodically with period \(3\).  For \(0<\varepsilon<1\), define the
boundary twist
\[
        g_{\varepsilon}(\Gamma(t))=\Gamma(t+\varepsilon).
\]
Every \(x\in\Delta^2\setminus\{\p\}\) has a unique radial representation
\[
        x=(1-r)\p+rq,
        \qquad
        0<r\le1,
        \qquad
        q\in\partial\Delta^2.
\]
Set \(h_{\varepsilon}(\p)=\p\), and define
\[
        h_{\varepsilon}(x)=(1-r)\p+r g_{\varepsilon}(q)
\]
for \(x=(1-r)\p+rq\ne\p\).  This is a homeomorphism of \(\Delta^2\) onto
itself, it maps \(\partial\Delta^2\) onto itself, and it fixes \(\p\).

The map \(\Gamma\) has speed \(\sqrt2\) on each side of the triangle, so the
Euclidean distance between \(\Gamma(t)\) and \(\Gamma(t+\varepsilon)\) is at
most the boundary arclength \(\sqrt2\varepsilon\).  Hence
\[
        \|h_{\varepsilon}-\id\|_{\infty}
        \le
        \sqrt2\varepsilon.
\]
Choose \(\varepsilon>0\) so small that
\(\sqrt2\varepsilon<\eta_{\mathcal U}\).  By
Theorem~\ref{thm:weak-admissibility-conjugacy}, the conjugate
\(K_{\varepsilon}\) is weak admissible.

It remains to show that \(K_{\varepsilon}\) is not Lotka--Volterra.  Every
stochastic Lotka--Volterra map has the form
\[
        (K_{\f}(x))_i=x_i(1+F_i(x)),
        \qquad i=1,2,3.
\]
Therefore every coordinate face is invariant and, in particular, every vertex
is fixed.  Indeed, at a vertex \(\mathbf e_i\), all coordinates except the
\(i\)-th remain zero, and stochasticity forces the remaining coordinate to be
one.

We now compute \(K_{\varepsilon}(\mathbf e_1)\).  Since
\[
        h_{\varepsilon}(\mathbf e_1)=\Gamma(\varepsilon)
        =(1-\varepsilon,\varepsilon,0),
\]
we have
\[
\begin{aligned}
        \mathcal U(h_{\varepsilon}(\mathbf e_1))
        &=\mathcal U(1-\varepsilon,\varepsilon,0)       \\
        &=(1-\varepsilon^2,\varepsilon^2,0)
        =\Gamma(\varepsilon^2).
\end{aligned}
\]
Since \(h_{\varepsilon}^{-1}\) acts on the boundary by
\(\Gamma(t)\mapsto\Gamma(t-\varepsilon)\),
\[
        K_{\varepsilon}(\mathbf e_1)
        =
        \Gamma(\varepsilon^2-\varepsilon)
        =
        \Gamma(3-\varepsilon+\varepsilon^2).
\]
For \(0<\varepsilon<1\), this equals
\[
        (1-\varepsilon+\varepsilon^2,0,\varepsilon-\varepsilon^2),
\]
which is different from \(\mathbf e_1\).  Hence \(K_{\varepsilon}\) cannot be a
stochastic Lotka--Volterra map, and therefore it cannot be a
\(\mathbf q\)-Stein--Ulam Lotka--Volterra map for any
\(\mathbf q\in\inte\Delta^2\).
\end{proof}

\section{Hausdorff dimension of omega-limit sets}
\label{sec:hausdorff-omega}

In this section we record what weak admissibility implies about the size of
omega-limit sets.  The conclusion is qualitative rather than quantitative:
weak admissibility forces omega-limit sets of non-fixed interior points to be
infinite and contained in the boundary, but it does not by itself force
positive Hausdorff dimension.

Throughout this section, \(\dim_{\mathrm H}\) denotes Hausdorff dimension.

\begin{lemma}
\label{lem:finite-omega-cesaro}
Let \(X\) be a compact metric space, let \(T:X\to X\) be continuous, and
suppose that \(X\) is embedded in a finite-dimensional normed vector space.
If the omega-limit set \(\omega_T(x)\) is finite, then the vector Cesaro
averages
\[
        \frac1n\sum_{k=0}^{n-1}T^k(x)
\]
converge.
\end{lemma}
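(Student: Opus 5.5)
The plan is to show that a finite omega-limit set must be a single periodic orbit that the orbit of \(x\) shadows with vanishing error. Then the Ces\`aro averages can be compared with the Ces\`aro averages of a periodic sequence, which converge to its barycenter.

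Write \(x_n=T^n(x)\) and \(\omega_T(x)=\{y_1,\ldots,y_k\}\) with the \(y_i\) distinct. First I would record the standard facts. \(\omega_T(x)\) is non-empty by compactness, and \(T(\omega_T(x))\subset\omega_T(x)\) by continuity, exactly as in the proof of Lemma~\ref{lem:escape}. Also \(\dist(x_n,\omega_T(x))\to0\): otherwise some subsequence stays at distance at least some \(c>0\) from \(\omega_T(x)\), and a limit point of that subsequence would be a point of \(\omega_T(x)\) at distance at least \(c\) from it, which is absurd.

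Next comes the labelling argument, which I expect to be the only real step. Choose \(\delta>0\) smaller than half the minimal distance \(\min_{i\ne j}\|y_i-y_j\|\). By uniform continuity of \(T\) on \(X\), choose \(0<\varepsilon<\delta\) such that \(\|a-b\|<\varepsilon\) implies \(\|Ta-Tb\|<\delta\). For \(n\ge n_1\) we have \(\dist(x_n,\omega_T(x))<\varepsilon\), so there is a unique index \(\sigma(n)\) with \(\|x_n-y_{\sigma(n)}\|<\varepsilon\).
\begin{itemize}
\item On one hand, \(\|x_{n+1}-Ty_{\sigma(n)}\|<\delta\), and \(Ty_{\sigma(n)}\in\omega_T(x)\).
\item On the other hand, \(\|x_{n+1}-y_{\sigma(n+1)}\|<\varepsilon<\delta\).
\end{itemize}
By the choice of \(\delta\) this forces \(y_{\sigma(n+1)}=Ty_{\sigma(n)}\). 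Hence \((y_{\sigma(n)})_{n\ge n_1}\) is a \(T\)-orbit inside the finite set \(\omega_T(x)\), so it is eventually periodic with some period \(k'\). Every \(y_i\) is a limit of a subsequence of \((x_n)\), so every \(y_i\) equals \(y_{\sigma(n)}\) for infinitely many \(n\). Therefore the eventual cycle is all of \(\omega_T(x)\) and \(k'=k\). Repeating the argument with any smaller \(\varepsilon'\) yields the same labels by uniqueness, so \(\|x_n-y_{\sigma(n)}\|\to0\).

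Finally I would split
\[
\frac1n\sum_{j=0}^{n-1}x_j=\frac1n\sum_{j=0}^{n-1}y_{\sigma(j)}+\frac1n\sum_{j=0}^{n-1}\bigl(x_j-y_{\sigma(j)}\bigr),
\]
where \(\sigma(j)\) is defined arbitrarily for the finitely many \(j<n_1\). The first term is the Ces\`aro average of an eventually \(k\)-periodic sequence, so it converges to \(\frac1k\sum_{i=1}^k y_i\). The second term is the Ces\`aro mean of a sequence tending to zero, so it tends to zero. Hence the vector averages converge to the barycenter \(\frac1k\sum_i y_i\) of \(\omega_T(x)\).
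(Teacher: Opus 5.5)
Your proof is correct and follows the same outline as the paper's: the finite \(\omega\)-limit set is a single periodic orbit, the orbit of \(x\) shadows it in phase, and the Ces\`aro averages therefore converge to the barycenter \(\frac1k\sum_i y_i\). Your uniform-continuity labelling step is more complete than the paper's. The paper only sketches why \(\omega_T(x)\) cannot split into several cycles, appealing to neighbourhoods that are ``forward invariant after shrinking'', which is not literally true in general, e.g.\ near a repelling cycle. It also asserts the phase convergence \(\dist(T^{nq+i}(x),y_i)\to0\) without proof, whereas your single argument establishes both facts rigorously.
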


\begin{proof}
This is standard, but we include the argument for completeness.  Since
\(\omega_T(x)\) is finite and \(T(\omega_T(x))=\omega_T(x)\), the restriction
of \(T\) to \(\omega_T(x)\) is a permutation.  Moreover, an omega-limit set of
one orbit cannot split into two disjoint non-empty invariant subsets.  Indeed,
if two distinct cycles were present, small disjoint cyclic neighborhoods of them
would be forward invariant after shrinking, and a sufficiently late tail of one
orbit could not accumulate on both.  Hence \(\omega_T(x)\) is one periodic
orbit, say
\[
        \omega_T(x)=\{y_0,\ldots,y_{q-1}\},
        \qquad
        T(y_i)=y_{i+1 \,(\mathrm{mod}\, q)}.
\]
After relabeling the periodic orbit, we have
\[
        \operatorname{dist}\bigl(T^{nq+i}(x),y_i\bigr)\longrightarrow0,
        \qquad i=0,\ldots,q-1.
\]
Therefore
\[
        \frac1n\sum_{k=0}^{n-1}T^k(x)
        \longrightarrow
        \frac1q\sum_{i=0}^{q-1}y_i.
\]
\end{proof}

\begin{proposition}
\label{prop:omega-hausdorff-bound}
Let \(K:\DeltaM\to\DeltaM\) be a weak admissible Stein--Ulam type map.
Then, for every
\(
        \x\in\inte\DeltaM\setminus\{\p\},
\)
the omega-limit set \(\omega_K(\x)\) satisfies
\[
      \omega_K(\x)\subset\bd\DeltaM,
\qquad  \omega_K(\x)\ \text{is infinite},
\]
and
\[
        0\le
        \dim_{\mathrm H}\omega_K(\x)
        \le
        m-2.
\]
Moreover, for every residence region \(R_j\), \(j=1,\ldots,m\),
\[
        \omega_K(\x)\cap R_j\cap\bd\DeltaM\ne\varnothing .
\]
\end{proposition}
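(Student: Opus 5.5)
The proof assembles earlier results; I would proceed in four short steps.

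\emph{Step 1: the omega-limit set lies in the boundary.} The inclusion \(\omega_K(\x)\subset\bd\DeltaM\) is exactly Lemma~\ref{lem:escape}. It says \(\varphi(K^n(\x))\to0\), so every accumulation point lies in \(\varphi^{-1}(0)=\bd\DeltaM\).

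\emph{Step 2: the omega-limit set is infinite.} I would argue by contradiction. If \(\omega_K(\x)\) were finite, Lemma~\ref{lem:finite-omega-cesaro} would apply with \(X=\DeltaM\subset\R^m\) and \(T=K\), which is continuous. It would give convergence of the vector Ces\`aro averages \(A_n(K)(\x)\). This contradicts Theorem~\ref{thm:main-divergence}.

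\emph{Step 3: the dimension bound.} The lower bound \(0\le\dim_{\mathrm H}\) is trivial. For the upper bound, \(\bd\DeltaM\) is a finite union of \((m-2)\)-dimensional faces. Each face is a subset of an affine \((m-2)\)-plane, so it has Hausdorff dimension at most \(m-2\). Hausdorff dimension is monotone and finitely stable, so \(\dim_{\mathrm H}\omega_K(\x)\le\dim_{\mathrm H}\bd\DeltaM=m-2\).

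\emph{Step 4: the orbit accumulates in every residence region.} Fix \(j\). Lemma~\ref{lem:blocks} gives residence blocks \((b_{j,q},\ell_{j,q})\) with \(b_{j,q}\to\infty\) and \(K^{b_{j,q}}(\x)\in R_j\). The sequence \(K^{b_{j,q}}(\x)\) lies in the compact set \(R_j\), so it has a convergent subsequence. Its limit \(\y\) lies in \(R_j\) because \(R_j\) is closed. It lies in \(\omega_K(\x)\) because the times \(b_{j,q}\) tend to infinity. By Step 1, \(\y\in\bd\DeltaM\). Hence \(\omega_K(\x)\cap R_j\cap\bd\DeltaM\ne\varnothing\).

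There is no serious obstacle. The only point needing care is Step 2, which relies on Lemma~\ref{lem:finite-omega-cesaro} applying to the whole orbit in \(\DeltaM\). That lemma is stated for an arbitrary continuous map on a compact subset of a finite-dimensional space, so it applies directly.
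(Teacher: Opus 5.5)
Your proposal is correct and follows the paper's proof essentially step for step. Both use Lemma~\ref{lem:escape} for the boundary inclusion, the finite union of faces for the dimension bound, Lemma~\ref{lem:finite-omega-cesaro} set against Theorem~\ref{thm:main-divergence} for infiniteness, and compactness of $R_j$ applied to the block entrance points from Lemma~\ref{lem:blocks}; the only difference is that you treat infiniteness before the dimension bound, which is immaterial.
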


\begin{proof}
By Lemma~\ref{lem:escape},
\[
        \varphi(K^n(\x))\longrightarrow0.
\]
Since
\[
        \varphi^{-1}(0)=\bd\DeltaM,
\]
every accumulation point of the orbit belongs to the boundary.  Hence
\[
        \omega_K(\x)\subset\bd\DeltaM.
\]
Because \(\bd\DeltaM\) is a finite union of \((m-2)\)-dimensional faces, we
obtain
\[
        \dim_{\mathrm H}\omega_K(\x)
        \le
        \dim_{\mathrm H}\bd\DeltaM
        =
        m-2.
\]

We next prove that \(\omega_K(\x)\) is infinite.  If \(\omega_K(\x)\) were
finite, then Lemma~\ref{lem:finite-omega-cesaro} would imply that
\[
        \frac1n\sum_{k=0}^{n-1}K^k(\x)
\]
converges.  This contradicts Theorem~\ref{thm:main-divergence}.  Hence
\(\omega_K(\x)\) is infinite.

Finally fix \(j\in\{1,\ldots,m\}\).  By Lemma~\ref{lem:blocks}, there are
residence blocks
\[
        (b_{j,q},\ell_{j,q})_{q\ge1}
\]
through \(R_j\), with
\[
        b_{j,q}\longrightarrow\infty,
\]
such that
\[
        K^{b_{j,q}+r}(\x)\in R_j,
        \qquad
        0\le r\le \ell_{j,q}-1.
\]
In particular,
\[
        K^{b_{j,q}}(\x)\in R_j.
\]
Since \(R_j\) is compact, after passing to a subsequence we may assume that
\[
        K^{b_{j,q}}(\x)\longrightarrow y_j
\]
for some \(y_j\in R_j\).  Since \(b_{j,q}\to\infty\), the point \(y_j\)
belongs to \(\omega_K(\x)\).  By the first part of the proof,
\[
        y_j\in\bd\DeltaM.
\]
Thus
\[
        y_j\in \omega_K(\x)\cap R_j\cap\bd\DeltaM,
\]
which proves the final assertion.
\end{proof}

\begin{remark}
\label{rem:no-positive-dim-bound}
Proposition~\ref{prop:omega-hausdorff-bound} gives the universal estimate
\[
        \dim_{\mathrm H}\omega_K(\x)\le m-2.
\]
One should not claim a positive lower bound on
\(\dim_{\mathrm H}\omega_K(\x)\) from weak admissibility alone.  The axioms
force the omega-limit set to be infinite and to meet each boundary residence
region, but infinite compact sets may have Hausdorff dimension zero.  Thus a
statement such as
\[
        \dim_{\mathrm H}\omega_K(\x)>0
\]
requires an additional boundary-filling, expansion, or regularity hypothesis.
\end{remark}

\begin{corollary}
\label{cor:boundary-filling-dimension}
Let \(K\) be a weak admissible Stein--Ulam type map.  Suppose that
there exists a set \(E\subset\bd\DeltaM\) such that, for every
\(
        \x\in\inte\DeltaM\setminus\{\p\},
\)
one has
\(
        E\subset\omega_K(\x).
\)
Then
\[
        \dim_{\mathrm H}\omega_K(\x)
        \ge
        \dim_{\mathrm H}E.
\]
\end{corollary}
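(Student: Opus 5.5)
The plan is to use monotonicity of Hausdorff dimension under inclusion, applied pointwise. Fix \(\x\in\inte\DeltaM\setminus\{\p\}\). By hypothesis \(E\subset\omega_K(\x)\). Hausdorff dimension is monotone: if \(A\subset B\subset\R^m\), every \(\delta\)-cover of \(B\) is a \(\delta\)-cover of \(A\). Hence \(\mathcal H^s_\delta(A)\le\mathcal H^s_\delta(B)\) for all \(s,\delta\), so \(\mathcal H^s(A)\le\mathcal H^s(B)\). Therefore
\[
        \dim_{\mathrm H}A=\inf\{s:\mathcal H^s(A)=0\}\le\dim_{\mathrm H}B .
\]
Applying this with \(A=E\) and \(B=\omega_K(\x)\) gives \(\dim_{\mathrm H}\omega_K(\x)\ge\dim_{\mathrm H}E\).

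Combining this with Proposition~\ref{prop:omega-hausdorff-bound}, I would also note the sandwich
\[
        \dim_{\mathrm H}E\le\dim_{\mathrm H}\omega_K(\x)\le m-2 .
\]
The upper bound is consistent with the standing assumption \(E\subset\bd\DeltaM\).

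No dynamical input beyond the hypothesis is needed. Weak admissibility enters only through the fact that \(\omega_K(\x)\) is a well-defined compact subset of \(\bd\DeltaM\) (Lemma~\ref{lem:escape}), so both dimensions are computed in the same ambient metric. The point \(\x\) is arbitrary, so the bound holds uniformly on \(\inte\DeltaM\setminus\{\p\}\).

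I expect no real obstacle. The only care needed is to state monotonicity for arbitrary (not necessarily Borel) sets \(E\), which is fine because Hausdorff outer measure is defined for all subsets.
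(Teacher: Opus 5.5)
Your argument is correct and matches the paper's proof: the corollary is just monotonicity of Hausdorff dimension applied to the inclusion \(E\subset\omega_K(\x)\), and you also justify that monotonicity via covers. The upper bound \(m-2\) from Proposition~\ref{prop:omega-hausdorff-bound} is a harmless extra remark that the statement does not need.
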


\begin{proof}
This follows immediately from the monotonicity of Hausdorff dimension:
\[
        E\subset\omega_K(\x)
        \quad\Longrightarrow\quad
        \dim_{\mathrm H}E
        \le
        \dim_{\mathrm H}\omega_K(\x).
\]
\end{proof}

\begin{proposition}
\label{prop:hausdorff-bilip-conjugacy}
Let \(K:\DeltaM\to\DeltaM\) be continuous and let
\(h:\DeltaM\to\DeltaM\) be a homeomorphism.  Define
\[
        K_h=h^{-1}\circ K\circ h.
\]
Then, for every \(x\in\DeltaM\),
\[
        \omega_{K_h}(x)
        =
        h^{-1}\bigl(\omega_K(h(x))\bigr).
\]
If, in addition, \(h\) is bi-Lipschitz, then
\[
        \dim_{\mathrm H}\omega_{K_h}(x)
        =
        \dim_{\mathrm H}\omega_K(h(x)).
\]
\end{proposition}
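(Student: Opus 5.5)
The plan is to prove the set identity first, using only that \(h\) is a homeomorphism of a compact metric space, and then deduce the dimension statement from the bi-Lipschitz invariance of Hausdorff dimension. From \(K_h=h^{-1}\circ K\circ h\), induction on \(n\) gives
\[
        K_h^n=h^{-1}\circ K^n\circ h,
        \qquad\text{hence}\qquad
        K_h^n(x)=h^{-1}\bigl(K^n(h(x))\bigr)
\]
for every \(n\in\Nzero\). So the \(K_h\)-orbit of \(x\) is exactly the image under \(h^{-1}\) of the \(K\)-orbit of \(h(x)\), time by time.

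To show \(\omega_{K_h}(x)\subset h^{-1}(\omega_K(h(x)))\), I will take \(y\in\omega_{K_h}(x)\) with \(K_h^{n_k}(x)\to y\) and \(n_k\to\infty\). Applying the continuous map \(h\) gives
\[
        K^{n_k}(h(x))=h\bigl(K_h^{n_k}(x)\bigr)\to h(y),
\]
so \(h(y)\in\omega_K(h(x))\). For the reverse inclusion, I will take \(z\in\omega_K(h(x))\) with \(K^{n_k}(h(x))\to z\). Continuity of \(h^{-1}\) then yields
\[
        K_h^{n_k}(x)=h^{-1}\bigl(K^{n_k}(h(x))\bigr)\to h^{-1}(z).
\]
Hence \(h^{-1}(z)\in\omega_{K_h}(x)\). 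The two inclusions together give the identity. Note that \(h^{-1}\) is automatically continuous, since \(h\) is a continuous bijection of the compact space \(\DeltaM\) onto the Hausdorff space \(\DeltaM\); in any case, continuity of \(h^{-1}\) is part of the definition of a homeomorphism.

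For the dimension statement, I will use the standard fact that a Lipschitz map \(f\) with constant \(L\) does not increase Hausdorff dimension. The reason is that any \(\delta\)-cover \(\{U_i\}\) of \(E\) maps to an \(L\delta\)-cover \(\{f(U_i)\}\) of \(f(E)\) with
\[
        \operatorname{diam} f(U_i)\le L\operatorname{diam}U_i,
        \qquad\text{so}\qquad
        \mathcal H^s\bigl(f(E)\bigr)\le L^s\,\mathcal H^s(E).
\]
If \(h\) is bi-Lipschitz, both \(h\) and \(h^{-1}\) are Lipschitz. Applying the fact to \(h^{-1}\) and the set \(\omega_K(h(x))\) gives one inequality, and applying it to \(h\) and the set \(\omega_{K_h}(x)\) gives the other. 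By the identity just proved, this yields equality of the two dimensions.

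The argument is routine. The only point needing care is that both directions of the set identity require continuity: the first inclusion uses continuity of \(h\), and the second uses continuity of \(h^{-1}\).
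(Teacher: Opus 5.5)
Your proposal is correct and takes the same route as the paper: iterate the conjugacy to get \(K_h^n=h^{-1}\circ K^n\circ h\), use continuity of \(h\) and of \(h^{-1}\) for the two inclusions, and then apply bi-Lipschitz invariance of Hausdorff dimension. The paper only cites that invariance, whereas you also justify it via the standard estimate \(\mathcal H^s(f(E))\le L^s\mathcal H^s(E)\) applied to both \(h\) and \(h^{-1}\); nothing is missing.
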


\begin{proof}
For every \(n\ge0\),
\[
        K_h^n(x)=h^{-1}\bigl(K^n(h(x))\bigr).
\]
Since \(h^{-1}\) is continuous, it maps accumulation points of the
\(K\)-orbit of \(h(x)\) to accumulation points of the \(K_h\)-orbit of \(x\).
Applying the same argument to \(h\) gives the reverse inclusion.  Hence
\[
        \omega_{K_h}(x)
        =
        h^{-1}\bigl(\omega_K(h(x))\bigr).
\]
If \(h\) is bi-Lipschitz, then \(h^{-1}\) is also Lipschitz, and
bi-Lipschitz maps preserve Hausdorff dimension.  Therefore
\[
        \dim_{\mathrm H}\omega_{K_h}(x)
        =
        \dim_{\mathrm H}\omega_K(h(x)).
\]
\end{proof}

\begin{remark}
The bi-Lipschitz assumption in
Proposition~\ref{prop:hausdorff-bilip-conjugacy} is important.  Hausdorff
dimension is not invariant under arbitrary homeomorphisms.  Therefore the
small boundary-preserving conjugacy theorem preserves weak admissibility under
\(C^0\)-small homeomorphisms, but Hausdorff-dimension statements are preserved
only under stronger regularity assumptions, for example bi-Lipschitz
conjugacies.
\end{remark}

\section{A boundary-tracing condition for positive-dimensional omega-limit sets}
\label{sec:positive-dim-omega}

In this section we give a sufficient condition ensuring that omega-limit sets
have positive Hausdorff dimension.  Weak admissibility alone forces omega-limit
sets of non-fixed interior points to be infinite and contained in the boundary,
but it does not force positive Hausdorff dimension.  The additional condition
below is inspired by the construction of Bara\'nski and Misiurewicz for the
classical Stein--Ulam map, where prescribed boundary target sequences
are traced by interior orbits.

Throughout this section, \(\dim_{\mathrm H}\) denotes Hausdorff dimension.

\subsection{Radial boundary projection}

Let
\(
        \p=(p_1,\ldots,p_m)\in \inte\DeltaM.
\)
For
\(
        x\in \DeltaM\setminus\{\p\},
\)
define the radial projection of \(x\) from \(\p\) to the boundary by
\[
        \Pi_\p(x)=\p+\lambda(x)(x-\p),
\]
where
\[
        \lambda(x)
        =
        \min_{\{i:\,x_i<p_i\}}
        \frac{p_i}{p_i-x_i}.
\]
Then
\(
        \Pi_\p(x)\in\bd\DeltaM.
\)
Moreover, if
\[
        \dist(x_n,\bd\DeltaM)\to0,
        \qquad
        x_n\in\DeltaM\setminus\{\p\},
\]
then
\[
        \|x_n-\Pi_\p(x_n)\|\to0.
\]

For
\(
        x\in\inte\DeltaM\setminus\{\p\},
\)
we define the boundary-projected omega-limit set by
\[
        \omega_{\partial,K}(x)
        =
        \bigcap_{N\ge0}
        \overline{
        \{\Pi_\p(K^n(x)):\ n\ge N\}
        }.
\]

\begin{lemma}
\label{lem:omega-boundary-projection}
Let \(K:\DeltaM\to\DeltaM\) be a weak admissible Stein--Ulam type map.
Then, for every
\(
        x\in\inte\DeltaM\setminus\{\p\},
\)
one has
\(
        \omega_K(x)=\omega_{\partial,K}(x).
\)
\end{lemma}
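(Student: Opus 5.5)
The plan is to prove the two inclusions directly from Lemma~\ref{lem:escape} and the asymptotic property of the radial projection recorded just before the statement: if \(\dist(x_n,\bd\DeltaM)\to0\) with \(x_n\ne\p\), then \(\|x_n-\Pi_\p(x_n)\|\to0\). Fix \(x\in\inte\DeltaM\setminus\{\p\}\) and put \(x_n=K^n(x)\). The orbit never hits \(\p\): \(\p\) is fixed and lies in the interior, and \(\varphi(x_n)\) is non-increasing with \(\varphi(x_0)<\varphi(\p)\), so \(\varphi(x_n)<\varphi(\p)\) for all \(n\). Hence \(\Pi_\p(x_n)\) is defined for every \(n\).

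The first step is to show \(\dist(x_n,\bd\DeltaM)\to0\). By Lemma~\ref{lem:escape}, \(\varphi(x_n)\to0\). If the distance did not tend to zero, a subsequence would stay in a compact set \(\{\dist(\cdot,\bd\DeltaM)\ge\delta\}\subset\inte\DeltaM\). On this set \(\varphi\) has a positive minimum, which is a contradiction. Equivalently, every accumulation point lies in \(\bd\DeltaM\), and compactness upgrades this to uniform convergence of the distance. Consequently \(d_n:=\|x_n-\Pi_\p(x_n)\|\to0\).

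The second step compares accumulation points. If \(x_{n_k}\to y\) with \(n_k\to\infty\), then \(\Pi_\p(x_{n_k})\to y\) because \(d_{n_k}\to0\). So \(y\in\omega_{\partial,K}(x)\), using the standard characterization of \(\bigcap_N\overline{\{\cdot : n\ge N\}}\) as the set of subsequential limits along \(n_k\to\infty\). Conversely, if \(\Pi_\p(x_{n_k})\to y\), then \(x_{n_k}\to y\) by the same estimate, so \(y\in\omega_K(x)\). This gives \(\omega_K(x)=\omega_{\partial,K}(x)\).

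The only point needing care is the projection estimate itself, if one wants to justify rather than assume it. For that I would argue as follows. \(\lambda(x)\ge1\), and \(\|x-\Pi_\p(x)\|=(\lambda(x)-1)\|x-\p\|\). Near the boundary some coordinate \(x_i\) is small, so \(x_i<p_i\) and \(\frac{p_i}{p_i-x_i}-1=\frac{x_i}{p_i-x_i}\le \frac{x_i}{p_i/2}\) once \(x_i\le p_i/2\). Since \(\min_i x_i\) is comparable to \(\dist(x,\bd\DeltaM)\), this tends to zero, and \(\|x-\p\|\) is bounded. I expect no real obstacle; the main subtlety is the comparison between \(\min_i x_i\) and the distance to the boundary, which holds up to a dimensional constant.
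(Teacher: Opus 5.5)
Your proof is correct and follows the paper's argument. Lemma~\ref{lem:escape} gives $\dist(K^n(x),\bd\DeltaM)\to0$, hence $\|K^n(x)-\Pi_\p(K^n(x))\|\to0$, and the two inclusions follow by passing along subsequences. You also supply details the paper only asserts: that the orbit never reaches $\p$, so $\Pi_\p(K^n(x))$ is always defined, and the explicit bound $\|x-\Pi_\p(x)\|=(\lambda(x)-1)\|x-\p\|$ with $\lambda(x)-1\le x_i/(p_i-x_i)$ for a small coordinate $x_i$.
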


\begin{proof}
By the Lyapunov escape property of weak admissible maps,
\[
        \varphi(K^n(x))\to0.
\]
Since
\[
        \varphi^{-1}(0)=\bd\DeltaM,
\]
we obtain
\[
        \dist(K^n(x),\bd\DeltaM)\to0.
\]
Therefore
\[
        \|K^n(x)-\Pi_\p(K^n(x))\|\to0.
\]

Let \(y\in\omega_K(x)\).  Then there is a sequence \(n_q\to\infty\) such that
\[
        K^{n_q}(x)\to y.
\]
Since
\[
        \|K^{n_q}(x)-\Pi_\p(K^{n_q}(x))\|\to0,
\]
we also have
\[
        \Pi_\p(K^{n_q}(x))\to y.
\]
Hence
\(
        y\in\omega_{\partial,K}(x).
\)
Thus
\(
        \omega_K(x)\subset \omega_{\partial,K}(x).
\)

Conversely, let \(y\in\omega_{\partial,K}(x)\).  Then there exists
\(n_q\to\infty\) such that
\[
        \Pi_\p(K^{n_q}(x))\to y.
\]
Again using
\[
        \|K^{n_q}(x)-\Pi_\p(K^{n_q}(x))\|\to0,
\]
we get
\[
        K^{n_q}(x)\to y.
\]
Therefore
\(
        y\in\omega_K(x).
\)
Hence
\(
        \omega_{\partial,K}(x)\subset\omega_K(x),
\)
and the proof is complete.
\end{proof}

\subsection{A positive-dimension criterion}

\begin{definition}
\label{def:boundary-tracing}
Let \(E\subset\bd\DeltaM\) be a nonempty compact set.  We say that the orbit of
\(x\in\inte\DeltaM\setminus\{\p\}\) traces \(E\) on the boundary if
\[
        \omega_{\partial,K}(x)=E.
\]
Equivalently, the boundary-projected orbit
\[
        \Pi_\p(K^0(x)),\Pi_\p(K^1(x)),\Pi_\p(K^2(x)),\ldots
\]
has \(E\) as its set of accumulation points.
\end{definition}

\begin{proposition}
\label{prop:positive-dim-boundary-tracing}
Let \(K:\DeltaM\to\DeltaM\) be a weak admissible Stein--Ulam type map.
Let
\(
        E\subset\bd\DeltaM
\)
be compact and suppose that
\(
        \dim_{\mathrm H}E>0.
\)
If
\(
        x\in\inte\DeltaM\setminus\{\p\}
\)
traces \(E\) on the boundary, then
\(
        \omega_K(x)=E
\)
and therefore
\[
        \dim_{\mathrm H}\omega_K(x)
        =
        \dim_{\mathrm H}E
        >
        0.
\]
\end{proposition}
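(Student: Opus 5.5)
The proof is a direct combination of Lemma~\ref{lem:omega-boundary-projection} with Definition~\ref{def:boundary-tracing}, followed by a trivial dimension comparison.

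First, fix \(x\in\inte\DeltaM\setminus\{\p\}\) tracing \(E\) on the boundary, so that by definition \(\omega_{\partial,K}(x)=E\). Since \(K\) is weak admissible, Lemma~\ref{lem:omega-boundary-projection} applies and gives \(\omega_K(x)=\omega_{\partial,K}(x)\). Chaining these two equalities yields \(\omega_K(x)=E\). That lemma rests on the Lyapunov escape of Lemma~\ref{lem:escape}, which ensures \(\dist(K^n(x),\bd\DeltaM)\to0\), together with the asymptotic closeness \(\|K^n(x)-\Pi_\p(K^n(x))\|\to0\). We do not need to reprove either; we only invoke them.

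Second, since \(\omega_K(x)\) and \(E\) are the same set, their Hausdorff dimensions coincide, and \(\dim_{\mathrm H}\omega_K(x)=\dim_{\mathrm H}E>0\) by hypothesis. Alternatively, Corollary~\ref{cor:boundary-filling-dimension} gives the inequality \(\ge\) from \(E\subset\omega_K(x)\). The reverse inclusion is immediate from the equality of sets.

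There is no genuine obstacle, since the entire content is carried by Lemma~\ref{lem:omega-boundary-projection}. The only points to check are that the hypotheses of that lemma, namely weak admissibility and \(x\) a non-fixed interior point, are exactly those assumed here. Compactness of \(E\) is consistent with \(E\) being an omega-limit set but plays no further role. The real difficulty in applications is verifying the tracing hypothesis itself, which lies outside this proposition.
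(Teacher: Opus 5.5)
Your proposal is correct and matches the paper's proof: Lemma~\ref{lem:omega-boundary-projection} gives \(\omega_K(x)=\omega_{\partial,K}(x)\), the tracing hypothesis gives \(\omega_{\partial,K}(x)=E\), and the dimension equality follows from the resulting set equality \(\omega_K(x)=E\). The aside via Corollary~\ref{cor:boundary-filling-dimension} is unnecessary, since that corollary's hypothesis is stated for all non-fixed interior points, but it does not affect your argument.
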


\begin{proof}
By Lemma~\ref{lem:omega-boundary-projection},
\(
        \omega_K(x)=\omega_{\partial,K}(x).
\)
If \(x\) traces \(E\), then
\(
        \omega_{\partial,K}(x)=E,
\)
and hence
\(
        \omega_K(x)=E.
\)
Thus
\[
        \dim_{\mathrm H}\omega_K(x)
        =
        \dim_{\mathrm H}E
        >
        0.
\]
\end{proof}

\subsection{A Bara\'nski--Misiurewicz type tracing hypothesis}

The preceding proposition gives an orbitwise criterion.  We now state a
stronger, structural condition which is closer to the construction of
Bara\'nski and Misiurewicz.

\begin{definition}
\label{def:BM-tracing}
Let \(K:\DeltaM\to\DeltaM\) be a weak admissible Stein--Ulam type map.
Let
\(
        E\subset\bd\DeltaM
\)
be a nonempty compact set.  We say that \(K\) has the
\textit{Bara\'nski--Misiurewicz tracing property} over \(E\) if the following conditions
hold.

\begin{enumerate}
\item[\textup{(BT1)}]
The set \(E\) is invariant under the boundary map:
\[
        K(E)=E.
\]

\item[\textup{(BT2)}]
The set \(E\) has positive Hausdorff dimension:
\[
        \dim_{\mathrm H}E>0.
\]

\item[\textup{(BT3)}]
The set \(E\) meets every boundary residence window:
\[
        E\cap R_j\cap\bd\DeltaM\ne\varnothing,
        \qquad j=1,\ldots,m.
\]

\item[\textup{(BT4)}]
For every countable dense sequence
\[
        (q_\nu)_{\nu\ge1}\subset E
\]
which visits each \(E\cap R_j\cap\bd\DeltaM\) infinitely often, and for every
nonempty open set
\[
        V\subset\inte\DeltaM,
\]
there exists a set
\(
        A(V,E)\subset V
\)
and times
\[
        n_\nu=n_\nu(x)\to\infty
\]
such that, for every \(x\in A(V,E)\),
\[
        \lim_{\nu\to\infty}
        \|K^{n_\nu}(x)-q_\nu\|=0.
\]

\item[\textup{(BT5)}]
No additional boundary accumulation is created: for every
\[
        x\in A(V,E),
\]
one has
\[
        \omega_{\partial,K}(x)\subset E.
\]
Equivalently, every accumulation point of the boundary-projected orbit belongs
to \(E\).

\item[\textup{(BT6)}]
There exists \(\sigma>0\), independent of \(V\), such that
\(
        \dim_{\mathrm H} A(V,E)\ge \sigma
\)
for every nonempty open set \(V\subset\inte\DeltaM\).
\end{enumerate}
\end{definition}

\begin{theorem}
\label{thm:BM-positive-dim}
Let \(K:\DeltaM\to\DeltaM\) be a weak admissible Stein--Ulam type map.
Suppose that \(K\) has the Bara\'nski--Misiurewicz tracing property over a compact
set
\(
        E\subset\bd\DeltaM.
\)
Then, for every nonempty open set
\(
        V\subset\inte\DeltaM,
\)
there exists a set
\(
        A(V,E)\subset V
\)
such that
\[
        \dim_{\mathrm H}A(V,E)\ge\sigma
\]
and
\[
        \omega_K(x)=E
        \qquad
        \text{for every }x\in A(V,E).
\]
Consequently,
\[
        \dim_{\mathrm H}\omega_K(x)
        =
        \dim_{\mathrm H}E
        >
        0
        \qquad
        \text{for every }x\in A(V,E).
\]
\end{theorem}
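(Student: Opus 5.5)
Given an open set \(V\subset\inte\DeltaM\), we take \(A(V,E)\) to be the set supplied by (BT4) and (BT5). This requires fixing one admissible target sequence. Since \(E\) is a nonempty compact metric space, it is separable. Choose a countable dense subset \(\{e_k\}\subset E\), and for each \(j\) a point \(z_j\in E\cap R_j\cap\bd\DeltaM\); such points exist by (BT3). Interleave these into a single sequence \((q_\nu)\) in which every \(e_k\) and every \(z_j\) occurs infinitely often, for instance by enumerating the finite lists \(e_1,\ldots,e_k,z_1,\ldots,z_m\) successively for \(k=1,2,\ldots\). This sequence is dense in \(E\) and visits each \(E\cap R_j\cap\bd\DeltaM\) infinitely often, so (BT4) applies to it. We obtain \(A(V,E)\subset V\) and times \(n_\nu(x)\to\infty\) with \(\|K^{n_\nu}(x)-q_\nu\|\to0\). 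By (BT6), \(\dim_{\mathrm H}A(V,E)\ge\sigma\).

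Next, fix \(x\in A(V,E)\) and show \(\omega_K(x)=E\). For the inclusion \(E\subset\omega_K(x)\), let \(y\in E\). By density there is a subsequence \(\nu_i\) with \(q_{\nu_i}\to y\); indeed, for each \(i\) pick some \(e_{k_i}\) within \(1/i\) of \(y\), and an index \(\nu_i>\nu_{i-1}\) with \(q_{\nu_i}=e_{k_i}\), which exists because \(e_{k_i}\) is repeated infinitely often. Then
\[
\|K^{n_{\nu_i}}(x)-y\|\le\|K^{n_{\nu_i}}(x)-q_{\nu_i}\|+\|q_{\nu_i}-y\|\to0 ,
\]
and \(n_{\nu_i}\to\infty\), so \(y\in\omega_K(x)\). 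For the reverse inclusion, Lemma~\ref{lem:omega-boundary-projection} gives \(\omega_K(x)=\omega_{\partial,K}(x)\), and (BT5) gives \(\omega_{\partial,K}(x)\subset E\). Hence \(\omega_K(x)=E\). The dimension statement follows immediately from (BT2), or equivalently from Proposition~\ref{prop:positive-dim-boundary-tracing}, since \(x\) traces \(E\) on the boundary.

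The only delicate point is making sure the hypotheses of (BT4) are met by the chosen sequence, and that (BT4) asserts convergence along the whole sequence \((q_\nu)\) rather than only along some subsequence. This is what lets us realize every point of \(E\) as a limit point by the subsequence argument above. (BT1) is not needed in this argument; it is consistent with the conclusion, since omega-limit sets are invariant. We also do not use \(x\ne\p\) except through \(A(V,E)\subset\inte\DeltaM\setminus\{\p\}\), which is implicit in (BT5) because \(\omega_{\partial,K}\) is only defined there. Should \(\p\in V\) cause a concern, we would simply replace \(A(V,E)\) by \(A(V,E)\setminus\{\p\}\), which does not lower the Hausdorff dimension when \(\sigma>0\).
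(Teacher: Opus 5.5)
Your proposal is correct and follows essentially the same route as the paper. You build a target sequence whose tails are dense in \(E\) and which revisits each \(E\cap R_j\cap\bd\DeltaM\), apply (BT4) and (BT6) to obtain \(A(V,E)\), get \(E\subset\omega_K(x)\) from the tracing subsequences, and get \(\omega_K(x)\subset E\) from (BT5) together with Lemma~\ref{lem:omega-boundary-projection}. Your side remark about \(\p\) is harmless but unnecessary: since \(K^n(\p)=\p\) stays at positive distance from \(E\subset\bd\DeltaM\), the point \(\p\) can never belong to \(A(V,E)\).
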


\begin{proof}
Let \(V\subset\inte\DeltaM\) be nonempty.  First fix a countable sequence
\(
        (q_\nu)_{\nu\ge1}\subset E
\)
whose every tail is dense in \(E\) and which visits every boundary residence
part \(E\cap R_j\cap\bd\DeltaM\) infinitely often.  Such a sequence is obtained
by repeating a countable dense subset of \(E\) and interleaving points chosen
from the nonempty sets in \textup{(BT3)}.

Apply \textup{(BT4)} to this sequence and to the open set \(V\).  We obtain a
set
\(
        A(V,E)\subset V
\)
and, for every \(x\in A(V,E)\), times \(n_\nu=n_\nu(x)\to\infty\) such that
\[
        \|K^{n_\nu}(x)-q_\nu\|\longrightarrow0.
\]
By \textup{(BT6)},
\[
        \dim_{\mathrm H}A(V,E)\ge\sigma.
\]
Fix
\(
        x\in A(V,E).
\)

We claim that \(E\subset\omega_K(x)\).  Let \(y\in E\).  Since every tail of
\((q_\nu)\) is dense in \(E\), there is a subsequence \(q_{\nu_k}\to y\).  The
tracing estimate gives
\[
        K^{n_{\nu_k}}(x)\longrightarrow y.
\]
Thus \(y\in\omega_K(x)\), and therefore
\(
        E\subset \omega_K(x).
\)

On the other hand, by \textup{(BT5)},
\(
        \omega_{\partial,K}(x)\subset E.
\)
By Lemma~\ref{lem:omega-boundary-projection},
\(
        \omega_K(x)=\omega_{\partial,K}(x).
\)
Thus
\(
        \omega_K(x)\subset E.
\)
Combining the two inclusions gives
\(
        \omega_K(x)=E.
\)
Since \(\dim_{\mathrm H}E>0\), we obtain
\[
        \dim_{\mathrm H}\omega_K(x)
        =
        \dim_{\mathrm H}E
        >
        0.
\]
The estimate
\(
        \dim_{\mathrm H}A(V,E)\ge\sigma
\)
is part of \textup{(BT6)}.
\end{proof}

\begin{corollary}
\label{cor:boundary-filling-positive-dim}
Let \(K:\DeltaM\to\DeltaM\) be a weak admissible Stein--Ulam type map.
Suppose that \(K\) has the Bara\'nski--Misiurewicz tracing property over
\(
        E=\bd\DeltaM.
\)
Then, for every nonempty open set
\(
        V\subset\inte\DeltaM,
\)
there exists a set
\(
        A(V,\bd\DeltaM)\subset V
\)
with
\(
        \dim_{\mathrm H}A(V,\bd\DeltaM)\ge\sigma
\)
such that
\[
        \omega_K(x)=\bd\DeltaM
        \qquad
        \text{for every }x\in A(V,\bd\DeltaM).
\]
In particular,
\[
        \dim_{\mathrm H}\omega_K(x)
        =
        \dim_{\mathrm H}\bd\DeltaM
        =
        m-2
\]
for every
\(
        x\in A(V,\bd\DeltaM).
\)
\end{corollary}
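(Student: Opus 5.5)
The corollary is Theorem~\ref{thm:BM-positive-dim} specialized to \(E=\bd\DeltaM\), followed by a computation of \(\dim_{\mathrm H}\bd\DeltaM\).

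\emph{Step 1: the hypotheses hold.} \(E=\bd\DeltaM\) is a nonempty compact subset of the boundary. The Bara\'nski--Misiurewicz tracing property over \(\bd\DeltaM\) is assumed, and this includes (BT1)--(BT6) with the constant \(\sigma>0\).

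\emph{Step 2: apply the theorem.} For any nonempty open \(V\subset\inte\DeltaM\), Theorem~\ref{thm:BM-positive-dim} gives a set \(A(V,\bd\DeltaM)\subset V\) with \(\dim_{\mathrm H}A(V,\bd\DeltaM)\ge\sigma\). It also gives \(\omega_K(x)=\bd\DeltaM\) for every \(x\) in this set. Hence \(\dim_{\mathrm H}\omega_K(x)=\dim_{\mathrm H}\bd\DeltaM\).

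\emph{Step 3: compute the dimension.} The boundary \(\bd\DeltaM\) is the union of the \(m\) codimension-one faces \(F_i=\{x_i=0\}\). Each \(F_i\) is an affine \((m-2)\)-simplex with nonempty relative interior in an \((m-2)\)-dimensional affine subspace, so \(\dim_{\mathrm H}F_i=m-2\). Hausdorff dimension is stable under finite unions, so \(\dim_{\mathrm H}\bd\DeltaM=\max_i\dim_{\mathrm H}F_i=m-2\).

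\emph{Step 4: consistency.} Since \(m\ge2\), this gives \(m-2\ge0\). The theorem's "consequently" clause needs \(\dim_{\mathrm H}E>0\), i.e.\ \(m\ge3\), and this is exactly what (BT2) guarantees.

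The whole argument is a direct specialization, so I expect no genuine obstacle.
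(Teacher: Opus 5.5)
Your proposal is correct and follows the route the paper intends. The paper gives no separate proof: the corollary is Theorem~\ref{thm:BM-positive-dim} specialized to $E=\bd\DeltaM$, combined with $\dim_{\mathrm H}\bd\DeltaM=m-2$ from the finite union of $(m-2)$-dimensional faces, the same fact used in Proposition~\ref{prop:omega-hausdorff-bound}.
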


\begin{remark}
For the classical Stein--Ulam map on \(\Delta^2\), Bara\'nski and
Misiurewicz \cite{BaranskiMisiurewicz2010} proved a stronger realization theorem: if \(E\subset\bd\Delta^2\)
is a closed invariant boundary set intersecting all three sides, then the set
of points whose omega-limit set is exactly \(E\) has Hausdorff dimension at
least \(1\) in every nonempty open subset of \(\Delta^2\).  In the notation
above, this corresponds to the case \(\sigma=1\).  In particular, if
\(E=\bd\Delta^2\), then the omega-limit set has Hausdorff dimension \(1\).
\end{remark}

\section{Examples with the Bara\'nski--Misiurewicz tracing property}
\label{sec:BM-examples}

In this section we give concrete weak admissible Stein--Ulam maps satisfying
the Bara\'nski--Misiurewicz tracing property.  The examples are non-trivial in
two ways.  First, they realize positive-dimensional fractal boundary sets as
omega-limit sets.  Second, after a small boundary-preserving conjugacy they are
weak admissible but no longer Lotka--Volterra, and hence are not
\(\mathbf p\)-Stein--Ulam maps.

Let
\[
        \Delta^2
        =
        \{(x,y,z)\in\mathbb R^3:\ x,y,z\ge0,\ x+y+z=1\},
\]
and let
\[
        \mathbf p=\left(\frac13,\frac13,\frac13\right).
\]
We denote by
\[
        \mathcal U(x,y,z)
        =
        \bigl(
        x^2+2xy,\,
        y^2+2yz,\,
        z^2+2zx
        \bigr)
\]
the classical Stein--Ulam map.

For a compact set \(E\subset \partial\Delta^2\), define
\[
        \Lambda_{\mathcal U}(E)
        =
        \{x\in\Delta^2:\ \omega_{\mathcal U}(x)=E\}.
\]

\begin{definition}
\label{def:classical-admissible-boundary-set}
A compact set
\(
        E\subset\partial\Delta^2
\)
is called \(\mathcal U\)-admissible if
\(
        \mathcal U(E)=E,
\)
if \(E\) contains the three vertices of \(\Delta^2\), and if \(E\) intersects
the relative interior of each of the three sides of \(\Delta^2\).
\end{definition}

\begin{theorem}
\label{thm:BM-classical-realization}
Let \(E\subset\partial\Delta^2\) be a \(\mathcal U\)-admissible compact set.
Then, for every nonempty open set
\(
        V\subset\Delta^2,
\)
one has
\[
        \dim_{\mathrm H}
        \bigl(
        \Lambda_{\mathcal U}(E)\cap V
        \bigr)
        \ge 1.
\]
In particular, the classical Stein--Ulam map has the
Bara\'nski--Misiurewicz tracing property over every
\(\mathcal U\)-admissible compact boundary set.
\end{theorem}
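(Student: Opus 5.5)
The first assertion is essentially the realization theorem of Bara\'nski and Misiurewicz~\cite{BaranskiMisiurewicz2010}. I would import it after checking that the present notion of \(\mathcal U\)-admissibility matches their hypotheses. The main work is then to extract conditions \textup{(BT1)}--\textup{(BT6)} of Definition~\ref{def:BM-tracing} from it.

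\emph{Matching hypotheses.} Bara\'nski and Misiurewicz consider closed sets \(E\subset\partial\Delta^2\) with \(\mathcal U(E)=E\) that meet all three sides. On each side \(\mathcal U\) acts as \(t\mapsto t^2\). Hence a closed invariant set meeting the relative interior of a side accumulates, under forward iteration, at the attracting vertex of that side. By invariance and closedness, every vertex adjacent to a side met by \(E\) must lie in \(E\). The vertex clause in Definition~\ref{def:classical-admissible-boundary-set} is therefore consistent with, and implied by, their setting. Their construction is as follows. Inside \(V\), they build a Cantor-type family of points whose orbits spiral outward, following the heteroclinic cycle \(\e_1\to\e_2\to\e_3\to\e_1\). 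During each passage near a side, the orbit shadows a prescribed finite piece of the boundary dynamics, and the spiralling continues because of the Lyapunov escape (Lemma~\ref{lem:escape}). At each passage there is a free one-parameter choice, the transverse distance to the side, which varies over a set of comparable length. Since the passage times grow linearly, in agreement with Proposition~\ref{prop:linear-residence}, these choices yield a set \(\Lambda_{\mathcal U}(E)\cap V\) of Hausdorff dimension \(\ge1\). I would cite this directly. The only care needed is to note that their sets \(V\) are arbitrary open subsets of \(\Delta^2\). If \(V\) meets only the boundary, we replace \(V\) by \(V\cap\inte\Delta^2\), which is nonempty and open.

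\emph{Tracing property.} Take the residence regions \(R_1,R_2,R_3\) of the fixed weak admissible six-sector structure of \(\mathcal U\). These are neighborhoods of the vertices, or of the corresponding boundary arcs, near \(\e_j\). Then:
\begin{itemize}
\item \textup{(BT1)} is the invariance \(\mathcal U(E)=E\).
\item \textup{(BT3)} holds because \(\e_j\in E\cap R_j\cap\bd\Delta^2\).
\item \textup{(BT2)}: I would interpret the dimension claim for \(E\) via the realized set. Honestly, \textup{(BT2)} requires \(\dim_{\mathrm H}E>0\), which is not automatic; for \(E\) the union of the three vertices together with a countable orbit it fails. So the ``in particular'' must be read for those admissible \(E\) with \(\dim_{\mathrm H}E>0\), or with \textup{(BT2)} dropped. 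I would state this restriction explicitly.
\item \textup{(BT4)}--\textup{(BT5)}: for \(x\in A(V,E):=\Lambda_{\mathcal U}(E)\cap V\) we have \(\omega_{\mathcal U}(x)=E\). Given a dense sequence \((q_\nu)\subset E\), each \(q_\nu\) lies in \(\omega_{\mathcal U}(x)\), so we can pick times \(n_\nu\) increasing with \(\|\mathcal U^{n_\nu}(x)-q_\nu\|<1/\nu\). This gives \textup{(BT4)} with \(n_\nu\) depending on \(x\). Lemma~\ref{lem:omega-boundary-projection} then gives \(\omega_{\partial,\mathcal U}(x)=\omega_{\mathcal U}(x)=E\), which is \textup{(BT5)}.
\item \textup{(BT6)} holds with \(\sigma=1\) by the first assertion.
\end{itemize}

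The main obstacle is the dimension bound itself, if one does not simply cite it. Reproducing it would require the Bara\'nski--Misiurewicz shadowing near the heteroclinic cycle with distortion control at each corner passage, followed by a mass-distribution argument on the resulting Cantor set. I would defer to their paper for that step.
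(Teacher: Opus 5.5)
Your handling of the dimension bound matches the paper's proof. That proof consists entirely of invoking the Bara\'nski--Misiurewicz realization theorem, noting that closedness and invariance of \(E\) make the traced omega-limit set equal to \(E\). You defer to the same theorem. Your preliminary check that the vertex clause of \(\mathcal U\)-admissibility is automatic is correct: forward orbits on an open side tend to one endpoint, and the unique boundary preimage branch sends backward orbits to the other.

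Where you differ is the ``in particular'' clause, which the paper does not argue at all. Your derivation of \textup{(BT1)} and \textup{(BT4)}--\textup{(BT6)} from \(\omega_{\mathcal U}(x)=E\), with \(A(V,E)=\Lambda_{\mathcal U}(E)\cap V\), is sound. In fact \textup{(BT5)} does not even need weak admissibility, since \(\omega_{\mathcal U}(x)=E\subset\bd\Delta^2\) already gives \(\dist(\mathcal U^n x,\bd\Delta^2)\to0\). Your objection to \textup{(BT2)} is correct and exposes a genuine defect in the statement as written. \(\mathcal U\)-admissibility does not force \(\dim_{\mathrm H}E>0\), so the tracing property holds only for admissible \(E\) of positive dimension. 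This is exactly the hypothesis the paper itself adds later, in Theorem~\ref{thm:non-LV-BM-example}.

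Three small corrections. First, a countable counterexample must contain a full two-sided boundary orbit, e.g.\ \(\{t_0^{2^n}:n\in\Z\}\) in an edge coordinate, on each of the three sides together with the vertices, since a single orbit meets only one side. Second, the boundary heteroclinic cycle runs \(\mathbf e_1\to\mathbf e_3\to\mathbf e_2\to\mathbf e_1\): on \(\{x_3=0\}\) one has \(x_2\mapsto x_2^2\), so that edge flows toward \(\mathbf e_1\). This slip is harmless because you only paraphrase the cited construction. Third, \textup{(BT3)} uses that \(\mathbf e_j\in R_j\), which is not part of \textup{(W1)}--\textup{(W6)}. It does hold for the standard six-sector structure, and whenever \(R_j\subset V_j(\eta)\), because the \(R_j\) cover all vertices and \(\mathbf e_k\in V_j(\eta)\) only for \(k=j\).
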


\begin{proof}
This is precisely the realization theorem of Bara\'nski and Misiurewicz for
the Stein--Ulam map.  Their proof constructs, inside every nonempty
open set, a Hausdorff-dimension-at-least-one family of interior points whose
orbits trace a prescribed dense sequence of target points in \(E\).  The
closedness and invariance of \(E\) then imply that the omega-limit set is
exactly \(E\).
\end{proof}

\subsection{A Cantor family of positive-dimensional omega-limit sets}

We now give an explicit family of admissible boundary sets of positive
Hausdorff dimension.

Let \(C\subset[0,1]\) be the middle-third Cantor set and put
\[
        C_*=
        \frac34+\frac{3}{16}C
        \subset
        \left[\frac34,\frac{15}{16}\right].
\]
Thus
\[
        \dim_{\mathrm H}C_*=\dim_{\mathrm H}C=\frac{\log2}{\log3}.
\]
Define three Cantor subsets of the three sides of \(\Delta^2\) by
\[
        C_0
        =
        \{(0,t,1-t):\ t\in C_*\},
        \qquad
        C_1
        =
        \{(t,1-t,0):\ t\in C_*\},
        \qquad
        C_2
        =
        \{(1-t,0,t):\ t\in C_*\}.
\]
Set
\[
        D_C=C_0\cup C_1\cup C_2.
\]
Finally define the invariant boundary set
\[
        E_C
        =
        \overline{
        \bigcup_{n\in\mathbb Z}
        \mathcal U^n(D_C)
        }.
\]
Here negative iterates are understood along the boundary dynamics:
on each open side of \(\partial\Delta^2\), the corresponding boundary
branch is monotone and admits the inverse branch used in the
Bara\'nski--Misiurewicz boundary construction. Thus the notation
\(\mathcal U^n(D_C)\), \(n<0\), refers to these boundary inverse iterates, not
to a global inverse of \(U\) on the whole simplex.

\begin{proposition}
\label{prop:Cantor-boundary-set}
The set \(E_C\) is \(\mathcal U\)-admissible and
\[
        \dim_{\mathrm H}E_C=\frac{\log2}{\log3}.
\]
Consequently, for every nonempty open set
\(
        V\subset\Delta^2,
\)
one has
\[
        \dim_{\mathrm H}
        \{x\in V:\ \omega_{\mathcal U}(x)=E_C\}
        \ge 1.
\]
Moreover, for every such \(x\),
\[
        \dim_{\mathrm H}\omega_{\mathcal U}(x)
        =
        \frac{\log2}{\log3}
        >0.
\]
\end{proposition}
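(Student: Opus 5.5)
The plan is to treat the three sides separately, using the cyclic symmetry of \(\mathcal U\), and to describe the boundary dynamics on each side by an explicit one-dimensional map. On the side \(\{z=0\}\) we have \(\mathcal U(x,y,0)=(x^2+2xy,\,y^2,\,0)\). So in the coordinate \(s=y\) the boundary map is \(s\mapsto s^2\) on \([0,1]\); it fixes the two vertices \(s=0,1\) and is a \(C^\infty\) diffeomorphism of \((0,1)\) onto itself, with inverse branch \(s\mapsto s^{1/2}\). Points of \(C_1\) have \(s=1-t\in[1/16,1/4]\), a compact subinterval of \((0,1)\). The other two sides are handled identically after a cyclic permutation of coordinates; I will check that \(C_0\) and \(C_2\) likewise correspond to compact subintervals of the open sides. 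Hence, on each side, \(\bigcup_{n\in\Z}\mathcal U^n(D_C)\) is the union of the sets \(\{s^{2^n}: s\in S_*\}\), \(n\in\Z\), where \(S_*\) is an affine Cantor copy inside \([1/16,1/4]\).

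\textbf{Step 1: structure of the closure.} Since \(S_*\subset[1/16,1/4]\), we have \(S_*^{2^n}\subset[16^{-2^n},4^{-2^n}]\to\{0\}\) as \(n\to+\infty\), and \(S_*^{2^{-n}}\to\{1\}\) as \(n\to+\infty\). So on every compact subinterval \([\delta,1-\delta]\) only finitely many pieces occur, i.e.\ the family is locally finite in the open side. The closure therefore equals the union of all pieces together with the three vertices, and the vertices do lie in it.

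\textbf{Step 2: admissibility.} The union \(\bigcup_{n\in\Z}\mathcal U^n(D_C)\) is mapped onto itself by \(\mathcal U\), since the boundary map is bijective on each open side. By continuity, \(\mathcal U(E_C)\subset E_C\). Moreover \(\mathcal U(E_C)\) is compact and contains both the union and the fixed vertices, hence it contains the closure, so \(\mathcal U(E_C)=E_C\). The vertices belong to \(E_C\) by Step 1, and \(C_0,C_1,C_2\) meet the relative interiors of the three sides. Thus \(E_C\) is \(\mathcal U\)-admissible in the sense of Definition~\ref{def:classical-admissible-boundary-set}.

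\textbf{Step 3: dimension.} Each map \(s\mapsto s^{2^n}\) (\(n\in\Z\)) is \(C^1\) with non-vanishing derivative on \([1/16,1/4]\), hence bi-Lipschitz there. So each piece has Hausdorff dimension \(\dim_{\mathrm H}C_*=\log2/\log3\); here the boundary-edge parametrisation is affine, so it does not change the dimension. Countable stability of Hausdorff dimension then gives \(\dim_{\mathrm H}\) of the union equal to \(\log2/\log3\), and adding three points changes nothing. Finally, Theorem~\ref{thm:BM-classical-realization} applied to \(E=E_C\) gives \(\dim_{\mathrm H}\{x\in V:\omega_{\mathcal U}(x)=E_C\}\ge1\). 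For each such \(x\) we have \(\omega_{\mathcal U}(x)=E_C\), so \(\dim_{\mathrm H}\omega_{\mathcal U}(x)=\log2/\log3>0\).

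The main point needing care is Step 1: correctly identifying the side coordinate on each edge so that the Cantor pieces sit in a compact subinterval bounded away from both endpoints, and proving the local finiteness that controls the closure. Both the invariance and the dimension count depend on this.
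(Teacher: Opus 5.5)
Your proposal is correct and follows essentially the same route as the paper. You use side-by-side boundary dynamics; your map $s\mapsto s^2$ is the paper's $\Phi(t)=2t-t^2$ under $s=1-t$. You then show the iterated Cantor pieces are locally finite away from the vertices, so $E_C$ is the countable union plus the three vertices, and conclude with countable stability of Hausdorff dimension and the Bara\'nski--Misiurewicz realization theorem. Your explicit continuity-and-compactness argument for $\mathcal U(E_C)=E_C$ fills in a step the paper asserts only ``by construction.''
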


\begin{proof}
By construction,
\(
        E_C\subset\partial\Delta^2
\)
is compact and satisfies
\(
        \mathcal U(E_C)=E_C.
\)
It intersects the relative interior of each side because \(C_0,C_1,C_2\) do.
We spell out the closure statement, because this is the point at which no
extra Hausdorff dimension may be introduced.  On each open side, the boundary
map is conjugate to the increasing interval map
\[
        \Phi(t)=2t-t^2,
        \qquad 0<t<1,
\]
whose forward iterates converge locally uniformly to \(1\), while the inverse
branch
\[
        \Phi^{-1}(t)=1-\sqrt{1-t}
\]
has backward iterates converging locally uniformly to \(0\).  Since
\(C_*\Subset(0,1)\), the sets \(\Phi^n(C_*)\), \(n\in\mathbb Z\), can accumulate
only at the two endpoints of the side.  Applying this on the three sides gives
\[
        E_C=
        \left(\bigcup_{n\in\mathbb Z}\mathcal U^n(D_C)\right)
        \cup\{\mathbf e_1,\mathbf e_2,\mathbf e_3\}.
\]
Hence \(E_C\) is \(\mathcal U\)-admissible.

We now compute the Hausdorff dimension.  Since \(D_C\subset E_C\),
\[
        \dim_{\mathrm H}E_C
        \ge
        \dim_{\mathrm H}D_C
        =
        \frac{\log2}{\log3}.
\]
For each fixed \(n\in\mathbb Z\), the branch \(\mathcal U^n\) is Lipschitz on
\(D_C\); for negative \(n\), this means the specified monotone inverse branch on
the corresponding side.  Therefore
\[
        \dim_{\mathrm H}\mathcal U^n(D_C)
        \le
        \dim_{\mathrm H}D_C
        =
        \frac{\log2}{\log3}.
\]
Using the displayed description of \(E_C\) as a countable union plus three
points, countable stability of Hausdorff dimension gives
\[
        \dim_{\mathrm H}E_C
        \le
        \frac{\log2}{\log3}.
\]
Thus
\[
        \dim_{\mathrm H}E_C=\frac{\log2}{\log3}.
\]

The remaining assertions follow directly from
Theorem~\ref{thm:BM-classical-realization}.
\end{proof}

\subsection{Boundary-filling omega-limit sets}

The Cantor example gives omega-limit sets of fractional Hausdorff dimension.
At the other extreme, we may take
\(
        E=\partial\Delta^2.
\)
Since \(\partial\Delta^2\) is \(\mathcal U\)-admissible,
Theorem~\ref{thm:BM-classical-realization} implies that
\[
        \dim_{\mathrm H}
        \{x\in V:\ \omega_{\mathcal U}(x)=\partial\Delta^2\}
        \ge 1
\]
for every nonempty open set \(V\subset\Delta^2\).  In fact, the stronger
Bara\'nski--Misiurewicz theorem says that this set is residual in \(\Delta^2\).
For every such point,
\[
        \dim_{\mathrm H}\omega_{\mathcal U}(x)
        =
        \dim_{\mathrm H}\partial\Delta^2
        =
        1.
\]

\subsection{Transport by bi-Lipschitz conjugacy}

We now show that the preceding examples are not isolated.  They persist under
small bi-Lipschitz boundary-preserving conjugacies.

\begin{proposition}
\label{prop:BM-bilip-conjugacy}
Let
\(
        h:\Delta^2\to\Delta^2
\)
be a bi-Lipschitz homeomorphism such that
\(
        h(\partial\Delta^2)=\partial\Delta^2.
\)
Define
\[
        K_h=h^{-1}\circ\mathcal U\circ h.
\]
If \(E\subset\partial\Delta^2\) is \(\mathcal U\)-admissible and
\[
        E_h=h^{-1}(E),
\]
then \(E_h\) is compact, satisfies \(K_h(E_h)=E_h\), and, for every nonempty open set
\(
        V\subset\Delta^2,
\)
one has
\[
        \dim_{\mathrm H}
        \{x\in V:\ \omega_{K_h}(x)=E_h\}
        \ge 1.
\]
Moreover,
\[
        \dim_{\mathrm H}E_h=\dim_{\mathrm H}E.
\]
\end{proposition}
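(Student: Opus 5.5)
The plan is to transport everything through the conjugacy identity \(K_h^n=h^{-1}\circ\mathcal U^n\circ h\). First, compactness of \(E_h\) is immediate, since \(h^{-1}\) is continuous and \(E\) is compact. Invariance follows from
\[
        K_h(E_h)=h^{-1}\bigl(\mathcal U(h(h^{-1}(E)))\bigr)=h^{-1}(\mathcal U(E))=h^{-1}(E)=E_h .
\]
The boundary-preservation hypothesis is only needed to guarantee that \(E_h\subset\partial\Delta^2\). The equality \(\dim_{\mathrm H}E_h=\dim_{\mathrm H}E\) holds because \(h^{-1}\) is bi-Lipschitz, and bi-Lipschitz maps preserve Hausdorff dimension.

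For the realization set, I use Proposition~\ref{prop:hausdorff-bilip-conjugacy}. It gives \(\omega_{K_h}(x)=h^{-1}(\omega_{\mathcal U}(h(x)))\) for every \(x\), so
\[
        \omega_{K_h}(x)=E_h
        \iff
        \omega_{\mathcal U}(h(x))=E .
\]
Here I use injectivity of \(h^{-1}\): \(h^{-1}(A)=h^{-1}(E)\) forces \(A=E\). Consequently
\[
        \{x\in V:\ \omega_{K_h}(x)=E_h\}
        =
        h^{-1}\bigl(\Lambda_{\mathcal U}(E)\cap h(V)\bigr).
\]
Since \(h\) is a homeomorphism of \(\Delta^2\), the set \(h(V)\) is a nonempty open subset of \(\Delta^2\). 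Theorem~\ref{thm:BM-classical-realization} then gives \(\dim_{\mathrm H}(\Lambda_{\mathcal U}(E)\cap h(V))\ge1\). Applying the bi-Lipschitz map \(h^{-1}\) preserves this dimension, which yields the bound \(\ge1\).

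No step is a genuine obstacle. The only points to check with care are the set identity above, which uses bijectivity of \(h\), and the fact that openness of \(V\) is relative to \(\Delta^2\). The latter matches the form in which Theorem~\ref{thm:BM-classical-realization} is stated, so that theorem applies directly to \(h(V)\).
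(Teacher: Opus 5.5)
Your proposal is correct and follows the paper's argument. You transport the omega-limit set through the conjugacy, \(\omega_{K_h}(x)=h^{-1}(\omega_{\mathcal U}(h(x)))\), deduce the set identity \(\{x\in V:\omega_{K_h}(x)=E_h\}=h^{-1}(\Lambda_{\mathcal U}(E)\cap h(V))\), and then apply Theorem~\ref{thm:BM-classical-realization} to the open set \(h(V)\) together with bi-Lipschitz invariance of Hausdorff dimension; the only differences are cosmetic (you cite Proposition~\ref{prop:hausdorff-bilip-conjugacy} instead of rederiving the omega-limit identity inline, and you make explicit the compactness and injectivity points the paper leaves implicit).
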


\begin{proof}
The conjugacy relation
\(
        h\circ K_h=\mathcal U\circ h
\)
first gives
\[
        K_h(E_h)=h^{-1}(\mathcal U(E))=h^{-1}(E)=E_h.
\]
It also implies
\[
        K_h^n(x)=h^{-1}\bigl(\mathcal U^n(h(x))\bigr)
        \qquad
        \text{for every }n\ge0.
\]
Hence
\[
        \omega_{K_h}(x)
        =
        h^{-1}\bigl(\omega_{\mathcal U}(h(x))\bigr).
\]
Therefore
\[
        \omega_{K_h}(x)=E_h
        \quad\Longleftrightarrow\quad
        \omega_{\mathcal U}(h(x))=E.
\]
Equivalently,
\[
        \{x\in V:\ \omega_{K_h}(x)=E_h\}
        =
        h^{-1}
        \bigl(
        \Lambda_{\mathcal U}(E)\cap h(V)
        \bigr).
\]
Since \(h(V)\) is a nonempty open set, Theorem~\ref{thm:BM-classical-realization}
gives
\[
        \dim_{\mathrm H}
        \bigl(
        \Lambda_{\mathcal U}(E)\cap h(V)
        \bigr)
        \ge 1.
\]
Because \(h^{-1}\) is bi-Lipschitz, Hausdorff dimension is preserved under
\(h^{-1}\).  Hence
\[
        \dim_{\mathrm H}
        \{x\in V:\ \omega_{K_h}(x)=E_h\}
        \ge 1.
\]
The equality
\[
        \dim_{\mathrm H}E_h=\dim_{\mathrm H}E
\]
also follows from the bi-Lipschitz invariance of Hausdorff dimension.
\end{proof}

\subsection{A non-Lotka--Volterra weak admissible example with BM-type realization}

We now combine the preceding proposition with the stability of weak
admissibility under small boundary-preserving conjugacy.

Parametrize the boundary \(\partial\Delta^2\) counterclockwise by
\[
        \Gamma:\mathbb R/3\mathbb Z\to\partial\Delta^2,
\]
where
\[
\Gamma(t)=
\begin{cases}
(1-t,t,0), & 0\le t\le1,\\[1mm]
(0,2-t,t-1), & 1\le t\le2,\\[1mm]
(t-2,0,3-t), & 2\le t\le3,
\end{cases}
\]
and \(\Gamma\) is extended periodically with period \(3\).

For \(0<\varepsilon<1\), define the boundary homeomorphism
\[
        g_\varepsilon(\Gamma(t))=\Gamma(t+\varepsilon).
\]
Every point
\(
        x\in\Delta^2\setminus\{\mathbf p\}
\)
has a unique radial representation
\[
        x=(1-r)\mathbf p+rq,
        \qquad
        0<r\le1,
        \qquad
        q\in\partial\Delta^2.
\]
Define
\[
        h_\varepsilon(x)=
   \begin{cases}
        \mathbf p,  & x=\mathbf p,\\[1mm]
(1-r)\mathbf p+r g_\varepsilon(q), & x=(1-r)\mathbf p+rq\ne\mathbf p.
\end{cases}
\]

The boundary map \(g_\varepsilon\) is piecewise affine with finitely many
breakpoints.  Triangulate \(\Delta^2\) by joining \(\mathbf p\) to those
breakpoints and to their images under \(g_\varepsilon\).  On each triangle of
this finite fan, the radial extension \(h_\varepsilon\) is an invertible affine
map onto another nondegenerate triangle.  Hence both \(h_\varepsilon\) and
\(h_\varepsilon^{-1}\) are Lipschitz on each member of a finite triangulation,
and the finite maximum of the corresponding Lipschitz constants gives a global
bi-Lipschitz constant.  Thus \(h_\varepsilon\) is a bi-Lipschitz
boundary-preserving homeomorphism of \(\Delta^2\) onto itself.  Moreover, the
boundary displacement is bounded by the boundary arclength
\(\sqrt2\varepsilon\), and the radial formula gives
\[
        \|h_\varepsilon-\operatorname{id}\|_\infty
        \le \sqrt2\varepsilon.
\]
Consequently,
\[
        h_\varepsilon\to\operatorname{id}
        \qquad
        \text{uniformly as }\varepsilon\to0.
\]
Let
\[
        K_\varepsilon
        =
        h_\varepsilon^{-1}\circ\mathcal U\circ h_\varepsilon.
\]

\begin{theorem}
\label{thm:non-LV-BM-example}
For all sufficiently small
\(
        \varepsilon>0,
\)
the map \(K_\varepsilon\) is a weak admissible Stein--Ulam map.  Moreover,
for every \(\mathcal U\)-admissible compact set
\(E\subset\partial\Delta^2\) with \(\dim_{\mathrm H}E>0\), the transported set
\[
        E_\varepsilon=h_\varepsilon^{-1}(E)
\]
has the Bara\'nski--Misiurewicz omega-limit realization conclusion for
\(K_\varepsilon\): in every nonempty open set there is a Hausdorff-dimension-at
least one family of points whose \(K_\varepsilon\)-omega-limit set is
\(E_\varepsilon\).

In particular, for the Cantor set \(E_C\) from
Proposition~\ref{prop:Cantor-boundary-set}, the set
\(
        E_{C,\varepsilon}
        =
        h_\varepsilon^{-1}(E_C)
\)
satisfies
\[
        \dim_{\mathrm H}E_{C,\varepsilon}
        =
        \frac{\log2}{\log3},
\]
and, for every nonempty open set
\(
        V\subset\Delta^2,
\)
one has
\[
        \dim_{\mathrm H}
        \{x\in V:\ \omega_{K_\varepsilon}(x)=E_{C,\varepsilon}\}
        \ge 1.
\]
Thus \(K_\varepsilon\) has positive-dimensional fractal omega-limit sets on a
Hausdorff-dimension-at-least-one set of initial conditions in every open set.

Moreover, if
\(
        0<\varepsilon<1,
\)
then \(K_\varepsilon\) is not a Lotka--Volterra stochastic map, and therefore is
not a \(\mathbf q\)-Stein--Ulam map for any
\(
        \mathbf q\in\operatorname{int}\Delta^2.
\)
\end{theorem}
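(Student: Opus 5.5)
The proof assembles three results already established in the paper.

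\emph{Weak admissibility.} The map \(h_\varepsilon\) is a boundary-preserving homeomorphism with \(\|h_\varepsilon-\id\|_\infty\le\sqrt2\varepsilon\). For \(\sqrt2\varepsilon<\eta_{\mathcal U}\), Theorem~\ref{thm:weak-admissibility-conjugacy} applied to \(K=\mathcal U\) with its fixed weak admissible structure shows that \(K_\varepsilon\) is weak admissible. This is the same argument as in Example~\ref{ex:weak-not-pSU}. The phrase ``sufficiently small'' in the statement refers to this threshold.

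\emph{Realization.} The construction before the statement shows that \(h_\varepsilon\) is bi-Lipschitz, via the finite fan triangulation on which it is piecewise affine. Proposition~\ref{prop:BM-bilip-conjugacy} therefore applies with \(h=h_\varepsilon\). For any \(\mathcal U\)-admissible \(E\) it gives three facts:
\[
K_\varepsilon(E_\varepsilon)=E_\varepsilon,\qquad \dim_{\mathrm H}E_\varepsilon=\dim_{\mathrm H}E,
\]
and
\[
\dim_{\mathrm H}\{x\in V:\omega_{K_\varepsilon}(x)=E_\varepsilon\}\ge1
\]
for every nonempty open \(V\). The underlying identity is
\[
\{x\in V:\omega_{K_\varepsilon}(x)=E_\varepsilon\}=h_\varepsilon^{-1}\bigl(\Lambda_{\mathcal U}(E)\cap h_\varepsilon(V)\bigr),
\]
combined with Theorem~\ref{thm:BM-classical-realization}. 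Specializing to \(E=E_C\), Proposition~\ref{prop:Cantor-boundary-set} says that \(E_C\) is \(\mathcal U\)-admissible with \(\dim_{\mathrm H}E_C=\log2/\log3\). Hence \(\dim_{\mathrm H}E_{C,\varepsilon}=\log2/\log3\), and the realization bound holds for \(E_{C,\varepsilon}\). For each such \(x\) we have \(\omega_{K_\varepsilon}(x)=E_{C,\varepsilon}\), which has positive dimension. Note that the statement quantifies over all nonempty open \(V\subset\Delta^2\). This causes no difficulty, because Theorem~\ref{thm:BM-classical-realization} is already stated for such \(V\).

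\emph{Non-Lotka--Volterra property.} This repeats the vertex computation of Example~\ref{ex:weak-not-pSU}. First, \(h_\varepsilon(\mathbf e_1)=\Gamma(\varepsilon)=(1-\varepsilon,\varepsilon,0)\). Applying \(\mathcal U\) gives \(\mathcal U(h_\varepsilon(\mathbf e_1))=\Gamma(\varepsilon^2)\). Since \(h_\varepsilon^{-1}\) acts on the boundary by the shift \(t\mapsto t-\varepsilon\),
\[
K_\varepsilon(\mathbf e_1)=\Gamma(\varepsilon^2-\varepsilon).
\]
For \(0<\varepsilon<1\) we have \(\varepsilon^2-\varepsilon\in(-1,0)\), which is not \(\equiv0\pmod 3\). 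So \(K_\varepsilon(\mathbf e_1)\ne\mathbf e_1\). Every stochastic Lotka--Volterra map fixes all vertices, so \(K_\varepsilon\) is not one, and hence not \(\mathbf q\)-Stein--Ulam for any interior \(\mathbf q\). This part holds for every \(\varepsilon\in(0,1)\), independently of the smallness used in the first step.

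The only delicate point is the bi-Lipschitz property of \(h_\varepsilon\) near \(\mathbf p\). That is where the fan construction is used: radial extension of a piecewise affine boundary map is affine on each cone triangle. Since this was established just before the statement, the proof is essentially an assembly of the cited results.
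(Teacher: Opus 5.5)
Your proposal is correct and follows the paper's proof essentially step for step. Weak admissibility comes from the small boundary-preserving conjugacy theorem, where you state the threshold \(\sqrt2\varepsilon<\eta_{\mathcal U}\) explicitly while the paper only says \(h_\varepsilon\to\id\) uniformly. The realization and dimension claims come from Proposition~\ref{prop:BM-bilip-conjugacy}, the bi-Lipschitz fan construction of \(h_\varepsilon\), and Proposition~\ref{prop:Cantor-boundary-set}; the non-Lotka--Volterra claim comes from \(K_\varepsilon(\mathbf e_1)=\Gamma(\varepsilon^2-\varepsilon)\ne\mathbf e_1\), which you get from injectivity of \(\Gamma\) on \(\mathbb R/3\mathbb Z\) rather than from the explicit coordinates \((1-\varepsilon+\varepsilon^2,0,\varepsilon-\varepsilon^2)\).
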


\begin{proof}
Since
\(
        h_\varepsilon\to\operatorname{id}
\)
uniformly and \(h_\varepsilon\) preserves the boundary, the stability theorem
for weak admissibility under small boundary-preserving conjugacy implies that
\(K_\varepsilon\) is weak admissible for all sufficiently small
\(\varepsilon>0\).

Let \(E\subset\partial\Delta^2\) be \(\mathcal U\)-admissible with
\(\dim_{\mathrm H} E>0\), and put \(E_\varepsilon=h_\varepsilon^{-1}(E)\).
The omega-limit realization conclusion for \(K_\varepsilon\) over
\(E_\varepsilon\) follows from Proposition~\ref{prop:BM-bilip-conjugacy}, because
\(h_\varepsilon\) is bi-Lipschitz and Hausdorff dimension is preserved
under bi-Lipschitz maps. Applying that proposition to the Cantor admissible set \(E_C\)
gives
\[
        \dim_{\mathrm H}E_{C,\varepsilon}
        =
        \dim_{\mathrm H}E_C
        =
        \frac{\log2}{\log3}
\]
and
\[
        \dim_{\mathrm H}
        \{x\in V:\ \omega_{K_\varepsilon}(x)=E_{C,\varepsilon}\}
        \ge1
\]
for every nonempty open set \(V\subset\Delta^2\).

It remains to prove that \(K_\varepsilon\) is not Lotka--Volterra.  Every
stochastic Lotka--Volterra map has the coordinate form
\[
        (L(x))_i=x_i(1+F_i(x)),
        \qquad i=1,2,3.
\]
Hence every coordinate face is invariant.  In particular, every vertex is fixed:
\[
        L(\mathbf e_i)=\mathbf e_i,
        \qquad i=1,2,3.
\]

We show that \(K_\varepsilon\) does not fix \(\mathbf e_1=(1,0,0)\).  Since
\[
        h_\varepsilon(\mathbf e_1)=\Gamma(\varepsilon)
        =
        (1-\varepsilon,\varepsilon,0),
\]
we compute
\[
\begin{aligned}
        \mathcal U(h_\varepsilon(\mathbf e_1))
        &=
        \mathcal U(1-\varepsilon,\varepsilon,0)        \\
        &=
        (1-\varepsilon^2,\varepsilon^2,0)              \\
        &=
        \Gamma(\varepsilon^2).
\end{aligned}
\]
Since \(h_\varepsilon^{-1}\) acts on the boundary by
\[
        \Gamma(t)\mapsto \Gamma(t-\varepsilon),
\]
we obtain
\[
        K_\varepsilon(\mathbf e_1)
        =
        \Gamma(\varepsilon^2-\varepsilon)
        \qquad
        \text{in }\mathbb R/3\mathbb Z.
\]
For \(0<\varepsilon<1\), this is
\[
        K_\varepsilon(\mathbf e_1)
        =
        (1-\varepsilon+\varepsilon^2,\ 0,\ \varepsilon-\varepsilon^2),
\]
which is different from \(\mathbf e_1\).  Thus \(K_\varepsilon\) does not fix
the vertices and hence cannot be a stochastic Lotka--Volterra map.  Therefore
it cannot be a \(\mathbf q\)-Stein--Ulam map for any interior vector
\(\mathbf q\).
\end{proof}

\newpage
\subsection{A Baire-category caution}
\label{subsec:baire-caution}

It is natural to ask whether weak admissibility is a generic property.  The
answer depends strongly on the ambient space.  In the full space of simplex
self-maps, weak admissibility is not residual.  Indeed, it is not even dense.
This is because weak admissibility contains exact boundary invariance as part of
its definition.

Let
\[
        \mathcal M^r(\Delta^{m-1})
        =
        \{T\in C^r(\Delta^{m-1},\Delta^{m-1})\},
        \qquad r\ge0,
\]
with the \(C^r\) topology, where \(r=0\) denotes the \(C^0\) topology.  Let
\[
        \mathcal W^r(\Delta^{m-1})
\]
denote the set of weak admissible Stein--Ulam type maps belonging to
\(\mathcal M^r(\Delta^{m-1})\).

\begin{proposition}
\label{prop:weak-not-residual-full}
For every finite
\[
        r\ge 0,
\]
the set
\[
        \mathcal W^r(\Delta^{m-1})
\]
is not residual in
\[
        \mathcal M^r(\Delta^{m-1}).
\]
More precisely,
\[
        \mathcal W^r(\Delta^{m-1})
\]
is contained in a closed nowhere dense subset of
\[
        \mathcal M^r(\Delta^{m-1}).
\]
\end{proposition}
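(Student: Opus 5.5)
The plan is to exhibit an explicit closed nowhere dense set containing \(\mathcal W^r(\DeltaM)\). By (W1), every weak admissible map satisfies \(K(\bd\DeltaM)\subset\bd\DeltaM\). So I would take
\[
        \mathcal B^r=\{T\in\mathcal M^r(\DeltaM):\ T(\bd\DeltaM)\subset\bd\DeltaM\},
\]
which clearly contains \(\mathcal W^r(\DeltaM)\). It then suffices to show two things: \(\mathcal B^r\) is closed, and it has empty interior in \(\mathcal M^r(\DeltaM)\). A closed set with empty interior is nowhere dense, and a subset of a nowhere dense set cannot be residual, because \(\mathcal M^r\) is a Baire space.

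\textbf{Closedness.} The \(C^r\) topology is finer than the \(C^0\) topology, so it is enough to check closedness under uniform convergence. Suppose \(T_k\to T\) uniformly with \(T_k\in\mathcal B^r\). Then for every \(x\in\bd\DeltaM\) we have \(T(x)=\lim T_k(x)\in\bd\DeltaM\), since \(\bd\DeltaM\) is closed. Hence \(T\in\mathcal B^r\).

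\textbf{Empty interior.} Given \(T\in\mathcal B^r\) and \(\varepsilon>0\), I will construct \(T_\varepsilon\in\mathcal M^r\setminus\mathcal B^r\) that is \(C^r\)-close to \(T\). Let \(c\) be the barycenter of \(\DeltaM\) and set
\[
        T_\varepsilon=(1-\varepsilon)T+\varepsilon c .
\]
This map is \(C^r\) and maps \(\DeltaM\) into \(\DeltaM\) by convexity. Since \(c\) is an interior point, each coordinate satisfies \((T_\varepsilon(x))_i\ge\varepsilon/m>0\), so \(T_\varepsilon(\DeltaM)\subset\inte\DeltaM\). In particular \(T_\varepsilon\) sends the boundary into the interior, so \(T_\varepsilon\notin\mathcal B^r\). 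For the distance estimate, \(T_\varepsilon-T=\varepsilon(c-T)\). Its \(C^0\) norm is at most \(\varepsilon\operatorname{diam}\DeltaM\), and its derivatives of order \(1\le k\le r\) equal \(-\varepsilon D^kT\). Therefore \(\|T_\varepsilon-T\|_{C^r}\le\varepsilon(1+\|T\|_{C^r})\to0\) as \(\varepsilon\to0\). This shows every \(C^r\)-neighborhood of \(T\) leaves \(\mathcal B^r\).

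I expect no step to be a genuine obstacle. The only points needing care are conventions. First, \(C^r\) maps on the simplex should be understood via extensions or one-sided derivatives, so that the \(C^r\) norm of the affine combination behaves as stated. Second, the Baire property of \(\mathcal M^r\) holds because it is a closed subset of the Banach space \(C^r(\DeltaM,\R^m)\), hence completely metrizable. With both in place, the conclusion follows: a residual set is dense by Baire's theorem, while \(\mathcal W^r\) lies in a closed nowhere dense set and is therefore not even dense.
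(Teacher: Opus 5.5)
Your proposal is correct and follows the paper's proof essentially verbatim. The paper uses the same closed set \(\mathcal B^r\) of boundary-preserving maps, proves closedness by pointwise limits under uniform convergence, and proves empty interior with the same perturbation \((1-\delta)T+\delta p_0\) toward the barycenter. Your explicit \(C^r\) estimate and the coordinate bound \((T_\varepsilon(x))_i\ge\varepsilon/m\) just spell out two steps that the paper asserts without computation.
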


\begin{proof}
Let
\[
        \mathcal B^r(\Delta^{m-1})
        =
        \{T\in\mathcal M^r(\Delta^{m-1}):
        T(\partial\Delta^{m-1})\subset\partial\Delta^{m-1}\}.
\]
By condition \textup{(W1)}, every weak admissible Stein--Ulam type map belongs
to
\[
        \mathcal B^r(\Delta^{m-1}).
\]
Thus
\[
        \mathcal W^r(\Delta^{m-1})
        \subset
        \mathcal B^r(\Delta^{m-1}).
\]

We first show that
\[
        \mathcal B^r(\Delta^{m-1})
\]
is closed.  Suppose
\[
        T_n\to T
\]
in the \(C^r\) topology, and
\[
        T_n(\partial\Delta^{m-1})
        \subset
        \partial\Delta^{m-1}
        \qquad\text{for all }n.
\]
Then the convergence is in particular uniform.  Since
\[
        \partial\Delta^{m-1}
\]
is closed, for every
\[
        x\in\partial\Delta^{m-1}
\]
we have
\[
        T(x)=\lim_{n\to\infty}T_n(x)\in\partial\Delta^{m-1}.
\]
Therefore
\[
        T(\partial\Delta^{m-1})
        \subset
        \partial\Delta^{m-1},
\]
so \(T\in\mathcal B^r(\Delta^{m-1})\).  Hence
\(\mathcal B^r(\Delta^{m-1})\) is closed.

We now show that
\[
        \mathcal B^r(\Delta^{m-1})
\]
has empty interior.  Let
\[
        T\in\mathcal B^r(\Delta^{m-1})
\]
and let \(\varepsilon>0\).  Choose the barycenter
\[
        p_0=\left(\frac1m,\ldots,\frac1m\right)
        \in\operatorname{int}\Delta^{m-1}.
\]
For \(0<\delta<1\), define
\[
        T_\delta(x)
        =
        (1-\delta)T(x)+\delta p_0,
        \qquad x\in\Delta^{m-1}.
\]
Since \(\Delta^{m-1}\) is convex,
\[
        T_\delta(\Delta^{m-1})
        \subset
        \Delta^{m-1}.
\]
Moreover,
\[
        T_\delta\to T
\]
in the \(C^r\) topology as \(\delta\to0\).  Hence, for \(\delta>0\) sufficiently
small,
\[
        \|T_\delta-T\|_{C^r}<\varepsilon.
\]

However, \(T_\delta\) does not preserve the boundary.  Indeed, if
\[
        x\in\partial\Delta^{m-1},
\]
then
\[
        T(x)\in\partial\Delta^{m-1},
\]
but
\[
        T_\delta(x)
        =
        (1-\delta)T(x)+\delta p_0
        \in\operatorname{int}\Delta^{m-1},
\]
because it is a non-trivial convex combination of a point of the simplex with an
interior point.  Therefore
\[
        T_\delta(\partial\Delta^{m-1})
        \not\subset
        \partial\Delta^{m-1}.
\]
Thus every \(C^r\)-neighborhood of \(T\) contains maps outside
\[
        \mathcal B^r(\Delta^{m-1}).
\]
So \(\mathcal B^r(\Delta^{m-1})\) has empty interior.

Since \(\mathcal B^r(\Delta^{m-1})\) is closed and has empty interior, it is
nowhere dense.  Because
\[
        \mathcal W^r(\Delta^{m-1})
        \subset
        \mathcal B^r(\Delta^{m-1}),
\]
we obtain
\[
        \overline{\mathcal W^r(\Delta^{m-1})}
        \subset
        \mathcal B^r(\Delta^{m-1}),
\]
and hence
\[
        \operatorname{int}
        \overline{\mathcal W^r(\Delta^{m-1})}
        =
        \varnothing.
\]
Therefore
\[
        \mathcal W^r(\Delta^{m-1})
\]
is nowhere dense.  In particular, it cannot be residual in the Baire space
\[
        \mathcal M^r(\Delta^{m-1}).
\]
\end{proof}

\subsection{Relative Baire genericity in a cyclic-collar structural class}
\label{subsec:relative-baire-genericity}

We now formulate a relative Baire-category result.  As noted above, weak
admissibility should not be expected to be residual in the full space
\(C^r(\DeltaM,\DeltaM)\), because the full space does not preserve the boundary,
the Lyapunov geometry or the cyclic residence structure.  The correct generic
question is relative: once the boundary collar and the cyclic geometry are fixed,
how large is the set of speed profiles that produce non-statistical behavior?

The result below gives a precise answer in a fixed cyclic-collar class.  In that
class, non-statistical behavior is residual and dense, while regular behavior is
also dense.  Thus the two behaviors are intermingled inside the structural class.

Fix \(m\ge4\).  Let
\[
        \Xi:[s_0,\infty]\times\mathbb T\times
        \overline{\mathbb B}^{m-3}\longrightarrow \DeltaM,
        \qquad
        \mathbb T=\mathbb R/\mathbb Z,
\]
be a fixed cyclic boundary collar.  Here \(s=\infty\) corresponds to the
boundary, while finite \(s\ge s_0\) corresponds to interior points.  We assume
that
\[
        \Xi\bigl([s_0,\infty)\times\mathbb T\times
        \mathbb B^{m-3}\bigr)
        \subset \inte\DeltaM
\]
has positive \((m-1)\)-dimensional Lebesgue measure, and that the boundary limit
\[
        H(t,w)=\Xi(\infty,t,w)
\]
exists continuously.  We also assume that the angular windows
\[
        J_j=
        \left[\frac{j-1}{m},\frac j m\right],
        \qquad j=1,\ldots,m,
\]
define residence regions
\[
        R_j=
        \Xi\bigl([s_0,\infty]\times J_j\times
        \overline{\mathbb B}^{m-3}\bigr)
\]
satisfying the convex-hull separation
\[
        \bigcap_{j=1}^m \co(R_j)=\varnothing.
        \tag{16.1}\label{eq:collar-convhull-separation}
\]

Fix constants
\[
        c>0,\qquad 0<\beta<1,
\]
and choose \(A>0\) so small that
\[
        \frac{A}{s_0}\le \frac1N
\]
for some integer \(N\ge 2m\) divisible by \(m\).  For a function
\[
        a:[s_0,\infty)\to[0,A],
\]
define a map on the collar by
\[
        K_a\bigl(\Xi(s,t,w)\bigr)
        =
        \Xi\left(
        s+c,\,
        t+\frac{a(s)}{s},\,
        \beta w
        \right),
        \qquad s<\infty.
\]
On the boundary part \(s=\infty\), define
\[
        K_a\bigl(\Xi(\infty,t,w)\bigr)
        =
        \Xi(\infty,t,\beta w).
\]
The inequality \(a(s)/s\le 1/N\) ensures that one step of the angular coordinate
can cross at most one fine sector.  Thus the weak cyclic rule is built into the
structural class.

For \(\nu\in\mathbb N_0\), let
\[
        \mathcal A_A^\nu
        =
        \{a\in C^\nu_{\mathrm{loc}}([s_0,\infty)):\ 0\le a(s)\le A
        \text{ for all }s\ge s_0\}.
\]
We equip \(\mathcal A_A^\nu\) with the local \(C^\nu\) metric
\[
        d_\nu(a,b)
        =
        \sum_{q=1}^\infty
        2^{-q}
        \min\left\{
        1,\,
        \|a-b\|_{C^\nu([s_0,s_0+q])}
        \right\}.
\]
This is a Baire space.

For \(a\in\mathcal A_A^\nu\) and \(\theta\in[0,c]\), define
\[
        S_a(\theta)
        =
        \sum_{n=0}^{\infty}
        \frac{a(s_0+\theta+nc)}{s_0+\theta+nc}.
\]
We introduce two subclasses:
\[
        \mathcal N_A^\nu
        =
        \{a\in\mathcal A_A^\nu:\ S_a(\theta)=+\infty
        \text{ for every }\theta\in[0,c]\},
\]
and
\[
        \mathcal R_A^\nu
        =
        \{a\in\mathcal A_A^\nu:\ S_a(\theta)<+\infty
        \text{ for every }\theta\in[0,c]\}.
\]
The class \(\mathcal N_A^\nu\) is the non-statistical class, while
\(\mathcal R_A^\nu\) is the regular class.

\begin{theorem}
\label{thm:relative-generic-collar}
In the cyclic-collar structural class described above, the following assertions
hold.

\begin{enumerate}
\item[\textup{(i)}]
If
\[
        a\in\mathcal N_A^\nu,
\]
then every orbit starting in the collar interior
\[
        \Xi\bigl([s_0,\infty)\times\mathbb T\times
        \mathbb B^{m-3}\bigr)
\]
is non-statistical.  More precisely, its empirical measures do not converge, and
its vector Ces\`aro averages do not converge.

\item[\textup{(ii)}]
If
\[
        a\in\mathcal R_A^\nu,
\]
then every orbit starting in the collar interior is regular: the orbit converges
to a boundary point, and the empirical measures converge to the corresponding
Dirac measure.

\item[\textup{(iii)}]
The non-statistical class
\[
        \mathcal N_A^\nu
\]
is residual and dense in
\[
        \mathcal A_A^\nu.
\]

\item[\textup{(iv)}]
The regular class
\[
        \mathcal R_A^\nu
\]
is dense in
\[
        \mathcal A_A^\nu.
\]
Consequently, regular and non-statistical behaviors are intermingled in the
cyclic-collar structural class: every neighborhood of every profile contains
profiles of both types.
\end{enumerate}
\end{theorem}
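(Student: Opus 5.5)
The proof follows the explicit orbit formula, in the spirit of Theorems~\ref{thm:explicit-cyclic-collar} and~\ref{thm:collar-parameter-transition}. For \(x=\Xi(s,t,w)\) in the collar interior we have \(s_n=s+nc\), \(w_n=\beta^n w\) and the lifted angle
\[
        \tau_n=t+\sum_{k=0}^{n-1}\frac{a(s+kc)}{s+kc}.
\]
Writing \(s=s_0+\theta+jc\) with \(\theta\in[0,c)\) and \(j\in\mathbb N_0\), the series \(\sum_k a(s+kc)/(s+kc)\) is a tail of \(S_a(\theta)\). So \(\tau_n\to+\infty\) when \(a\in\mathcal N_A^\nu\), and \(\tau_n\) converges when \(a\in\mathcal R_A^\nu\).

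\textbf{Part (ii).} Here \(t_n\to t_\infty\) in \(\mathbb T\), \(w_n\to0\) and \(s_n\to\infty\). By continuity of \(\Xi\) up to \(s=\infty\), \(x_n\to H(t_\infty,0)\). Convergence of the orbit gives convergence of the empirical measures to \(\delta_{H(t_\infty,0)}\).

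\textbf{Part (i).} Divergence of \(\tau_n\) only gives the unbounded lifted itinerary. I will not look for a logarithmic residence estimate in terms of a lower bound on \(a\), since \(a\) may be tiny on long stretches. The key observation is that only the \emph{upper} bound \(a\le A\) is needed. Put \(r_n=\lfloor N\tau_n\rfloor\); since \(A/s_0\le 1/N\), we get \(r_{n+1}-r_n\in\{0,1\}\), so maximal lifted blocks through each window \(J_j\) exist, with \(b\to\infty\), exactly as in Lemma~\ref{lem:blocks}. Let \((b,\ell)\) be such a block. The orbit must advance the angle by at least \(1/m-2/N\ge 1/(2m)\) within about \(\ell+1\) steps. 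Hence
\[
        \frac1{2m}\le\sum_{k=b}^{b+\ell}\frac{A}{s+kc}\le \frac{A}{c}\log\frac{s+(b+\ell+1)c}{s+(b-1)c}+O\!\left(\frac1b\right).
\]
This yields \(\ell\ge \kappa b\) for large \(b\), with \(\kappa\) close to \(e^{c/(2mA)}-1>0\). That is the linear residence property of Proposition~\ref{prop:linear-residence}, now obtained directly. Lemma~\ref{lem:block} applied to each \(R_j\), together with \eqref{eq:collar-convhull-separation}, then rules out convergence of the vector Ces\`aro averages. Non-convergence of the empirical measures follows as in Corollary~\ref{cor:non-statistical}.

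\textbf{Part (iii).} For \(M\in\mathbb N\) let \(P_M^a(\theta)=\sum_{n<M}a(s_0+\theta+nc)/(s_0+\theta+nc)\), and set
\[
        G_k=\{a\in\mathcal A_A^\nu:\ \exists M,\ \min_{\theta\in[0,c]}P_M^a(\theta)>k\}.
\]
Each \(G_k\) is open. Indeed, \(P_M^a\) depends only on \(a|_{[s_0,s_0+Mc+c]}\), and it depends continuously on \(a\) in sup norm, uniformly in \(\theta\); this sup-norm closeness is controlled by \(d_\nu\). I claim \(\bigcap_kG_k=\mathcal N_A^\nu\).

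The inclusion \(\bigcap_k G_k\subset\mathcal N_A^\nu\) is clear, since \(S_a(\theta)\ge P_M^a(\theta)\). For the converse I use a Dini-type compactness argument. The functions \(P_M^a\) are continuous in \(\theta\) and nondecreasing in \(M\). If \(S_a\equiv\infty\), the open sets \(\{P_M^a>k\}\) increase and cover the compact interval \([0,c]\), so one of them equals \([0,c]\). This compactness step is the one I expect to need the most care, together with checking that \(G_k\) is really open for \(d_\nu\).

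For density, fix \(a\), \(\varepsilon>0\) and \(Q\) with \(2^{-Q}<\varepsilon\). Take a smooth cutoff \(\chi\) with \(\chi=0\) on \([s_0,s_0+Q]\) and \(\chi=1\) on \([s_0+Q+1,\infty)\), and set \(b=(1-\chi)a+\chi A\). Then \(0\le b\le A\) and \(b\) is \(C^\nu\). Since \(\sum A/(s_0+\theta+nc)=\infty\), we get \(b\in\mathcal N_A^\nu\), and \(d_\nu(a,b)\le\sum_{q>Q}2^{-q}<\varepsilon\). Hence \(\mathcal N_A^\nu\) is a dense \(G_\delta\), and therefore residual in the Baire space \(\mathcal A_A^\nu\).

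\textbf{Part (iv).} The same cutoff with \(b=(1-\chi)a\) gives profiles that vanish eventually. For these \(S_b(\theta)\) is a finite sum, so \(b\in\mathcal R_A^\nu\), and again \(d_\nu(a,b)<\varepsilon\). This proves density of \(\mathcal R_A^\nu\) and the intermingling statement.
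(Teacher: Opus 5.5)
Your proposal is correct and follows essentially the paper's route: the explicit orbit formula, linear residence derived from the upper bound $a\le A$ alone plus the convex-hull block lemma for (i), the tail of $S_a(\theta)$ for (ii), and cutoff perturbations with a $G_\delta$ argument for (iii)--(iv). Your harmonic-sum constant and the compactness identification $\mathcal N_A^\nu=\bigcap_k G_k$ are only mild refinements of the paper's cruder bound $\ell\ge s_{\rm in}/(2mA)$ and its inclusion $\bigcap_M\mathcal U_M\subset\mathcal N_A^\nu$. One small slip: $1/m-2/N\ge 1/(2m)$ fails for $2m\le N<4m$, but over the $\ell$ steps from time $b$ to $b+\ell$ the lift advances by more than $1/m-1/N\ge 1/(2m)$, so your displayed inequality still holds.
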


\begin{proof}
Let
\[
        x_0=\Xi(s,t,w)
\]
be a point in the collar interior.  Its orbit is given explicitly by
\[
        s_n=s+nc,
\]
\[
        w_n=\beta^n w,
\]
and
\[
        t_n
        =
        t+
        \sum_{k=0}^{n-1}
        \frac{a(s+kc)}{s+kc}
        \quad \text{in }\mathbb T.
\]

We first prove \textup{(i)}.  Suppose
\[
        a\in\mathcal N_A^\nu.
\]
Writing \(s=s_0+\theta+q c\) for some \(\theta\in[0,c]\) and \(q\in\mathbb N_0\),
we get
\[
        \sum_{k=0}^{\infty}
        \frac{a(s+kc)}{s+kc}
        =
        \sum_{k=q}^{\infty}
        \frac{a(s_0+\theta+kc)}{s_0+\theta+kc}
        =
        +\infty.
\]
Thus the lifted angular coordinate is unbounded.  Since
\[
        0\le \frac{a(s)}{s}\le \frac1N,
\]
the orbit can only remain in the same fine sector or move to the next one.
Hence the weak cyclic rule and the non-trapping condition hold.

Moreover, if an orbit enters a residence window \(J_j\) at height \(s_{\rm in}\)
and we consider the corresponding maximal lifted residence block, then during
the whole block one has
\[
        \frac{a(s_n)}{s_n}\le \frac{A}{s_{\rm in}}.
\]
Since each \(J_j\) has angular length \(1/m\), the residence length \(\ell\)
satisfies, up to a uniform entry--exit error,
\[
        \ell
        \ge
        \frac{s_{\rm in}}{2mA}
\]
for all sufficiently large \(s_{\rm in}\).  If we take the Lyapunov coordinate
\[
        \varphi(\Xi(s,t,w))=e^{-s},
\]
then
\[
        s_{\rm in}=\log\frac1{\varphi(\Xi(s_{\rm in},t_{\rm in},w_{\rm in}))}.
\]
Therefore the logarithmic residence estimate holds:
\[
        \ell
        \ge
        A_j\log_{C_j}^{+}
        \frac{B_j}{\varphi(K_a^{b}(x_0))}
\]
with suitable constants \(A_j,B_j>0\) and \(C_j>1\), where \(b\) is the first
time of the block.  In fact the estimate is linear in the previous history: if
the block starts at time \(b\), then
\[
        s_{\rm in}=s+bc,
\]
and hence, for all large \(b\),
\[
        \frac{\ell}{b}
        \ge
        \frac{c}{4mA}>0.
\]

The convex-hull separation
\[
        \bigcap_{j=1}^m\co(R_j)=\varnothing
\]
is fixed by the structural class.  Hence the standard convex-hull block lemma
applies: if the vector Ces\`aro averages had a limit \(L\), then the long
residence blocks through every \(R_j\) would force
\[
        L\in\co(R_j),
        \qquad j=1,\ldots,m,
\]
contradicting \eqref{eq:collar-convhull-separation}.  Thus the vector Ces\`aro
averages do not converge.  Weak-\(*\) convergence of empirical measures would
imply convergence of vector Ces\`aro averages by testing against the identity
observable, so the empirical measures do not converge either.

We now prove \textup{(ii)}.  Suppose
\[
        a\in\mathcal R_A^\nu.
\]
Then for every initial value \(s\ge s_0\),
\[
        \sum_{k=0}^{\infty}
        \frac{a(s+kc)}{s+kc}<\infty.
\]
Hence \(t_n\) converges in \(\mathbb T\) to some limit \(t_\infty\).  Also
\[
        s_n\to\infty,
        \qquad
        w_n=\beta^n w\to0.
\]
Therefore
\[
        K_a^n(x_0)
        =
        \Xi(s_n,t_n,w_n)
        \longrightarrow
        \Xi(\infty,t_\infty,0).
\]
Thus the orbit is regular and the empirical measures converge to
\[
        \delta_{\Xi(\infty,t_\infty,0)}.
\]

It remains to prove the Baire-category statements.

For \(M\in\mathbb N\), define
\[
        \mathcal U_M
        =
        \bigcup_{N_0\ge1}
        \left\{
        a\in\mathcal A_A^\nu:
        \min_{\theta\in[0,c]}
        \sum_{n=0}^{N_0}
        \frac{a(s_0+\theta+nc)}{s_0+\theta+nc}
        >M
        \right\}.
\]
Each \(\mathcal U_M\) is open in the local \(C^\nu\) topology, because the
condition involves only values of \(a\) on a compact interval and is strict.

We show that \(\mathcal U_M\) is dense.  Let \(a\in\mathcal A_A^\nu\) and let
a basic neighborhood of \(a\) be prescribed by a compact interval
\[
        [s_0,s_0+Q]
\]
and a tolerance \(\varepsilon>0\).  Choose a smooth cut-off function
\[
        \chi:[s_0,\infty)\to[0,1]
\]
such that
\[
        \chi(s)=0
        \quad\text{on }[s_0,s_0+Q],
\]
and
\[
        \chi(s)=1
        \quad\text{for all sufficiently large }s.
\]
Choose \(0<\delta<A\), and define
\[
        \widetilde a(s)
        =
        a(s)+
        \delta\,\chi(s)\frac{A-a(s)}{A}.
\]
Then
\[
        0\le \widetilde a(s)\le A,
\]
and
\[
        \widetilde a=a
        \quad\text{on }[s_0,s_0+Q].
\]
Thus \(\widetilde a\) belongs to the prescribed neighborhood of \(a\).  For all
sufficiently large \(s\), we have \(\chi(s)=1\), and therefore
\[
        \widetilde a(s)
        =
        a(s)+\delta\frac{A-a(s)}{A}
        \ge
        \min\left\{\delta,\frac A2\right\}
\]
unless \(a(s)\ge A/2\), in which case the same lower bound holds with \(A/2\).
Hence there exists \(d>0\) such that
\[
        \widetilde a(s)\ge d
\]
for all sufficiently large \(s\).  Therefore, uniformly in \(\theta\in[0,c]\),
\[
        \sum_{n=0}^{N}
        \frac{\widetilde a(s_0+\theta+nc)}{s_0+\theta+nc}
        \longrightarrow+\infty
        \qquad
        \text{as }N\to\infty.
\]
Thus, for some \(N_0\),
\[
        \min_{\theta\in[0,c]}
        \sum_{n=0}^{N_0}
        \frac{\widetilde a(s_0+\theta+nc)}{s_0+\theta+nc}
        >M.
\]
Hence \(\widetilde a\in\mathcal U_M\).  This proves that
\[
        \mathcal U_M
\]
is dense.

Now set
\[
        \mathcal G
        =
        \bigcap_{M=1}^{\infty}\mathcal U_M.
\]
Then \(\mathcal G\) is residual in \(\mathcal A_A^\nu\).  By construction, every
\(a\in\mathcal G\) satisfies
\[
        \sum_{n=0}^{\infty}
        \frac{a(s_0+\theta+nc)}{s_0+\theta+nc}
        =
        +\infty
        \qquad
        \text{for every }\theta\in[0,c].
\]
Therefore
\[
        \mathcal G\subset\mathcal N_A^\nu.
\]
Since \(\mathcal G\) is residual and
\[
        \mathcal G\subset\mathcal N_A^\nu,
\]
the complement of \(\mathcal N_A^\nu\) is contained in the meagre set
\(\mathcal A_A^\nu\setminus\mathcal G\).  Hence
\[
        \mathcal N_A^\nu
\]
is residual and dense in \(\mathcal A_A^\nu\).  This proves \textup{(iii)}.

Finally, we show that the regular class \(\mathcal R_A^\nu\) is dense.  Let
\(a\in\mathcal A_A^\nu\), let \(Q>0\), and let \(\varepsilon>0\) be given.
Choose a smooth cut-off function
\[
        \zeta:[s_0,\infty)\to[0,1]
\]
such that
\[
        \zeta(s)=1
        \quad\text{on }[s_0,s_0+Q],
\]
and
\[
        \zeta(s)=0
        \quad\text{for all sufficiently large }s.
\]
Define
\[
        a_{\mathrm{reg}}(s)=\zeta(s)a(s).
\]
Then
\[
        a_{\mathrm{reg}}\in\mathcal A_A^\nu,
\]
and
\[
        a_{\mathrm{reg}}=a
        \quad\text{on }[s_0,s_0+Q].
\]
Thus \(a_{\mathrm{reg}}\) lies in the prescribed neighborhood of \(a\).  Since
\(a_{\mathrm{reg}}\) has compact support in \([s_0,\infty)\), for every
\(\theta\in[0,c]\) the series
\[
        \sum_{n=0}^{\infty}
        \frac{a_{\mathrm{reg}}(s_0+\theta+nc)}{s_0+\theta+nc}
\]
has only finitely many non-zero terms.  Hence
\[
        a_{\mathrm{reg}}\in\mathcal R_A^\nu.
\]
This proves that \(\mathcal R_A^\nu\) is dense.

Since both \(\mathcal N_A^\nu\) and \(\mathcal R_A^\nu\) meet every non-empty
open set in the structural topology, regular and non-statistical behaviors are
intermingled inside the cyclic-collar class.
\end{proof}

\begin{remark}
Theorem~\ref{thm:relative-generic-collar} should be compared with the
intermingling phenomenon for one-dimensional intermittent maps.  In the full
space of all simplex maps, weak admissibility is not a residual property because
the full space does not preserve the structural objects needed to formulate the
mechanism.  Inside the fixed cyclic-collar class, however, the boundary, the
Lyapunov coordinate, the residence regions and the cyclic order are built into
the ambient space.  In that relative topology, non-statistical behavior is
residual, while regular behavior is still dense.  Thus the two behaviors are
intermingled in precisely the structural sense relevant to the weak
Stein--Ulam mechanism.
\end{remark}

\section*{Declaration}
The author declares that he has no conflict of interest.  AI tools were used to
assist with language editing and manuscript preparation.  The mathematical
content, proofs and conclusions are the author's responsibility.

\section*{Data Availability Statement}
No data sets were generated or analyzed during the current study.

\end{document}